\begin{document}

\newtheorem{thm}{Theorem}[section]
\newtheorem{cor}[thm]{Corollary}
\newtheorem{lem}[thm]{Lemma}
\newtheorem{prop}[thm]{Proposition}
\newtheorem{obs}[thm]{Observation}
\theoremstyle{remark}
\newtheorem{rem}{Remark}[section]
\newtheorem{notation}{Notation}
\newtheorem{exc}[thm]{Exercise}
\newtheorem{exe}[thm]{Exercise}
\newtheorem{problem}[thm]{Problem}

\theoremstyle{definition}
\newtheorem{defn}{Definition}[section]

\theoremstyle{plain}
\newtheorem*{thmA}{Main Theorem (physical version)}
\newtheorem*{thmB}{Main Theorem (dynamical version)}
\newtheorem*{thmC}{Theorem (Global Lee-Yang-Fisher Current)}
\newtheorem*{thmRIG}{Local Rigidity Theorem}
\newtheorem*{LY Theorem}{Lee-Yang Theorem}
\newtheorem*{LY Theorem2}{General Lee-Yang Theorem}
\newtheorem*{LY TheoremBC}{Lee-Yang Theorem with Boundary Conditions}
\newtheorem*{LY Theorem2BC}{General Lee-Yang Theorem with Boundary Conditions}
\newtheorem*{conj}{Conjecture}

\numberwithin{equation}{section}
\numberwithin{figure}{section}

\newcommand{\figref}[1]{Figure~\ref{#1}}
\newcommand{\Rmig}{R} 
\newcommand{\Cmig}{C} 
\newcommand{\Cmigbl}{C_0}
\newcommand{\TOPmig}{\mathrm{T}} 
\newcommand{\BOTTOMmig}{\mathrm{B}} 
\newcommand{\FIXmig}{b} 
\newcommand{\CFIXmig}{e} 
\newcommand{\INDmig}{a} 
\newcommand{\Smig}{S} 

\newcommand{\parab} {{\mathrm{par}}}
\newcommand{\thin} {{\mathrm{thin}}}
\newcommand{\thick} {{ \mathrm{thick}}}
\newcommand{\loc}{{\mathrm {loc}}}
\newcommand{\hor}{{\mathrm {hor}}}
\newcommand{\ver}{{\mathrm {ver}}}
\newcommand{\ess}{{\mathrm{ess}}}
\newcommand{\sym}{{\mathrm{sym}}}

\newcommand{\Cyl}{{\mathrm{Cyl}}}

\newcommand{\lp}{{\,\mathrm {dominates}\,}}

\newcommand{\Rphys}{\mathcal R} 
\newcommand{\Cphys}{\mathcal C} 
\newcommand{\Solid}{\mathcal{SC}}
\newcommand{\Solidmig}{SC}
\newcommand{\Secphys}{\Pi}
\newcommand{\Secmig}{P}
\newcommand{\TOPphys}{\mathcal T} 
\newcommand{\Cphystl}{{\mathcal C}_1}
\newcommand{\Cphysbl}{{\mathcal C}_0}
\newcommand{\Cphyslow}{{\mathcal C}_*}
\newcommand{\BOTTOMphys}{\mathcal B} 
\newcommand{\FIXphys}{\beta} 
\newcommand{\CFIXphys}{\eta} 
\newcommand{\INDphys}{\alpha} 
\newcommand{\Sphys}{\mathcal S} 
\newcommand{\Icurve}{\mathcal G} 
\newcommand{\Imig}{G} 
\newcommand{\PI}{{\mathcal A}}
\newcommand{\crittemps}{{\mathscr C}} 
\newcommand{\epoints}{{\mathscr E}} 

\newcommand{\Line}{L}
\newcommand{\Lzero}{L_0}
\newcommand{\Lone}{L_1}
\newcommand{\Ltwo}{L_2}
\newcommand{\Lthree}{L_3}
\newcommand{\Lfour}{L_4}

\newcommand{\LLzero}{\LL_0}
\newcommand{\LLone}{\LL_1}

\newcommand{\Div}{{\rm Div}}
\newcommand{\Tongue}{\Upsilon}

\newcommand{\Par}{{{\mathcal P}}}
\newcommand{\inv}{{\iota}}

\def\cF{{\mathcal{F}}}
\def\cA{{\mathcal{A}}}

\newcommand{\leg}{{\mathrm leg}}
\newcommand{\il}{{\mathrm il}}

\newcommand{\Mconv}{{\rm M}}
\newcommand{\Hconv}{{\rm H}}
\newcommand{\tconv}{{\rm t}}

\newcommand{\Weight}{W}

\newcommand{\KSQRT}{\widehat \KK}

\newcommand{\ex}{{\mathrm{exc}}}
\newcommand{\out}{{\mathrm{out}}}

\newcommand{\vertbondpp}{\, \underset{\oplus}{\overset{\oplus}{|}} \, }
\newcommand{\vertbondpm}{\, \underset{\ominus}{\overset{\oplus}{|}} \,}
\newcommand{\vertbondmp}{\, \underset{\oplus}{\overset{\ominus}{|}} \, }
\newcommand{\vertbondmm}{\, \underset{\ominus}{\overset{\ominus}{|}} \, }

\font\nt=cmr7


 \def\note#1{}

\long\def\bignote#1{}


\newcommand{\QED}{\rlap{$\sqcup$}$\sqcap$\smallskip}

\def\sss{\subsubsection}

\newcommand{\correspond}{\Psi}
\newcommand{\conjugacy}{\psi}

\newcommand{\rank}{\rm rank}
\newcommand{\di}{\partial}
\newcommand{\dibar}{\bar\partial}
\newcommand{\hookra}{\hookrightarrow}
\newcommand{\ra}{\rightarrow}
\newcommand{\hra}{\hookrightarrow}
\newcommand{\imply}{\Rightarrow}
\def\lra{\longrightarrow}
\newcommand{\wc}{\underset{w}{\to}}
\newcommand{\tu}{\textup}

\def\ssk{\smallskip}
\def\msk{\medskip}
\def\bsk{\bigskip}
\def\noi{\noindent}
\def\nin{\noindent}
\def\lqq{\lq\lq}
\def\sm{\setminus}
\def\bolshe{\succ}
\def\ssm{\smallsetminus}
\def\tr{{\text{tr}}}
\def\Crit{{\mathrm{Crit}}}

\newcommand{\Exp}{\operatorname{Exp}}
\newcommand{\ang}{\operatorname{ang}}
\newcommand{\ctg}{\operatorname{ctg}}
\newcommand{\diam}{\operatorname{diam}}
\newcommand{\dist}{\operatorname{dist}}
\newcommand{\Hdist}{\operatorname{H-dist}}
\newcommand{\cl}{\operatorname{cl}}
\newcommand{\inter}{\operatorname{int}}
\renewcommand{\mod}{\operatorname{mod}}
\newcommand{\card}{\operatorname{card}}
\newcommand{\tl}{\tilde}
\newcommand{\ind}{ \operatorname{ind} }
\newcommand{\Dist}{\operatorname{Dist}}
\newcommand{\Graph}{\operatorname{Graph}}
\newcommand{\len}{\operatorname{\l}}
\newcommand{\vol}{\operatorname{vol}}

\renewcommand{\Re}{\operatorname{Re}}
\renewcommand{\Im}{\operatorname{Im}}

\newcommand{\orb}{\operatorname{orb}}
\newcommand{\HD}{\operatorname{HD}}
\newcommand{\supp}{\operatorname{supp}}
\newcommand{\id}{\operatorname{id}}
\newcommand{\length}{\operatorname{length}}
\newcommand{\dens}{\operatorname{dens}}
\newcommand{\meas}{\operatorname{meas}}
\newcommand{\area}{\operatorname{area}}
\renewcommand{\Im}{\operatorname{Im}}

\renewcommand{\d}{{\diamond}}

\newcommand{\lef}{{\mathrm{left}}}
\newcommand{\righ}{{\mathrm{right}}}

\newcommand{\Dil}{\operatorname{Dil}}
\newcommand{\Ker}{\operatorname{Ker}}
\newcommand{\tg}{\operatorname{tg}}
\newcommand{\codim}{\operatorname{codim}}
\newcommand{\isom}{\approx}
\newcommand{\comp}{\circ}
\newcommand{\esssup}{\operatorname{ess-sup}}
\newcommand{\Rat}{{\mathrm{Rat}}}
\newcommand{\hot}{{\mathrm{h.o.t.}}}
\newcommand{\Conf}{{\mathrm{Conf}}}

\newcommand{\SLa}{\underset{\La}{\Subset}}

\newcommand{\const}{\mathrm{const}}
\def\loc{{\mathrm{loc}}}
\def\fib{{\mathrm{fib}}}

\newcommand{\eps}{{\epsilon}}
\newcommand{\epsi}{{\epsilon}}
\newcommand{\veps}{{\varepsilon}}

\newcommand{\Ga}{{\Gamma}}
\newcommand{\De}{{\Delta}}
\newcommand{\de}{{\delta}}
\newcommand{\la}{{\lambda}}
\newcommand{\La}{{\Lambda}}
\newcommand{\si}{{\sigma}}
\newcommand{\Si}{{\Sigma}}
\newcommand{\Om}{{\Omega}}
\newcommand{\om}{{\omega}}
\newcommand{\Ups}{{\Upsilon}}

\newcommand{\al}{{\alpha}}
\newcommand{\ba}{{\mbox{\boldmath$\alpha$} }}
\newcommand{\be}{{\beta}}
\newcommand{\bbe}{{\mbox{\boldmath$\beta$} }}
\newcommand{\bk}{{\boldsymbol{\kappa}}}
\newcommand{\bg}{{\boldsymbol{\gamma}}}

\newcommand{\BFF} {{\boldsymbol{\FF} }}

\newcommand{\bare}{{\bar\eps}}

\newcommand{\Ray}{{\mathcal R}}
\newcommand{\Eq}{{\mathcal E}}
\newcommand{\PR}{PR}

\newcommand{\IS}{{\mathcal IS}}
\newcommand{\uN}{{\underline{N}}}
\newcommand{\cp}{{cp}}
\newcommand{\Sp}{{Sp}}
\newcommand{\Sieg}{{S}}
\newcommand{\Siegel}{\mathrm{Sieg}}
\newcommand{\Cir}{\operatorname{{\mathrm{Cir}}}}
\newcommand{\Sec}{{ \mathfrak{S}}}
\newcommand{\cyl}{{\mathrm{cyl}}}

\newcommand{\Disc}{{D}}
\newcommand{\hull}{{\mathrm{hull}}}
\newcommand{\num}{{\mathbf{p}}}

\newcommand{\AAA}{{\mathcal A}}
\newcommand{\BB}{{\mathcal B}}
\newcommand{\CC}{{\mathcal C}}
\newcommand{\DD}{{\mathcal D}}
\newcommand{\EE}{{\mathcal E}}
\newcommand{\EEE}{{\mathcal O}}
\newcommand{\II}{{\mathcal I}}
\newcommand{\FF}{{\mathcal F}}
\newcommand{\GG}{{\mathcal G}}
\newcommand{\JJ}{{\mathcal J}}
\newcommand{\HH}{{\mathcal H}}
\newcommand{\KK}{{\mathcal K}}
\newcommand{\LL}{{\mathcal L}}
\newcommand{\MM}{{\mathcal M}}
\newcommand{\NN}{{\mathcal N}}
\newcommand{\OO}{{\mathcal O}}
\newcommand{\PP}{{\mathcal P}}
\newcommand{\QQ}{{\mathcal Q}}
\newcommand{\QM}{{\mathcal QM}}
\newcommand{\QP}{{\mathcal QP}}
\newcommand{\QL}{{\mathcal Q}}

\newcommand{\RR}{{\mathcal R}}
\newcommand{\SSS}{{\mathcal S}}
\newcommand{\TT}{{\mathcal T}}
\newcommand{\TTT}{{\mathcal P}}
\newcommand{\UU}{{\mathcal U}}
\newcommand{\VV}{{\mathcal V}}
\newcommand{\WW}{{\mathcal W}}
\newcommand{\XX}{{\mathcal X}}
\newcommand{\YY}{{\mathcal Y}}
\newcommand{\ZZ}{{\mathcal Z}}

\newcommand{\A}{{\Bbb A}}
\newcommand{\BBB}{{\Bbb B}}
\newcommand{\C}{{\Bbb C}}
\newcommand{\bC}{{\bar{\Bbb C}}}
\newcommand{\D}{{\Bbb D}}
\newcommand{\Hyp}{{\Bbb H}}
\newcommand{\J}{{\Bbb J}}
\newcommand{\Ll}{{\Bbb L}}
\renewcommand{\L}{{\Bbb L}}
\newcommand{\M}{{\Bbb M}}
\newcommand{\N}{{\Bbb N}}
\newcommand{\Q}{{\Bbb Q}}
\renewcommand{\P}{{\Bbb{P}}}
\newcommand{\R}{{\Bbb R}}
\newcommand{\T}{{\Bbb T}}
\newcommand{\V}{{\Bbb V}}
\newcommand{\U}{{\Bbb U}}
\newcommand{\W}{{\Bbb W}}
\newcommand{\X}{{\Bbb X}}
\newcommand{\Z}{{\Bbb Z}}

\newcommand{\VVV}{{\mathbf U}}
\newcommand{\UUU}{{\mathbf U}}

\newcommand{\tT}{{\mathrm{T}}}
\newcommand{\tD}{{D}}
\newcommand{\hyp}{{\mathrm{hyp}}}
\newcommand{\Flower}{{\mathrm{Fl}}}
\newcommand{\Petal}{{\mathrm{Pet}}}

\newcommand{\fix}{{b}}
\newcommand{\cxfix}{{\xi}}
\newcommand{\LINV}{L_{\rm inv}}
\newcommand{\LLINV}{{\mathcal L}_{\rm inv}}
\newcommand{\f}{{\bf f}}
\newcommand{\g}{{\bf g}}
\newcommand{\h}{{\bf h}}
\renewcommand{\i}{{\bar i}}
\renewcommand{\j}{{\bar j}}

\newcommand{\Neck} {{\mathcal{N}}}

\newcommand{\Bc}{{\mathbf{c}}}
\newcommand{\Bp}{{\mathbf{p}}}
\newcommand{\Bq}{{\mathbf{q}}}
\newcommand{\Bf}{{\mathbf{f}}}
\newcommand{\Bg}{{\mathbf{g}}}
\newcommand{\BG}{{\mathbf{G}}}
\newcommand{\Bh}{{\mathbf{h}}}
\def\BH{{\mathbf{H}}}
\def\BJ{{\mathbf{J}}}
\def\Bl{{\mathbf{l}}}
\def\Bm{{\mathbf{m}}}
\def\BI{{\mathbf{I}}}
\def\BT{{\mathbf{T}}}

\def\Bk{{\mathbf{k}}}
\def\Bj{{\mathbf{j}}}
\def\BJ{{\mathbf{J}}}
\def\Bphi{{\mathbf{\Phi}}}
\def\Bpsi{{\mathbf{\Psi}}}

\newcommand{\BA}{{\mathbf{A}}}
\newcommand{\BC}{{\mathbf{C}}}
\newcommand{\BD}{{\boldsymbol{D}}}
\newcommand{\BO} {{\mathbf{O}}}
\newcommand{\BP} {{\boldsymbol{P}}}
\newcommand{\BS} {{\mathbf {S} }}
\newcommand{\BE}{{\boldsymbol{E}}}
\newcommand{\BF}{{\boldsymbol{F}}}
\newcommand{\BU}{{\mathbf{U}}}
\newcommand{\BW}{{\mathbf{W}}}
\newcommand{\BV}{{\mathbf{V}}}
\newcommand{\BOM}{{\boldsymbol{\Om}}}

\newcommand{\BY}{{\mathbf{Y}}}
\newcommand{\BZ}{{\mathbf{Z}}}

\newcommand{\BPi}{{\boldsymbol{\Pi}}}
\def\BUps{{\boldsymbol{\Upsilon}}}
\def\BLa{{\boldsymbol{\La}}}
\def\BGa{{\boldsymbol\Gamma}}
\def\BDe{{\boldsymbol\Delta}}
\def\BUps{{\boldsymbol\Upsilon}}
\def\BThe{{\boldsymbol\Theta}}
\def\BOm{{\boldsymbol \Om}}
\def\BPsi{{\boldsymbol\Psi}}

\def\Baleph{{\boldsymbol\aleph}}

\newcommand{\Ba}{{\mathbf{a}}}
\newcommand{\Bs}{{\mathbf{s}}}
\newcommand{\Bt}{{\mathbf{t}}}
\newcommand{\Bn}{{\mathbf{n}}}
\newcommand{\Bw}{{\mathbf{w}}}
\newcommand{\Bz}{{\mathbf{z}}}

\newcommand{\Bal}{{\boldsymbol{\alpha}}}
\newcommand{\Bde}{{\boldsymbol{\delta}}}
\newcommand{\Bga}{{\boldsymbol{\gamma}}}
\newcommand{\Bsi}{{\boldsymbol{\sigma}}}
\newcommand{\Bla}{{\boldsymbol{\lambda}}}
\def\Be{\mathbf{e}}
\def\Dia{{\Diamond}}

\def\SB{{\boldsymbol{\BB}}}

\def\ext{{\mathrm{ex}}}
\def\mouth{\operatorname{mouth}}
\def\tail{\operatorname{tail}}

\newcommand{\ul} {\underline{l}}
\newcommand{\ukappa} {\underline{\kappa}}
\newcommand{\ut} {\underline{t}}

\newcommand{\Comb}{{\it Comb}}
\newcommand{\Top}{{\operatorname{Top}}}
\newcommand{\Bottom}{{\operatorname{Bot}}}
\newcommand{\QC}{\mathcal QC}
\newcommand{\Def}{\mathcal Def}
\newcommand{\Teich}{\mathcal Teich}
\newcommand{\PPL}{{\mathcal P}{\mathcal L}}

\newcommand{\per }{{\mathfrak{p}}}
\newcommand{\pp }{{\mathfrak{p}}}
\newcommand{\p}{{\boldsymbol{\mathfrak{ {p}}}}}
\newcommand{\m}{{\mathfrak{m}}}

\newcommand{\Jac}{\operatorname{Jac}}
\newcommand{\Homeo}{\operatorname{Homeo}}
\newcommand{\AC}{\operatorname{AC}}
\newcommand{\Dom}{\operatorname{Dom}}

\newcommand{\depth}{\operatorname{depth}} 

\newcommand{\Hol}{{\rm Hol}}

\newcommand{\Aff}{\operatorname{Aff}}
\newcommand{\Euc}{{\mathrm{Euc}}}
\newcommand{\MobC}{\operatorname{M\ddot{o}b}({\Bbb C}) }
\newcommand{\PSL}{ {\mathcal{PSL}} }
\newcommand{\SL}{ {\mathcal{SL}} }
\newcommand{\CP}{ {\Bbb{CP}}   }

\newcommand{\hf}{{\hat f}}
\newcommand{\hz}{{\hat z}}
\newcommand{\hM}{{\hat M}}

\renewcommand{\lq}{``}
\renewcommand{\rq}{''}

\newcommand{\Ch}{\textrm{Ch}}


\catcode`\@=12

\def\Empty{}
\newcommand\oplabel[1]{
  \def\OpArg{#1} \ifx \OpArg\Empty {} \else
  	\label{#1}
  \fi}
		
%

\long\def\realfig#1#2#3#4{
\begin{figure}[htbp]
\centerline{\psfig{figure=#2,width=#4}}
\caption[#1]{#3}
\oplabel{#1}
\end{figure}}

%

\newcommand{\comm}[1]{}
\newcommand{\comment}[1]{}

\title{ 
      Lebesgue measure of Feigenbaum Julia sets}

\author {Artur Avila and Mikhail Lyubich}

\bigskip\bigskip

\date{}

\begin{abstract} 
  We construct Feigenbaum quadratic-like maps
  with a Julia set of positive
  Lebesgue measure.  Indeed, in the quadratic family $P_c: z \mapsto z^2+c$
  the corresponding set of parameters $c$ is shown to have
  positive Hausdorff dimension.
  Our examples include renormalization fixed points,
  and the corresponding quadratic polynomials in their stable manifold
  are the first known rational maps for which the hyperbolic dimension is
  different from the Hausdorff dimension of the Julia set.
\end{abstract}

\setcounter{tocdepth}{1}
 
\maketitle
\tableofcontents

\section{Introduction}

One of the major successes of the theory of one-dimensional dynamical systems
was the conceptual explanation, in terms of the dynamics of a
renormalization operator, of the striking universality phenomena
discovered by Feigenbaum and Coullet-Tresser in 1970's. 
At the center of the 
picture lies the concept of a {\it Feigenbaum map}, which is a quadratic-like map that can be
renormalized infinitely many times with bounded combinatorics and
{\it a priori} bounds (a certain control on the nonlinearity).  
The successive renormalizations are then exponentially asymptotic to a
{\it renormalization attractor}, see \cite{S,McM2,FCT}.
In the simplest case of 
stationary combinatorics, the renormalization attractor consists of a single
renormalization fixed point.  As a consequence, the dynamics of such Feigenbaum maps 
display remarkable self-similarity
reflected in  the geometry of the corresponding Julia sets.

In fact, understanding  the geometry of Feigenbaum Julia sets already played a key role in
the first proof of exponential convergence of the
renormalization \cite {McM2}.  However, 
for a long time the theory had been unable to tackle 
natural geometric problems:  do Feigenbaum Julia sets have full Hausdorff dimension or
even positive area?
 (See \cite {McM2}, page 177, question 3).
In \cite {AL}, a new approach to these problems was developed,
which allowed us to show, in particular, that Feigenbaum Julia sets
can have Hausdorff dimension strictly less than two, while leaving open the problem of whether
they can ever have positive area.  The 
goal of this work is to settle the latter question affirmatively.
Namely, we will show that Julia sets of positive area appear already
among Feigenbaum quadratic polynomials with stationary combinatorics
(note that there are only countably many such polynomials).
At the same time, we construct a set of parameters $c$ of positive
Hausdorff dimension such that the quadratic polynomials $P_c: z\mapsto
z^2+c$ are Feigenbaum maps with Julia sets of positive area.

\comm{
\begin{thm}

There exist Feigenbaum quadratic maps, indeed with stationary combinatorics,
whose Julia set has positive area.

\end{thm}

\begin{rem}

The property of having a Julia set of positive area
depends only on the renormalization combinatorics of a Feigenbaum map.
Thus Theorem \ref {} also provides renormalization fixed points with a Julia set
of positive area.

\end{rem}
}

Note  that our results (as well as the earlier results of \cite {AL})
go against 
intuition coming from hyperbolic geometry.
Indeed, according to the philosophy known as {\it Sullivan's dictionary},
there is a correspondence between certain objects and results in complex dynamics and hyperbolic
geometry. As McMullen suggested in \cite {McM2}  (see especially the last
paragraph on page 177),   Feigenbaum maps 
are analogous to $3$-manifolds with two ends, one of which is
geometrically finite, while the other one is asymptotically fibered over the circle. 
The limit sets $\La(\Gamma)$  of the corresponding  Kleinian groups have 
zero area but full Hausdorff dimension   (see Thurston \cite{Th} and Sullivan\cite{S-Kleinian}.
So, it may  look  like  the dictionary completely breaks down at this
point, though in fact there is a way to rehabilitate it 
(see \S \ref{Sullivan's Dictionary} below) .


\comm{
In a remarkable recent development, Buff and Cheritat have carried out
Douady's program for  constructing  the first example of a quadratic  Julia set of
positive are a \cite{BC}. 
(See also Yampolsky \cite{Ya-posmeas} for an alternative point of view on the
final piece of their argument.)  
There are three types of examples this strategy produced: Cremer, Siegel, and infinitely renormalizable of {\it unbounded } type. 
In this paper we construct examples of a very different nature, namely
infinitely renormalizable of {\it bounded}  type.
}

\subsection{Feigenbaum maps}\label{Feigen maps}

Let us begin with reminding briefly the main concepts of the
complex renormalization theory. (See \S \ref{ql maps} for a precise account.)
A {\it quadratic-like map} is a holomorphic double covering $f:U \to V$ where $U$
and $V$ are quasidisks with $U$ compactly contained in $V$.  The 
{\it filled-in Julia set} of $f$ is the set $K(f)$ of points $z$ with $f^n(z) \in U$ for all
$n \geq 0$; its boundary is the {\it Julia set}  $J(f)$.  The filled-in Julia set
is always a full compact set which is either connected or totally
disconnected, according to whether or not it contains the critical point.

Simplest examples of quadratic-like maps are given by restrictions
of quadratic maps $P_c:  z \mapsto z^2+c$ to suitable neighborhoods of
$K(P_c)$. 
 The precise choice of the restriction is dynamically inessential, which is expressed by saying they
all define the same {\it quadratic-like germ}.
The {\it Mandelbrot set} $\MM$  \note{notation!}
can then be defined as the set of parameters $c \in \C$ for which $K(P_c)$ is
connected. 

The central role of the quadratic family is
made clear by Douady-Hubbard's {\it Straightening Theorem} 
that states that each quadratic-like map with connected Julia set is 
{\it hybrid conjugate} to a unique quadratic map $P_c$, i.e., there exists a
quasiconformal map $h:(\C,K(f)) \to (\C,K(P_c))$ satisfying
$h \circ f=P_c \circ h$ near $K(f)$ and with $\bar \partial h|\,
K(f)=0$ a.e.  We say
that $P_c$ is the {\it straightening} of $f$, and write $c=\chi(f)$.

A quadratic-like map $f:U \to V$ is said to be {\it renormalizable} with
period $\per \geq 2$ if the $\per$-th iterate of $f$ can be restricted to a
quadratic-like map $g :U' \to V'$ 
such that the {\it little Julia sets}
$K_j:= f^j(K(g))$, $0 \leq j \leq \per-1$, are connected and do not cross each
other (meaning that $K_j\sm K_i$ are connected for $i\not=j$). 
 We can always choose $g$ to have the same critical point as $f$,
and such a $ g $ is called the {\it pre-renormalization of period $\per$}
of $f$.  The smallest possible value of $\per$ is called the {\it renormalization
period} of $f$, and the corresponding pre-renormalization,
considered up to
affine conjugacy, is called the {\it renormalization} of $f$ and denoted by
$R(f)$.  The {\it renormalization operator} $f \mapsto R f $ is then well
defined at the level of affine conjugacy classes of quadratic-like germs.

The set of parameter values corresponding to renormalizable quadratic maps
is disconnected.  Its connected components are called (maximal) {\it
Mandelbrot copies}, which can be of two types, 
{\it primitive} or {\it satellite}, 
according to whether they are canonically homeomorphic 
(via the straightening map $c \mapsto \chi(R(P_c))$) 
to the full Mandelbrot set
or to $\MM \setminus \{1/4\}$ 
 (note that $1/4$ is the {\it cusp} of the {\it main cardioid} bounding the
``largest''  component of the interior of $\MM$).
Alternatively, (maximal) satellite copies can be distinguished by the property
that they are  ``attached'' to the main cardioid at 
the ``missing'' cusp.  
They can also be distinguished dynamically: 
For the satellite renormalization
(with the minimal period), all
little Julia sets have a common touching point, while for the primitive
renormalization, they are pairwise disjoint.

The {\it renormalization combinatorics} of a renormalizable quadratic-like
map $f$ is the Mandelbrot copy $\MM'$ containing $\chi(f)$.\footnote {The
renormalization combinatorics
can be alternatively encoded by a finite graph, the
{\it Hubbard tree}, which describes the positioning of the little Julia sets
(of the first pre-renormalization) inside the full Julia set.
It coincides with the Hubbard tree of the superattracting map
$\Bf_{c'}$, $c'\in M'$, whose period is equal to the renormalization
period of $f$.}
The renormalization
period only depends on the renormalization combinatorics, but the converse
is false (except for period two).  There are however only finitely many
combinatorics corresponding to each  period.

Assume now that $f$ is {\it infinitely renormalizable},
 i.e., the renormalizations  $R^j f  $ are well defined for all $j \geq 0$.
We say that $f$ has {\it bounded combinatorics} if the
renormalization periods of the successive renormalizations $R^j f$,
$j \geq 0$,  remain bounded.  The combinatorics is {\it stationary} 
if it is the same for all $R^j f$. 

The ``analytic quality'' of a quadratic-like map $f:U \to V$
is measured by the {\it modulus} of the 
{\it fundamental annulus} $V \setminus \overline {U}$,
denoted by $\mod f$. (The quality is poor if $\mod f $ is small.)
  An infinitely renormalizable map is said to have {\it
a priori bounds} if all of its renormalizations have definite quality, i.e.,
the corresponding moduli are bounded away from zero.  ({\it A priori} bounds
are equivalent to precompactness of the full renormalization orbit
$\{R^j f \}_{j \geq 0}$ in a suitable topology.)  While by no means all
infinitely renormalizable maps have {\it a priori bounds}, many do, and in
particular it is conjectured that bounded combinatorics
implies {\it a priori} bounds (which has indeed been proved whenever
the renormalization combinatorics of all the $R^j f$ are primitive \cite{K}).

Recall that  a {\it Feigenbaum map} is an infinitely renormalizable
quadratic-like map with bounded combinatorics and {\it a priori} bounds. 
%

\begin{thm}
There exists a Feigenbaum quadratic polynomial $P_c$ 
with primitive stationary combinatorics  whose Julia set $J_c$
has positive area.
\end{thm}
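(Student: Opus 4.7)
The plan is to reduce the theorem, via Douady--Hubbard straightening, to the assertion that some Feigenbaum quadratic-like germ $f_*$ has $\area(J(f_*))>0$. Concretely, I would choose a primitive stationary combinatorics $\tau$ of sufficiently large period (primitivity giving a priori bounds by \cite{K}, hence the existence of a renormalization fixed point $f_*$). The unique quadratic polynomial $P_{c_*}$ with combinatorics $\tau^\infty$ lies in the stable manifold of $f_*$, so $R^n P_{c_*}\to f_*$ geometrically; the little Julia sets at deep levels of $P_{c_*}$ are then hybrid conjugate to $J(f_*)$ and, being hybrid conjugate, inherit positive area. Thus the whole game is to produce $f_*$ with $\area(J(f_*))>0$.

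For the main step, I would adopt a self-similar escape-rate analysis in the spirit of \cite{AL}. Write $f_*\colon U_*\to V_*$ and exploit $Rf_*=f_*$ up to affine rescaling by some $\lambda<1$, which gives a canonical tower of nested pre-renormalization domains $U_*^{(n)}\subset V_*^{(n)}$ inside $V_*$. Stratify each level by the first-landing time into the fundamental annulus $V_*^{(n)}\setminus U_*^{(n)}$ of the next generation, and let $q_n$ be the relative area of points at level $n$ that leave the domain of the next-generation pre-renormalization before returning. Since the filled Julia set is the intersection over all levels, one obtains a telescoping identity
\[
\area\bigl(K(f_*)\cap U_*\bigr)=\area(U_*)\cdot\prod_{n\ge 0}(1-q_n),
\]
and self-similarity of $f_*$ reduces the infinitely many factors $q_n$ to a single geometric invariant depending only on $\tau$.

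The crux, and what I expect to be the principal obstacle, is producing a combinatorics $\tau$ for which this invariant yields $\sum q_n<\infty$ (equivalently, for which the product above is strictly positive). Heuristically, for primitive combinatorics of high period, the fundamental annulus $V_*\setminus U_*$ has a large modulus but small relative area, so the per-step escape fraction should be small. Making this rigorous requires uniform control on the hyperbolic geometry near the postcritical Cantor set $\omega(0)$: although $\omega(0)$ has area zero, one must prove an ``avoidance'' bound ruling out concentration of escaping orbits in arbitrarily thin shells around $\omega(0)$. This is precisely the regime where the trunk decomposition and beau bounds from \cite{AL} apply, and the task becomes to verify a single quantitative inequality for a judiciously chosen primitive combinatorics of large enough period.

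Once $\area(J(f_*))>0$ is in hand, the transfer to $P_{c_*}$ proceeds by geometric convergence: for each $n$, a little Julia set of depth $n$ in $J_{c_*}$ is the image of $J(f_*)\cap U_*^{(n)}$ under an affine rescaling composed with a $(1+o(1))$-quasiconformal hybrid conjugacy, and such maps preserve positive area. Summing over $n$ (or taking a single sufficiently deep little copy) gives $\area(J_{c_*})>0$, establishing the theorem.
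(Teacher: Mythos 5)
Your outer framing is sound: reduce via straightening to a renormalization fixed point $f_*$, use a priori bounds from \cite{K}, and apply a self-similar escape-rate criterion from \cite{AL}. This is indeed the skeleton the paper uses, via the Black Hole Criterion (Theorem \ref{black hole}): if the landing probability $\eta$ satisfies $\eta\geq C(\mu)\,\xi$ for the escaping probability $\xi$, with $\mu=\mod f_*$, then $\area J(f_*)>0$. However, two points need correction.

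First, your heuristic for choosing $\tau$ is backwards. You argue that primitive combinatorics of high period should work because the fundamental annulus $V_*\setminus U_*$ then has large modulus and small relative area. But large modulus is exactly the \emph{Lean} regime: as $\mod f_*\to\infty$ the map is close to $z\mapsto z^2$, the Julia set has small Hausdorff dimension, and $\eta/\xi\to 0$, not $\infty$. The paper explicitly works to keep the modulus \emph{bounded} (see the introduction's ``Strategy'' and item (iii) of Lemma \ref{ql family}, where $\underline\mu(N,l,\kappa,t)\to\infty$ as $t\to\infty$ is regarded as a danger to be controlled, not a virtue); naive high-period primitive choices ``either displayed degeneration of the geometry ... or could not be treated ... without computer assistance.''

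Second, and more fundamentally, the proposal stops exactly where the theorem starts. Saying ``the task becomes to verify a single quantitative inequality for a judiciously chosen primitive combinatorics'' describes the problem, not the proof; you give no mechanism by which $\eta$ stays bounded below while $\xi$ becomes small. The paper's actual content is the construction of such a $\tau$: begin with a quadratic polynomial with a high-type bounded Siegel disk, take $m-\kappa$ cylinder renormalizations (Inou--Shishikura theory to control the postcritical set), perturb to a parabolic approximand, then through the parabolic gate to a Misiurewicz and finally a superattracting parameter. The Siegel disk supplies a positive-area set whose orbits escape only through a ``safe trapping disk'' $\BD$, a definite portion of which is precritical for the renormalization domain, so $\eta$ is bounded below (Proposition \ref{def eta}). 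The Siegel Return Machinery (expansion off the postcritical set, Lemma \ref{expansion}, trapping disks at many intermediate scales, Lemma \ref{many trapping disks}) forces any orbit leaving $\BD$ to visit many disjoint scales before escaping, so $\xi\to 0$ as $m\to\infty$ (Proposition \ref{small xi}). None of that machinery is present or replaced in your argument. Your telescoping product formula is also not the form of the \cite{AL} criterion, and as stated it does not account for orbits that leave the pre-renormalization tower yet remain bounded, but this is a secondary issue next to the missing construction.
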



Our methods yields in fact an infinite family of primitive
Mandelbrot copies which have the property that all infinitely
renormalizable maps whose
renormalization combinatorics (for all the renormalizations) belong to this
family have Julia sets of positive area.  We recall that
any finite family $\cF$ of primitive Mandelbrot copies with $\#\cF \geq 2$
defines an associated {\it renormalization horseshoe} $\cA$ consisting
of all quadratic-like maps that
belong to the $\omega$-limit of the renormalization operator restricted to
those combinatorics,
 see \cite {AL-horseshoe})  (complemented with \cite{K} )  for a recent account
 of this result . 
 The dynamics of $R|\cA$ is topologically semiconjugate
to the shift on $\cF^\Z$, and the corresponding quadratic parameters in
$\chi(\cA)$ form a Cantor set
naturally labeled by $\cF^\N$.  This Cantor set has bounded geometry
by \cite {FCT}, so we can conclude:

\begin{thm} \label {hdpar}
The set of
Feigenbaum quadratic maps with Julia sets of positive area has positive
Hausdorff dimension in the  parameter space.
\end{thm}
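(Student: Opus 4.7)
The plan is to deduce Theorem~\ref{hdpar} as a corollary of the strengthening of Theorem~1.1 announced in the paragraph just above it, together with the bounded-geometry property of the renormalization Cantor set provided by \cite{FCT}. Concretely, let $\cF_\infty$ denote the infinite family of primitive Mandelbrot copies produced by the methods of the paper, with the property that every infinitely renormalizable map whose renormalization itinerary lies in $\cF_\infty^\N$ has a Julia set of positive area. I would fix any finite subfamily $\cF\subset \cF_\infty$ with $\#\cF \geq 2$ and consider the associated renormalization horseshoe $\cA$ of \cite{AL-horseshoe}.

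By construction, every quadratic-like map in $\cA$ is a Feigenbaum map whose renormalization combinatorics at every level belongs to $\cF\subset \cF_\infty$, so its Julia set has positive area. Pushing forward via the straightening map $\chi$, the set $\chi(\cA)\subset \MM$ is a Cantor set in parameter space, naturally labeled by the one-sided shift space $\cF^\N$, since the future renormalization itinerary uniquely determines the hybrid class of a Feigenbaum quadratic-like map. Every point of $\chi(\cA)$ is a parameter $c$ for which $P_c$ is a Feigenbaum map with a positive-area Julia set, so it suffices to prove that $\HD\, \chi(\cA) > 0$.

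The bounded geometry of $\chi(\cA)$, provided by \cite{FCT}, asserts that at every scale the Cantor pieces corresponding to cylinders of $\cF^\N$ have diameters comparable to the distances between them, with constants independent of the scale. In particular there is a uniform upper bound $\lambda\in(0,1)$ on the shrinking ratios between consecutive levels. A standard mass-distribution argument (putting the Bernoulli measure of entropy $\log \#\cF$ on $\cF^\N$ and comparing with the natural cylinder cover at each scale) then yields $\HD\, \chi(\cA)\geq \log \#\cF/\log(1/\lambda)>0$, which finishes the proof.

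The main obstacle is not in this Hausdorff-dimension deduction, which is essentially classical once the horseshoe is in hand, but rather in establishing the robustness of the construction of Theorem~1.1: one must verify that the positive-area mechanism works uniformly over an infinite family of primitive Mandelbrot copies, and not merely along a single stationary combinatorics. Once that uniformity is in place, Theorem~\ref{hdpar} follows directly from the horseshoe/bounded-geometry package outlined above.
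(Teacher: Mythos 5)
Your proposal reconstructs the paper's own argument: the paper's proof is exactly the paragraph preceding the theorem (infinite family of Black-Hole Mandelbrot copies from the main construction, the associated renormalization horseshoe of \cite{AL-horseshoe}, its $\cF^\N$-labeled Cantor set $\chi(\cA)$ of parameters, and bounded geometry from \cite{FCT}), and you have correctly filled in the standard mass-distribution step that the paper leaves implicit. You also correctly identify that the real content lies in verifying the positive-area criterion uniformly over an entire horseshoe rather than at a single stationary combinatorics, which is what the construction in the body of the paper is designed to deliver.
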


\bignote{should be commented upon in the main body}

\subsection{What do we learn about Julia sets of positive area?}

\sss{Preamble: Area Problem}

The problem of whether 
all  nowhere dense Julia sets  have zero area  
goes back to the classical Fatou's memoirs
who gave first examples of such  Julia sets \cite{F}.%
\footnote{What Fatou showed is that if $|Df(z)|> \deg f$ for all $z\in
  J(f)$, then $J(f)$ is a Cantor set of zero length.}
In 1980-90's, broad classes of Julia sets with zero area were given  
in \cite{DAN,L-area,Sh-ICM,Yar} and \cite{U,PR,GS}. \note{Shen and van Strien?} 
 First examples of a rational maps%
\footnote{For transcendental entire functions,
a class of Julia sets of zero area was described in \cite{EL-DAN},
and examples of  Julia sets with positive area
   appeared in \cite{EL,McM-sin}.}
 (in fact, quadratic polynomials)  with nowhere dense Julia sets with positive area
 have been recently constructed  by Buff and Cheritat \cite {BC}   
in a remarkable development
that successfully brought to completion Douady's program from mid-1990's.
(See also Yampolsky \cite{Ya-posmeas} for an alternative point of view on the
final piece of their argument.)   
An  important technical input to this program was supplied by the recent
breakthrough in the Parabolic Bifurcation Theory 
by Inou and Shishikura \cite{IS}. 

The strategy carried by Buff and Cheritat  depends on a {\it Liouvillean mechanism} 
of fast rational approximation. It produces three type of examples: 
{\it Cremer, Siegel}, and
{\it infinitely renormalizable with unbounded satellite combinatorics}.\footnote {We recall
that a quadratic map with a periodic orbit ${\mathbf \beta}$  with irrationally indifferent
multiplier $e^{2 \pi i \alpha}$, $\alpha \in \R \setminus \Q$, is classified
as {\it Siegel} or {\it Cremer} according to whether it is  locally linearizable
near ${\mathbf \beta}$ or not.}

Feigenbaum Julia sets have a rather different nature, so our work
brings new light on the realm of Julia sets of positive area.

\sss{Parameter visibility}  Julia sets of positive area are supposed to be
{\it visible} objects. However,  sets of parameters produced by
the Liouvillean mechanisms (such as in \cite{BC}) tend to be tiny: 
they probably have  zero Hausdorff dimension.
(This is definitely so  in the Cremer case as the whole set of Cremer parameters
has zero Hausdorff dimension).

By our previous work \cite {AL}, Feigenbaum Julia sets of positive area are
more robust: the existence of a single Feigenbaum Julia
set of positive area inside some renormalization horseshoe
implies that there is a whole ``sub-horseshoe'' of them, restricted to which
the renormalization dynamics is topologically conjugate to an 
subshift of finite type. 
This creates  a  parameter set of positive Hausdorff dimension. 
The construction we use to proof of Theorem \ref {hdpar} is even more
precise, providing us with full renormalization horseshoes and allowing us to obtain an effective estimate:
the set of parameters $c$ such that $P_c$ is a Feigenbaum map of positive
area has Hausdorff dimension at least $1/2$.

We note that it is expected that Lebesgue almost every quadratic map
is hyperbolic, and hence has a Julia set of not only zero Lebesgue measure
but even of Hausdorff dimension less than two.  It is unclear whether
the set  of all complex Feigenbaum parameters
has Hausdorff dimension strictly less than two.%
\footnote{The real analogue of this statement is known  to be true \cite {AM}.}  
At the moment, it is only
known that the Hausdorff dimension of these parameters is at least 1
\cite{L-dim}.  

\sss{Poincar\'e series and Hausdorff dimension}
 The notion of  {\it Poincar\'e series} 
was transferred from the theory of Kleinian groups to Holomorphic Dynamics by 
Sullivan \cite{S-conf meas},
and it became an efficient tool in the study of Hausdorff
dimension of Julia sets.
Previously to this work, in all known cases the Hausdorff dimension of 
rational  Julia sets coincided with the critical exponent
of the Poincar\'e series (see \cite{U,PR,GS} and \cite{AL}).  On the
other hand, it was shown in \cite {AL} that equality must break down in the
case of a Feigenbaum map with periodic
combinatorics and positive Lebesgue measure Julia set.%
\footnote{More recently, such a  phenomenon was also observed in the
  transcendental dynamics \cite{UZ}.}  

The critical exponent does coincide with the {\it hyperbolic dimension} for
all Feigenbaum Julia sets (and indeed for all known  cases of rational maps),
 so our examples display a definite gap between the Hausdorff dimensions of the Julia set
and of its hyperbolic subsets. It is conceivable, however, that 
{\it for Julia sets of zero area, the critical exponent, Hausdorff dimension
and hyperbolic dimension, are all equal}  (without any further
assumptions on the rational map.)


\sss{Positive measure vs non-local connectivity}
  There was a general feeling that these two phenomena are tightly
  linked as the examples constructed by Buff and Cheritat
  are probably all non-locally connected. 
(Note, in particular, that  Cremer Julia sets are never locally
connected). 
On the other hand,   all Feigenbaum Julia sets have well behaved geometry and
  in particular are locally connected, see \cite{HJ, McM1}.
Note that local connectivity make a Julia set {\it topologically
  tame}: it admits an explicit topological model (see \cite{D-model}).
Thus,  our examples show that positive area is compatible with
topological tameness.

Related to this issue is the fact that all Feigenbaum Julia sets constructed
here have primitive combinatorics, while the previously known infinitely
renormalizable examples have satellite combinatorics.  In fact all known
examples of infinitely renormalizable maps with non-locally connected Julia
set have satellite combinatorics.  


\comm{
\sss{Holomorphic Dynamics vs Hyperbolic Geometry}
Feigenbaum Julia sets have clear analogues
in the world of hyperbolic geometry: they
are related to hyperbolic manifolds that asymptotically fiber over the
circle.  As the limit set of such a manifold has zero
area \cite{Th}  (as long as it is nowhere dense), there was anticipation that the same is true for
Feigenbaum maps (see \cite{McM2}, ....) However, the dictionary
breaks down at this point.

\sss{Satellite vs primitive} Infinitely renormalizable maps of
\cite{BC} are all of {\it satellite type} (see \S \ref{}), which is 
more susceptible to wild phenomena. 
In our examples the renormalization is {\it primitive}.
}

\sss{Wild attractors and ergodicity}
  The measure-theoretic dynamics on Feigenbaum Julia sets of positive area
is  well understood.  In particular
it is {\it ergodic} with respect to the Lebesgue measure \cite{P},
and there is a uniquely ergodic Cantor attractor
$A\subset J(f)$ (of Hausdorff dimension
strictly less than two) that attracts almost
all orbits in the Julia set, see \cite{DAN}.  Moreover, almost all orbits are
equidistributed with respect to the canonical measure on $A$.
Such a structure was known to exist in  the real dynamics \cite{BKNS} 
but not in the complex  rational dynamics.%
\footnote{See, however,  \cite{L-exp,R}  for a related  phenomenon 
 in transcendental dynamics.}

By contrast, the description of the measure-theoretical dynamics of the examples
obtained in \cite {BC} is less developed.  While it is known that there exists 
an attractor $A$ properly contained in $J(f)$ such that $\om(
z) \subset  A$
for almost all $z\in J(f)$, see \cite{DAN,IS,Ch}, the
structure of $A$ is not  fully understood. It is also unknown whether the
Lebesgue measure on $J(f)$ is ergodic under the dynamics.

\sss{Sullivan's Dictionary}\label{Sullivan's Dictionary}

A parallel spectacular development in the problem of area and Hausdorff
dimension has happened in the Theory of Kleinian groups.
However, the outcome appeared to be quite different. 
In mid 1990's,  it was proved by Bishop and Jones \cite{BJ} 
that the limit set $\La=\La(\Gamma)$ of  a (finitely
generated) Kleinian group $\Gamma$  has full Hausdorff dimension if
and only if the group is geometrically infinite. As geometrically finite
groups correspond to hyperbolic  or parabolic rational maps,
we see that the answer for Kleinian groups is much simpler. 

As the {\em area} is concerned, it had been the subject of the long-standing {\it Ahlfors
  Area Conjecture} asserting that any limit set $\La(\Gamma)$  has zero area
as long as it is different from the whole sphere. 
Through the work of Thurston \cite{Th}, Bonahon \cite{Bo} and Canary \cite{Ca}, 
this conjecture was reduced to {\it Marden's Tameness Conjecture},
and the latter was recently proved
by Agol \cite{Ag} and Calegary-Gabai \cite{CG}.  
Thus, there are no non-trivial limit sets $\La$ of positive area:
again, the situation is for Kleinian groups is much  more definite
compared with rational maps. 

It does not mean, however, that Sullivan's Dictionary between Kleinian
groups and rational maps completely
breaks down at this point.  Kleinian groups belong
to a special class of {\it reversible} dynamical systems:
the corresponding geodesic flow on the hyperbolic 3-manifold
$M_\Gamma$  admits a nice
involution that conjugates it to the inverse flow. 
The analogous flow for a rational map $f$ lives  on the hyperbolic
3-lamination $\HH_f$ constructed in \cite{LM}. However, this flow is not
reversible,
which  reflects  the {\it unbalanced} property 
(see the next section) of the underlying maps
and bears  responsibility for richer geometric properties of
Julia sets.

\subsection{Basic Trichotomy}
To put our result  into deeper perspective, 
let us briefly recall the basic trichotomy  of  \cite{AL}. 
Consider the following alternative for Feigenbaum maps:

\ssk{\it Lean case:} $\HD(J(f)) < 2$;

\ssk{\it Balanced case:} $\HD(J(f)) =2$ but $\area J(f)=0$;

\ssk {\it Black hole case:} $\area J(f) >0$.

\ssk
In that paper, we showed that if a periodic point of renormalization is
either of  Lean or Black hole type, then this can be verified
``in finite time'', by estimating some geometric quantities associated to some
(not necessarily the first) renormalization of $f$.  Namely,  let us
define two parameters:
\begin{itemize}
\item $\eta_n$ gives the probability
for an orbit starting in the domain of $f$
to enter the domain of the $n$-th pre-renormalization,

\item $\xi_n$ gives the probability that an orbit starting in
the domain  of the $n$-th pre-renormalization will never come back to it.
\end{itemize}
We showed that in the Lean case $\eta_n / \xi_n \to 0$
exponentially, in the Black hole case $\eta_n / \xi_n  \to \infty$
exponentially, and that in the Balanced case $\eta_n / \xi_n$
remains bounded away from zero and infinity.  Moreover, there is an
effective constant $C>1$ (given in terms of some rough
geometric parameters, like $\mod f$)  
such that
if $R^n f=f$ then 
\begin{itemize}
\item $\eta_n / \xi_n >C $ implies the Black hole case,
\item $ \eta_n / \xi_n < C^{-1}  $ implies the
  Lean case.
\end{itemize}

Regarding the Balanced case, Theorem 8.2 of
\cite {AL} asserts that the existence
of both Lean and Black hole Feigenbaum maps inside some renormalization
horseshoe implies that {\it there exist some Balanced Feigenbaum maps} in
this horseshoe, 
but the construction does not yield a renormalization periodic point.  
In fact, in seems unlikely that 
Balanced  maps with periodic combinatorics exist  (the geometric
parameters would be  too fine tuned for it to happen ``by chance''
given that there are only countably many periodic points
of renormalization).\footnote {See also the discussion in \cite {AL} about
a related problem for real maps of the form $x \mapsto |x|^\alpha+c$:
therein one can vary the degree $\alpha$ of the
critical point continuously to fine-tune the parameters, so the
corresponding Balanced case is believed to exist (and a conditional proof is
given, subject to a Renormalization Conjecture), but it is unlikely that the
fine tuned degrees would ever correspond to an integer number (i.e., to a
polynomial).}  

\subsection{Strategy}

As discussed above, \cite {AL} gives a probabilistic criterion for the
Black Hole property to hold for a fixed point of renormalization: it
suffices to check that $\eta_n / \xi_n $ is sufficiently large for
some $n$.  Below we will use this only in the particular case $n=1$: We will
produce a sequence of fixed points of renormalization $f_m:U_m \to V_m$
such that $\inf \eta(m)>0$ while $\lim \xi(m)=0$,
where $\eta(m)=\eta_1(f_m)$ and $\xi(m)=\xi_1(f_m)$.  We will also verify
that the rough initial geometry of the fundamental annuli
$V_m \setminus \overline
U_m$ remains under uniform control.  Since the ``constant to beat'' in the
criterion only depends on such a control, this will show that for $m$
sufficiently large the criterion is satisfied so that
the Julia set of $f_m$ has positive Lebesgue measure.

It is easy to see that if the sequence
$\chi(f_m)$ converges to a parameter $c$ for which $\area K(P_c)=0$, and the
rough initial geometry remains under control, then
$\eta(m) \to 0$.  Given this observation, it is natural to
consider sequences of renormalization combinatorics which approach
a parameter $c$
with either a Siegel disk or a parabolic point.  In our argument,
we will take $c$ to have a Siegel disk of bounded type.  
One still has to select the combinatorics very carefully, 
and a number of natural options we had initially tried  had either displayed 
degeneration of the geometry (for instance, with growing modulus of the fundamental
annulus), or could not be treated in  a definitive way  without 
computer assistance.

We now describe the idea more precisely.
Let us consider a  quadratic polynomial $P_c$ that has a
Siegel disc $S$ with rotation number $\theta= [N,N,\dots]$, $N$ being big
enough. Let $p_m/q_m= [N,\dots, N]$ be the continuous fraction
approximands to $\theta$, and let $P_{c_m}$ be the corresponding
quadratic maps with a parabolic fixed point
with rotation number $p_m/q_m$. 
We perturb $c_m$ within the $(p_m/q_m)$-limb (the connected
component of $\MM \setminus \{c_m\}$ not containing $0$) to a
Misiurewicz map $P_{a_m}$, i.e., one for which the critical orbit is
eventually periodic, but not periodic.  Then we further perturb $a_m$
to a superattracting parameter $b_m$.  This parameter  
is the center of some maximal primitive Mandelbrot copy $\MM_m$.

Let $f_m: U_m\ra V_m$ be the corresponding renormalization  fixed points
with  stationary combinatorics $\MM_m$. To control the dynamics
of these maps in what
follows, we need a good control of the postcritical set after all the
perturbations. 
This has also been crucial in Buff and Cheritat's work \cite {BC}, who proved
using  the Inou-Shishikura renormalization theory \cite{IS} (which currently
is only available for large $N$, hence the choice above), that the
postcritical set of $P_{c_m}$ stays in a small neighborhood of the Siegel
disk $S$.  Our further choice of $a_m$ and $b_m$
is in part designed to keep this
property for the further perturbations: In particular excursions of the
critical orbit away from the Siegel disk must be prevented to avoid
excessive expansion (which would again lead to 
growing fundamental annuli).  
Thus,  the periodic orbit on which the critical point eventually lands must be
taken quite close to the Siegel disk.  The most natural choice would be the
periodic orbit with combinatorial  rotation number $p_m/q_m$ that  arises from the
bifurcation of $P_{c_m}$, but for technical implementation reasons we
actually use some orbit of rotation number $p_{m-\kappa}/q_{m-\kappa}$,
for some big but bounded (as $m\to \infty$)  $\kappa$ (so that the critical point still only goes a
bounded number of levels up in terms of
the cylinder Siegel  renormalization).

We then fine tune the superattracting parameter $b_m$ to get a suitable
control on the initial geometry of the first renormalization.  While
we want the moduli of fundamental annuli to remain bounded,
we'd like  them  to be sufficiently large to obtain control on the actual
renormalization fixed point.  Indeed, there is a ``threshold''
lower bound on the moduli of the fundamental annuli of the first
renormalization of a
Feigenbaum quadratic map with stationary primitive combinatorics,
which, once surpassed, implies uniform control for the
associated renormalization fixed point.  Below this threshold, current
techniques do not give such uniform bounds without further restrictions
(which would in particular not apply when approaching
Siegel parameters).  Thus, we make the critical orbit (after
perturbation) follow closely the periodic orbit for large but bounded 
number of turns around the Siegel disk, picking up the right amount of
expansion from the periodic orbit before drifting apart and closing.

Once the geometry of the first renormalization is controlled,
we construct a {\it safe trapping disk} $D$ that it stays away from the
postcritical set, captures all orbits that escape from the Siegel disk
$S$ to infinity and has the property that 
a definite portion of $D$  lands in the renormalization domain $U$.
Then  a direct Distortion Argument implies that the pullbacks of $U$ 
occupy a definite proportion of $S$,
which implies that the landing probability  $\eta_m$ stays bounded away from $0$.

To control the escaping parameter  $\xi_m$, we make use of
the  {\it Siegel Return Machinery}
that ensures high probability of returns back to the trapping disk,
and hence high probability of  eventual landing in the renormalization
domain $U$. 
(The Return Machinery makes use of the {\it hyperbolic expansion}
outside the postcritical set  \cite{McM1}
and that  was  also used by Buff and Cheritat  \cite {BC}).

In  this construction, there is one free parameter that can be varied
without significant impact on the geometry of the first renormalization,
which is the time the critical point spends 
in the {\it parabolic gate} created when the parabolic map $P_{a_m}$
is perturbed to the Misiurewicz map $P_{a_m}$.  
There is a  uniform control of this perturbation governed by the
limiting transit map (the {\it geometric limit}) .  Varying this time  produces  a sequence of
 Black Hole combinatorics whose Mandelbrot copies  decay
quadratically. 
Alternating these combinatorics  creates a Cantor set
of  Hausdorff dimension  $\geq 1/2$ consisting of Black Hole 
 parameters.

\msk
To carry out the above strategy, we make use of four Renormalization
Theories:

\ssk\nin $\bullet$ 
Renormalization of {\it quadratic-like maps}, including the probabilistic
criterion of \cite{AL}, is discussed in  \S \ref{ql maps} . 

\ssk\nin $\bullet$ 
Renormalization of {\it quasicritical circle maps} is developed in \S \ref{analytic circle maps}
(roughly speaking, ``quasicritical''  means that the map is allowed to
loose analyticity at the critical point, but is assumed to
be quasiregular over there). 

\ssk\nin $\bullet$ 
 {\it Siegel} renormalization theory based upon renormalization of
 quasicritical circle maps is laid down in \S \ref{Siegel maps sec}.

\ssk\nin $\bullet$ 
Finally, is \S \ref{IS sec} we briefly discuss the  {\it parabolic}
renormalization,
and particularly,  the  {\it Inou-Shishikura Theory}. 

\ssk
With these renormalization tools in hands, 
we proceed to  the main construction 
(\S \ref{construction sec}).

\comm{
\begin{rem}
  A very delicate part of \cite{BC} is based on Cheritat's thesis where
   a controlled perturbation of $S$ is performed to create a new
   Siegel  disk $S'$ (which much higher combinatorics) that occupies
   about $1/2$ of $\area S$.   In our
   scenario, this is replaced with a much more straightforward  step:
   ensuring that the pullbacks of
the renormalization domain occupy a definite proportion of $S$, which
is done by a direct distortion estimate.
\end{rem}
}

\subsection{ Basic Terminology and  Notation}  \note{check consistency}
$\N_0= \{ 0,1,\dots\}$, $\N\equiv \N_1= \{ 1,2,\dots\}$,\\ 
and in general,  $\N_\kappa= \{ n\in \N:\ n\geq \kappa\}$;\\
$\bar \N_\kappa  = \N_\kappa\cup \infty$ (with the natural topology);\\ 
$\C^*=\C\sm \{0\}$; \\  
$\D_R(a) =\{z: \ |z-a|< R \}$; $\D_R=\D_R(0)$,  $\D=\D_1$;\\
Notation $\T$ will be used for both the unit circle in $\C$ and its
angular parametrization by  $\R/\Z$;\\
$\area$ refers to the Lebesgue measure;\\
\newcommand{\Comp} {\mathrm{Comp}}
For a set $Z \subset \C$ and a point $z\in Z$,
we let $\Comp_z (Z)$  be the component of 
$ Z$ containing $z$.\\
For  a topological annulus $A\Subset\C$, we let $\di^o A$ and 
$\di^i A$ be its {\it outer} and {\it inner} boundaries.

\ssk\nin
$\Dom f$ is the domain of a map $f$;\\
$\orb z=\orb_f z$ is the forward orbit of a point $z$;\\
$\OO_f$ is the postcritical set of a map $f$, i.e., the closure of the
orbit of its critical point;\\
$\Bf_\theta: z\mapsto e^{2\pi i \theta} z + z^2$, $\theta\in \C/\Z$; \\
$\boldsymbol{\FF} = (\Bf_\theta)_{\theta\in \C}$ is the quadratic family;\\ 
$\MM$ is the Mandelbrot set.

\ssk \nin
By saying that some quantity, e.g. $\eta$,  depending on parameters is {\it definite},
we mean that 
$\eta \geq \eps>0$ where $\eps$ is independent of the parameters
(or rather, it may depend only on  some, explicitly specified,
parameters).
 By saying that a set $K$ is {\it well inside a domain $D\Subset \C$}
we mean that $K\Subset D$  with a definite $\mod (U\sm K)$. 
The  meaning of expressions {\it bounded}, {\it comparable}, etc. is similar.
If we need to specify a constant then we say  ``$\eps$-definite'',
``$C$-comparable ($\asymp$)'', etc. 

\ssk\nin
Given a pointed domain $(D, \beta)$,
we say that  {\it $\beta $ lies  in the middle of  $D$},
or equivalently, that {\it  $D$ has a bounded shape around $\beta$}
  if 
$$
     \max_{\zeta\in \di D} |\beta- \zeta| \leq C\, \min_{\zeta\in \di
       D} |\beta- \zeta|,
$$ 
where $C$ is a constant that may depend only on specified parameters.




\section{Quadratic-like maps} \label{ql maps} 

\subsection{Basic definitions}

A {\it quadratic-like map}  $f: U\ra V$ \cite{DH},
which will also be abbreviated as a {\it q-l map}, 
  is a holomorphic double branched covering between two Jordan disks
$U\Subset V\subset \C$.  
It has a single critical point that we denote $c_0$. The 
annulus $A= U\sm \bar V$ is called the {\it fundamental annulus} of $f$. 
We let $\mod f : = \mod A$. 
The {\it filled Julia set} $K(f)$
is the set of non-escaping points:
$$
   K(f)= \{ z:\,  f^n z\in U, \ n=0,1,2,\dots \}
$$
Its boundary is called the {\it Julia set} $J(f)$. The (filled) Julia set  is either connected or Cantor,
depending on whether the critical point is non-escaping (i.e., $c_0\in K(f)$) or  otherwise. 


Two quadratic-like maps $f: U\ra V$ and $\tl f: \tl U\ra \tl V$ are called {\it hybrid equivalent}
if they are conjugate by a quasiconformal map $h: (V,U) \ra (\tl V, \tl U)$ such that
$\dibar h= 0$ a.e. on $K(f)$.  

A simplest example of a quadratic-like map is provided by a quadratic polynomial $P_c: z\mapsto z^2+c$
restricted to a disk $\D_R$ of sufficiently big radius. The  Douady and Hubbard {\it Straightening Theorem} asserts that 
any quadratic-like map $f$ is hybrid equivalent to some restricted quadratic polynomial $P_c$. 
Moreover, if $J(f)$ is connected then the parameter $c\in \MM$ is unique.

As for quadratic polynomials, two fixed points of a quadratic like maps with connected Julia set
have a different dynamical meaning. One of them, called $\beta$, is 
 the landing point of a proper arc $\gamma\subset U\sm K(f) $  such that $f(\gamma)\supset \gamma$.
It is either repelling or parabolic with multiplier one. The other fixed point, called  $\alpha$, is either 
non-repelling or a {\it  cut-point}  of the Julia set.

\sss{Quadratic-like families}\label{ql families}
A {\it quadratic-like family} $\Bbb F= (F_\la: U_\la\ra V_\la ) $ over a parameter
domain $\La\subset \C$ is a family of quadratic-like maps $F_\la$ 
holomorphically depending on $\la$. The latter means more precisely
that  the set
$$
   \Bbb U =  \bigcup_{\la\in \La} U_\la\ 
$$
is a domain in $\C^2$ and the function  $f_\la(z)$ is holomorphic on
$\Bbb U$.
Let us normalize it so that $c_0=0$ for all $f_\la$.
The associated Mandelbrot set is defined as 
$$
   \MM_{\Bbb F} = \{ \la\in \La: \  J(F_\la) \ \mathrm{is \
   connected} \}.  
$$ 

Let us select a base point $\la_\circ$ and let $U_\circ\equiv
U_{\la_\circ}$ etc.
We say that a quadratic-like family $\BF$ is {\it equipped} if 
there is a holomorphic motion 
$$ 
    h_\la:  \bar V_\circ \sm U_\circ \ra \bar V_\la \sm U_\la
$$ 
of the fundamental annulus $\bar V_\la\sm U_\la $  over the pointed
domain  $(\La, \la_\circ)$ which is {\it equivariant}  on the boundary
of the annulus, i.e., 
$$
        h_\la (f_\circ(z))  = f_\la (h_\la (z)), \quad z\in \di U_\circ. 
$$

An equipped quadratic-like family $\Bbb F$  is called {\it proper} if 
$f_\la(0) \in \di V_\la$ for $\la\in \di \La$ (which assumes implicitly
that the family $f_\la$ is continuous up to  $\di \La$). 

A quadratic-like family  $\Bbb F$ is called {\it unfolded} if the
curve  
$$
  \la\mapsto f_\la(0), \ \la\in \di \La, 
$$
has winding number 1 around $0$. 

\begin{thm}\cite{DH}\label{M-sets of ql families}
For any equipped proper unfolded quadratic-like family $\Bbb F$,
the Mandelbrot set $\MM_{\Bbb F}$ is canonically homeomorphic to the standard 
Mandelbrot set $\MM$.
\end{thm}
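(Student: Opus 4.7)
The plan is to construct the canonical homeomorphism as the straightening map restricted to $\MM_{\BF}$, and prove it is a homeomorphism by a combination of continuity/holomorphic dependence with a topological degree argument that invokes the three hypotheses on the family (equipping, properness, unfolding) in succession.

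First I would define $\chi: \La \to \C$ by setting $\chi(\la) = c(\la)$, where $P_{c(\la)}$ is the unique quadratic polynomial hybrid equivalent to $F_\la$ given by the Douady--Hubbard Straightening Theorem. The construction of the hybrid conjugacy proceeds by gluing: one takes the B\"ottcher coordinate on the complement of some large disk in the $P_c$-plane, interpolates quasiconformally across the fundamental annulus of $F_\la$ (where the equipping $h_\la$ provides an equivariant holomorphic motion of this annulus), and solves a measurable Riemann mapping problem to conjugate the extended map to a genuine quadratic. The equipping is exactly what lets one make this construction in a way that depends continuously (indeed, holomorphically on $\La$ off the locus where the Beltrami coefficient jumps), because the interpolated Beltrami coefficient on $V_\la \sm K(F_\la)$ varies continuously with $\la$. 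This gives continuity of $\chi$ on $\La$, and in particular on $\MM_{\BF}$.

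Next, I would use properness to control the behavior of $\chi$ at $\di \La$. Since $f_\la(0) \in \di V_\la$ for $\la \in \di \La$, the critical value sits on the boundary of the range of the quadratic-like restriction, which forces the straightened polynomial $P_{\chi(\la)}$ to have its critical value on the boundary of the corresponding B\"ottcher-lifted target disk in the $P$-plane. In other words, $\chi(\di \La)$ lies on a fixed equipotential of $\MM$ in the escape locus, bounded away from $\MM$. Hence $\chi: \La \to \C$ is proper as a map into the region enclosed by that equipotential, and in particular $\chi^{-1}(\MM) \subset \La$ is compact and equals $\MM_{\BF}$.

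Then I would deploy the unfolding hypothesis to compute the topological degree. The winding number of $\la \mapsto f_\la(0)$ around $0$ along $\di \La$ is $1$ by assumption; because the straightening sends the $\la$-critical value continuously to the $c$-critical value and these winding numbers are preserved under the construction, $\chi|_{\di \La}$ also has winding number $1$ around any point of $\MM$. By degree theory, $\chi: \La \to \chi(\La)$ is a proper map of topological degree $1$ onto a region containing $\MM$, so it is a homeomorphism over $\MM$. Restricting yields the canonical homeomorphism $\MM_{\BF} \to \MM$.

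The main obstacle, to my mind, is justifying the continuity of $\chi$ up to $\di \La$ together with the claim that degree $1$ plus properness forces injectivity (as $\chi$ a priori need not be holomorphic because the hybrid conjugacies involve solving non-holomorphic Beltrami equations that jump across $J(F_\la)$). The standard way around this is to note that the straightening is actually holomorphic on each hyperbolic component of $\MM_{\BF}$ (where one has honest holomorphic motions of $K(F_\la)$) and continuous across the boundary, and then appeal to either Douady--Hubbard's original tubing argument or to a direct application of the Argument Principle to the continuous extension of $\chi$ to a neighborhood of $\overline{\La}$; injectivity on $\MM_{\BF}$ follows from the uniqueness clause of the Straightening Theorem, and surjectivity onto $\MM$ from the degree computation.
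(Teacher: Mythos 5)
The paper does not actually prove this theorem; it cites \cite{DH} and remarks that the proof can be found in \cite{L-book}. Your proposal reconstructs the standard Douady--Hubbard argument, which is the intended one, but two steps need repair.

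First, a well-definedness issue. You open by defining $\chi(\la)$ as ``the unique quadratic polynomial hybrid equivalent to $F_\la$,'' but the uniqueness clause of the Straightening Theorem holds only when $J(F_\la)$ is connected, i.e.\ only for $\la \in \MM_{\Bbb F}$. Off $\MM_{\Bbb F}$ every $F_\la$ is hybrid equivalent to every quadratic with Cantor Julia set, so the Straightening Theorem alone gives no map $\chi$ on $\La \sm \MM_{\Bbb F}$. What defines $\chi$ there is the \emph{tubing} construction you sketch in your second paragraph: the equipping $h_\la$ is precisely the datum that lets one glue a fixed B\"ottcher coordinate across the fundamental annulus, producing a continuously (indeed holomorphically, in the external coordinate) varying Beltrami form whose solution gives a canonical $c(\la)$. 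You should drop the word ``unique'' and present the tubing as the definition of $\chi$, with uniqueness of straightening used only to show this extension agrees with the canonical straightening on $\MM_{\Bbb F}$.

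Second, and more substantively, the implication ``proper continuous map of topological degree one $\Rightarrow$ homeomorphism over $\MM$'' is false in general: degree one gives surjectivity and the \emph{algebraic} count of preimages, but a degree-one proper map can still fail to be injective (it can fold with opposite orientations). To close the argument one must show $\chi$ is a local homeomorphism on the escape locus $\La \sm \MM_{\Bbb F}$. This is where the structure of the tubing is essential: the extended $\chi$ is quasiconformal on $\La \sm \MM_{\Bbb F}$ because the interpolated Beltrami coefficient depends holomorphically on $\la$ and is supported away from the (Cantor) filled Julia set, so $\chi$ is open and discrete there; a proper, open, discrete, degree-one map is a homeomorphism. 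Combined with injectivity on $\MM_{\Bbb F}$ from uniqueness of straightening, one gets the full statement. Your closing paragraph does flag the difficulty but proposes the Argument Principle, which does not apply to a merely continuous $\chi$; one must first establish its quasiregularity off $\MM_{\Bbb F}$, and that is exactly the step the equipping/tubing is designed to furnish.
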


The proof can be also found in \cite{L-book}. 

\subsection{Renormalization}

A quadratic-like map  
$f: U\ra V$ is called {\it DH renormalizable} (after Douady and Hubbard) if there is a quadratic-like
restriction 
$$
         Rf\equiv R_{DH} f = f^\per: U'\ra V'
$$ 
 with connected Julia set $K'$ such that
the sets $f^i(K')$, $k=1,\dots, \per-1$,  are either disjoint from $K'$
or else touch it at its $\beta$-fixed point.%
\footnote{See \cite{McM1} for a discussion of this condition.} 
In the former case the renormalization is called {\it primitive}, while in the latter it is called {\it satellite}.     

The map $Rf: U'\ra V'$ is called the {\it pre-renormalization} of $f$.
If it is considered up to rescaling, it is called  the 
{\it  renormalization} of $f$.  

The sets $f^i(K')$, $k=0,\dots, \per-1$, are referred to as  the {\it little (filled) Julia sets}.
Their positions in the big Julia set $K(f)$ determines the renormalization {\it combinatorics}.
The set of parameters $c$ for which the quadratic polynomial $P_c$ is renormalizable
with a given combinatorics forms a {\it little Mandelbrot copy}
$\MM'\subset \MM$. In fact, the family of renormalizations
$R(P_c)$, $c\in \MM'$, with a given combinatorics can be included in
a quadratic-like family $\Bbb F= (f^\per : U_c\ra V_c)$ over
some domain $\La\supset \MM'$ so that $\MM'= \MM_{\Bbb F}$.
A natural base point $c_\circ \in \MM'$ in this family is the superattracting
parameter with period $\per$. 
It is called the {\it center} of $\MM'$.
Any superattracting parameter
in $\MM$  with period $\per>1$ is the center of some Mandelbrot  copy
$\MM'$ like this. 
Moreover, in case of primitive combinatorics the quadratic-like family
$\Bbb F$ is proper and unfolded. 
(See \cite{DH,D-ICM,L-book} for a discussion of all these facts.)

We can encode the renormalization combinatorics by the corresponding
copy $\MM'$ itself.
Equivalently, it  can be  encoded  by the center $c_\circ$ of $\MM'$
or the corresponding  Hubbard tree $H'$. 

A little Mandelbrot copy is called {\it primitive} or {\it satellite} depending on  the type of the corresponding renormalization.
They can be easily distinguished as any satellite copy is attached to
some hyperbolic component of $\inter \MM$
and does not have the cusp at its root point. 

For infinitely renormalizable maps,
notions of {\it stationary/bounded combinatorics},
 {\it a priori} bounds, and {\it Feigenbaum maps}  were defined in the Introduction
(\S \ref{Feigen maps}).  We say that a Feigenbaum map is {\it
  primitive}  if all it renormalizations are such.  

One says that a family $\FF$  of Feigenbaum maps
(e.g.,  for the family of  maps with a given combinatorics)
has {\it beau bounds}
if there exists $\mu>0$ such that for any $f\in \FF$ we have
$$
    \mod R^n f \geq \mu\ \mathrm{for\ all }\ n\geq n(\mod f ). 
$$ 
It was proved by Kahn \cite{K} that {\it primitive Feigenbaum  maps
  have beau bounds}, with $\mu$ depending only on the combinatorial
bound.  In fact,  $\mu$  can be made uniform over  some class of combinatorics \cite{KL}.%
\footnote{ We will not use these  results as the  combinatorics
  we construct do not fall into the class \cite{KL}.
On the other hand, {\it beau bounds} can be easily supplied for our class.}

The {\it renormalization fixed point } $f_*$ is a quadratic-like map
  which is invariant under renormalization: $Rf= f$.
 In terms of the {\it pre}-renormalization, there exists a {\it scaling
   factor}  $\la\in \C\sm \bar \D$ such that 
$$
                Rf (z) =  \la^{-1} f(\la z). 
$$

\begin{thm}\label{renorm fixed point}
  For any stationary bounded combinatorics with a beau bound,
there exists a unique renormalization fixed point   $f_*$ with this
combinatorics.
Moreover,  $\mod f_*\geq \mu$, where $\mu>0$ is the beau bound.   
\end{thm}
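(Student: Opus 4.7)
The plan is to realize $f_*$ as the Carath\'eodory limit of the renormalization orbit of any initial germ in the prescribed stationary combinatorial class, then use exponential contraction of $R$ to extract uniqueness and the modulus bound. First I would fix any quadratic-like germ $f_0$ whose straightening lies in the Mandelbrot copy $\MM'$ encoding the combinatorics---for instance the quadratic polynomial $P_{c_\circ}$ at the center $c_\circ$ of $\MM'$, restricted to a suitable quasidisk. By the beau-bound hypothesis there is $N=N(\mod f_0)$ with $\mod R^n f_0 \geq \mu$ for all $n \geq N$. Since the quadratic-like germs with connected Julia set and modulus bounded below by $\mu$ form a compact family in the Carath\'eodory topology (once the critical point and critical value are normalized), the tail $\{R^n f_0\}_{n\geq N}$ is precompact, and any subsequential limit inherits both the lower bound $\mod \geq \mu$ and the stationary combinatorics (the latter being preserved under Carath\'eodory convergence with modulus bounded below).

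The main task is to upgrade precompactness to convergence of the entire orbit and to identify the limit as a fixed point. For this I would invoke McMullen's exponential contraction of $R$ on Feigenbaum germs with bounded combinatorics and a priori bounds: there exist $C>0$ and $\lambda \in (0,1)$, depending only on the combinatorics and on $\mu$, such that for any two germs $f,g$ in the relevant compact set
\[
   d(R^n f, R^n g) \leq C\lambda^n,
\]
where $d$ is the Carath\'eodory (or Teichm\"uller) distance on normalized germs. Applied to the pair $(f_0, R f_0)$ this makes the orbit Cauchy, so $R^n f_0 \to f_*$; continuity of $R$ on the compact part yields $R f_* = f_*$, and lower semicontinuity of the modulus under Carath\'eodory limits gives $\mod f_* \geq \mu$. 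Uniqueness is then immediate from the same inequality: for any two fixed points $f_*, \tilde f_*$ with the prescribed combinatorics,
\[
   d(f_*, \tilde f_*) = d(R^n f_*, R^n \tilde f_*) \leq C\lambda^n \to 0.
\]

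The step I expect to be the main obstacle is the contraction estimate itself, which is a genuinely non-trivial input from the classical theory. If one prefers not to invoke McMullen's theorem directly, the existence half can be argued instead via the $\omega$-limit set of $\{R^n f_0\}$: beau bounds make it a nonempty, compact, $R$-invariant subset of germs with $\mod \geq \mu$ in the target combinatorial class, and then one would use hybrid rigidity together with Theorem \ref{M-sets of ql families} (applied to the equipped quadratic-like family over $\MM'$ carrying the pre-renormalizations) to conclude that any two germs in the $\omega$-limit set coincide, collapsing it to a single fixed point. The modulus bound $\mod f_* \geq \mu$ is, in either approach, a direct consequence of the beau property applied to the $R$-fixed germ $f_*$.
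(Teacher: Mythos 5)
The paper does not supply a proof of this theorem: after the statement it simply records that the result was ``originally proved by Sullivan \cite{S}. Other proofs were given by McMullen \cite{McM2}, and recently, by the authors \cite{AL-horseshoe}.'' So there is no internal argument to compare against; what you have produced is a reconstruction of the cited approaches, and it is worth assessing it on those terms.

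Your first route---extract $f_*$ as the Carath\'eodory limit of $\{R^n f_0\}$, then use an exponential contraction estimate for $R$ both to make the orbit Cauchy and to force uniqueness---is in spirit the Avila--Lyubich \cite{AL-horseshoe} argument (exponential contraction of $R$ along hybrid classes). Two points of care. First, the contraction estimate $d(R^n f, R^n g)\le C\lambda^n$ is \emph{not} valid for arbitrary pairs $f,g$ in the compact set of germs with connected Julia set and $\mod\ge\mu$: at $f_*$ the operator $R$ is hyperbolic, with an expanding direction transverse to the hybrid class. The estimate holds for $f,g$ in a common hybrid class; in your application this is fine, because any two maps with the same stationary combinatorics straighten to the same Feigenbaum quadratic parameter and are therefore hybrid conjugate, but you should say so explicitly rather than assert contraction on a compact set where it fails. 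Second, quoting ``McMullen's exponential contraction'' blurs two distinct theorems: McMullen's argument in \cite{McM2} goes through tower rigidity and produces convergence to a renormalization attractor; the Teichm\"uller-metric contraction you actually invoke is closer to \cite{FCT} and \cite{AL-horseshoe}.

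The more substantive issue is in your fallback alternative. You propose to collapse the $\omega$-limit set to a point via ``hybrid rigidity together with Theorem \ref{M-sets of ql families}.'' Hybrid rigidity (uniqueness of the straightening for connected Julia set) tells you only that any two germs $g,\tilde g$ in the $\omega$-limit set are quasiconformally conjugate with $\bar\partial=0$ on the filled Julia set, not that they are affinely conjugate, i.e.\ equal as renormalization germs. That gap is exactly what McMullen's \emph{tower} rigidity is for: one must pass to the bi-infinite tower $(R^n g)_{n\in\Z}$, which the $\omega$-limit set supplies, and then prove that a quasiconformal conjugacy between two complete towers with the same bounded combinatorics and a priori bounds is automatically affine. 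Without that (or the contraction estimate, which encodes the same rigidity quantitatively), the $\omega$-limit argument does not close. Your final observation---that $\mod f_*\ge\mu$ follows simply by applying the beau bound to the constant orbit $R^n f_*=f_*$---is correct and is the intended reading.
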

                         \note{needed?}

This theorem was originally proved by Sullivan \cite{S}.
Other proofs were given by McMullen \cite{McM2}, 
and recently, by the authors \cite{AL-horseshoe}.


\subsection{Probabilistic criterion for positive area}

Let us now introduce precisely probabilistic parameters $\eta$ and
$\xi $ mentioned in the Introduction. Let $f: U\ra V$ be a Feigenbaum
map with {\it a priori} bound $\mu>0$ (i.e., $\mod R^n f\geq \mu$ for all
$n\in \N$), and let $Rf: U'\ra V'$ be its first pre-renormalization, 
$A'= U'\sm \bar V'$ be the  corresponding fundamental annulus. 

The  {\it landing    parameter} $\eta$ is the probability of  landing
in $U'$. Precisely, 
let $\displaystyle{ \XX= \bigcup_{n\in \N} f^{-n} U' }$ be the set of
  points in $U$ that  eventually land in $U'$.   Then
\begin{equation}\label{landing par}
    \eta  = \frac {\area \XX} {\area U }.
\end{equation}                                                   

The {\it  escaping parameter} $\xi$ is the probability of escaping
from the fundamental annulus $A'$. Precisely, let $\YY$ be the set of
points in $A$ that never return back to $V'$:
$$
      \YY= \{ z\in A': \ f^n z \not\in V'\ \mathrm{for} \  n \geq 1\
      \mbox {(as long as $f^n z$ is well defined)}.  
$$
Then
\begin{equation}\label{escaping par}
         \xi = \frac {\area \YY} {\area A'} .  
\end{equation}

The following result asserts that if the landing
probability is much higher then the escaping one,
then the Julia set has positive area.  

\begin{thm}[Black Hole Criterion  \cite{AL}] \label{black hole}
  There exists $C=C(\mu)$ with the following property.
Let $f$ be a primitive Feigenbaum map with stationary combinatorics and {\it a priori bound $\mu$}.
If $\eta\geq C \xi$ then $\area J(f)>0$. 
\end {thm}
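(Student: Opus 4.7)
The plan is to exploit the self-similar structure of a stationary combinatorics tower in order to promote the one-level inequality $\eta \geq C\xi$ into a positive lower bound on $\area(J(f))$. Since $f$ is a Feigenbaum map, $\inter K(f)=\emptyset$, so $K(f)=J(f)$ and it suffices to show that a definite proportion of the initial mass in $U$ is trapped forever; equivalently, that the escape probability
$$\xi^{\star} \;=\; \frac{\area\{z\in U:\orb_f z\text{ eventually leaves }V\}}{\area(U)}$$
is strictly less than $1$.

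First I would set up the renormalization tower. By stationary combinatorics, {\it a priori} bounds $\mu$, and Theorem~\ref{renorm fixed point}, one may replace $f$ by its renormalization fixed point $f_{\ast}$, so that each $R^{n}f$ is literally an affine rescaling of $f$ and the nested pre-renormalization domains $U\supset V_{1}\supset U_{1}\supset V_{2}\supset U_{2}\supset\cdots$ carry the same landing and escape probabilities $\eta,\xi$ at every level. Kahn's beau bounds \cite{K}, together with the primitivity of the combinatorics, yield uniform Koebe distortion constants $K=K(\mu)$ for the univalent iterates $f^{j}$ on the pullback components we will use.

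Next I would stratify the escaping orbits by the deepest renormalization level they reach. Let $E_{n}\subset U$ be the set of points whose $f$-orbit enters $U_{n}$ but not $U_{n+1}$ and subsequently escapes $V$, so that the escaping set is the disjoint union $(U\setminus X_{1})\sqcup\bigsqcup_{n\geq 1}E_{n}$. The main estimate to establish is a geometric decay: $\area(E_{n})/\area(U)\leq K\xi\cdot\eta^{\,n-1}$ for every $n\geq 1$, and an analogous bound $\area(U\setminus X_{1})/\area(U)\leq K\xi$. The idea for $E_{n}$ is that mass landing in $U_{n}$ but failing to reach $U_{n+1}$ must, under the conjugate $R^{n}f$-dynamics, eventually enter the fundamental annulus of level $n$ and never return to $V_{n}$; by definition this happens with conditional probability at most $K\xi$ after correcting for Koebe distortion, while the mass of orbits that reach $U_{n}$ in the first place telescopes to a factor $\eta^{\,n-1}$ up to the distortion. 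Summing the geometric series gives $\xi^{\star}\leq K\xi/(1-\eta)$; taking $C=C(\mu):=K/(1-\eta_{\ast})$ for a fixed universal upper bound $\eta_{\ast}<1$, the hypothesis $\eta\geq C\xi$ forces $\xi^{\star}<1$ and hence $\area(J(f))>0$.

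The main obstacle is the bound $\area(U\setminus X_{1})\leq K\xi\,\area(U)$. The naive bound gives only $\area(U\setminus X_{1})=(1-\eta)\area(U)$, which is useless when $\eta$ is small. The resolution must use that for $f$ quadratic-like with the postcritical set accumulating on the critical little Julia set $K(Rf)\subset U_{1}$, a.e. orbit starting in $U$ necessarily transits through a neighborhood of $U_{1}$, so its escape probability is governed by the annular escape $\xi$ amplified only by a $K=K(\mu)$ distortion factor, and not by the crude $1-\eta$. Making this precise requires a careful first-return analysis to the fundamental annulus $A_{1}=V_{1}\setminus\overline{U}_{1}$: one covers $U\setminus X_{1}$ by disjoint univalent pullbacks of the escaping portion of $A_{1}$ under appropriate branches of $f^{-j}$, and applies Koebe to each pullback to compare areas. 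Propagating this analysis to every level, while controlling the partition of $V\setminus V_{n}$ into the satellite pieces that the primitivity hypothesis keeps well-separated from $U_{n}$, is the principal technical content of the argument.
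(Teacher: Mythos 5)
The paper does not prove this theorem---it only recalls it with a citation to \cite{AL}---so there is no ``paper's own proof'' to compare against; I will assess your argument on its own merits.

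Your telescoping plan (stratify escaping points by the deepest renormalization level they reach, bound each stratum, sum a geometric series) has a fatal gap at its base case, and it is not the kind of gap that more careful bookkeeping can close. The estimate $\area(U\setminus X_1)/\area(U)\le K\xi$, which you flag yourself as ``the main obstacle,'' is not merely unproven; it is false in general. It is literally the statement $1-\eta\le K(\mu)\,\xi$, i.e.\ a lower bound $\eta\ge 1-K\xi$ with $K$ depending only on the modulus bound. But the hypothesis $\eta\ge C\xi$ is perfectly compatible with $\eta$ being, say, $0.1$ while $\xi$ is astronomically small: then $1-\eta=0.9$ is not remotely dominated by $K\xi$. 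The two quantities live on different domains ($\XX\subset U$ vs.\ $\YY\subset A'$), measure different phenomena (landing vs.\ never-returning), and nothing in the {\it a priori} bounds ties $1-\eta$ to $\xi$. Your heuristic --- that a.e.\ orbit ``transits through a neighborhood of $U_1$'' so its escape is governed by $\xi$ --- conflates escape from the fundamental annulus $A'$ (which $\xi$ does control) with the escape of points of $U\setminus \XX$ that simply exit $U$ into $V\setminus U$ without ever approaching $V'$, let alone $A'$; that direct-exit set has measure governed by the geometry of $V\setminus V'$ inside $U$, not by $\xi$, and there is no reason for it to be $O(\xi)$. Once (b) fails, the sum $\xi^\star\le K\xi/(1-\eta)$ never gets off the ground.

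There is also a structural problem with estimate (a) even granting (b). The factor $\eta^{\,n-1}$ requires the conditional probability of reaching level $n+1$ given reaching level $n$ to be $\asymp\eta$ uniformly. But the landing map distributes mass in $U_n$ nonuniformly, concentrated near the little Julia set, with Koebe control only away from the critical orbit. Indeed, if one really had $\area X_n\asymp\eta^{\,n}$ then $\area K(f)\le\lim\area X_n=0$ whenever $\eta<1$, contradicting the very conclusion you want. This shows your decomposition is aimed at the wrong target: in the regime of the theorem the total escaping density $\xi^\star$ from $U$ is typically close to $1$ (it is at least $1-\eta$), and the goal is not to make it small but to show it is strictly less than $1$, i.e.\ that the densities of the nested landing sets $\XX=X_1\supset X_2\supset\cdots$ stabilize away from zero. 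The mechanism in \cite{AL} is a first-return/Poincar\'e-series analysis on the fundamental annulus: one compares pullbacks of $\YY\subset A'$ (of relative area $\xi$) with pullbacks of $U'$ (of relative area $\eta$), and the hypothesis $\eta\ge C\xi$ forces the escaping pullbacks to be subordinate, level by level, to the landing pullbacks, so a recursive density estimate stabilizes at a positive value. Your sketch does not reach this argument; the key inequality it rests on is simply not true.

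A final smaller point: your closing step requires a universal upper bound $\eta\le\eta_*(\mu)<1$. This is in fact available (the preimage of $V\setminus\overline U$ inside $U$ has definite relative measure by the {\it a priori} bound), but you should state and use it explicitly rather than silently absorbing it into the constant.
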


\section{Quasicritical circle maps}\label{analytic circle maps} 

An  {\it (analytic) critical circle map } is an analytic  homeomorphism $f: \T\ra \T$ of the circle  $\T= \R/\Z $
with   a single critical point $c_0$ of cubic type (i.e.,
$f'''(c_0)\not=0)$. It is usually normalized so that $c_0=0$ in the angular coordinate.

To study Siegel disks of non-polynomial maps we need to enlarge this class allowing the map
be only quasiregular at the critical point.

\subsection{Definitions}\label{quasicrit circle maps}  

A {\it quasicritical circle map } is a  homeomorphism $f: \T\ra \T$ of the circle  $\T= \R/\Z $
with the following properties:

\ssk\nin Q1.
$f$ is a real analytic diffeomorphism  outside  a single critical point
   $c_0$  normalized so that $c_0=0$ in the angular coordinate;
we let $c_n= f^n c_0$;

\ssk\nin Q2. Near the critical point, $f$ admits a quasiregular
extension  to $\C$  of cubic type, i.e, it has a local form%
\footnote{We could write $h(z)^\de + c_1 $ with any $\de>1$ as well
  since all the powers are quasisymmetrically equivalent.}  
$h(z)^3 + c_1 $ with a
quasiconformal $h: (\C, c_0)\ra (\C, 0)$ such that $h$ is holomorphic near $z$ whenever
 $f(z)$ lies on the same side of $\T$ as $z$.

\ssk
It follows, in particular, that $f|\, \T$ {\it is quasisymmetric}. 
Moreover, it admits a quasiregular extension to a neighborhood of $\T$,
symmetric with respect to $\T$, that  is holomorphic in the domain
$$
     \Dom^h f  = \{z\in \Dom f:  \mbox { $z$ and $f(z) $ lie on the same side
     of $\T$ } \} \cup \T \sm \{c_0\}. 
$$

\ssk
We will also make extra assumptions about exterior structure of $f$:

\ssk\nin Q3.  $\Dom f$ is a $\T$-symmetric  annulus, and
  $\Dom^h  f\sm \D$ is obtained from the outer annulus  $\Dom f\sm \D$ by
  removing a closed topological triangle
$$
  \TT=\TT_f \subset (\Dom f\sm \Bar \D)  \cup \{ c_0  \}
$$    
with a vertex at $c_0$ and the opposite side 
on   the outer boundary of  $\Dom f$;

\ssk\nin Q4.  
  $f :  \Dom^h f \ra \C$ is an immersion, and $f:  \TT \ra \D\cup \{c_1\} $ is an embedding.

\msk 
Let $\Cir$ stand for  the space of all quasicritical circle maps.
The {\it geometry} of such a map is specified by the  dilatation of the
map $h$ from Q2, and the size of $\Dom f$.
We call $f$ a $(K, \eps)$-{\it quasicritical} if $\Dil h\leq K$ and
$\Dom f $ contains the $(2\eps)$-neighborhood of $[0,1]\subset \C$.
Let $\Cir(\bar N, K, \eps) $ denote the class of
$(K,\eps)$-quasicritical circle maps
of type bounded by $\bar N$.


\subsection{Local properties near the critical point} 

\sss{ John Property} 

Let $\NN(K) $ stand for the class of  $K$-quasiregular {\it normalized}
  maps $F : (\C, \R,0, 1) \ra (\C, \R, 0, 1 )$ 
such that
$F(z)= H (z)^3$, where $H : (\C, \R,0, 1) \ra (\C, \R, 0, 1)$ is a
$K$-qc homeomorphism. 

\begin{lem}\label{John property}
  Let  $F \in \NN(K) $,   and
let $S_\pm= F^{-1} (\C\sm \R_\pm)$, where $ F^{-1}$ is the branch of the
inverse map preserving $\R_\mp$. Then 
$$
     S_+\supset  \{  | \arg z -\pi| \leq \alpha \pi \}, \quad S_-\supset  \{  | \arg z | \leq \alpha \pi \},
$$   
where $\alpha>0 $ depends only on $K$.    \note{switch $\pm$}

\end{lem}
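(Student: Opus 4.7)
The idea is to identify $S_+$ explicitly as the $H^{-1}$-image of a concrete sector, and then to control its angular shape by a normal-families / rescaling argument.

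Factor $F = \phi \circ H$ with $\phi(w) = w^3$. The three branches of $\phi^{-1}$ on $\C \sm \R_+$ have as images the three open sectors of opening $2\pi/3$ centered on the rays of argument $\pi, \pm\pi/3$; the unique branch sending $\R_-$ to $\R_-$ realizes $\C \sm \R_+$ as
$$\Si := \{w : |\arg w - \pi| < \pi/3\},$$
so $S_+ = H^{-1}(\Si)$ and the desired inclusion reduces to showing $H^{-1}(\Si) \supset \{z : |\arg z - \pi| \leq \alpha \pi\}$ for some $\alpha = \alpha(K) > 0$; the $S_-$ case then follows by complex conjugation applied on both sides.

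Extend $H$ to a $K$-qc self-homeomorphism of $\hat{\C}$ with $H(\infty) = \infty$; after possibly replacing $F$ by $z \mapsto \overline{F(\bar z)}$ we may also assume $H$ preserves the upper half plane. The family $\FF_K$ of $K$-qc self-homeomorphisms of $\hat{\C}$ fixing $0, 1, \infty$, preserving $\R$ and the upper half plane, is sequentially compact in the topology of uniform spherical convergence, and each $H \in \FF_K$ restricts to an $\eta_K$-quasisymmetric homeomorphism of $\R$ with $\eta_K$ depending only on $K$.

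Argue by contradiction: suppose there are $H_n \in \FF_K$ and $z_n \in \C^*$ with $|\arg z_n - \pi| < 1/n$ but $|\arg H_n(z_n) - \pi| \geq \pi/3$, and extract a subsequential spherical limit $z_n \to z_* \in \hat{\C}$. If $z_* \in \R_- \sm \{0\}$ is finite, a limit $H_n \to H_\infty \in \FF_K$ uniform on compacts of $\hat{\C} \sm \{0, \infty\}$ gives $H_n(z_n) \to H_\infty(z_*) \in \R_-$, contradiction. If $z_* = 0$, rescale by $r_n := |z_n|$ and set
$$\tl H_n(z) := -H_n(r_n z)/H_n(-r_n).$$
Then $\tl H_n$ is $K$-qc, fixes $0$ and $\infty$, preserves $\R$ and the upper half plane, and satisfies $\tl H_n(-1) = -1$. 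The $\eta_K$-quasisymmetry of $H_n|_\R$ applied at the pair $\pm r_n$ forces $\tl H_n(1) = H_n(r_n)/|H_n(-r_n)| \in [\eta_K(1)^{-1}, \eta_K(1)] \subset \R_+$, so $\{\tl H_n\}$ remains in a compact subfamily. Extract a limit $\tl H_\infty$; since $\tl z_n := z_n/r_n \to -1$ we have $\tl H_n(\tl z_n) \to \tl H_\infty(-1) = -1$, whereas by construction $\arg \tl H_n(\tl z_n) = \arg H_n(z_n)$ stays at angular distance $\geq \pi/3$ from $\pi$, contradiction. The case $z_* = \infty$ reduces to $z_* = 0$ by applying the same argument to $G_n(z) := 1/H_n(1/z) \in \FF_K$ and $w_n := 1/z_n$, using the invariance $1/\Si = \Si$.

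The main obstacle is handling the degeneration at the fixed points $\{0, \infty\}$ of $\FF_K$ where naive compactness fails. The rescaling, combined with the uniform quasisymmetric control of $H_n|_\R$ that pins $\tl H_n(1)$ inside a compact subset of $\R_+$, is exactly what restores compactness of the renormalized family and closes the argument.
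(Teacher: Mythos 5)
Your proof is correct, but takes a genuinely different route from the paper's. Both arguments begin by factoring $F = H(\cdot)^3$ and identifying $S_+ = H^{-1}(T_+)$ where $T_+ = \{\,|\arg w - \pi| < \pi/3\,\}$. The paper then argues quantitatively: a normalized $K$-qc self-map of $\C$ is $\eta_K$-quasisymmetric (in the metric sense) on the whole plane, and applying this to the triple $(\zeta, 0, w)$ with $\zeta = H(z) \in \R_-$ and $w \in \partial T_+$ yields directly $\dist(z,\partial S_+) \geq c(K)\,|z|$ for all $z\in\R_-$, hence the sector inclusion with $\alpha = \pi^{-1}\arcsin c(K)$. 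Your argument instead is a soft compactness/contradiction argument: you avoid invoking planar quasisymmetry of $H^{-1}$, relying only on compactness of normalized $K$-qc families together with quasisymmetry of $H|_\R$ to restore a three-point normalization after blowing up at the degenerate fixed points $0$ and $\infty$. This works, but is longer, the constant $\alpha(K)$ it produces is non-effective, and the two blowups have to be handled separately; the paper's estimate is a two-line computation once one accepts the global quasisymmetry fact. One small simplification to yours: quasiconformal homeomorphisms are orientation-preserving, so $H$ automatically preserves the upper half-plane and the preliminary conjugation by $z\mapsto\bar z$ can be dropped.
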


\begin{proof}
We will deal with $S_+$ only, as the argument for $S_-$ is the same. 
The inverse branch  $F^{-1} : \C\sm \R_+ \ra S_+ $ is the composition of $z\mapsto  z^{1/3} $ with
$H^{-1}$, so 
$$
       S_+ = H^{-1}  (T_+), \quad {\mathrm{where}}\   T_+= \{ | \arg z - \pi| < \pi/3\}.
$$  
Since $H^{-1}: (\C,0,1) \ra (\C$,0,1) is a normalized $K$-qc map, it is
$L(K)$-quasisymmetric on the whole plane.  

For any $\zeta\in \R_-$, we have: $\dist (\zeta, \di T_+) = (\sqrt{3}/2)
\, |\zeta|$. 
Take any  $z\in\R_-$ and let $\zeta= H(z)$.
By  definition of  $L_0$-quasisymmetry, we have
 $$
  \frac {\dist(z, \di S_+) }{|z|} <   \frac 1L \cdot  \frac {\dist (\zeta,
    \di T_+)} {|\zeta|} <  \frac {\sqrt{3}} {2L} ,
$$ 
with some $L$ depending only on  $L_0)$ (and on  $\sqrt{3}/2$).
The conclusion follows.
\end{proof}

Any quasicritical circle map $f\in \Cir (K,\eps) $ can be 
non-dynamically normalized 
without changing its dilatation so that it fixes $0$ and $1$,
Namely for any $t\in (0,1/2)$, let 
\begin{equation}\label{normalization}
     F_t: (\C, \R, 0, 1) \ra (\C, \R, 0, 1) , \quad       F_t (x)=
     \frac { f( t x) -c_1 }  { f (t) - c_1 }.
\end{equation}
Then it can be modified outside the $\eps$-neighborhood of $[0,1]$
to turn it into a map of class $\NN(K')$ with some $K'=K'(K,\eps)$. 
Let us call the modified map $\hat F_t$. 
Applying to it the previous lemma, 
we immediately obtain: 

\begin{prop}\label{local sector} 
  For any quasicritical circle map $f\in \CC(K,\eps)$, the domain $\Dom^h f$
  contains  local sectors
$$
    S_+(f)=   \{  | \arg z -\pi| \leq \alpha \pi, \ |z|< \eps  \}
    \quad {\mathrm {and}} \quad  S_- (f)=   \{  | \arg z| \leq \alpha \pi, \ |z|< \eps  \}
$$
with some $\alpha>0 $ depending only on $(K,\eps)$.
\end{prop}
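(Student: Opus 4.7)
The plan is to reduce the statement directly to Lemma~\ref{John property} via the normalization~(\ref{normalization}) followed by a quasiconformal extension. First I would fix some $t\in(0,\eps/2)$ and consider $F_t(x)=(f(tx)-c_1)/(f(t)-c_1)$. By Q2, on a neighborhood of $0$ we have $F_t=H_t^3$ with $H_t(x)=h(tx)/(f(t)-c_1)^{1/3}$ a $K$-quasiconformal germ fixing $0$, preserving $\R$, and satisfying $H_t(1)=1$. Outside the $1$-neighborhood of $[0,1]$ in the normalized coordinate---which pulls back to the $t$-neighborhood of $[0,1]$, hence well inside $\Dom f$---I would extend $H_t$ by standard quasiconformal interpolation to a normalized $K'$-qc homeomorphism $\hat H_t:(\C,\R,0,1)\to(\C,\R,0,1)$ with $K'=K'(K,\eps)$. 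The resulting map $\hat F_t:=\hat H_t^3\in\NN(K')$ then coincides with $F_t$ on a disk around $0$ of definite radius.

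Applying Lemma~\ref{John property} to $\hat F_t$ yields sectors $S_\pm(\hat F_t)$ of definite aperture $\alpha'=\alpha'(K')>0$ around $\R_\mp$. The crucial observation is that the branch-preserving-$\R_\mp$ convention in the definition of $S_\pm$ identifies these sectors with the local $\Dom^h\hat F_t$: on the model sector $T_+=\{|\arg z-\pi|<\pi/3\}$, the map $z\mapsto z^3$ sends $\{2\pi/3<\arg z<\pi\}$ into the upper half-plane and $\{-\pi<\arg z<-2\pi/3\}$ into the lower half-plane, and composition with the $\R$-preserving homeomorphism $\hat H_t$ respects this half-plane bookkeeping; an analogous computation handles $S_-$. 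Consequently, each point of $S_\pm(\hat F_t)$ satisfies the same-side-of-$\R$ condition defining $\Dom^h \hat F_t$, so $S_\pm(\hat F_t)\subset\Dom^h\hat F_t$.

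Since $\hat F_t=F_t$ on a disk of definite radius around $0$, the portions of $S_\pm(\hat F_t)$ in that disk are contained in $\Dom^h F_t$ and have the same aperture $\alpha'$. Unwinding~(\ref{normalization}) rescales lengths by $t\asymp\eps$, producing sectors in $\Dom^h f$ of aperture $\alpha'$ around $c_0=0$ with radius $\asymp\eps$; after possibly shrinking $\alpha'$ to an $\alpha=\alpha(K,\eps)>0$ to fit the stated bound $|z|<\eps$, one obtains the claimed $S_\pm(f)$. The only non-routine ingredient is the sign bookkeeping in the middle paragraph, which reconciles the lemma's branch selection with the geometric definition of $\Dom^h f$; the qc extension and the rescaling are immediate.
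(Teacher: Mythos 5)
Your proposal is correct and follows essentially the same route as the paper: normalize via $F_t$ from~(\ref{normalization}), modify to a globally defined map $\hat F_t\in\NN(K')$, and invoke Lemma~\ref{John property}. In fact you supply two details that the paper's terse ``applying the previous lemma, we immediately obtain'' leaves implicit — performing the modification at the level of the cube root $H_t$ rather than of $F_t$ itself (which is the cleaner way to land in $\NN(K')$), and the sign-bookkeeping verification that the sectors $S_\pm$ of Lemma~\ref{John property} really do sit inside $\Dom^h\hat F_t$ — so the write-up is, if anything, more complete than the original.
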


\sss{Normalized Epstein class and scaling limits}


We say that a map $F\in \NN(K)$ belongs to the {\it Normalized Epstein class}
$\EE\NN(K)$ if the inverse maps $F^{-1}|\, \R_\mp$ admits a conformal  extension 
to $\C\sm \R_\pm\ra \C\sm \R_\pm$.

\begin{lem}\label{scaling limits}
Let $f$ be a quasicritical circle map. 
 Then the  family of modified rescalings $\hat F_t$, $t\in (0,1/2)$
 defined after \rm { (\ref{normalization})  }    is precompact in the
 uniform topology on $\hat \C$.
All limit maps as $t\to 0$ belong to the normalized Epstein class $\NN\EE(K)$,
with $K$ depending only on the geometry of $f$. 
\end{lem}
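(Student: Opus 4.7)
The plan is to handle the two assertions in turn, using the cubic factorization of Q2.

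\textbf{Precompactness.} Near $c_0$, write $f = h(\cdot)^3 + c_1$ with $h:(\C,0)\to(\C,0)$ a $K$-qc map, and set $H_t(x) := h(tx)/h(t)$, so that $F_t = H_t^3$ with $H_t:(\C,0,1)\to(\C,0,1)$ a normalized $K$-qc map on the rescaled domain $(1/t)\Dom f$. The modification producing $\hat F_t$ from $F_t$ takes place outside the $\eps$-neighborhood of $[0,1]$, so it can be arranged as $\hat F_t = \hat H_t^3$ with $\hat H_t:(\hat\C,0,1,\infty)\to(\hat\C,0,1,\infty)$ a normalized $K'$-qc homeomorphism, for some $K' = K'(K,\eps)$. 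Classical compactness of normalized $K'$-qc maps of the sphere yields precompactness of $\{\hat H_t\}_{t\in(0,1/2)}$ in the uniform topology on $\hat\C$, and cubing transfers this to $\{\hat F_t\}$. Any uniform limit $F^*$ is then of the form $(H^*)^3$ with $H^*$ a normalized $K'$-qc map, so $F^*\in\NN(K')$.

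\textbf{Epstein property of limits.} Pass to a convergent subsequence $\hat F_{t_n} \to F^*$ with $t_n\to 0$. We must produce a conformal extension of $(F^*)^{-1}|_{\R_\mp}$ to a univalent map $\C\sm\R_\pm\to\C\sm\R_\pm$. On the inflated domain $(1/t)\Dom^h f$, the map $F_t$ is a rescaling of $f|_{\Dom^h f}$, hence analytic (by Q1 and the definition of $\Dom^h f$) and locally univalent (by the immersion clause of Q4). Fix the branch $\Phi_t$ of $F_t^{-1}|_{\R_\mp}$ landing on a prescribed side of $\R$, and let $\Omega_t\subset\C\sm\R_\pm$ be its maximal univalent analytic continuation. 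The only obstructions to extending $\Phi_t$ are (i) the topological triangle $\TT$ in the $z$-plane, with vertex at $c_0$ and opposite side on $\di^o\Dom f$ (Q3), and (ii) its image $f(\TT)\subset\D\cup\{c_1\}$ in the $w$-plane (Q4). Under the $1/t$-inflation, both obstructions retreat off every compact subset of the corresponding plane punctured at $\{0\}$ as $t\to 0$. Consequently $\Omega_{t_n}$ Carath\'eodory-converges to $\C\sm\R_\pm$ (based at an interior point of $\R_\mp$), and by the Carath\'eodory kernel theorem $\Phi_{t_n}$ converges locally uniformly to a univalent extension $\Phi^*:\C\sm\R_\pm\to\C\sm\R_\pm$ of $(F^*)^{-1}|_{\R_\mp}$. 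The identity $F_{t_n}\circ\Phi_{t_n}=\id$ passes to the limit, placing $F^*$ in $\NN\EE(K')$, and the lemma holds with $K = K'$.

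\textbf{Main obstacle.} The crux is verifying the Carath\'eodory convergence $\Omega_t\to\C\sm\R_\pm$: a priori the inverse branches could be pinched off by the quasiregular obstruction -- either hitting the triangle $\TT$ in the $z$-plane or its image $f(\TT)$ in the $w$-plane -- before filling the slit plane. This is precisely what Q3--Q4 are designed to prevent: both $\TT$ and $f(\TT)$ are single topological triangles with one vertex at the critical point (respectively value) and opposite side on the outer boundary of the ambient domain (respectively confined to $\D$), so under the $1/t$-inflation the boundary of the obstruction flees to infinity on both sides, leaving the full slit plane as the Carath\'eodory limit.
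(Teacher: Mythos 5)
Your precompactness argument is fine and matches the paper's reasoning: the $\hat F_t$ are normalized degree-three uniformly quasiregular maps, so they form a precompact family and any limit lies in $\NN(K')$.

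The Epstein half has a genuine gap. You claim that under the $1/t$-inflation ``both obstructions retreat off every compact subset of the corresponding plane punctured at $\{0\}$''. This is false. The triangle $\TT$ has its vertex at $c_0$, and the rescaling $z\mapsto z/t$ fixes $c_0=0$; thus $\TT/t$ keeps its vertex at the origin and, as $t\to 0$, expands to fill an entire sector of a \emph{fixed} positive angle based at $0$ (for $f\sim z^3$, roughly $\{\pi/3<\arg z<2\pi/3\}$). It does not disappear from $\C^*$. The same applies to the image $f(\TT)$, whose vertex at $c_1$ is carried to $0$ by the target normalization $w\mapsto(w-c_1)/(f(t)-c_1)$. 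So the Carath\'eodory convergence $\Omega_{t_n}\to\C\sm\R_\pm$ cannot be deduced from the premise as stated. What actually works, and what the paper uses, is simpler: the inverse branch $\Phi_t$ of $F_t^{-1}|_{\R_\mp}$ preserving $\R_\mp$ has its \emph{image} confined to the local sector $S_\pm(f)$ of Proposition~\ref{local sector}, which is disjoint from $\TT$; hence the triangle is never an obstruction for this particular branch. After rescaling, $S_\pm(f)/t$ has angular opening bounded below by $\alpha(K,\eps)$ and radius $\eps/t\to\infty$, $F_t$ is conformal and injective there, and the image contains a slit region $(\C\sm\R_\pm)\cap\D_{\de/t}$. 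As $t\to 0$ these exhaust the slit plane, giving the Epstein property in the limit (that the limiting inverse maps \emph{into} $\C\sm\R_\pm$ comes again from Lemma~\ref{John property}). In short: identify the right obstruction (the outer boundary of the sector, not $\TT$), and the qualitative Carath\'eodory argument becomes the quantitative $\D_{\de/t}$ estimate in the paper.
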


\begin{proof}
The modified rescalings  $\hat F_t$ form a precompact family since 
 they are normalized degree three  uniformly quasiregular maps.
Moreover, the inverse maps  $F_t^{-1}|\, \R_\mp$ admit a conformal
extension to  $(\C\sm \R_\pm)\cap \D_{\de/t}$, with some $\de$
depending on the geometry of $f$.  Hence in the  limit we obtain a map
of Epstein class.
\end{proof}

\sss{Schwarzian derivative}

We will now show that quasicritical circle maps have negative
Schwarzian derivative near the critical point.
Let us begin with  maps of Epstein class:

\begin{lem}\label{neg S in Epstein class}
  Any map $F \in \NN\EE$ has negative Schwarzian derivative on the
  whole punctured line  $\R\sm \{0\}$.   
\end{lem}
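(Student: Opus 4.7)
My plan is to derive $SF < 0$ from a classical Epstein-type lemma applied to inverse branches of $F$.

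First, I would reduce to inverse branches. Set $G_\mp := F^{-1}|_{\R_\mp}$; by the definition of $\NN\EE$, each extends to a univalent holomorphic map $\tilde G_\mp\colon \C\setminus\R_\pm \to \C\setminus\R_\pm$ preserving $\R_\mp$. The Schwarzian inverse-function formula $SF(G(w)) = -SG(w)/G'(w)^2$ shows that $SF < 0$ on $\R \setminus \{0\}$ will follow once I prove $S\tilde G_\mp > 0$ on $\R_\mp$. The two cases are symmetric; I treat $G_-$.

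Next, I would uniformize to the upper half-plane. A conformal isomorphism $\psi\colon \C\setminus\R_+ \to \Hyp$ can be chosen sending $\R_-$ onto $i\R_+$; explicitly one may take the branch of $z \mapsto i\sqrt{-z}$ that is positive imaginary on $\R_-$. The conjugate $g := \psi\circ\tilde G_- \circ\psi^{-1}$ is then a univalent holomorphic self-map of $\Hyp$ preserving $i\R_+$. Writing $g(it) = i\tilde g(t)$ with $\tilde g\colon \R_+ \to \R_+$, the Schwarz--Pick inequality for $g$ becomes
\begin{equation*}
   t\,\tilde g'(t) \leq \tilde g(t),
\end{equation*}
with equality if and only if $g \in \mathrm{Aut}(\Hyp)$ is a real dilation $g(w) = \lambda w$.

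To convert this into a Schwarzian statement, I would work with the Epstein cross-ratio functional
\begin{equation*}
   \mathrm{Ep}(\phi;x,y) := \frac{\phi'(x)\phi'(y)\,(x-y)^2}{(\phi(x)-\phi(y))^2},
\end{equation*}
which is invariant under pre- and post-composition with real M\"obius transformations and expands as $\mathrm{Ep}(\phi;x,y) = 1 + \tfrac{1}{6}(x-y)^2\,S\phi(x) + O((x-y)^3)$ as $y \to x$. The Schwarz--Pick inequality on $\Hyp$ integrates to $\mathrm{Ep}(g;s,t) \geq 1$ for distinct $s,t \in i\R_+$; M\"obius invariance of $\mathrm{Ep}$ then transfers this to $\mathrm{Ep}(\tilde G_-;x,y) \geq 1$ for distinct $x,y \in \R_-$. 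Sending $y \to x$ gives $S\tilde G_-(x) \geq 0$. Strict inequality holds unless $g$ is a dilation, which would force $\tilde G_-(z) = \lambda^2 z$ and $F(z) = \lambda^{-2} z$---incompatible with the assumption $F = H^3$, whereby $F$ has a genuine cubic critical point at $0$. Therefore $S\tilde G_- > 0$, and $SF < 0$ on $\R\setminus\{0\}$.

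The main technical point will be the monotonicity $\mathrm{Ep}(g;s,t) \geq 1$ for a univalent self-map of $\Hyp$ preserving $i\R_+$. I expect to prove this by integrating the pointwise Schwarz--Pick bound $t\tilde g'(t) \leq \tilde g(t)$ along the hyperbolic geodesic segment of $i\R_+$ joining $s$ and $t$, or, equivalently, by observing that $\log(\tilde g(t)/t)$ is non-increasing and then reorganizing the resulting inequality into the cross-ratio form. This step, together with the verification of M\"obius invariance and the Taylor expansion of $\mathrm{Ep}$, constitutes the core of the argument.
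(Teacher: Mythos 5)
Your overall reduction is sound: passing to the inverse branches $\tilde G_\mp$, using $SF(G(w))=-SG(w)/G'(w)^2$, and extracting the sign of $S\tilde G_-$ from the second-order expansion $\mathrm{Ep}(\phi;x,y)=1+\tfrac16 S\phi(x)(x-y)^2+O((x-y)^3)$ are all correct, as is the M\"obius invariance of $\mathrm{Ep}$. But the proof fails at exactly the step you flag as the core. Neither of your two proposed routes to $\mathrm{Ep}(g;s,t)\geq 1$ can work, because both use only the first-order Schwarz--Pick inequality $t\,\tilde g'(t)\leq\tilde g(t)$. That is an \emph{upper} bound on the derivative; it holds for \emph{every} holomorphic self-map of $\Hyp$ preserving $i\R_+$, univalent or not; and it is therefore consistent with $\mathrm{Ep}$ being arbitrarily small. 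Concretely, the M\"obius conjugate of $z\mapsto z^2$ on $(\D,(-1,1))$ gives a holomorphic self-map of $\Hyp$ preserving $i\R_+$ with a critical point on $i\R_+$; it obeys $t\,\tilde g'(t)\leq\tilde g(t)$ everywhere, yet $\tilde g'$ vanishes there and $\mathrm{Ep}$ drops to $0$. So the inequality $\mathrm{Ep}\geq 1$ must use univalence in a genuinely second-order way, which your plan never does. (There is also a secondary slip: $\psi(z)=i\sqrt{-z}$ is not M\"obius, so M\"obius invariance alone does not transfer $\mathrm{Ep}(g;\cdot,\cdot)\geq 1$ on $i\R_+$ to $\mathrm{Ep}(\tilde G_-;\cdot,\cdot)\geq 1$ on $\R_-$; the two differ by the factors $\mathrm{Ep}(\psi;\cdot,\cdot)$ and $\mathrm{Ep}(\psi^{-1};\cdot,\cdot)$.)

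What makes univalence do work is the variable-domain Schwarz lemma, and that is precisely the paper's proof. For a nested pair $J\Subset I$ of intervals in $\R_-$, view $I$ as the geodesic of the doubly-slit plane $\C(I)=\C\setminus(\R\setminus I)$. Because $\tilde G_-$ is real and monotone on $\R_-$, univalent, and holomorphic on $\C\setminus\R_+$, the reflection principle together with injectivity shows $\tilde G_-(\C(I))\subset\C(\tilde G_-(I))$; the Schwarz lemma \emph{in these domains} then sends the Poincar\'e disk $\D(I)$ into $\D(\tilde G_-(I))$, giving contraction of the hyperbolic cross-ratio $|J:I|$, and $\mathrm{Ep}(\tilde G_-;\cdot,\cdot)\geq 1$ is the degenerate limit $J\to\partial I$. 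If you prefer to stay in the half-plane, the analogue is to apply the Schwarz lemma to $g\colon\Hyp\setminus(i\R_+\setminus I)\to\Hyp\setminus(i\R_+\setminus g(I))$ for each interval $I\subset i\R_+$: the reference domain must shrink with $I$. The fixed-domain Schwarz--Pick lemma on $\Hyp$ sees only the first-order information and discards exactly the second-order sign you need.
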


\begin{proof}
Let us consider an open interval $I=(a,d) \subset \R\sm \{0\}$ as a Poincar\'e model of the
hyperbolic line.  Given a subinterval $J=(b,c) \Subset I$,
let  
\begin{equation}\label{real hyp metric}
   |J: I| = \log \frac{(c-a)  (d-b) } { (b-a) (d-c) } 
\end{equation}
stand for its hyperbolic length. Condition of negative Schwarzian
derivative for $F$ is equivalent to the  property that $F^{-1} $ is a
hyperbolic contraction:
$$
       | F^{-1} (J) : F^{-1}  (I) | \leq | J: I| 
$$ 
for any pair of  intervals $I$ and $I$ as above.

Let us now consider the slit plane $\C(I):= \C\sm (\R\sm I) $ endowed with its
hyperbolic metric. Then  $I$ is a hyperbolic geodesic in $\C(I)$. 
Let $\D(I)$ be the round disk based upon $I$ as a diameter. 
It is the hyperbolic neighborhood of $I$ in $\C(I)$ of certain radius $r$  
independent of $I$. 

If $F$ belongs to the Epstein class then the inverse map 
$F^{-1} : I\ra I'$  (where $I'= F^{-1} (I)$) extends to a holomorphic map
$  F^{-1} : \C(I) \ra \C(I')$. 
By the Schwarz Lemma, it is a hyperbolic contraction.
 Since $F^{-1} (I) = I'$,   we conclude that  $F^{-1} (\D(I)) \subset
 \D(I')$. Applying the Schwarz Lemma again, we obtain 
that $F^{-1} : \D(I) \ra \D(I')$ is contracting with respect to the hyperbolic metric
in these disks. 
Since  the hyperbolic metrics on $I$ and $I'$ are induced by the
hyperbolic metrics in the corresponding disks, we are done.  
\end{proof}

\begin{rem}
  In fact, in the applications to the distortion bounds, 
the  contracting property for  the cross ratios from 
(\ref{real hyp metric}), rather than the Schwarzian derivative,  
is directly used (see Theorem \ref{Koebe control}).
\end{rem}

\begin{prop}\label{neg S}
  Any quasicritical circle map $f\in \Cir(K,\eps)$ has negative
  Schwarzian derivative in  $\de$-neighborhood of the
  critical point, where $\de= \de(K,\eps)$ depends only on the
  geometry of $f$.
\end{prop}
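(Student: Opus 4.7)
The plan is a compactness-and-rescaling argument reducing Proposition \ref{neg S} to Lemma \ref{neg S in Epstein class} via the scaling-limit Lemma \ref{scaling limits}. Suppose the conclusion fails: there exist $f_n \in \Cir(K,\eps)$ and real $z_n \to 0$ with $(Sf_n)(z_n) \geq 0$. Set $t_n := |z_n|$ and $\zeta_n := z_n/t_n \in \{\pm 1\}$; passing to subsequences I may assume $\zeta_n \equiv \zeta_0$ and, by Lemma \ref{scaling limits} extended to the family $\Cir(K,\eps)$, that the modified rescalings $\hat F_{t_n}^{(n)}$ (built from $f_n$ as in (\ref{normalization})) converge uniformly on $\hat\C$ to some $F_\infty \in \NN\EE(K')$ with $K'=K'(K,\eps)$.

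Invariance of the Schwarzian under affine post-composition together with $S(f\circ g)=(Sf)\circ g \cdot (g')^2 + Sg$ applied to $g(x)=tx$, combined with the definition $F_t(x)=(f(tx)-c_1)/(f(t)-c_1)$, yields the scaling identity
\[(SF_{t_n}^{(n)})(\zeta_n) \;=\; t_n^{\,2}\cdot (Sf_n)(t_n\zeta_n) \;=\; t_n^{\,2}\cdot (Sf_n)(z_n) \;\geq\; 0.\]
If this non-negativity passes to the limit to give $(SF_\infty)(\zeta_0) \geq 0$, it directly contradicts Lemma \ref{neg S in Epstein class}, which asserts $SF_\infty<0$ on all of $\R\sm\{0\}$, closing the argument.

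The one genuine technical step, which I expect to be the main obstacle, is upgrading uniform convergence of $\hat F_{t_n}^{(n)}$ on $\hat\C$ to $C^\infty$-convergence of the \emph{unmodified} $F_{t_n}^{(n)}$ on a complex neighborhood of $\zeta_0 \in \{\pm 1\}$, so that Schwarzians converge. For this I would exploit the Epstein extensions already used in the proof of Lemma \ref{scaling limits}: the inverse branches $(F_{t_n}^{(n)})^{-1}$ are uniformly conformal on $(\C\sm\R_\pm)\cap \D_{\de/t_n}$ with $\de=\de(K,\eps)$ and $\pm$ chosen opposite to $\zeta_0$. This makes $\{F_{t_n}^{(n)}\}$ a normal family of holomorphic maps on the slit plane $\C\sm\R_\pm$, which contains a full complex neighborhood of $\zeta_0$. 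Any subsequential limit there must coincide with $F_\infty$ since $\hat F_t = F_t$ on the $\eps$-neighborhood of $[0,1]$ by construction, so by analytic continuation $F_{t_n}^{(n)} \to F_\infty$ uniformly on compact subsets of $\C\sm\R_\pm$. Cauchy estimates then give $C^\infty$-convergence, hence $(SF_{t_n}^{(n)})(\zeta_n) \to (SF_\infty)(\zeta_0)$, as required.

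A minor bookkeeping point is that Lemma \ref{scaling limits} is stated for a fixed map with $t\to 0$, whereas the argument above simultaneously varies $f_n$. This is routine: the modified rescalings are uniformly $K'$-quasiregular normalized maps with $K'=K'(K,\eps)$, hence form a normal family, and the Epstein extension constant $\de$ depends only on $(K,\eps)$, so the limiting Epstein structure is preserved verbatim.
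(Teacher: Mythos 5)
Your contrapositive rewrite is essentially the paper's own proof: the same scaling identity $SF_t(x)=t^2(Sf)(tx)$, the same appeal to Lemma~\ref{scaling limits} for accumulation on Epstein maps and to Lemma~\ref{neg S in Epstein class} for negativity of the limit, and the same need to convert $C^0$ convergence of $\hat F_t$ near $\pm 1$ into $C^3$ convergence of $F_t$ there. The step you flag as the obstacle, however, is not handled correctly. The conformal Epstein extension concerns the \emph{inverse} branches $F_t^{-1}$; what you get from it is that $F_t$ is holomorphic on the \emph{image} of that extension, namely a quasi-sector of definite opening around one of the half-lines $\R_\pm$ (precisely what Lemma~\ref{John property} and Proposition~\ref{local sector} quantify) --- and not, as your sentence asserts, on the entire slit plane $\C\sm\R_\pm$. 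The sector around $\R_-$ (containing $\zeta_0=-1$) is disjoint from the sector around $\R_+$ (the one meeting the $\eps$-neighbourhood of $[0,1]$), so there is no chain of analytic continuation from the region where you know $\hat F_t=F_t$ to a neighbourhood of $-1$; the identification of the subsequential limit of $F_{t_n}^{(n)}$ near $-1$ does not go through as written.

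The cure is what the paper does directly: the natural domain of $F_t$ grows like $\eps/t$, and the modification producing $\hat F_t$ is performed only at that far-out scale, so for $t<t_0(K,\eps)$ one has $\hat F_t=F_t$ on a definite complex neighbourhood of \emph{both} $\pm 1$, not merely on the $\eps$-neighbourhood of $[0,1]$; and Proposition~\ref{local sector} (rescaled) gives holomorphicity of $F_t$ on a definite sector around $\zeta_0$ there. Then $\hat F_{t_n}^{(n)}\to F_\infty$ uniformly on $\hat\C$ immediately gives $C^0$ convergence of $F_{t_n}^{(n)}$ near $\zeta_0$, and normality of the holomorphic family on the fixed sector upgrades this to $C^\infty$ convergence. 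With that substitution the rest of your argument --- the uniform-in-$n$ extension of Lemma~\ref{scaling limits}, the scaling identity, the contradiction with Lemma~\ref{neg S in Epstein class} --- is correct and reproduces the paper's direct proof.
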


\begin{proof}
  By Lemma \ref{scaling limits},  the  modified rescalings $\hat F_t$ accumulate on
  a compact set $\KK\subset \NN\EE(K') $  of normalized Epstein maps,
  with $K'=K'(K,\eps)$.  
 By Lemma \ref{neg S in Epstein class}, the latter have negative Schwarzian derivative.
By Proposition \ref{local sector},  the maps $\hat F_t$ are eventually (for $t<t_0(K,\eps)$)
 holomorphic in  definite
sectors  $\{ |\arg z| <\alpha \pi\}  \cap \D$ and $\{ |\arg z-\pi|
<\alpha \pi \}\cap \D$. 
 It follows that $S \hat F_t \to S F\in \KK $ uniformly on $\pm [1/2, 1]$,
and hence the Schwarzian derivatives $SF_t$ are eventually negative on
these  two intervals.
By the scaling properties of the Schwarzian, we have:
$ S\hat F_t (x) = t^2 Sf (tx) $, and hence  $S f <0$ on some punctured
interval $[ -\de, \de]$, with $\de>0$ depending only on the geometry   \note{define}
of $f$.    
\end{proof}

\sss{Power expansion}


Let us consider a map $F\in \NN\EE$ of Normalized Epstein class, and
let 
$
   \Dom^h F= \{z: \  (\Im  z) \cdot (\Im F(z)) >0\}.
$
Recall from Lemma \ref{John property} that it consists of two 
disjoint topological sectors $S_\pm$  with the axes $\R_\mp$ mapped conformally
onto $ \Hyp \sm \R_\pm$ respectively.  Let us slightly shrink these
sectors, namely for $\beta\in (0,1)$, let 
$$
       S_+(\beta)  = \{ z\in S_+:  |\arg F(z)| > \beta \pi\} ,\quad
       S_-(\beta) =   \{ z\in S_-:  |\arg F(z) | < (1-\beta) \pi\}. 
$$


\begin{lem}\label{qr contraction}
Let us consider a  map $F\in \NN\EE(K)$ of Normalized Epstein class,
and let $\beta\in (0,1)$.
Then  
$$
   |F(z) \geq C|z|^{1+\si} \quad {\mathrm{for}} \   
    z\in S_\pm (\beta), \ |z|\geq 1.
$$
 where  $\si >0$ and $C>0$ depend only on $K$ and $\beta>0$. 
\end{lem}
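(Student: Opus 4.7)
My plan is to use the Epstein hypothesis to upgrade $F|_{S_+}$ to a genuine conformal isomorphism, reduce the lemma to a power-growth bound for the Riemann uniformization of $S_+$, and control that via the global quasiconformal structure $F=H^3$.

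By hypothesis, $F^{-1}|_{\R_-}$ extends conformally to $\C\setminus \R_+ \to \C\setminus \R_+$; its image is $S_+$, so $F|_{S_+}\colon S_+\to \C\setminus\R_+$ is a conformal isomorphism with $S_+\subset\C\setminus\R_+$. Let $\phi\colon\Hyp\to S_+$ be the Riemann map normalized by $\phi(0)=0$, $\phi(\infty)=\infty$, $\phi(i)=-1$, and let $\pi(\zeta)=\zeta^{2}\colon\Hyp\to\C\setminus\R_+$. Then $\pi^{-1}\!\circ F\circ \phi$ is an automorphism of $\Hyp$ fixing $0,\infty$, hence a real dilation $\zeta\mapsto\lambda\zeta$ with $\lambda^{2}=|F(-1)|$, which is bounded above and below in terms of $K$ by Mori's theorem applied to $H$. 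Consequently
\[
F(\phi(\zeta))=\lambda^{2}\zeta^{2}, \qquad |F(z)|=\lambda^{2}\,|\phi^{-1}(z)|^{2},
\]
and an angular calculation identifies $\phi^{-1}(S_+(\beta))$ with the interior sub-sector $\Hyp(\beta):=\{\zeta\in \Hyp : \arg\zeta\in (\beta\pi/2,\pi-\beta\pi/2)\}$.

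Thus the lemma reduces to a distortion bound $|\phi(\zeta)|\leq C(K,\beta)|\zeta|^{\gamma}$ for $\zeta\in\Hyp(\beta)$, $|\zeta|\geq 1$, with some $\gamma=\gamma(K)<2$, which gives $|F(z)|\geq C'|z|^{1+\sigma}$ for $\sigma=2/\gamma-1>0$. To obtain it, I exploit the qc structure: with $F=H^{3}$ and $H$ a $K$-qc homeomorphism of $\C$ fixing $0,1,\infty$, the set $\C\setminus F^{-1}(\R_+)$ decomposes into three simply connected components, each being the $H$-preimage of a round sector of opening $2\pi/3$. The John-type argument of Lemma \ref{John property}, applied to each of the three components, shows that each one contains a definite Euclidean sector of opening $2\alpha(K)\pi$; hence $S_+$ is disjoint from two such sectors and is contained in a Euclidean sector $\Omega(K)$ around $\R_-$ of opening strictly less than $2\pi$, with deficit depending only on $K$. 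The Riemann map $\phi_{\Omega(K)}\colon\Hyp\to \Omega(K)$ with matching normalization is explicit, with $|\phi_{\Omega(K)}(\zeta)|=|\zeta|^{\gamma_0}$ for some $\gamma_0=\gamma_0(K)<2$; and the univalent self-map $\psi:=\phi_{\Omega(K)}^{-1}\!\circ \phi\colon \Hyp\to\Hyp$ fixes $0,\infty,i$. Schwarz--Pick applied along $i\R_+$ (writing $\psi(it)=is(t)$, so that $s(1)=1$ and $s'(t)/s(t)\leq 1/t$) gives $s(t)\leq t$ for $t\geq 1$, whence $|\phi(it)|=s(t)^{\gamma_0}\leq t^{\gamma_0}$. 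Since every $\zeta\in\Hyp(\beta)$ is at bounded hyperbolic distance $D(\beta)$ from $i|\zeta|$, Koebe distortion applied to $\phi$ extends this to $|\phi(\zeta)|\leq C(\beta)|\zeta|^{\gamma_0}$. The case $z\in S_-(\beta)$ is symmetric.

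The main technical obstacle is the ``two definite Euclidean sectors avoided by $S_+$'' statement: the boundary quasi-arcs of $S_+$ are qc images of rays and could in principle spiral, and the axes of the two omitted sectors depend on $H$. I would address this through the same John-style argument as in Lemma \ref{John property} together with Mori-type bounds for the $K$-qc map $H^{-1}$ at reference points such as $e^{\pm i\pi/3}$, which give the uniform positivity both of the opening $\alpha(K)$ of the omitted sectors and of the angular separation of their axes from $\R_-$, hence the uniform bound $\gamma_0(K)<2$.
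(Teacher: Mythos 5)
Your core argument---uniformize $S_+$ by $\Hyp$ via the Riemann map $\phi$, observe that $\pi^{-1}\circ F\circ\phi$ is an affine automorphism with bounded dilation so that $|F(z)|=\lambda^2|\phi^{-1}(z)|^2$, enclose $S_+$ in a sector $\Omega$ of opening $\gamma_0\pi<2\pi$ with $\gamma_0=\gamma_0(K)$, and run Schwarz--Pick on the univalent self-map $\phi_\Omega^{-1}\circ\phi$ of $\Hyp$---is correct and is essentially the paper's argument in a different dressing: the paper instead factors the conformal inverse branch as $\phi(w)^{1-\alpha}$ on the slit plane and applies the Schwarz lemma to the hyperbolic distance from $1$, which amounts to the same estimate with $\gamma_0=2(1-\alpha)$ and $\sigma=\alpha/(1-\alpha)$.

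The one place where there is a real gap is your claim that the John-type argument of Lemma~\ref{John property}, ``applied to each of the three components'' of $\C\setminus F^{-1}(\R_+)$, shows that all three contain definite Euclidean sectors. That lemma's proof pivots on the fact that the axis of the sector ($\R_-$ for $S_+$, $\R_+$ for $S_-$) is a ray preserved by $H$; the two other components of $\C\setminus F^{-1}(\R_+)$ have no $H$-invariant axis, and the quasi-arcs $H^{-1}(\{\arg\zeta=\pm\pi/3\})$ can drift in angle as $|z|\to\infty$, so the lemma does not transfer to them. You flag this obstacle, but the Mori-type patch you sketch only pins down those quasi-arcs near scale $1$, not uniformly in the radius. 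The fix is simpler and is what the paper actually does: $S_+$ and $S_-$ are disjoint (their $H$-images are disjoint sectors), and Lemma~\ref{John property} applied to $S_-$---whose axis $\R_+$ is genuinely $H$-invariant---gives $S_-\supset\{|\arg z|\leq\alpha\pi\}$, hence $S_+\subset\{\alpha\pi<\arg z<2\pi-\alpha\pi\}$, a sector around $\R_-$ of opening $2(1-\alpha)\pi<2\pi$. Substituting this $\Omega(K)$, with $\gamma_0=2(1-\alpha)$, the rest of your proof goes through unchanged.
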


\begin{proof}
Since $S_+$ contains the sector  
$\{ |\arg z -\pi| < \alpha\pi \}$, we have: 
$$
  S_- \subset |\arg z| < (1-\alpha) \, \pi \}.
$$
Hence the inverse branch $F^{-1}: \Hyp \sm \R_- \ra S_-$ can be
decomposed as $ \phi (z)^{1-\alpha} $, where $\phi: (\Hyp, \R, 0, 1)\ra
(\Hyp, \R,  0, 1) $ is a conformal embedding. 
For such a map, we have:
\begin{equation}\label{linear expansion}
 |  \phi(z)| \leq A |z|\quad \mathrm{ as \ long \ as }\  |z|\geq 1,\  |\arg z|
  < \pi(1 - \beta),
\end{equation}
  where $A$  depends only on $\beta>0$. 
Indeed,  the hyperbolic distance (in $\C\sm \R_-$) from  $z$ as above 
  to $1$ is     
$\log |z|+O(1)$ (note that by the scaling invariance, the hyperbolic distance from $z$ to $|z|$ depends
only on $\arg z$) . Since $1$ is fixed under $\phi$, 
the Schwarz Lemma implies (\ref{linear expansion}).
The conclusion for  $F$ on $S_-$ follows.

The argument for $S_+$ is similar, except $-1$ is not the fixed point
any more. But since $F$ is quasiregular, $|\phi(-1)| \asymp 1$,
and the Schwarz lemma   implies the assertion again.
\end{proof}

\subsection{Real geometry}

Due to the above local properties, 
  quasicritical circle maps enjoy the same geometric virtues as usual 
 analytic critical circle maps. The main results formulated below are
 proven in a standard way,  see e.g., 
the monograph by de Melo and van Strien \cite{MvS} for a reference.

\sss{ Koebe Distortion Bounds}

The following statement extends the usual Koebe distortion bounds to
quasicritical circle  maps:

\begin{thm}\label{Koebe control}
  Let $f\in \Cir(K,\eps)$ be a quasicritical circle map.
Let  $J\subset I\subset \R/\Z$ be two nested intervals in $\T$,
with  $I$ open. Assume that for some $n,m\in \N$, the intersection multiplicity of the
intervals $f^{-k} I$, $k=0,1,\dots, n$ is bounded by $m$
and $|f^{-k} I| < \de/2$ with $\de$ from Proposition \ref{neg S}.   \note{needed?}
Then  
$$
       |f^{-k} J : f^{-k} J | \leq C (K,\eps, m) \, | J: I|.
$$
\end{thm}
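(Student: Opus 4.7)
The plan is to reduce the statement to the standard cross-ratio distortion estimate for maps with negative Schwarzian derivative, combined with the usual counting argument based on bounded intersection multiplicity. Recall that negative Schwarzian is characterized by non-expansion of the cross-ratios $|J':I'|$ of \eqref{real hyp metric} under inverse branches, and this characterization extends naturally to quasicritical maps through Proposition \ref{neg S}.

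First, by Proposition \ref{neg S}, $f$ has negative Schwarzian derivative on the punctured $\de$-neighborhood $N_\de$ of the critical point $c_0$. Away from $c_0$, the map $f$ is real analytic, and Lemma \ref{scaling limits} together with the Epstein-class inclusion used in Lemma \ref{neg S in Epstein class} supplies uniform cross-ratio distortion bounds of Koebe type in terms of the geometry $(K,\eps)$ alone.

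Next, consider the backward orbit $I_k := f^{-k}I \supset J_k := f^{-k}J$ for $k=0,\ldots,n$. Split the set of indices into $\mathcal{A} = \{k : I_{k+1} \subset N_\de\}$ and its complement $\mathcal{B}$; the assumption $|I_k| < \de/2$ ensures that all steps at which the pullback meets a sufficiently small neighborhood of $c_0$ fall into $\mathcal{A}$. For $k \in \mathcal{A}$, Proposition \ref{neg S} gives outright non-expansion $|J_{k+1} : I_{k+1}| \leq |J_k : I_k|$, so these steps contribute no distortion whatsoever. For $k \in \mathcal{B}$, the restriction $f : I_{k+1} \to I_k$ is a real analytic diffeomorphism, and the classical one-dimensional Koebe estimate in cross-ratio form yields
$$|J_{k+1} : I_{k+1}| \leq |J_k : I_k|\,\bigl(1 + C_1 |I_k|^{\alpha}\bigr)$$
with $C_1, \alpha > 0$ depending only on $(K,\eps)$. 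Telescoping over $\mathcal{B}$ and using that the bounded intersection multiplicity forces $\sum_k |I_k| \leq m \cdot |\T| = m$ gives a total expansion factor bounded by $\exp(C_1 m)$, which yields the desired constant $C(K,\eps,m)$.

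The main technical obstacle is obtaining a uniform Koebe-type cross-ratio estimate at those indices $k \in \mathcal{B}$ for which $I_{k+1}$ approaches but does not quite enter the negative-Schwarzian region $N_\de$, since there is no a priori uniform bound on the Schwarzian of $f$ in this transitional annulus. This is handled by the scaling-limit compactness of Lemma \ref{scaling limits}: rescaling around $c_0$ produces a precompact family of normalized Epstein maps, and Lemma \ref{neg S in Epstein class} delivers uniform cross-ratio contraction at every scale. Combined with the straightforward Koebe estimate on compact intervals bounded away from $c_0$, where $f$ is honestly analytic, this furnishes the constants $C_1, \alpha$ above with dependence only on $K$ and $\eps$, completing the argument.
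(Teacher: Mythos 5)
Your two-case decomposition omits the \emph{critical moments}---the steps at which the pulled-back interval actually contains the critical point $c_0$. Since $N_\de$ is punctured at $c_0$, an interval $I_{k+1}\ni c_0$ is not contained in $N_\de$, so such an index lands in $\mathcal{B}$; but your assertion that "$f : I_{k+1} \to I_k$ is a real analytic diffeomorphism" for $k\in\mathcal{B}$ is false there, since $f$ has its degree-$3$ critical point inside $I_{k+1}$. Neither the classical $C^2$/Denjoy estimate nor the negative-Schwarzian contraction applies at such a step, and the cross-ratio genuinely \emph{expands} under the local cubic branch, so these moments cannot be absorbed into the other two cases. Your sentence "the assumption $|I_k|<\de/2$ ensures that all steps at which the pullback meets a sufficiently small neighborhood of $c_0$ fall into $\mathcal{A}$" is also incorrect for precisely this reason: an interval containing $c_0$ meets every neighborhood of $c_0$ yet avoids $\mathcal{A}$.

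The paper's proof supplies the missing third ingredient: quasisymmetric distortion control at the critical moments. The quasiregular local model of Property Q2 (with qs-dilatation $H=H(K,\eps)$ bounded) gives a multiplicative estimate $|f^{-k-1}J : f^{-k-1}I| \le C(H,L)\,|f^{-k}J : f^{-k}I|$ at each critical moment, where $L$ bounds the preceding cross-ratio, and the bounded intersection multiplicity caps the number of such moments by $m$, so their aggregate contribution to the constant $C(K,\eps,m)$ remains finite. Your handling of the other two regimes (negative Schwarzian on the punctured $\de$-neighborhood; Denjoy telescoping away from $c_0$ using $\sum_k |I_k|\lesssim m$) essentially matches the paper's, so incorporating the qs estimate at the critical moments would complete the argument.
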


\begin{proof}
It is obtained by the standard cross-ratio distortion techniques, see
\cite{MvS}. To see  the role of various properties  of $f$, 
let us recall the main ingredients. 

\ssk \nin $\bullet$  {\it Denjoy Distortion control outside the
  $(\de/2)$-neighborhood of $c_0$}.  The distortion bound depends
  on $C^2$-norm of $f$  on $\T$  and 
  on $\displaystyle{\sum_{k\in \LL} |f^{-k} I| }$, where $\LL$ is the
  set of moments $k\leq n$ for which $f^{-k} I\cap (-\de/2, \de/2) =
  \emptyset$. The $C^2$-norm of $f$ depends only on $(K,\eps)$
 by compactness of $\Cir(K,\eps)$ and
the Cauchy control of the derivatives of holomorphic functions.
The total length of the intervals $f^{-k} I$ is bounded $m$.
 
\ssk \nin $\bullet$  
{\it Contraction of the cross-ratio in the punctured  $\de$-neighborhood of $c_0$.}
This is concerned with the moments $k\leq n$ when $f^{-k} I \subset
(-\de, \de)\sm \{0\}$. At these moments the hyperbolic  length  
$|f^{-k} J : f^{-k} I|$  is contracted under $f^{-1}$ by Proposition \ref{neg S}. 

\ssk \nin $\bullet$
{\it Quasisymmetric distortion control at the critical  moments.}
At the moments $k\leq n$ when $f^{-k} I \ni c_0$, we have:
$$
    | f^{-k-1} J : f^{-k-1} I| \leq C(H, L) \cdot | f^{-k} J : f^{-k} I | 
$$
where $L$ is an upper bound for $| f^{-k} J : f^{-k} I | $ and $H=H(K,\eps)$ is
the qs-dilatation of $f$ near $c_0$. Since the number of the critical  moments
is bounded by $m$, their contribution to the total   distortion is  bounded.
\end{proof}

\sss{No wandering intervals}

Recall that an interval $J\subset I$ is called {\it wandering} if
$f^nJ \cap J=\emptyset$ for any $n>0$. 
The above Koebe Distortion Bounds lead to the following generalization
of  Yoccoz's No Wandering Intervals Theorem  \cite{Y1}:

\begin{thm}
  A quasicritical circle map does not have wandering intervals.
\end{thm}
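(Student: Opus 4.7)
The plan is to adapt Yoccoz's classical proof \cite{Y1} (as developed for more general smooth and quasisymmetric settings in \cite{MvS}), using Theorem \ref{Koebe control} in place of the analytic Koebe estimates employed there. First I would reduce to the case of irrational rotation number: if the rotation number is rational $p/q$, then every orbit accumulates on a periodic orbit of $f^q$, and such a periodic orbit lies off $c_0$ after at most one iterate, so near it $f^q$ is a real-analytic diffeomorphism. A standard linearization or hyperbolic/parabolic normal-form argument there rules out wandering intervals accumulating on such a periodic orbit, since their forward iterates would have to shrink to zero while keeping their combinatorial order.

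Suppose now the rotation number is irrational and that $J$ is a wandering interval, which I would enlarge to a maximal one. Let $q_n$ be the closest-return denominators of the rotation number. The classical combinatorics of circle homeomorphisms provides, for each $n$, enlarged intervals $I_j \supset f^j(J)$ for $0 \leq j \leq q_n$ (obtained by adjoining the two adjacent closest-return intervals of level $n$) whose intersection multiplicity on $\T$ is bounded by a universal constant $m$. Applying Theorem \ref{Koebe control} to the pullback $f^{-q_n} : I_{q_n} \to I_0$, the hyperbolic length $|f^j(J) : I_j|$ varies by at most a uniform multiplicative constant across $0 \le j \le q_n$, giving $|J : I_0| \asymp |f^{q_n}(J) : I_{q_n}|$ with constants independent of $n$.

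The contradiction would then arise as follows: by the combinatorial choice of $I_{q_n}$, the ratio on the right stays bounded above uniformly in $n$, so $|J : I_0|$ stays bounded above. On the other hand, as $n \to \infty$ one may enlarge the ambient interval $I_0$ to swallow longer and longer forward orbits of $J$; since the iterates $f^j(J)$ are pairwise disjoint on $\T$, their total length is at most $1$, forcing $|J|/|I_0| \to 0$ and thus $|J : I_0| \to 0$.

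The main obstacle will be ensuring that the pullback path $f^{-q_n}$ is genuinely compatible with the critical crossings: each time the orbit of $I_j$ passes through $c_0$ one loses ordinary analytic Koebe control. This is exactly what is absorbed by the quasisymmetric distortion term in the third bullet of the proof of Theorem \ref{Koebe control}, combined with the negative Schwarzian estimate of Proposition \ref{neg S} governing the analytic stretches; the combinatorial fact that the pullback orbit traverses $c_0$ only a bounded number of times between closest returns of level $n$ is what keeps the overall distortion bounded and allows the argument to close.
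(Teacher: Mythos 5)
Your overall route — replace the classical analytic Koebe control in Yoccoz's argument by Theorem~\ref{Koebe control}, with the critical crossings absorbed via the quasisymmetric bullet of that proof and Proposition~\ref{neg S} handling the analytic stretches — is exactly the intent behind the paper's one-line derivation, which simply asserts that the Koebe Distortion Bounds ``lead to'' the no wandering intervals theorem. The reduction to irrational rotation number, and the introduction of enlarged intervals $I_j\supset f^j(J)$ of bounded intersection multiplicity so that Theorem~\ref{Koebe control} applies to the pullback chain $f^{-q_n}$, are all sound ingredients of the standard Yoccoz machinery (as in \cite{MvS}).

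The step that does not work is the final contradiction. As written, you derive $|J:I_0|\leq C$ uniformly in $n$ and then argue $|J:I_0|\to 0$; but an upper bound is perfectly compatible with a quantity tending to zero, so there is no contradiction at all. Moreover the argument offered for $|J|/|I_0|\to 0$ is itself incorrect: even if $I_0$ is enlarged to swallow longer and longer forward orbits of $J$, its length is trapped between $|J|>0$ and the circumference $1$, so $|J|/|I_0|$ stays bounded away from $0$ rather than going to it. To close the proof along Yoccoz's lines you need a two-sided estimate: Theorem~\ref{Koebe control} (with the bounded intersection multiplicity) controls the hyperbolic lengths $|f^j(J):I_j|$, $0\leq j\leq q_n$, up to a \emph{multiplicative constant depending only on $(\bar N,K,\eps)$}, and combinatorics of closest returns keeps the gaps in $I_j$ on either side of $f^j(J)$ comparable to the adjacent elements of the dynamical tiling. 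Together these force the lengths $|f^j(J)|$ for $0\leq j\leq q_n$ to be mutually comparable up to a constant independent of $n$. The genuine contradiction then comes from disjointness: one has $\sim q_n$ pairwise disjoint comparable intervals with total length $\leq 1$, so their common size is $O(1/q_n)\to 0$, while $|J|$ is a fixed positive number. Replacing your final paragraph with this mutual-comparability argument (and specifying that the $I_j$ are chosen from the level-$n$ dynamical tiling, so that their intersection multiplicity is bounded by a constant depending only on $\bar N$) would make the proof complete.
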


It follows by the classical theory (Poincar\'e's thesis) 
if $f\in \Cir$  does not have periodic points then it is topologically
conjugate to a rigid rotation 
$$
     T_\theta:  x\mapsto x+\theta\ \mod \ 1,
$$
where $\theta\in \R\sm \Q \ \mod\ \Z$ is the {\it rotation number } of $f$.

When we want to specify the rotation number of circle maps under consideration,
we will use notation $\Cir_\theta$ and $\Cir_\theta(K,\eps)$.

\sss{Bounded geometry and dynamical scales}

The further theory largely depends on the Diophantine properties of $\theta $
encoded in its continuous fraction expansion $[N1,\,  N2, \dots]$.  
Let  $p_m/q_m = [N_1,\dots, N_m]$ be the $m$-fold rational approximand to $\theta$. 
The rotation number (and the map $f$ itself) is called of {\it bounded
  type} if the entries of the expansion are bounded by some $\bar N$.
The spaces of circle map with rotation number bounded by $\bar N$
will be denoted $\Cir(\bar N)$,  $\Cir_\theta(\bar N, K, \eps)$, etc.
(depending on how many parameters we need to specify.

The Koebe Distortion Bounds also imply a more general version of the 
H\'erman-Swiatek Theorem \cite{H,Sw}: 

\begin{thm}\label{HS}
  A quasicritical circle map $f\in \Cir(\bar N, K, \eps)$ of bounded type is $H$-quasisymmetrically
  conjugate to the rigid rotation $T_\theta$, with $H= H(\bar N, K, \eps)$.     
\end{thm}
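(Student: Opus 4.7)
The strategy is to follow the classical Herman--\'Swi\c{a}tek route, adapted to the quasicritical setting, the central objects being the \emph{dynamical partitions} $\PP_m$ of $\T$ obtained by cutting along the orbit $\{f^k c_0\}_{0\le k<q_{m+1}}$. Their combinatorial structure depends only on $\theta$; in particular, the analogous partition $\PP_m^*$ for the rigid rotation $T_\theta$ has geometry bounded in terms of $\bar N$ alone (consecutive intervals have comparable Lebesgue length, with ratios controlled by $\bar N$). The goal is to transfer such a bounded-geometry statement to $\PP_m$ and conclude that the natural combinatorial bijection extends to an $H$-quasisymmetric conjugacy.

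The first step is to establish \emph{real bounds}: any two adjacent intervals of $\PP_m$ have $C$-comparable lengths with $C=C(\bar N, K, \eps)$, uniformly in $m$. I would argue inductively, pushing the comparability from one level to the next by means of Theorem \ref{Koebe control}. Between two consecutive critical passages, the relevant pullback branches of $f^{-n}$ have intersection multiplicity bounded in terms of $\bar N$, so Theorem \ref{Koebe control} gives bounded cross-ratio distortion as long as intervals remain in a definite neighborhood of $c_0$, with the negative Schwarzian from Proposition \ref{neg S} taking care of the punctured $\de$-neighborhood and Lemma \ref{qr contraction} supplying definite expansion off the real line when needed. At critical passages, the quasisymmetry of $f$ on $\T$ near $c_0$ (which follows from the $K$-quasiregularity Q2 and is quantitative in $K$ via Proposition \ref{local sector}) transfers bounded ratios from one side of $c_0$ to the other with a loss bounded by $H(K,\eps)$; the number of such passages between two consecutive levels $q_m$ and $q_{m+1}$ is bounded by $\bar N+1$, so the total loss over one level is bounded.

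Once the real bounds are in hand, the second step is straightforward. For each $m$, define a piecewise-affine homeomorphism $h_m:\T\to\T$ sending each $J\in\PP_m$ to its combinatorially corresponding $J^*\in\PP_m^*$. Since both partitions have bounded geometry with constants depending only on $(\bar N,K,\eps)$, each $h_m$ is $H$-quasisymmetric with $H=H(\bar N,K,\eps)$. The No Wandering Intervals Theorem, together with the minimality of $f$, guarantees that $\mathrm{mesh}(\PP_m)\to 0$, so the $h_m$ form an equicontinuous family and, after passing to the unique limit compatible with the combinatorial identifications, converge uniformly to a homeomorphism $h$ conjugating $f$ to $T_\theta$. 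Quasisymmetry is preserved under uniform convergence with uniform constants, so $h$ is $H$-quasisymmetric.

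The main obstacle is the real bounds step, and specifically the bookkeeping needed to verify that at each critical passage the distortion loss is governed solely by $(\bar N,K,\eps)$ and not by the chosen circle map. This is where the quasicritical (as opposed to analytic critical) nature of $f$ enters most sharply: the usual conformal estimates at $c_0$ are replaced by a combination of the negative Schwarzian in a $\de$-neighborhood, uniform quasisymmetric estimates on the critical transition (from Q2 and the compactness of $\Cir(\bar N,K,\eps)$), and the power-expansion lemma for the rescaled Epstein limits. These are exactly the three ingredients built into Theorem \ref{Koebe control}, so that theorem does the heavy lifting and the remainder of the argument is the standard combinatorial machinery in \cite{MvS}.
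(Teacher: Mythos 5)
Your proposal is correct and follows exactly the route the paper intends: the paper does not spell out a proof of Theorem \ref{HS}, but presents it as a corollary of the Koebe Distortion Bounds (Theorem \ref{Koebe control}) following the standard Herman--\'{S}wi\c{a}tek argument, with \cite{MvS} cited as the reference for the details. Your two-step scheme --- real bounds on the dynamical partitions via cross-ratio control (with the three ingredients of Theorem \ref{Koebe control} handling the Denjoy part, the negative-Schwarzian region, and the critical passages respectively), followed by the piecewise-affine approximations converging to the quasisymmetric conjugacy --- is precisely the argument the paper is pointing at.
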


The circle dynamics naturally encodes the continued fraction expansion
of the rotation number, as the denominators $q_n$ are the {\it
  moments of combinatorially closest approaches}%
\footnote{Meaning that these are the closest approaches for the
  corresponding circle rotation $T_\theta$.}
 of the critical orbit $\{c_n\}$ back
to the critical point $c_0$. Let us consider the corresponding
intervals  $I^n=[c_0, c_{q_n}]$ (i.e.,  the combinatorially
shortest intervals bounded by $c_0$ and $c_{q_n}$). The orbits of two consecutive ones,
\begin{equation}\label{circle tilings}
    f^k(I^n), \ k=1,,\dots, q_{n+1}-1\quad  {\mathrm{and}}
     \quad   f^k(I^{n+1})  ,   \ k=1,,\dots, q_n-1 ,
\end{equation}
together with the {\it central} interval $I^n_0: = I^n \cup I^{n+1}$  
form a dynamical tiling  $\II^n$ of $\T$.  Moreover, these tilings are
nested: $\II^{n+1}$ is a refinement of $\II^n$. 

We label the intervals $I^n_k\in \II^n$, $k=1,\dots, q_n+q_{n+1}-2$,  
in an arbitrary way.
Each of these intervals is homeomorphically  mapped onto either
$f^{q_{n+1} } (I^n)$ or
  $f^{q_n} (I^{n+1} ) $ by some iterate of $f$. We call it the 
{\it landing  map }   $L=L_n$ of  level $n$.
On the central interval $I^n_0$, we let $L_n=\id$. 


 In case  of bounded type,
Theorem \ref{HS} ensures that these
tilings have bounded geometry,%
\footnote{This property is also referred to as {\it real a priori bounds}.}
 i.e., the
neighboring tiles are comparable, and hence the consecutive nested
tiles are also comparable. 
 This gives us a notion of {\it
 $n$-th dynamical scale } at any point $z\in \T$ (well defined
up to a constant): it the the size of  any tile $I^n(z)\in \II^n$
containing $z$. 

 More precisely, 
let $C_0= C (\bar N, K, \eps)\geq 2 $ be an upper bound for the ratios of
any two neighboring  and any two  consecutive nested dynamical tiles. 
We say that a point $\zeta\in \C$ lies in $n$-th dynamical scale
around $z\in \T$ if
\begin{equation}\label{dyn scales}
           C_0^{-1} | I^n_k |  \leq   |\zeta - z| \leq C_0 | I^n_k | 
\end{equation}
for the  dynamical tile $I^n_k$ of depth $n$ containing $z$. 
Any point $\zeta\in \D_2$  lies in some dynamical scale around any
$z\in T$, and the number of such scales is bounded in terms of 
$(\bar N, K, \eps)$.

\subsection{Renormalization $R_\cp$ of circle pairs}
\label{renorm of circle pairs}
 
A quasicritical circle map can be represented as a discontinuous map of the fundamental interval $[c_1-1 , c_1 ] $,
which motivates the following definition:  a {\it  quasicritical circle pair}
$F=(\phi_+,\phi_+)$     
is a pair of real analytic homeomorphisms
\begin{equation}\label{circle pair}
     \phi_- : [\beta_- , 0 ) \ra  [b, \beta_+) , \quad \phi_+ :
     ( 0, \beta_+] \ra  ( \beta_-, b]  
\end{equation}
 with  some $\beta_-  \leq 0  \leq \beta _+$,  $\beta_+ - \beta_- = 1$,     
Moreover, $c_0=0$ is the only critical point of the $\phi_\pm$
and this point is of quasicubic type in the sense of property Q2 from 
\S \ref{quasicrit circle maps}. Properties Q3 and Q4 are also easily translated to this
setting.


\msk
{\it Renormalization} $R_\cp$ of circle pairs is defined as
follows.  
In the degenerate case  $\beta_-=0$ or $\beta_+=0$  
(so that  the critical point is fixed under $\phi_+$ or $\phi_-$) 
$F$ is non-renormalizable.
In the non-degenerate case,   assume for definiteness that $ b \in (\beta_-, 0] $
(otherwise , one should change the roles of $\beta_-$ and $\beta_+$).
If  $\phi_-^N(\beta_-) \leq 0$ for all $N\in \N$
 (equivalently,  there is a fixed point in $(\beta_-, 0)$) 
then $F$ in  still non-renormalizable.%
\footnote{In other words, maps with zero rotation number are non-renormalizable.}
Otherwise, let $N\geq 1 $ be the biggest integer such that 
$$
   \beta_-' := \phi_-^{N}(\beta_- ) \leq 0, \quad  \beta'_+= \beta_+,
$$
and let 
$$
  \phi_-'|\, [\beta_-' , 0] = \phi_-,        \quad      \phi_+'|\, [0, \beta_+']  = \phi_-^{N}\circ \phi_+.
$$
Rescaling the interval $[\beta_-', \beta_+']$ to the unit size by an
orientation preserving%
\footnote{Under the usual convention, the rescaling is orientation
  reversing. However,  in  further applications to Siegel maps, this
  would lead to some inconvenience.}
 linear map, we obtain $R_\cp F$. 

To see how the renormalization acts on the rotation numbers,
let us consider the linear case (corresponding to the pure rotation). 
In this case, 
a convenient normalization of $F$ is to let 
$\max (|\beta_-|, \beta_+)=1$ 
leaving only one parameter 
$\beta = \min (|\beta_-|, \beta_+) \in [0,1]$
(related to the rotation number $\theta$ of $f$ by  $\theta = \beta_+/(1+\beta)$). 
Then $N$ is the biggest integer such that $N\beta \leq  1$, so $N$ is the
integer part of
$1/\beta$.  Under the renormalization, we obtain 
$$
  \beta' = \frac{1-N\beta} {\beta} = \frac 1\beta \ \mod \Z,
$$ 
which is the Gauss map applied to $\beta$. In this way, 
 the continued fraction expansion of $\beta$ (and hence $\theta$)  is directly 
related to the renormalization dynamics.%
\footnote{It is worth noting that in this renormalization scheme,
the cases $\beta_+=1$ and $\beta_-=-1$ alternate.}


\sss{Epstein class}

We say that a quasicritical circle pair $F= (\Phi_\pm)$ 
 belongs to {\it Epstein class} $\EE $  if
 the inverse  maps $\Phi_\pm^{-1}$ admit a conformal extension to 
the whole  upper  half-plane $\{ \Im z > 0\}$. 
Letting $I_-= [\beta_-, c_0]$, $I_+ = [c_0, \beta_+]$, we see  by
symmetry that the  maps $\Phi_\pm^{-1}$  admit a conformal extension to the plane slit
along two rays.  $\C_\pm = \C\sm ( \R \sm \Phi_\pm (I_\pm) ) $.
For $\de>0$, we let  $\C_\pm(\de)$ be a similar domain 
where  the interval 
 $\Phi_\pm (I_\pm) $ is scaled by factor  $(1+\de)$. 

\comm{*******************
We say that a quasicritical circle map $f: \R/\Z \ra \R/\Z$  belongs
to {\it Epstein class} $\EE$  if the inverse branches of it lift to $\R$, 
$$
    f^{-1}: (c_1,c_1+ 1)\ra (0,1) \quad  {\mathrm {and}}\quad  f^{-1}:  (c_1, c_1-1) \ra
    (-1, 0), 
$$
admit    conformal extensions to the whole upper and lower half-planes.

(Here and  in what follows, we notationally identify  a circle map with 
its lift.) 

Let us extend this definition to a  {\it quasicircle pair}
$F=(\phi_\pm)$, where $\phi_\pm:  I_\pm \ra \R$
(\ref{circle pair})  in a way that secures an appropriate {\it
  space} around the intervals $I_\pm$.

Let us consider a quasicritical circle pair $F= (\phi_\pm)$ with
$$I_-= [\beta_-,0] , \quad I_+=[0,\beta_+].$$  
We say that $F= (\phi_\pm)$ 
 belongs to {\it Epstein class} $\EE(\de) $  if

\ssk\nin $\bullet$
  The four intervals $I_\pm$, $\phi_\pm(I_\pm)$ are $\de^{-1}$-comparable.
 
\ssk\nin $\bullet$
  $\phi_\pm=  \psi_\pm\circ h_\pm$, where $h_\pm\in \Cir(\de^{-1},\de) $,
 while  the inverse  maps $\psi_\pm^{-1}$ admit a conformal extension to 
the slit plane 
$\C(\beta_--\de, \beta_+ + \de)$
 (this notation was introduced  in  the proof of 
Lemma \ref{neg S in   Epstein class}).  
***************}

\ssk

Let us consider another  quasicritical circle map  $f = (\phi_\pm)$
and  write its renormalizations as 
as
$$
     R_\cp^m f= (\phi_{m,\pm} ) = (\psi_{m,\pm } \circ \phi_\pm) 
$$
(splitting off the first iterate of $f$).
We say that  they converge (along a subsequence)  to $F\in \EE $ as above
if the latter can be represented as 
$$
\mbox{
$F=(\Phi_\pm)  = (\Psi_\pm\circ \phi_\pm) , $ 
}
$$
and   
there exists $\de>0$ such that  for any domain $\Om_\pm $ compactly
contained in the slit plane 
$   \C _\pm(\de) $, 
the inverse maps $(\psi_{m,\pm} ) ^{-1}$
are  eventually  defined on  $\Om_\pm$ and uniformly converge to
$\Psi_\pm^{-1}$ on it (along the subsequence in question).

The real {\it a priori} bounds imply, in the standard way,  precompactness of the
renormalizations, with all limits in the Epstein class,  see \cite{dFdM}: 

\begin{prop}\label{Epstein limits}
  For a quasicritical circle map $f\in \Cir(\bar N, K,\eps)$ of
  bounded type, 
any  sequence of the renormalizations $R_\cp^n f$  admits
a subsequence
converging to a quasicritical circle pair of Epstein class $\EE (\de)$
with $\de=\de(\bar N, K,\eps)$. 
\end{prop}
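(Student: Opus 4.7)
My plan is to combine the real \emph{a priori} bounds (Theorem \ref{HS}) with a standard factorization of the renormalization into a single critical step and a long composition of diffeomorphic steps, then extract Epstein-class limits from the latter via Koebe/Schwarz arguments. First, by Theorem \ref{HS} applied to $f\in \Cir_\theta(\bar N, K,\eps)$, the map is $H$-quasisymmetrically conjugate to the rigid rotation $T_\theta$ with $H=H(\bar N, K,\eps)$. This gives the dynamical tilings $\II^n$ of (\ref{circle tilings}) $C_0$-bounded geometry and yields bounded intersection multiplicity $m=m(\bar N)$ for the pullbacks $f^{-k}(I^n_j)$ with $k\leq q_{n+1}$. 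Together with the contraction of cross-ratios under $f^{-1}$ near $c_0$ (Proposition \ref{neg S}) and the quasisymmetric distortion at the critical moment, this puts us in the setting of Theorem \ref{Koebe control}.

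Next, I decompose the renormalization. For each $n$, the pair $R_\cp^n f=(\phi_{n,\pm})$ is a rescaling of two iterates $f^{N_{n,\pm}}$ of $f$ along orbits starting in dynamical intervals adjacent to $c_0$. Let $\phi_\pm$ denote (up to rescaling) the single critical step $f|\,I_\pm^{(n)}$, and write
\[
     \phi_{n,\pm} = \psi_{n,\pm}\circ \phi_\pm,
\]
so that $\psi_{n,\pm}$ is a composition of $N_{n,\pm}-1$ further iterates of $f$, each acting as a real-analytic diffeomorphism on the relevant orbit interval (these intervals do not contain $c_0$, by the combinatorics of renormalization). Because $f$ is analytic on $\Dom^h f \supset \T\sm\{c_0\}$, each of these diffeomorphic steps extends holomorphically to a definite neighborhood of its real domain, and the full composition $\psi_{n,\pm}$ extends holomorphically wherever the intermediate pullbacks remain in $\Dom^h f$.

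To promote this local extension to a uniform slit-plane extension, I apply the Koebe-type bounds of Theorem \ref{Koebe control} to the inverse composition $\psi_{n,\pm}^{-1}$. The bounded intersection multiplicity of the pullbacks and the $C_0$-bounded geometry of the $\II^n$ imply that after rescaling to unit size, the images $\psi_{n,\pm}(I_\pm^{(n)})$ sit inside a region whose complement, in the rescaled coordinates, blows up to fill $\C\sm \R$ minus two bounded slits $\Psi_\pm(I_\pm)$, with a definite buffer of size $\de=\de(\bar N,K,\eps)$ around those slits. Equivalently, the Poincar\'e domains of the inverse branches $\psi_{n,\pm}^{-1}$ contain the slit domains $\C_\pm(\de)$ introduced after (\ref{real hyp metric}). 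The inverse branches restricted to these slit planes form a normal family of univalent maps with real fixed normalization, so by Koebe's theorem we can pass to a subsequence converging locally uniformly to univalent $\Psi_\pm^{-1}:\C_\pm(\de)\to \C$.

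Finally, the rescaled critical pieces $\phi_\pm$ live in a compact family of normalized $(K,\eps)$-quasicritical maps (the dilatation is preserved under rescaling, and the cubic-type normalization is preserved by the affine rescaling in the definition of $R_\cp$). Extracting a further subsequence yields a limit pair whose branches have the form $\Phi_\pm=\Psi_\pm\circ \phi_\pm$, which is precisely an element of the Epstein class $\EE(\de)$. The main obstacle is the third step: verifying that the slit-plane extensions of $\psi_{n,\pm}^{-1}$ enjoy \emph{uniform} space $\de$, independent of $n$. This is where the full force of bounded geometry and the hyperbolic contraction property of inverse branches across arbitrarily many iterations (powered by negative Schwarzian near $c_0$ from Proposition \ref{neg S} and by Denjoy control away from it) is needed, exactly in the spirit of \cite{dFdM} and \cite{MvS}.
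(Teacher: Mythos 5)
Your proposal is essentially the route the paper has in mind: the paper states the proposition with only the one-line attribution "the real \emph{a priori} bounds imply, in the standard way, precompactness of the renormalizations, with all limits in the Epstein class, see \cite{dFdM}", and your proof correctly fills in that skeleton. The decomposition $\phi_{n,\pm}=\psi_{n,\pm}\circ\phi_\pm$ splitting off the single critical step is exactly the one the paper introduces just before the proposition, and the three ingredients you identify --- real bounds from Theorem~\ref{HS}, negative Schwarzian near $c_0$ from Proposition~\ref{neg S} via Theorem~\ref{Koebe control}, and a normal-family extraction for the inverse diffeomorphic branches --- are the right ones.

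Two cautionary remarks. First, your third step ("promote this local extension to a uniform slit-plane extension") is, as you acknowledge, the entire analytic content of the proposition, and the sentence you offer in its support is circular in phrasing: you cannot assert a uniform buffer $\de(\bar N,K,\eps)$ around the slits merely from bounded intersection multiplicity and $C_0$-bounded geometry; those are real statements, while the extension of $\psi_{n,\pm}^{-1}$ to a definite slit plane is a complex one. The actual mechanism, as in \cite{dFdM} and \cite{LY}, is to pass each local holomorphic inverse branch of $f$ (away from $c_0$) through the Schwarz lemma between slit planes at the scale dictated by the real bounds, and to use that the intermediate intervals shrink geometrically so that in the rescaled coordinates the domains of definition enlarge to fill any compact of the slit plane. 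Second, note the logical order: you must not invoke Theorem~\ref{bounds for comm pairs} here, since its proof begins precisely by reducing to the Epstein case via the present proposition; your argument does avoid this, which is correct. Given that the paper itself defers the estimate to \cite{dFdM}, your proof is at an appropriate level of detail, but a self-contained version would need the Schwarz-lemma-on-slit-planes argument made explicit.
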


\subsection{Complex bounds and butterfly}

\sss{Holomorphic circle pairs (butterfly)}

Let us now complexify the above notions. A {\it holomorphic circle
  pair}  or a {\em butterfly map }
\begin{equation}\label{hol circle pair} 
    F=  (\phi_-, \phi_+) : (\hat X_- ,  \hat X_+ ) \ra  \hat Y
\end{equation}
is a holomorphic extension of a real  circle pair  $(\phi_- , \phi_+ ): (I_-, I_+)\ra \R$ with the following properties:

\ssk \nin $\bullet$ $\hat X_\pm\supset \inter I_\pm $ are disjoint
$\R$-symmetric  Jordan disks whose closures  touch only at $0$;
we let $X_\pm= \hat X_\pm\cap \{ \Im z >0\} $; 

\ssk \nin $\bullet$ $\hat Y$ is an $\R$-symmetric  topological disk compactly containing the $X_\pm$;
we let $Y= \hat Y\cap \{ \Im z>0 \}$; 

\ssk \nin $\bullet$ Each   $\phi_\pm $ maps the corresponding  $X_\pm$
univalently onto  $Y$;


\ssk \nin $\bullet$ The maps $\phi_\pm$ admit a quasiregular extension
to a neighborhood of $c_0$ with local degree $3$.


The configuration of domains $ X_+\cup X_-$ sitting inside $Y$ is called a {\it butterfly}.

Let us mark in $\hat Y$ the critical point $c_0=0$, and in $\hat X_\pm$ the critical
value $c_\mp = \phi_\mp (0)$. 
We say that a butterfly has a {\it $\kappa$-bounded shape} if each of  the marked domains involved
can be mapped onto  the unit disk $(\D,0)$  by a global $\R$-symmetric $\kappa$-qc
map.

\ssk
Let $\mod F= \min (\mod (Y\sm X_\pm)) $.

\sss{Complex bounds}
We are ready to state a quasicritical version of 
de Faria-Yampolsky  complex bounds \cite{dF,Ya-bounds} : 

\begin{thm}  \label{bounds for comm pairs}
 Let  $f\in \Cir (\bar N, K,\eps) $ be a quasicritical circle map of bounded type.
 Then there exists an $\underline{l}$ depending only on $(\bar N, K,\eps)$ 
 such that for all  $m\geq \underline{l}-1$
the renormalizations $R_\cp^m f $ can  be represented as 
a butterfly $  X^m_ - \cup  X_+^m \ra   Y^m$ of bounded shape  such
that 
$\hat Y^{\underline {l}-1 }\Supset \hat Y^{\underline{l}}\Supset\dots$
and
$$
      \mod (\hat Y^m\sm \hat Y^{m+1} ) \geq \mu, \quad \mod R_\cp^m \, f \geq \mu >0.
$$ 
Moreover, the boundary $\di X^m_+ $ near $c_0$ is a wedge of
angle $\pi/3$ obtained by taking $f$-preimage of  $[c_1, c_1 + \de_m^3)$
with $\de_m \asymp \diam Y^m$
(and similarly for $X_-$, using the other half-neighborhood of $c_1$).

All  geometric constants and bounds  depend only on $(\bar N, K, \eps)$. 
\end{thm}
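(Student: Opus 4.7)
The plan is to adapt the classical de~Faria--Yampolsky argument to the quasicritical setting, replacing holomorphic inputs at the critical point by the quasi-regular control supplied by Q2--Q4 and the local sector Proposition~\ref{local sector}. The core idea is: the \emph{non-critical} pullbacks that assemble a renormalization are almost-M\"obius by Koebe (Theorem~\ref{Koebe control}), so the complex geometry they generate is controlled by the real geometry, which is already bounded by Theorem~\ref{HS}; the \emph{critical} pullback is where the wedge description in the statement comes from, and it is handled directly by Q2 and the power expansion Lemma~\ref{qr contraction}.

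First I would seed the construction. Using Theorem~\ref{HS} one gets real bounds at every scale: the central interval $I^m_0$ and its immediate $\phi_\pm$-images are pairwise comparable with constants depending only on $(\bar N,K,\eps)$. For $m$ slightly larger than some $\underline l=\underline l(\bar N,K,\eps)$, I would pick as a ``seed'' domain $\hat Y^{\underline l-1}$ an $\R$-symmetric topological disk of bounded shape whose real trace is a definite neighborhood of the central interval at level $\underline l-1$, chosen so that (i) $\hat Y^{\underline l-1}$ is compactly contained in $\Dom f$, (ii) it sits in the region where Proposition~\ref{neg S} gives negative Schwarzian for $f$, and (iii) near the critical point $c_0$ its preimage by $f$ is precisely the wedge of angle $\pi/3$ described in the statement (which is forced by $f=h^3+c_1$ from Q2). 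Existence of such a disk, with the required bounded shape, is a finite-scale check; it may require pushing the starting level $\underline l$ up by a bounded amount, but this only affects the constants.

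Next I would run the pullback. The pre-renormalization $R_\cp^m f$ is a concatenation of one application of $\phi_\pm$ (critical) followed by a bounded number of iterates of $\phi_-$ (all diffeomorphic) whose intersection multiplicity on the relevant interval $I^{m+1}\cup I^m$ is bounded in terms of $\bar N$. Pulling $\hat Y^{\underline l-1}$ back by these diffeomorphic branches and using Theorem~\ref{Koebe control} produces an $\R$-symmetric Jordan disk of bounded shape, sitting in $\hat Y^{\underline l-1}$ with a definite modulus of annulus around it, since the real base intervals shrink by a definite factor at each renormalization level by the real bounds. This yields $\hat Y^m\Supset \hat Y^{m+1}$ with $\mod(\hat Y^m\sm\hat Y^{m+1})\geq\mu$. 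Taking one more critical pullback of $\hat Y^m$ by $\phi_\pm$ produces the pieces $\hat X^m_\pm$; away from $c_0$ this is a univalent holomorphic pullback controlled by Koebe, while near $c_0$ property Q2 together with Proposition~\ref{local sector} forces the boundary to be a wedge of opening $\pi/3$, with radius $\asymp\diam Y^m$ by Q3 and bounded shape of $Y^m$. Disjointness of $\hat X^m_+$ and $\hat X^m_-$ off $c_0$ follows from disjointness of the real base intervals and the local sector description.

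The main obstacle, and the one that demands genuine input beyond bookkeeping, is the uniform lower bound $\mu>0$ on $\mod R_\cp^m f=\mod(\hat Y^m\sm(\hat X^m_+\cup\hat X^m_-))$ independent of $m$ and of the particular $f\in\Cir(\bar N,K,\eps)$. I would argue by compactness and contradiction: if along some sequence the modulus degenerated, one could extract, using the real bounds, a subsequential limit of the renormalizations along which (by Proposition~\ref{Epstein limits}) the inverse branches $\Phi_\pm^{-1}$ converge to a quasicritical circle pair of Epstein class, while the limiting butterfly would have zero modulus at a non-degenerate limit. This forces the limit inverse branches to send a non-trivial round disk to the real line, contradicting openness of the maps in Epstein class. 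Once this compactness step is in place, all other assertions (bounded shape, nesting, the wedge structure, and $\mod R_\cp^m f\geq\mu$) are consequences of the construction and of the Koebe distortion bounds of Theorem~\ref{Koebe control}.
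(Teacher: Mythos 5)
Your plan follows the de~Faria--Yampolsky architecture, and the wedge description of $\di X^m_\pm$ via Q2, Proposition~\ref{local sector} and Lemma~\ref{qr contraction} is handled correctly. But the step you rightly single out as the main obstacle --- the uniform lower bound $\mod R_\cp^m f \geq \mu$ --- has a genuine gap. Your compactness argument correctly reduces, via Proposition~\ref{Epstein limits}, to proving a butterfly with definite modulus exists for a (non-degenerate) Epstein pair, but it does not resolve that case: ``zero modulus at a non-degenerate limit'' only says that $\overline{X_\pm}$ and $\di Y$ touch somewhere; it neither forces an open disk to be squeezed onto $\R$ nor contradicts openness of the inverse branches, so the claimed contradiction does not materialize. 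Nothing in your argument rules out the pullbacks of a fixed seed domain being progressively flattened against the circle at deep scales.

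What is missing is the quantitative input the paper calls the Key Estimate~(\ref{key estimate}). For an Epstein pair one factors the long pullback as $F^q|_I = \psi\circ F$ with $\psi^{-1}: J\to f(I)$ \emph{conformal} between slit planes; the real bounds plus the Schwarz lemma for holomorphic maps of slit planes then give $\dist(\psi^{-1}(z),f(I))/|f(I)| \leq A\,\dist(z,I)/|I| + B$, i.e.\ the relative excursion off $\R$ grows only affinely under the non-critical part of the pullback. The last, critical pullback by $F^{-1}$ is then controlled by Lemma~\ref{qr contraction} --- which you invoke only to identify the wedge, whereas it is actually needed here to absorb the additive drift $B$, since near $c_0$ the inverse is power-contracting at large relative scales. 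Pulling a big hyperbolic neighborhood $\De$ of the central interval (in the slit plane, not a fixed seed disk) back by $F^q$, the Key Estimate traps the pullback well inside $\De$ and produces a butterfly with definite modulus; the bounded shape and nesting then follow by shrinking $\hat Y^l$ and pulling back, as you describe. A secondary issue is that Theorem~\ref{Koebe control} is a real cross-ratio distortion bound and by itself does not yield the ``bounded complex shape'' of the pullback domains you assert; the slit-plane/big-hyperbolic-disk setup supplies the needed complex Koebe control for free, and is not interchangeable with iterating a small fixed seed.
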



\begin{proof}
The proof is  the same as in the analytic case 
(at the last moment making use of Lemma \ref{qr contraction}).
We will remind main steps  following the strategy of \cite{LY,Ya-bounds}. 

Due to Proposition \ref{Epstein limits},
it is sufficient to prove the bounds for 
circle pairs $F$ of Epstein class.

Let us consider an interval $I=I^n\in \II^n $ attached to the critical
point, and let $q=q_{n+1}$, $J = f^q (I)$.
Then $F^q |\, I$ can be decomposed as $\psi \circ F$ where
$\psi^{-1}: J\ra f(I) $ admits a conformal extension to the slit plane 
$ \C( J) $.   Here is the Key Estimate:
for  any $z$ outside $\T$, we have: 
\begin{equation}\label{key estimate}
      \frac{\dist (\psi^{-1}  (z)) , | f(I)| } { |f(I)| } \leq A \left(
            \frac{\dist (z, I )}  {|I|}\right) +B. 
\end{equation}
The proof uses only the real bounds and the Schwarz lemma for holomorphic
maps between slit planes.
 As  both these ingredients are available for our
class (as we always apply only holomorphic inverse branches of $F$), 
the Key Estimate is valid in this generality.

At the last moment we apply the inverse branch of the
cubic quasiregular map $F$ near its critical point. By  Lemma \ref{qr contraction},   it  is highly contracting in big
(rel $|I|$) scales, which implies (\ref{key estimate}).
%
%

%

Take now a big hyperbolic neighborhood $\De$ of  $L_\de(F))$  in the slit plane $\C(L_\de(F))$
and pull it back by $f^q$. The Key Estimate easily implies that the
pullback will be trapped well inside $\De$. This produces a 
butterfly with a definite modulus $\mu$. 

Slightly shrinking $\hat Y^l$ (using the space in between $\hat Y^l$ and
the $\hat X^l_\pm$)  and taking its pullbacks under $R^l f$, we obtain a butterfly with a
bounded shape.
\end{proof}

\sss{Expansion}

For $z\in \C\sm \bar \D$ near $c_0$, 
we will use notation $\ang z $ for the smallest angle between $z$ and $\R$
in the $\C/\R$-model with  $c_0 = 0$.
Together with the Schwarz Lemma, the above complex bounds imply:

\begin{cor}\label{expansion for circle maps}
Under the circumstance of Theorem \ref{bounds for comm pairs},
the butterfly renormalizations $f_m:= R_\cp^m f$ are expanding in the hyperbolic
metric of $Y^m$. Moreover, 
$$
     \| D f_m (z) \|_\hyp \geq \rho >1 
$$ 
with $\rho$ depending only on $(\bar N, K, \eps)$ and a lower bound
on $\ang z$ 
\end{cor}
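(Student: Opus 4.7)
The plan is to exploit the fact that each butterfly map $\phi_\pm : X_\pm^m \to Y^m$ is a conformal isomorphism onto a strictly larger target. Since $\phi_\pm$ pulls back the hyperbolic density $\rho_{Y^m}$ to the hyperbolic density $\rho_{X_\pm^m}$, the derivative measured in the ambient $Y^m$-hyperbolic metric on both source and target satisfies
$$
\|Df_m(z)\|_\hyp \;=\; \frac{\rho_{X_\pm^m}(z)}{\rho_{Y^m}(z)}.
$$
By the Schwarz--Pick lemma applied to the inclusion $X_\pm^m \hookrightarrow Y^m$, this ratio is pointwise $>1$ strictly, since $X_\pm^m \neq Y^m$. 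The task is to upgrade this to a uniform lower bound $\rho > 1$ on the locus $\{z \in X_\pm^m : \ang z \geq \theta_0\}$.

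I would first handle $z$ at a definite distance from $c_0$. There Theorem~\ref{bounds for comm pairs} provides the complex bounds $\mod(Y^m \setminus \overline{X_+^m \cup X_-^m}) \geq \mu$ together with a uniformly bounded shape for the butterfly, so the quantitative Schwarz lemma for inclusions with a definite separating annulus yields $\rho_{X_\pm^m}(z) \geq (1+\veps)\rho_{Y^m}(z)$ with $\veps = \veps(\mu) > 0$, uniformly on any such compact region.

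The delicate case is $z$ near the critical point $c_0 = 0$, where both densities blow up simultaneously because $c_0 \in \partial X_\pm^m \cap \partial Y^m$ (the real axis lies on the boundary of the upper-half-plane representation of $Y^m$). Here I would invoke the explicit wedge description from Theorem~\ref{bounds for comm pairs}: near $c_0$ the boundary of $X_\pm^m$ is a wedge of angle $\pi/3$, whereas $Y^m$ is locally a half-plane (angle $\pi$). Uniformizing via $z \mapsto z^3$ and the identity respectively, and pulling back $\rho_\Hyp(w) = 1/\Im w$, the ratio at $z = r e^{i\theta}$ close to $c_0$ computes to
$$
\frac{\rho_{X_\pm^m}(z)}{\rho_{Y^m}(z)} \;=\; \frac{3\sin\theta}{\sin(3\theta)} + o(1) \qquad \text{as } r \to 0,
$$
which is strictly $>1$ on $\theta \in (0,\pi/3)$ and bounded below by a definite constant as soon as $\theta = \ang z \geq \theta_0$.

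The main obstacle is precisely the boundary touching at $c_0$: without the angular condition the Schwarz--Pick ratio degenerates to $1$ as $z \to c_0$ along $\R$. The hypothesis $\ang z \geq \theta_0$ rescues us, and the fact that the complex bounds supply a wedge of angle $\pi/3$ rather than $\pi$ for $X_\pm^m$ converts the local issue into the explicit trigonometric estimate above; combining the two regimes gives the desired uniform $\rho = \rho(\bar N, K, \eps, \theta_0) > 1$.
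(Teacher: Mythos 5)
Your argument is correct and follows the same route as the paper's: express $\|Df_m\|_\hyp$ as the ratio $\rho_{X^m_\pm}/\rho_{Y^m}$, observe by Schwarz--Pick that it exceeds $1$, and note that the only obstacle to a uniform bound is the boundary contact along $\R$, which the wedge-of-angle-$\pi/3$ description from Theorem~\ref{bounds for comm pairs} together with the lower bound on $\ang z$ removes. The paper keeps the $c_0$-analysis qualitative (bounding $\dist_\hyp(z,\partial X^m_\pm)$ in $Y^m$ via the relative Euclidean distance), whereas you make the local density comparison explicit via the sector models and the closed formula $3\sin\theta/\sin(3\theta)=3/(3-4\sin^2\theta)$; this is a concrete realization of the same wedge mechanism, not a different argument.
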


\begin{proof}
Let $z\in X^m_+$, for definiteness. 
  The hyperbolic  expanding factor is equal to the inverse of  $\| Di (z) \|_\hyp$, where
  $i: X^m_+\ra Y$ is the embedding, and the norm is measured from the
  hyperbolic metric of $X^m_+$ to the one of $Y^m$. This norm  is
  bounded in terms of the upper bound on $\dist_\hyp(z, \di X^m_+)$
  measured in $Y^m$, which in turn, is controlled by  the relative
  Euclidean distance, 
$\dist(z, \di X^m_+)/ \dist(z, Y^m)$. 
Finally,  complex bounds  
(and in particular, the  wedge property of the butterfly)
imply that the latter is  bounded in terms of
the lower bound on $\ang z$. 
\end{proof}

\begin{cor}\label{expansion for circle maps-2}
   Let $f\in \Cir (\bar N, K,\eps)$ be a quasicritical circle map.
Then there exist $a>0$ and  $\rho >1$ depending on 
$ (\bar N, K,\eps) $  only such that 
if $z\in Y^m\cap \Dom^h f^n$ while  $f^n z\in Y^{m-k}$ for some $n\in \N$,  $0< k< m$
(with $m-k> ul $), 
then $$
    \| Df^n (z) \|_\hyp \geq a \rho^k,
$$ 
where the norm is measured in the hyperbolic metric of  $\C\sm \bar \D$.
\end{cor}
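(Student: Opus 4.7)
The strategy is to chain $k$ applications of Corollary \ref{expansion for circle maps}, one for each butterfly level traversed in transit from $Y^m$ to $Y^{m-k}$. The mechanism is that escape from any deeper butterfly to the next shallower one necessarily goes through the butterfly legs $X^j_\pm$, whose wedge geometry at $c_0$ both supplies a lower bound on $\ang$ and identifies the step of the orbit at which Corollary \ref{expansion for circle maps} may be invoked.

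First I would extract a level-crossing sequence. By Theorem \ref{bounds for comm pairs} the butterflies are nested $\hat Y^m \Subset \hat Y^{m-1} \Subset \cdots \Subset \hat Y^{m-k}$ with $\mod(\hat Y^j \setminus \hat Y^{j+1}) \geq \mu$. Because $z \in Y^m$ and $f^n z \in Y^{m-k}$, for each $i = 1, \ldots, k$ there is a time $\tau_i \in [0, n]$ at which $f^{\tau_i} z$ sits in a leg $X^{m-i+1}_\pm$ and $f^{\tau_i+1} z$ lies outside $Y^{m-i+1}$; indeed any exit from $Y^{m-i+1}$ must go through $X^{m-i+1}_\pm$ by the butterfly topology. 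Concretely one may take $\tau_i = \max\{t \leq n : f^t z \in Y^{m-i+1}\}$ and observe that the last step before exit occurs inside the legs. The wedge opening of $\pi/3$ at $c_0$ in Theorem \ref{bounds for comm pairs} then yields a lower bound on $\ang f^{\tau_i} z$ depending only on $(\bar N, K, \eps)$.

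Next I would apply Corollary \ref{expansion for circle maps} at each $\tau_i$ to the butterfly renormalization $f_{m-i+1} = R_\cp^{m-i+1} f$, producing expansion by a factor $\geq \rho > 1$ in the hyperbolic metric of $Y^{m-i+1}$. To chain these estimates into a single bound, I would compare hyperbolic densities across levels: the modulus bound $\mod(\hat Y^j \setminus \hat Y^{j+1}) \geq \mu$ implies (by the standard Schwarz--Koebe comparison) that $\rho_{Y^j}$ and $\rho_{Y^{j+1}}$ differ by a factor depending only on $\mu$ on the inner butterfly. Thus each level change costs at most a bounded multiplicative factor, which can be absorbed by replacing $\rho$ with an effective rate $\rho' > 1$ (by grouping several pre-renormalizations per level if needed, using Theorem \ref{bounds for comm pairs} inductively). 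Finally, since all relevant butterflies $Y^j$ with $j \geq \underline{l}$ sit in a fixed compact subregion of $\C \setminus \bar \D$, the hyperbolic metrics of any $Y^j$ and of $\C \setminus \bar \D$ differ on that region by a bounded factor; absorbing this into the constant $a$ yields the claimed bound $\|Df^n(z)\|_\hyp \geq a \rho'^k$.

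The main obstacle is keeping the per-level expansion from collapsing when the orbit flirts with the critical point, where the hypothesis of Corollary \ref{expansion for circle maps} degenerates. Two devices reinforce the argument. First, the wedge structure of $\partial X^j_\pm$ forces any transition from $Y^{j+1}$ into $Y^j$ to pass through a region in which $\ang$ is bounded below, so one always has a legitimate place to invoke the previous corollary. Second, if the orbit does venture into the cubic quasiregular sector at $c_0$, the inherent Euclidean expansion of the cube map (Lemma \ref{qr contraction}) provides a substitute quantitative factor of $|z|^{1+\si}$, which more than compensates for a temporary loss of hyperbolic expansion. Balancing these two contributions level by level, one confirms that every transition contributes a definite factor, so the total expansion is exponential in $k$ as asserted.
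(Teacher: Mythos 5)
The overall strategy matches the paper's: chain Corollary \ref{expansion for circle maps} at the $\asymp k$ level crossings and convert to the hyperbolic metric of $\C\sm\bar\D$. But there are two genuine gaps.

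The closing step --- that ``all relevant butterflies $Y^j$ with $j \geq \underline{l}$ sit in a fixed compact subregion of $\C \setminus \bar\D$'' --- is simply false. By construction $\hat Y^j$ is symmetric with respect to the circle and contains a sub-arc of $\T$, so $Y^j$ touches $\T$; no $Y^j$ is compactly contained in $\C\sm\bar\D$. Moreover the nest $\hat Y^{\underline l-1}\Supset \hat Y^{\underline l}\Supset\cdots$ shrinks geometrically to $c_0$ (Theorem \ref{bounds for comm pairs}), so both the domains and the $\C\sm\bar\D$-density restricted to them escape to infinity. Because the comparison is not uniform over $Y^j$, it also cannot be ``absorbed into $a$'' once and for all; the density $\rho_{Y^j}$ blows up near $\di Y^j\sm\T$ where $\rho_{\C\sm\bar\D}$ stays finite. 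What the paper actually uses is the \emph{local} comparability: at a point that is well inside $Y^{j}$ and at a definite angular distance from $\T$ (precisely what ``lands in the middle of $X^{j}_\pm$'' encodes), both $\rho_{Y^{j}}$ and $\rho_{\C\sm\bar\D}$ are $\asymp 1/\dist(\cdot,\T)$, with constants depending only on $(\bar N, K, \eps)$. This is what justifies ``this metric restricted to $Y^m$ is boundedly equivalent to the hyperbolic metric on $\C\sm\bar\D$'' in the paper's proof, and it needs to be applied at \emph{each} landing point (not globally).

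The opening step --- ``any exit from $Y^{m-i+1}$ must go through $X^{m-i+1}_\pm$ by the butterfly topology'' --- is also not adequately justified. The legs $X^j_\pm$ are compactly contained in $Y^j$ (well away from the outer boundary $\di^o Y^j$), so there is no topological obstruction to a single step $w\mapsto f(w)$ carrying a point from $Y^j\setminus(X^j_+\cup X^j_-)$ out of $Y^j$. The butterfly $R^j_\cp f: X^j_\pm\to Y^j$ is a return map made from several iterates of $f$, not a single step; the mechanism that forces the orbit to land ``in the middle'' of the legs at $\asymp k$ levels is the combinatorics of the renormalization together with the complex bounds, not the ``butterfly topology''. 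Finally, the fallback you invoke in the last paragraph --- Lemma \ref{qr contraction} for the cubic quasiregular sector at $c_0$ --- is not applicable here: the hypothesis $z\in\Dom^h f^n$ already excludes the orbit from that sector, so the expansion must come entirely from the Schwarz-lemma mechanism of Corollary \ref{expansion for circle maps}, which in turn requires the ``in the middle'' landing property to be established, not circumvented.
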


\begin{proof}
  On its way from $Y^m$ to $Y^{m-k}$, the orbit of $z$ must
land in the middle of 
$\asymp k$ domains $X^{m-i}_+ \cup X^{m-i}_-$, $0\leq i \leq k$. 
By Corollary \ref{expansion for circle maps}, the return map $R^{m-1}f $
is definitely expanding at these moments, in the hyperbolic metric of
$Y^{m-1}$.  By the complex  bounds,  this metric restricted to $Y^m$ 
 is boundedly equivalent to the         \note{OK?}
hyperbolic metric on $\C\sm \bar \D$. The conclusion follows. 
\end{proof}

\sss{Compactness} 
Let us normalize a complex pair $F: \hat X_+\cup \hat X_-\ra \hat Y$ 
so that $|\hat Y\cap \R| =1$ and introduce the following topology on the space of
normalized pairs.  A sequence 
$F_n: \hat X_+^n \cup \hat X_-^n\ra \hat Y^n$ converges
to a pair $F: \hat X_+\cup \hat X_-\ra \hat Y$ if the domains $\hat Y^n$ Carath\'eodory
converge to $\hat Y$ and the inverse branches 
$(F_n)^{-1} :  \hat Y^n_\pm\ra \hat X^n_\pm$ 
converge to the corresponding branches of $F^{-1}$ uniformly
on  compact subsets of $\hat Y_\pm$.

The {\it geometry} of a complex commuting pair is controlled by three
parameters:  $\mu$ (a lower bound on the modulus), $\kappa$
(a bound on the shape of the butterfly), and $B$, a bound on the
geometry of the  intervals $\hat X_\pm\cap \R$ inside $\hat Y\cap \R$. The latter is
defined as the best dilatation of a quasisymmetric map $(\hat Y\cap \R , 0) \ra
([-1,1],0)$ that moves the boundary points of the intervals in question
to some standard configuration.  Let $\PP(\mu, \kappa, B )$ stand for
the space of complex pairs with geometry controlled by the specified parameters.

\begin{prop}\label{compactness}
   The space $\PP(\mu, \kappa, B )$ is compact. For any $f\in
   \Cir(\bar N, K,\eps)$, the renormalizations
   $R^m f: \hat  X^m_+\cup \hat X^m_-\ra \hat Y^m$ of a quasicritical circle  map of
   bounded type eventually (for $m\geq m_0(\bar N, K,\eps)$) belong to some space
   $\PP(\mu, \kappa, B )$,
   with all the parameters  depending only on $\bar N$   and $K$.   
\end{prop}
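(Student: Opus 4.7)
The plan splits into the two assertions: compactness of $\PP(\mu,\kappa,B)$, and eventual membership of renormalizations in some such space.

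\textbf{Compactness.} Let $F_n: \hat X_+^n \cup \hat X_-^n \ra \hat Y^n$ be a sequence in $\PP(\mu,\kappa,B)$. The normalization $|\hat Y^n\cap \R|=1$ together with the $\kappa$-bounded shape of $(\hat Y^n,0)$ forces all $\hat Y^n$ into a fixed large Euclidean disk and to contain a fixed round disk about $0$. So, by the Carath\'eodory kernel theorem, a subsequence of the pointed $\R$-symmetric disks $(\hat Y^n,0)$ converges to a pointed $\R$-symmetric Jordan disk $(\hat Y,0)$ of the same size and with $\kappa$-bounded shape. The interval-geometry bound $B$ keeps the critical-value marked points $c^n_\mp = \phi^n_\mp(0)$ a definite distance from $0$ and from each other inside $\hat Y^n\cap\R$, so along a further subsequence $c^n_\mp\to c_\mp$ in $\hat Y\cap\R$. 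Combining the modulus bound $\mod(\hat Y^n\sm \hat X^n_\pm)\geq\mu$ with the $\kappa$-bounded shape of $(\hat X^n_\pm, c^n_\mp)$ gives uniform compact containment of $\hat X^n_\pm$ inside $\hat Y^n$; hence after yet another extraction $\hat X^n_\pm$ Carath\'eodory converge to $\R$-symmetric Jordan disks $\hat X_\pm\Subset\hat Y$ meeting only at $0$.

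Now the inverse branches $(\phi^n_\pm)^{-1}: \hat Y^n_\pm \ra \hat X^n_\pm$ are univalent holomorphic maps whose images have uniformly bounded hyperbolic diameter in $\hat Y^n$; Carath\'eodory convergence of the domains plus the Koebe theorem make them a normal family, so a subsequence converges locally uniformly to univalent $\phi^{-1}_\pm:\hat Y_\pm \ra \hat X_\pm$. These branches define the holomorphic part of a limit pair $F$; the quasiregular cubic extensions near $0$ have uniformly bounded dilatation, so by QC-compactness they also pass to the limit. The bounds $\mu$, $\kappa$, $B$ are closed conditions under this convergence, so $F\in\PP(\mu,\kappa,B)$.

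\textbf{Eventual membership.} For the renormalizations $R^m_\cp f$ of $f\in\Cir(\bar N,K,\eps)$, Theorem \ref{bounds for comm pairs} directly provides, for all $m\geq \underline{l}(\bar N,K,\eps)$, both a modulus bound $\mod R^m_\cp f\geq \mu$ and a $\kappa$-bounded shape of the associated butterfly, with $\mu$ and $\kappa$ depending only on $(\bar N,K,\eps)$. The interval-geometry bound $B$ on $\hat X^m_\pm\cap\R$ inside $\hat Y^m\cap\R$ follows from the real \emph{a priori} bounds: by Theorem \ref{HS}, $f$ is $H$-quasisymmetrically conjugate to the rigid rotation $T_\theta$ with $H=H(\bar N,K,\eps)$, so the nested dynamical tilings $\II^n$ have bounded geometry, and in particular the intervals $\hat X^m_\pm\cap\R=[c_0,c_{q_{m+1}}]$ etc.\ sit inside $\hat Y^m\cap\R$ with quasisymmetric constant bounded in terms of $(\bar N,K,\eps)$.

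\textbf{Expected obstacle.} The delicate point is verifying that the Carath\'eodory limit really produces a non-degenerate complex pair, i.e., that the inner domains $\hat X_\pm$ do not collapse into $\{0\}$, fail to remain disjoint, or leak out of $\hat Y$. The three parameters are tuned precisely to prevent each of these failure modes --- $\mu$ keeps the outer annulus definite, $\kappa$ keeps each marked domain round at its marked point, and $B$ keeps the critical values $c_\pm$ separated on the real line --- so once all three are in hand, the argument reduces to standard Carath\'eodory and normal-family machinery.
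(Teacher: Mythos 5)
Your argument is correct and follows the same two-pronged route the paper takes; the paper's own proof is just the one-line remark that the first assertion is standard Carathéodory compactness and the second is the content of the real and complex \emph{a priori} bounds, so you have essentially supplied the details the paper leaves implicit. One small mismatch worth noting: as you invoke them, Theorem~\ref{bounds for comm pairs} and Theorem~\ref{HS} produce constants depending on $(\bar N, K, \eps)$, whereas the proposition asserts that the eventual parameters $(\mu,\kappa,B)$ depend only on $(\bar N, K)$. The way to shed the $\eps$-dependence is via Proposition~\ref{Epstein limits}: after sufficiently many renormalizations the pair is close to an Epstein-class limit, and the Epstein class has already forgotten the size of the original domain, so the bounds stabilize to $(\bar N, K)$-only quantities (with only the waiting time $m_0$ retaining $\eps$-dependence). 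The paper's own proof is equally silent on this point, so this is not a gap in your reasoning relative to what was expected.
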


\begin{proof}
  The first assertion follows from the standard compactness properties
  of the Carath\'eodory topology. The second one is the content of
  real and complex {\it a priori} bounds.  
\end{proof}

\subsection{Periodic points $\alpha^l$, collars $A^l$,
  and trapping disks $\Disc^l$}  

\sss{Periodic points $\alpha^l$}

Let us start collecting consequences of the complex bounds.

\begin{prop}\label{periodic pts}
  For any  $ l \geq \underline{l}-1$,    a quasicritical circle%
\footnote{In this statement we will use the unit circle model for
  $\T$ rather than $\R/\Z$.} 
 map of
  bounded type has a  repelling periodic point
$\alpha^l \in  X_-^l  \cup X_+^l $
 of period $q_l $.
Moreover, 

\ssk \nin ${\mathrm{ (i)}}$  $\dist(\alpha^l, \T)$ is comparable to the dynamical
depth at $c_0$ at scale $l$; 

\ssk \nin ${\mathrm {(ii)}}$
the multiplier of $\alpha^l $ is  bounded and bounded away
from 1 in absolute value.
\end{prop}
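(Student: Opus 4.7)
The plan is to realize $\alpha^l$ as the unique fixed point of an inverse branch of $R_\cp^l f$, pull it back to a periodic orbit of $f$, and read off its location and multiplier from the complex bounds and the compactness of the renormalization orbit.

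First I would check, by inspection of the renormalization algorithm of \S\ref{renorm of circle pairs}, that one of the two branches of $R_\cp^l f$ corresponds to iterating $f$ exactly $q_l$ times on an appropriate preimage; call this branch $\phi^l_*: X^l_* \to Y^l$, where $X^l_* \in \{X^l_-, X^l_+\}$. By Theorem \ref{bounds for comm pairs}, $\phi^l_*$ is univalent on $X^l_* \Subset Y^l$ with $\mod(Y^l \setminus \overline{X^l_*}) \geq \mu$, so the inverse branch $(\phi^l_*)^{-1}: Y^l \to X^l_*$ is a strict contraction in the hyperbolic metric of $Y^l$ by the Schwarz lemma and sends $\overline{Y^l}$ into the compactly contained subdisk $\overline{X^l_*}$. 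The Denjoy--Wolff theorem then yields a unique fixed point $\tilde\alpha^l \in X^l_*$; undoing the affine rescaling built into renormalization gives the desired periodic point $\alpha^l \in X^l_- \cup X^l_+$ of $f$ of exact period $q_l$, and it is repelling since $(\phi^l_*)^{-1}$ contracts by a definite factor.

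For (i), the inclusion $\alpha^l \in X^l_*$ together with the bounded shape of the butterfly (Proposition \ref{compactness}) and the real \emph{a priori} bounds of Theorem \ref{HS} yield that $\dist(\alpha^l, \T)$ is comparable to $\diam X^l_*$, which is in turn comparable to the dynamical scale at $c_0$ of depth $l$. For (ii), the multiplier of $\alpha^l$ as a periodic point of $f$ equals $(\phi^l_*)'(\tilde\alpha^l)$ (multipliers being invariant under the affine conjugacy), so a uniform upper bound follows at once from the compactness of the space $\PP(\mu,\kappa,B)$ (Proposition \ref{compactness}), since evaluation of the derivative at the fixed point is a continuous functional on that space. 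A uniform lower bound away from $1$ comes from Corollary \ref{expansion for circle maps}.

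The main obstacle I anticipate is the angular control required to apply Corollary \ref{expansion for circle maps} at $\tilde\alpha^l$: the hyperbolic expansion of $\phi^l_*$ degenerates near the cubic wedge at $c_0$, so one must rule out that $\tilde\alpha^l$ slides down toward the critical point. I would handle this by combining the real bound that locates the $\R$-projection of $\alpha^l$ inside a specific tile of $\II^l$ at definite combinatorial distance from $c_0$ (hence at a definite Euclidean distance from $c_0$ relative to $|X^l_*|$) with the bounded shape of $X^l_*$: together these should force $\tilde\alpha^l$ into the thick part of $X^l_*$, where $\ang \tilde\alpha^l$ is bounded below and Corollary \ref{expansion for circle maps} delivers the required multiplier estimate.
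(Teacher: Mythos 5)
Your overall strategy -- realize $\alpha^l$ as the fixed point of an inverse branch of the butterfly renormalization via Denjoy--Wolff, then use compactness for the estimates -- is the same as the paper's. But there is a genuine gap at the crucial step: your claim that $(\phi^l_*)^{-1}$ sends $\overline{Y^l}$ into a \emph{compactly contained} subdisk $\overline{X^l_*} \Subset Y^l$ is false. By the definition of a butterfly, the half-disks $X^l_\pm = \hat X^l_\pm \cap \{\Im z > 0\}$ and $Y^l = \hat Y^l \cap \{\Im z > 0\}$ share boundary along the real intervals $\hat X^l_\pm \cap \R \subset \hat Y^l \cap \R$. The modulus bound $\mod(Y^l \sm X^l_\pm) \geq \mu$ from Theorem~\ref{bounds for comm pairs} does not give compact containment of the closures. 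Consequently, the Denjoy--Wolff theorem applied to $(\phi^l_*)^{-1}: Y^l \to Y^l$ only produces a fixed or Denjoy--Wolff point in $\overline{X^l_*}$, and you have not ruled out that this point sits on the real boundary, where your contraction degenerates.

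The paper closes this gap with two short observations: a fixed point on $\partial X^l_\pm \cap \R$ would be a periodic point of $f$ on $\R$, impossible since the rotation number is irrational; and there are no fixed points on $\partial X^l_\pm \sm \R$ because $R^l f$ maps it onto $\partial Y^l \sm \R$, which is disjoint from $\partial X^l_\pm \sm \R$. You need some version of these observations before you can speak of a unique interior fixed point or invoke the definite hyperbolic contraction. (Relatedly, your assertion that the fixed point is ``repelling since $(\phi^l_*)^{-1}$ contracts by a definite factor'' is slightly premature at the point where you make it -- the \emph{definite} factor requires the fixed point to lie in the thick part of $X^l_*$, which you only establish later via the angular/compactness discussion; Schwarz alone gives multiplier $>1$ but not a uniform bound.) The rest of your plan, in particular the identification of the branch of exact period $q_l$ up front and the use of compactness of $\PP(\mu,\kappa,B)$ for (i) and (ii), is a correct and somewhat more detailed rendering of the paper's ``Assertions (i) and (ii) follow from compactness.''
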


\begin{proof}
    Each restriction
$
       R^l f: X^l_\pm  \ra Y^l
$ 
is  a conformal map from
    a smaller domain onto a bigger one. By the Wolff-Denjoy Theorem
    (applied to the inverse map) it has a fixed point in the closure
    $\bar X^l_\pm $. However, it does not have fixed points on
    the boundary since $f$ does not have  periodic points on $\R$,
    while the image of $\di X^l_\pm\sm \R$ under $R^l f$ 
    (equal to $\di Y^l_\pm\sm \R $)  is disjoint from itself.  
   So, there is a fixed point $\alpha_\pm^l \in X^l_\pm $.  

  Assertions (i) and (ii) follow from compactness (Proposition \ref{compactness}).

Finally one of the points $\alpha^l_\pm$ has period $q_l$. 
\end{proof}

\sss{Collar Lemma and trapping disks $D^l$} \label{geom tilings} 

For all sufficiently big $l $,  
complex {\it a priori}  bounds allow us to construct nice collars 
$A^l $   around $\bar \D$ and nice trapping disks $D^l$ that
capture all orbits that  escape beyond the corresponding collars.

We say that a point $z\in \C\sm \bar\D$ {\it lies on depth $l$},
$d(z)=l$,  if
$$
        C_0^{-1}  |I^l (\zeta) | \leq  \dist (z, \T) \leq C_0 |I^l (\zeta)|,
$$
where  $\zeta$ is the closest to $z$  point of $\T$, 
and $C_0=C_0(\bar N, K, \eps)$ is the constant from (\ref{dyn scales}).
Of course, any point can lie on several depths (so $d(z)$ is    \note{OK?} 
multivalued),  but this number is bounded in term of $(\bar N, K, \eps)$. 

\begin{lem}\label{collar lemma}
For any quasicritical  circle map 
$f\in \Cir(\bar N, K, \eps)$ and any $l\geq \underline{l}-1$,  
there exists a  pair of smooth annuli (``collars'')%
\footnote{We prepare a pair of collars for each $l$ to make the
  statements robust under perturbations.}
$A_0^l \Subset A^l $ surrounding  $\D$ in $\Dom f\sm \bar \D$,
and a smooth quasidisk $D^l\ni \alpha^l $ in $ Y^l$
 with  the following properties:

\ssk\nin \rm {(A1)} 
Any boundary point $z\in \di^o A_0^l\cup \di^o A^l$  of these collars lies on  depth $d(z)$
with  
$$
|d(z)-l| \leq   \bar\iota= \bar \iota(\bar N, K, \eps);
$$
Moreover,   for any $z\in \di^o A_0^l,$ $\dist (z, \di^o A^l)\asymp \dist (z,
  \T)$,  and similarly for the inner boundaries $\di^i A_0^l$ and
  $\di^i A^l$;

\ssk \nin \rm {(A2)}
It is impossible to``jump over the collar'':
$$
\mbox{ If $z\in \Comp_0(\C\sm A_0^l) \sm \bar \D$ while  
$f( z)\not \in \Comp_0(\C\sm A_0^l )$ then $f(z)\in A_0^l$; } 
$$ 

\ssk \nin {\rm {(D1)}}
The disk $D^l$ has a  bounded shape around  $\alpha^l $;
it  has also the hyperbolic diameter of order 1 in  $Y^l\sm \bar \D$ and in $\C\sm \bar \D$; 
  
\ssk \nin {\rm {(D2)}} 
A definite portion of $D^l$ is contained in $f^{-1} (\D)\sm  \D $;
moreover, 
$$\mbox{
there is a point $\beta\in f^{-1} (\T )\sm  \bar\D $
that lies in the middle of $\Disc^l$; 
}$$

\ssk\nin \rm { (D3) } If  $z \in A^l $ 
 then there exists a moment  $k  < q_{l+1}$ such that $f^k  z$ lies in the
middle of  $D^l$.

\ssk\nin \rm { (D4) } 
 There exists 
$\underline{\iota}= \underline{\iota}(\bar N, \mu, K)$ 
such that 
for any 
$ \iota > \underline{\iota}$  and $l> \underline{l}+\iota$,
 we have  under the circumstances of  \rm { (D3) }:
$$
   f^i z  \not \in  D_1^{l-\iota} , \quad i=0,1,\dots, k,
$$ 
where $ D^{l-\iota}_1 \Subset \Om \sm \bar S $ is a disk containing 
$D^{l-\iota}  $ with a definite $\mod (D^{l-\iota}_1\sm D^{l-\iota})$; 
in particular, 
 $\Disc^l \cap \Disc^{l-\iota}_1=\emptyset$;  

\ssk \nin \rm { (D5) } Moreover,  under the above circumstances, 
$$
     f^i z  \in \Comp_0(A^{l-\iota})   , \quad i=0,1,\dots, k,
$$
and $A^{l-\iota} \Subset \Comp_0 (A^{l-2\iota} ) $. 

\ssk
All the bounds and constants depend only on $(\bar N, K,\eps)$. 

\end{lem}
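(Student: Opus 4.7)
The construction will be built in two stages from the butterfly structure supplied by Theorem~\ref{bounds for comm pairs} and the repelling points of Proposition~\ref{periodic pts}, using the bounded real geometry of the dynamical tilings $\II^n$ (consequence of Theorem~\ref{HS}) and the hyperbolic expansion estimates (Corollaries~\ref{expansion for circle maps}, \ref{expansion for circle maps-2}).

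First, I would build the collars. Define a smooth $\T$-symmetric annulus $A^l\Subset \Dom f\setminus\bar\D$ whose outer boundary is a smooth curve that, at every angular position $\zeta\in\T$, lies at Euclidean distance of order $|I^l(\zeta)|$ from $\T$; take $A_0^l$ similarly with boundary at distance a small definite fraction of this. Property (A1) is then immediate from the definition of dynamical depth (\ref{dyn scales}) and the bounded geometry of the tilings, with the constant $\bar\iota$ absorbing the ambiguity in assigning a depth. For (A2), the obstruction to ``jumping over'' $A_0^l$ would be a single application of $f$ that contracts a point from outside $A^l_0$ into the disk bounded by the inner boundary of $A^l_0$, while staying away from the critical point. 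By Koebe control (Theorem~\ref{Koebe control}), $f$ has bounded distortion on any piece of $\Dom^h f$ that avoids a definite neighborhood of $c_0$; choosing the collar sufficiently thin in the radial direction (proportionally to the dynamical scale) but wider than any single $f$-step of comparable depth forces the image to remain in $A_0^l$.

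Second, the trapping disk. Take $D^l$ to be the Euclidean disk of radius comparable to the dynamical scale at $\alpha^l$, centered at $\alpha^l$ and then rounded/quasisymmetrized inside $Y^l$. Proposition~\ref{periodic pts} and compactness (Proposition~\ref{compactness}) supply a bounded shape and bounded hyperbolic diameter in $Y^l\setminus\bar\D$ and in $\C\setminus\bar\D$, giving (D1). Property (D2) follows from the butterfly structure: $R^lf:X^l_\pm\to Y^l$ is a proper covering, so the preimage of the interior of $\D$ under this iterate hits any fixed-size disk around $\alpha^l$ in a definite proportion, and the first $f$-preimage of $\T$ passes through the middle of $D^l$ because the critical point is at a comparable scale away.

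The real work is in (D3)--(D5). Here I would argue as follows. The tiling $\II^{l+1}$ is invariant, in the combinatorial sense, under the return dynamics: the interval tiles of $\II^{l+1}$ attached to $c_0$ at level $l$ together with their images exhaust $\T$ in time bounded by $q_{l+1}$ (see (\ref{circle tilings})). Complex-analytically, by the bounded shape of the butterfly at level $l$, each tile extends to a Jordan disk of bounded shape, and these disks tile a neighborhood of $\T$ comparable to $A^l$. So given $z\in A^l$, iterating $f$ moves $z$ through a bounded-shape sequence of tiles, and by Corollary~\ref{expansion for circle maps} the hyperbolic derivative in $Y^l$ is multiplicatively expanding at each return to the ``deep'' piece near $c_0$. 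Within $q_{l+1}$ steps either $z$ has left $A^l$ or it has entered a small neighborhood of the periodic cycle $\orb(\alpha^l)$: the expansion prevents a bounded orbit from wandering without landing near the unique repelling cycle of period $q_l$ inside the butterfly. This forces $f^kz$ into the middle of $D^l$ for some $k<q_{l+1}$, proving (D3).

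For (D4)--(D5), suppose $\iota$ is large. Any orbit segment $\{z,fz,\ldots,f^kz\}$ with $k<q_{l+1}$ that dips into $D^{l-\iota}_1$ must at the moment of entry have been at depth $\le l-\iota+O(1)$, i.e.\ at distance from $\T$ at least $\rho^\iota$ times the scale of $A^l$, for the universal $\rho>1$ of Corollary~\ref{expansion for circle maps-2}. Running the expansion estimate along the subsequent iterates back towards $A^l$ gives an incompatibility with the requirement that $f^kz$ is again near $\alpha^l$ within only $q_{l+1}$ steps; alternatively, the combinatorics of the tilings $\II^{l-\iota}$ forbid $f^jz$ from visiting a tile attached to $c_0$ at that depth during one ``round'' of the level-$l$ return. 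The same argument, applied one level at a time, keeps the orbit inside $\Comp_0(A^{l-\iota})$ and yields the nesting $A^{l-\iota}\Subset\Comp_0(A^{l-2\iota})$, which is a feature of the collar construction itself.

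The main obstacle will be the simultaneous control in (D4)--(D5): one must choose $\underline\iota$ large enough (depending only on $(\bar N,K,\eps)$) that the exponential expansion in Corollary~\ref{expansion for circle maps-2} beats the polynomial combinatorial depth $q_{l+1}$, and then verify that the constants involved do not inflate across scales. Once this is done, all remaining clauses follow from the uniform butterfly geometry (Proposition~\ref{compactness}) and the bounded geometry of the tilings.
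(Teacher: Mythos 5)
Your picture of the trapping disk $D^l$ and, as a consequence, your argument for (D3) are essentially wrong, and this is the main gap. In the paper, $D^l=D^l(R)$ is the \emph{hyperbolic} disk of large (but universal) radius $R$ in the half-plane $Y^l\sm\bar\D$, centered at $\alpha^l$. The whole point of taking $R$ big is that $D^l$ then covers everything that could reasonably be called the ``middle'' of $Y^l\sm\bar\D$. You instead take a disk of size comparable to the dynamical scale at $\alpha^l$, i.e.\ a fixed-size neighborhood of the periodic point. With that definition, (D3) is false for the reason your argument itself betrays: you try to force the orbit of $z\in A^l$ to approach the periodic cycle, invoking expansion ``to prevent a bounded orbit from wandering without landing near the unique repelling cycle''. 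But $\orb(\alpha^l)$ is a \emph{repelling} cycle; there is no reason the orbit of a generic $z$ should cluster near it, and in fact it shouldn't. The correct mechanism is much more elementary and uses only a single application of a landing branch: if $z\in A^l$, then by design $z$ lies in the middle of some pulled-back disk $Y^l_k$, the landing map $L_l\colon Y^l_k\to Y^l$ is univalent with bounded distortion (it is the complex extension of $L_l\colon I^l_k\to I^l_0$), so $L_l(z)$ lies in the middle of $Y^l\sm\bar\D$, and hence in $D^l(R)$ once $R$ is fixed large enough. There is no iteration to infinity, no convergence to a cycle, and no expansion estimate needed in (D3). Your confusion propagates to (D2): the paper shows a definite part of $D^l(R)$ lies in $f^{-1}(\D)\sm\D$ by combining the quasiregularity of $f$ near $c_0$ with the bounded shape of $Y^l$ around $c_0$ and the fact that $D^l(R)$ nearly fills $Y^l\sm\bar\D$; your ``proper covering'' heuristic does not substitute for this and would not apply to a disk of fixed dynamical scale around $\alpha^l$.

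Your treatment of (D4)--(D5) also goes down the wrong track. You set up a race between exponential expansion and $q_{l+1}$, but the paper's argument is purely a depth-accounting one and contains no such race. Once the collars are constructed so that (A2) holds, the returning orbit $\{f^iz\}_{i=0}^k$ from $A^l$ to $D^l$ stays at dynamical depth $\ge l-O(1)$ (it starts and ends near depth $l$ and cannot jump over a collar), so for $\iota$ larger than a universal $\underline\iota(\bar N,K,\eps)$ it is automatically disjoint from $D^{l-\iota}_1$ and from $A^{l-\iota}$, since those objects sit at depth $l-\iota$; the nesting $A^{l-\iota}\Subset\Comp_0(\C\sm A^{l-2\iota})$ likewise just compares the depths $l-\iota$ and $l-2\iota$. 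Relatedly, your (A2) argument via Koebe/bounded distortion is not the right mechanism either: what is actually used is the quantitative fact that a single application of the quasiregular map $f$ can decrease the depth by at most a bounded amount, $d(f(z))\ge d(z)-\bar\iota$, combined with the construction of $A_0^l$ inside a slightly deeper auxiliary collar so that one step of $f$ cannot overshoot it. The paper also takes pains to build the collars from the unions of the half-disks $\De_\eps(I^l_k)$ coming from the pullbacks $Y^l_k$ of $Y^l$, precisely so that the middle-of-$Y^l_k$ property used in (D3) comes for free; your smooth-annulus-at-comparable-distance construction does not by itself supply that property.
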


\begin{proof}
Let us consider the circle pairs
renormalization $R_\cp^l f : X^l_-\cup X^l_+ \ra Y^l$.
For $Y^l$ we will also use notation $Y^l_0$.

  Any dynamical tile $I^l_k \in \II^l $  
(\ref{circle tilings})  
is compactly contained in the  topological disk $Y^l_k$ obtained by pulling
$Y^l$ back  by 
the conformal landing map, the complex extension of  
$L_l :   I^l_k \ra I^l_0 $. 
Complex {\it a priori} bounds   imply that $I^l_k $ is contained well inside 
$Y^l_k $.
 Hence each  $Y^l_k $  contains a round disk $\De_\eps (I^l_k)$ based on the
$(1+\eps)$-scaled interval $I^l_k$,
where  $\eps>0$ depends only on  {\it a priori}  bounds.
The union of these disks is an annulus $\DD^l\supset \D$ whose boundary 
lies on dynamical depth  $l$. 
Moreover, for $k\not=0$,  these disks lie well inside $\Dom^h f$,
since $Y^l_k\subset \Dom^h f$. 

Obviously, there is $\bar \iota= \bar \iota (\bar N, K, \eps)$ such that
for any $\underline{\iota}$,  we can select  collars 
$A^l\Subset A^l_1 \Subset \DD^l\sm \bar \D$
with the following properties:

\ssk
(i) They satisfy property (A1); 

\ssk
(ii)  Every point $z\in A^l_1$ lies in the middle of some half-disk 
    $\De_\eps (I^l_k) \sm \bar \D$;


\ssk
(iii) Every $z\in \di^i  A^l\cup\di^i  A^l_1$ lies on depth $d(z)$ with 
$0< d(z)- (l+ \underline{\iota})  < \bar i $.

Since $f$ is quasiregular,
there is  $\bar \iota = \bar \iota (\bar N, K, \eps)$ such that
$$
       d(f(z)  \geq d(z)  - \bar \iota, \ 
     z\in  \Dom^h f
$$
Together with (iii), this implies that if $\underline {\iota}$ is selected sufficiently big
($\underline{\iota}> 2\bar \iota$),
then property (A2) is satisfied as well: no point can jump over the
collar $A_0^l$.

Let us view the topological half-disk $Y^l \sm \bar \D$ as the 
hyperbolic plane, and let $D^l=  D^l(R)$ be the 
 hyperbolic disk of radius $R$ in $Y^l$ centered at $\alpha^l$.
By the Koebe Distortion Theorem, 
these disks  satisfy property (D1) with constants
depending on $R$ (or better to say,  on  an upper bound for $R$) .  

For $R$ big enough (depending only on ($\bar N$, $K$, $\eps$)) 
they also satisfy  (D2).  Indeed, since $f$ is quasiregular,
any sufficiently small disk $\D(c_0, r)$ contains a comparable disk
$\D(\zeta, ar) \subset f^{-1}(\D)\sm \D$. 
 Since the  domains $Y^l$ have a bounded shape around $c_0$, 
while the disks $D^l(R)$ closely approximate $Y^l\sm \bar \D$
(uniformly in $l$),  we conclude that for $R$ big enough, 
$$\mbox{
$D^l(R) \supset \D(\zeta, ar/2)$ 
and $\area D^l(R)\asymp \area \D(\zeta, ar/2)$, 
}$$
which yields the first part of (D2).

The second part of (D2) follows from Proposition \ref{local sector}
that implies that there is a point $\zeta\in f^{-1}(\T)$ lying in the
middle of $Y^l$. For $R$ big enough, it lies in the middle of $D^l(R)$
as well. 

If $z\in A^l$ then by Property (ii),  $z$  lies in the middle of
some  half-disk $Y^l_k \sm \bar \D$. By the Koebe Distortion Theorem, 
 under the landing map $L_l: Y^l_k  \ra Y^l $,
 it lands in the middle of $Y^l  \sm \bar \D$.  
Hence for  $R$ big enough
 $L_l( z)$ lies in the middle of  $ D^l(R)$ as well,
which establishes property (D3).

Since the whole orbit $\{ f^i z\}_{i=0}^k$ lies on depth $\geq l - O(1)$,
it  is separated from $D^{ l -\iota} $ and from 
$A^{l-\iota}$, as long as  $\iota$ is sufficiently big. 
Similarly, since $A^{ l-\iota} $ lies on depth $l-\iota$, it  is
separated from  $A^{l -2\iota}$ for $\iota$ big enough.
These remarks prove (D4) and (D5). 
\end{proof}

We say that the trapping disk $D=D^l$ is {\it centered} at $\alpha^l$,
or that $\depth D= l$.  

\bignote{Old collars and trapping disks and def of $\Dom^h f^n$ are hidden here}

\comm{******
\begin{lem}\label{collar lemma}
There exists $R=R(\bar N, K, \eps)$ such that for any quasicritical  circle map 
$f\in \Cir(\bar N, K, \eps)$ and any $l\geq \underline{l}-1$,  the collars 
$A^l \equiv A^l(R)$ have the following properties:

\ssk\nin \rm {(A0)}  Each collar $A^l$ separates
 $0$ from $\infty$,
and $A^l  \Supset A^{ l+1}  $;

\ssk\nin \rm {(A1)}  Distance from any   point $\zeta\in \di A^m $ to $\T$ is
comparable  
(with a constant depending only on  $(\bar N, K, \eps)$).
 with the $l$-th dynamical scale at $\zeta' \in \T$, where
$\zeta'$ is the point on the circle closest to $\zeta$; 

\ssk \nin \rm {(A2)}
$f(A^l \sm Y^l )\subset A^l $, and  
 there exists $\iota_0 =\iota_0 (\bar N, K, \eps)$ such that
 $f(Y^{l+\iota_0} ) \subset \hull A^l$;

\ssk\nin \rm { (A3) } If  $z \in \Dom^h f\cap \Comp_0(\C^*\sm A^l)$ 
while  $f^n z\in \Comp_\infty(\C^*\sm A^l)$ for some $n\in \N$, then
there exists a moment  $m \in (0,n)$ such that $f^mz$ lies in the
middle of  $Y^l(R)$.

\end{lem}

\sss{Trapping disk $\Disc^l$}\label{trapping disk for circle}
\msk
 For $n\in \N\cup \{\infty\}$, we  let   
$$
     \Dom^h f^n  = \{z: \ f^k z\in \Dom^h f, \ k=0,1,\dots, n-1 \}. 
$$

Given a pointed domain $(D, \beta)$,
we say that  {\it $\beta $ lies  in the middle of  $D$},
or equivalently, that {\it  $D$ has a bounded shape around $\beta$}
  if 
$$
     \max_{\zeta\in \di D} |\beta- \zeta| \leq C\, \min_{\zeta\in \di
       D} |\beta- \zeta|,
$$ 
where $C$ is a constant that may depend only on specified parameters.


\begin{prop}\label{disc D}
Let $f\in \Cir (\bar N, K,\eps)$ 
and  let $l\geq \underline{l} - 1 $. 
  Then  there exists a quasidisk $\Disc^l \ni \alpha^l$ 
with bounded shape around  $\alpha^l $ and a natural number
$\iota=\iota(\bar N, K, \eps) \in \N$ with the following properties: 

\ssk \nin {\rm {(i)}}
$D^l$ has hyperbolic diameter of order 1 in  $A^{ l -1 }\sm \bar
A^{l+\iota-1}$ and in $\C\sm \bar \D$; 
in particular, $\Disc^l \cap \Disc^{l+\iota}=\emptyset$;

\ssk \nin {\rm {(ii)}} 
A definite portion of $D^l$ is contained in $f^{-1} (\D)\sm  \D $;
moreover, there is a point $\beta\in f^{-1} (\T )\sm  \bar\D $
that lies in the middle of $\Disc^l$; 

\ssk \nin {\rm {(iii)}}
 any point $z\in A^l \cap \Dom^h f^n$ whose orbit escapes $A^l$
    at moment $n+1$ 
 must pass through $\Disc^l$ at the  moment $n$;

\ssk \nin {\rm {(iv)}}
 $\Disc^l $ can be univalently and with bounded distortion
pulled back along the orbit  $\{f^k z\}_{k=0}^n$ of the above point,
and this pullback is contained in $A^{l-1}$.

\ssk
All the bounds and constants depend only on $(\bar N, K,\eps)$. 
\end{prop}

\begin{proof}
For an even $\iota\in 2\N$,
   let us consider a set 
$Q^l_\iota   = Y^l\sm ( A^{l+\iota/2}\cup \bar    \D)$.
Complex {\it a priori} bounds (Theorem \ref{bounds for comm pairs}) and   
Property (A1) imply that  for $\iota$ sufficiently big
(depending only on $\bar N, K, \eps)$), we have
$$
  \diam Q^l_\iota  \asymp  \dist (Q^l_\iota, A^{l+\iota-1}) \asymp
  \dist(Q^l_\iota, \D),
$$ 
(with constants depending on $\iota$ but independent of $l$).
%
%
Hence $Q^l_\iota$ can be inscribed into a quasidisk 
$\Disc^l= \Disc^l_\iota   \subset Y^{l-1}  \sm A^{l+\iota-1}$ 
with the same property.
This implies property (i) for this disk.

Since distance from the periodic point  $\alpha^l$ to $f^{-1}(\D) \sm \D$ is
comparable to its distance to $c_0$, the disk $D^l$ can be selected so
that property (ii) is satisfied. 

Now, if  $z\in A^l$  is an escaping point as in (iii), 
then by property (A2),  $f^nz\in \Disc^l $,
provided $\iota/2> \iota_0$.

 Let us further enlarge $\Disc^l$ to a similar disk 
$\De^l \subset  Y^{l-1}\sm \bar \D$
 with a  definite space in between. By property (A3), 
the pullback of $ \De^l$ 
 along the orbit of $z$ stays inside $A^{l-1} $. Consequently, this
 pullback is obtained  by applying  well defined inverse branches of
 $f^k$,  and (iv) follows from the Koebe Distortion Theorem.
\end{proof}
************}

  

\subsection{Cylinder circle renormalization}

\sss{Real definition}
There is a different approach to the circle renormalization that
avoids using circle pairs. 
The quotient of $\R$ by the lift of $f$ is a circle $\T'$, and the first return map to the fundamental interval $[0, f(0)] $  
descends to a critical circle map of $\T'$.  Identifying $\T'$ with
$\T$ by means of a an orientation preserving analytic diffeomorphism
we obtain the renormalization $R_\cyl f$  of $f$ (defined up to an orientation preserving analytic conjugacy). 
 The rotation number of  $R_\cyl f$   is equal to  $-1/\theta \ \mod \Z $. 

This leads to the modified Gauss map  $G_* : \theta\mapsto -1/\theta\
\mod \Z$ 
accompanied by the modified  continued fraction expansion 
$$
   \theta = \frac 1{ N_1 - \frac 1{N2 - \dots} }  \equiv    
[N_1,\,   N_2,\dots]_* , \quad N \geq 2.
$$ 
We will use the same notation for the rational approximands in this
expansion,  $p_m/q_m = [N_1,\dots, N_m]_*$.
Of course,  notion of ``bounded type'' is 
independent of which expansion we use.

The rotation numbers 
$\theta_N= [N, \, N, \, N, \dots ]_*$  
  with equal entries%
\footnote{Note that $\theta_2=1$.} 
  $N\geq 3$
are called of {\it stationary type} (with respect to the modified expansion) 
The most familiar  of these is the
golden mean  $\theta_3 = (3-\sqrt{5})/2$.  

\sss{Complexification}\label{complex cyl renorm}
Let us start with a topological lemma:

\begin{lem}\label{nice curve}
  For a butterfly map $F$ (\ref{hol circle pair}), \note{term}
there exists an arc $\gamma$ connecting the fixed point  $\alpha\in
X_+$   to $\beta_+$  in such a way that $\alpha$ is the only 
common point of $\gamma$ and $f (\gamma)$.
Moreover, the triangle bounded by $\gamma$, $f(\gamma) $ and 
the arc of $[\phi_+ (\beta_+] \in \R$ is $\kappa$-qc equivalent 
(by a global map $\hat \C\ra \hat\C$) to the half-strip
\begin{equation}\label{half-strip}
 \{ z: \ \Im z\geq 0, \  0\leq \Re z \leq 1 \} \cup\{\infty\}
\subset \hat\C, 
\end{equation}
with $\kappa$ depending only on the qc geometry of the pair of domains
$(Y,X_+)$.   \note{comment}
\end{lem}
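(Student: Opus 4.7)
The plan is first to produce the arc $\gamma$ by linearizing $\phi_+$ at its repelling fixed point $\alpha$, and then to read off the qc equivalence to the half-strip (\ref{half-strip}) from the linearization near $\alpha$ together with a qc interpolation over the bounded complementary piece. By Proposition \ref{periodic pts}(ii) and the compactness of the space of butterflies (Proposition \ref{compactness}), the multiplier $\lambda := \phi_+'(\alpha)$ lies in a compact subset of $\{|\lambda|>1\}$ whose size is controlled solely by the qc geometry of $(Y,X_+)$. Koenigs' theorem then gives a conformal linearization $\psi \colon V \to \D_r$ with $\psi(\alpha)=0$ and $\psi\circ \phi_+ = \lambda\,\psi$, where $V \subset X_+$ and $r>0$ are likewise controlled.

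Inside $\D_r$ I choose an arc $\sigma$ from $0$ to a boundary point $w_0$ such that $\sigma \cap \lambda\sigma = \{0\}$ (a radial segment when $\arg\lambda \neq 0$, a logarithmic spiral otherwise), and set $\gamma_\loc := \psi^{-1}(\sigma)$. Then $\phi_+(\gamma_\loc) = \psi^{-1}(\lambda\sigma)$ is a second arc emanating from $\alpha$ meeting $\gamma_\loc$ only at $\alpha$, and the sector in $V$ between them is conformally identified, via $(\log\lambda)^{-1}\log\circ\psi$, with an end $\{\Re z<-M,\ 0\le\Im z\le1\}$ of the standard half-strip (\ref{half-strip}). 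It remains to extend $\gamma_\loc$ by an arc $\gamma_{\mathrm{ext}} \subset \overline{X_+}$ from $\zeta_0 := \psi^{-1}(w_0)$ to $\beta_+$ so that $\phi_+(\gamma_{\mathrm{ext}})$ does not re-cross $\gamma := \gamma_\loc \cup \gamma_{\mathrm{ext}}$. The cleanest way is to pass to the universal cover of $Y\setminus\{\alpha\}$, on which $\phi_+$ lifts to translation by $\log\lambda$; take $\tilde{\gamma}_{\mathrm{ext}}$ to be a straight arc from a lift of $\zeta_0$ to a chosen lift of $\beta_+$ in a direction not positively parallel to $\log\lambda$, and project down. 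This gives $\gamma_{\mathrm{ext}}$ with the required disjointness and, combined with the local picture, yields $\gamma \cap f(\gamma) = \{\alpha\}$.

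The three arcs $\gamma$, $f(\gamma)$ and the real segment $[\phi_+(\beta_+),\beta_+]$ now bound a Jordan domain $T$ in the upper half-plane. Near $\alpha$, $T$ is conformally the above end of the half-strip; the complementary bounded portion of $T$ is a Jordan domain whose shape, by Proposition \ref{compactness}, depends only on the qc geometry of $(Y,X_+)$. A qc interpolation over this bounded piece onto a finite rectangle $\{0\le\Re z\le 1,\ 0\le\Im z\le M\}$, glued to the conformal end-model, yields the claimed $\kappa$-qc homeomorphism from $T$ onto the half-strip (\ref{half-strip}), with $\kappa$ controlled by the qc geometry of $(Y,X_+)$. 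The main obstacle is the extension step: one must ensure that $\gamma_{\mathrm{ext}}$ stays in $\overline{X_+}$, that its image stays in $\overline{Y}$, and that the two remain disjoint off $\alpha$. The universal-cover viewpoint reduces this to avoiding a single translation, which is automatic for generic transverse directions, but checking that the projection lands in the correct domain requires a short case analysis depending on the position of $\phi_+(\beta_+)$ relative to $\beta_+$.
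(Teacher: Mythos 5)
Your route is genuinely different from the paper's, and while it captures some of the right intuition (linearize at the repelling fixed point $\alpha$, reduce to a standard model), it has a real gap precisely at the step you yourself flag as "the main obstacle": the extension of $\gamma_{\mathrm{loc}}$ to $\beta_+$.

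The paper's argument is anchored differently. It first constructs a \emph{dynamical} quasi-arc $\Gamma$ by pulling back the real segment $J=[\phi_+(\beta_+),\beta_+]$ under $\phi_+^{-n}$: the arcs $J^n=\phi_+^{-n}(J)$ meet consecutively at angle $\pi/3$ (from the cubic-type critical point), shrink geometrically by Proposition~\ref{periodic pts}, and hence accumulate on $\alpha$ in a quasi-arc $\Gamma$ whose dilatation is controlled by the $(Y,X_+)$-geometry through compactness (Proposition~\ref{compactness}). Because $\Gamma$ essentially traces $\partial X_+$ into the cusp, it is a natural "spine" of the domain. The paper then linearizes $\phi_+$ \emph{globally} by a qc map $\psi$ (conformal on $X_+$ — this is the Koenigs coordinate, which is globally defined because $\phi_+^{-1}:Y\to X_+$ is a contraction fixing $\alpha$), and further conjugates to $T:z\mapsto 2z$ by a qc map $h$ that sends $\Gamma$ to $[0,1]$. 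Only \emph{then} is the arc chosen, as a circular arc $\tl\gamma$ through $0$ and $1$ close to $\R$. Because $[0,1]=h\psi(\Gamma)$ sits on the boundary of $h\psi(X_+)$ with definite geometry, $\tl\gamma$ lands inside $h\psi(X_+)$ automatically; the bigon between $\tl\gamma$, $2\tl\gamma$ and $[1,2]$ is an explicit wedge/lens domain whose qc-equivalence to the half-strip is immediate. The $\Gamma$-anchoring is what makes the "stays in $\bar{X}_+$, doesn't cross its image" verification trivial.

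Your construction lacks that anchoring, and the universal-cover step does not repair it. Two concrete problems. First, your $\psi\colon V\to\D_r$ is the \emph{local} Koenigs chart onto a round disk, so $V\subsetneq X_+$ and $\beta_+\notin V$; the assertion that $\phi_+$ "lifts to translation by $\log\lambda$ on the universal cover of $Y\setminus\{\alpha\}$" is only literally true in linearizing coordinates, which at that point you only possess on $V$. (One can fix this by observing that the Koenigs coordinate is in fact defined on all of $Y$, conjugating $\phi_+$ to $z\mapsto\lambda z$ on $\psi(X_+)$ with controlled, but \emph{non-round}, image — but you need to say this, and it changes the picture.) Second, even granting the global linearization, a "straight arc in a direction not positively parallel to $\log\lambda$" from a lift of $\zeta_0$ to a lift of $\beta_+$ has no reason to stay inside the lift of $\bar{X}_+$; the shape of $\log\psi(X_+)$ near $\log\psi(\beta_+)$ is a quasi-half-strip, not a full half-plane, so genericity of the direction does not give you containment. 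This is exactly the verification you defer to "a short case analysis," and it is not short: it is the content of the lemma. The paper's choice to straighten $\Gamma$ (a curve that lies on $\partial X_+$ by construction) before choosing $\tl\gamma$ is what eliminates this issue. Your final qc-interpolation step is also unjustified until you know the bounded piece of $T$ is a quasi-rectangle with comparable sides, which again the paper gets for free from the circular-arc model. I would rework the argument to either (a) follow the paper's $\Gamma$-anchoring, or (b) use the global Koenigs coordinate on $Y$ and explicitly control the shape of $\log\psi(X_+)$ near the two ends using the wedge structure from Theorem~\ref{bounds for comm pairs} and the compactness in Proposition~\ref{compactness}.
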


\begin{proof}
Let $\phi\equiv \phi_+$, $X\equiv X_+$,  $\beta_+\equiv \beta$,  $J := [\phi(\beta), \beta]$.
The pullback $J'  : = \phi^{-1}(J)$ is a  smooth subarc of $\di X_+$ 
touching $J$  at $\beta$ with angle $\pi/3$. Hence $J\cup J'$ is a
quasiarc.  Pulling  it further, we obtain a s sequence of
smooth arcs $J^n := \phi^{-n}(J)\subset \bar X$, $n=0,1,\dots$, 
one touching  the previous at angle $\pi/3$ and shrinking at a
geometric rate. 
Their union  $\bigcup J^n$ is a quasiarc
converging to the fixed point $\alpha$.  Adding $\alpha$ to it,
we obtain a closed quasiarc  $\Gamma=[\alpha, \phi(\beta)] $  such that 
$f(\Gamma) = [\alpha, \phi(\beta)]$ is a longer quasiarc.   Moreover,
the dilatation of $\Gamma$ depends only on  the geometry of  the pair
$(Y,X)$
(by compactness of the corresponding maps $\phi$).

The map  $\phi$ on $X$ can be globally linearized
by a $\kappa$-qc homeomorphism  $\psi: (\C, X) \ra (\C, \psi(X))$
which is conformal on $X$,
$\psi ( \phi(z))  =  \la \psi(z)$, $z\in X$,
with $\kappa$ depending only on the geometry of $(Y,X)$.  It can be further conjugate to the
doubling map $T: z\mapsto 2z$ by a qc homeomorphism $h: \C \ra \C$
that straightens the quasiarc $\Gamma$ to the unit  interval $[0,1]$.
In this model, we can let $\tl \gamma \equiv  h((\psi(\gamma))$ be a segment of a circle passing
through $0$ and $1$ sufficiently close to $\R$ so that it fits to the
domain $h(\psi(X))$.  Moreover, the triangle bounded by $\tl \gamma$, 
$2\cdot \tl \gamma$ and $[1,2]$ is $\kappa$-qc equivalent to the half-strip
(\ref{half-strip}),
with $\kappa$ depending only on the geometry of the pair $(Y,X)$. 
\end{proof}

For $m$ sufficiently big, the cylinder  renormalizations $R_\cyl^m f$ 
we have  described above can be complexified
as follows, see Yampolsky  \cite{Ya-cylinder}. 
Let us consider a periodic point $\alpha^m$, $m\geq \underline{l}$,  from Corollary \ref{periodic pts}. 
Then there is a $\T$-symmetric arc $\gamma_m$  connecting $\alpha^m$ to the symmetric
point%
\footnote{Here we describe it in terms of the unit circle $\T$ in $\C$.}  
 $1/\bar\alpha^m$ in such a way that
$f^{q_m} (\gamma_m)$ does not intersect $\gamma_m$. 
Let us consider
the fundamental  region $\De^m= \De^m(f)$ bounded by  these two  arcs.

\begin{lem}\label{definite geometry of De: circle}
   Let $f\in \Cir(\bar N, K,\eps)$. 
  Then the regions $\De^m$ are $\kappa$-qc equivalent to 
the strip $0\leq \Re z \leq 1$,
with $\kappa$  depending only on $(\bar N, K,\eps)$. 
\end{lem}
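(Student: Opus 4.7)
The plan is to obtain this as a $\T$-symmetric doubling of Lemma~\ref{nice curve}, with the uniform dilatation bound coming from the compactness of complex renormalization (Proposition~\ref{compactness}).

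First, I would reduce to a compact family of butterflies. By Theorem~\ref{bounds for comm pairs} and Proposition~\ref{compactness}, there is $m_0 = m_0(\bar N, K, \eps)$ such that for all $m \geq m_0$ the butterfly renormalization $R_\cp^m f : X^m_- \cup X^m_+ \to Y^m$ lies in a fixed compact class $\PP(\mu, \kappa_0, B)$ whose parameters depend only on $(\bar N, K, \eps)$. In particular, the qc geometry of the pair $(Y^m, X^m_+)$ is uniformly controlled. Proposition~\ref{periodic pts} then supplies the repelling fixed point $\alpha^m$ of $R_\cp^m f$ lying in $X^m_+$ (so of period $q_m$ for $f$), together with its $\T$-reflection $1/\bar\alpha^m$ in $X^m_-$.

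Second, I would apply Lemma~\ref{nice curve} to this butterfly on the $X^m_+$-side: this produces a quasiarc $\gamma^+_m$ from $\alpha^m$ to a point $\beta \in \T$, and a global $\kappa_1$-qc homeomorphism $\Phi^+ : \hat\C \to \hat\C$ sending the upper half of $\Delta^m$ --- the topological triangle bounded by $\gamma^+_m$, $f^{q_m}(\gamma^+_m)$ and the $\T$-subarc joining $\phi_+(\beta)$ to $\beta$ --- onto the upper half-strip $\{z : \Im z \geq 0,\ 0 \leq \Re z \leq 1\} \cup \{\infty\}$, with $\kappa_1$ depending only on the qc geometry of $(Y^m, X^m_+)$, hence only on $(\bar N, K, \eps)$. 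Since $f$ commutes with the $\T$-reflection $z \mapsto 1/\bar z$, I would rerun the proof of Lemma~\ref{nice curve} equivariantly: the linearization $\psi$ of $f^{q_m}$ near $\alpha^m$ and the straightening $h$ of the invariant quasiarc $\Gamma$ can both be chosen to intertwine with the reflection, because the fixed point, its multiplier and the pair $(Y^m, X^m_+)$ are themselves symmetric under this involution (and if necessary one averages over the $\Z/2$-action, which does not increase dilatation).

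Third, the equivariant version of $\Phi^+$ reflects to a qc map $\Phi^-$ from the lower half of $\Delta^m$ onto the lower half-strip, and agrees with $\Phi^+$ on the common $\T$-subarc. Since each $\Phi^\pm$ is $\kappa_1$-qc on its side and the two match along an analytic arc, qc-removability of analytic arcs yields a single $\kappa_1$-qc homeomorphism $\Delta^m \to \{z : 0 \leq \Re z \leq 1\}$, giving the desired equivalence with $\kappa = \kappa_1 = \kappa(\bar N, K, \eps)$.

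The main obstacle I foresee is precisely the equivariance requirement of the second step: Lemma~\ref{nice curve} merely asserts the existence of \emph{some} qc model, whereas here I need a model respecting the $\T$-symmetry so that the two halves glue without inflating the dilatation. The sketch above suffices, but one must check that the compactness class $\PP(\mu, \kappa_0, B)$ is closed under the $\T$-involution (which follows from its definition via $\R$-symmetric butterflies) so that the symmetric construction inherits the same uniform bounds.
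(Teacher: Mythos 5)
Your overall strategy—reduce to a compact butterfly class via Proposition~\ref{compactness}, apply Lemma~\ref{nice curve} to one half of $\De^m$, reflect, and glue along $\T$ by qc-removability—is the right one and matches what the paper leaves implicit. But your second step contains both an unnecessary detour and a suspect claim, and I want to point those out.

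The worry that you ``need a model respecting the $\T$-symmetry so that the two halves glue'' is unfounded. Take $\Phi^+$ exactly as supplied by Lemma~\ref{nice curve}: it sends the upper (outer) half of $\De^m$ to the upper half-strip, and in particular sends the $\T$-subarc bounding that triangle into $[0,1]\subset\R$. Now define $\Phi^-(z):=\overline{\Phi^+(1/\bar z)}$ on the lower (inner) half. For $z$ on the common $\T$-subarc we have $1/\bar z=z$, so $\Phi^-(z)=\overline{\Phi^+(z)}=\Phi^+(z)$, since $\Phi^+(z)\in\R$. Thus $\Phi^+$ and $\Phi^-$ match automatically on $\T$, with no equivariance hypothesis on $\Phi^+$ whatsoever; the matching is forced purely by the fact that the image of the $\T$-arc lies on $\R$, which is pointwise fixed by conjugation. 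You can therefore delete the second paragraph of your argument and go straight to the reflection-and-glue step.

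This also disposes of the weakest point in the write-up: the parenthetical ``if necessary one averages over the $\Z/2$-action, which does not increase dilatation.'' That is not a valid operation for quasiconformal maps—pointwise averaging of two $K$-qc maps agreeing on a curve need not produce a homeomorphism, let alone one with controlled dilatation—and there is no natural sense in which one ``averages'' the linearization $\psi$ at $\alpha^m$ over the $\T$-involution either, since $\alpha^m$ is not fixed by it. Fortunately you never need the averaging once you observe the automatic matching above. The rest of the argument—uniformity of $\kappa_1$ from Theorem~\ref{bounds for comm pairs} and Proposition~\ref{compactness}, existence of $\alpha^m$ from Proposition~\ref{periodic pts}, and removability of the analytic arc $\T$—is correct and gives the stated bound $\kappa=\kappa(\bar N,K,\eps)$.
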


Let us now
identify the boundary components of $\De^m$ by means of  $f^{ q_m}$. 
We obtain a cylinder $\Cyl^m $ which is conformally equivalent to the 
standard bi-infinite cylinder $\C/\Z$.   
The first return map to $\De^m$ descends to a holomorphic map on
$\Cyl^m$ near the circle, and then can be transferred to $\exp (\C/\Z ,
\R/\Z)= (\C^*, \T)$. 
This is the {\it cylinder renormalization}  of a holomorphic circle
map  (well defined up to affine conjugacy). 


\bignote{Necklace is hidden here}

\comm{************
\sss{Necklace}
The following statement follows directly  from the definitions,
{\it a priori} bounds,  
and compactness (Proposition \ref{compactness}):

\begin{lem}\label{Delta}
Let  $f\in \Cir_\theta(\bar N, K, \eps)$.
Then for any $m\geq m_0(\bar N, K, \eps)\geq \underline{l}$,  the fundamental  quadrilateral $\De^m$ 
can be selected so that:


\ssk \nin $ \mathrm{ (i)}$ $\pi_m |\, \inter \De^m$ is a univalent map with bounded
distortion onto an annulus $A^m=A^m(R^mf)$ around $0$ slit along a
$\T$-symmetric  quasiarc $J^m$ which is disjoint from $\De^m (R^m f)$;

\ssk \nin $ \mathrm { (ii)}$   
  $\mod ((A^m \sm J^m) \sm \De^m (R^m f) )\geq \mu(\bar N, K, \eps)$; 

\ssk \nin $ \mathrm { (iii)}$ The pullbacks $\De_{-k}^m$, $k=0,1, \dots,
q_m,$ of $\De^m $ along the circle form a closed chain 
consecutively attached one to another;
moreover, for $k<  q_m$, the interiors of the above pullbacks, $\inter \De_{-k}^m$, 
except for the first and the last
ones, are all disjoint;

\ssk \nin $  \mathrm { (iv)}$ All the maps $f^k: \De_{-k} \ra \De_0$,
  with $k<  q_m$,   are univalent with bounded distortion;
  the map $f: \De_{- q_m}\ra \De_{- q_m+1}$ is a quasiregular triple
  branched covering;
 hence the sizes of the $\De_{-k}^m $ are comparable with the $m$th
 scale.  

\end{lem}

The union $\BDe^m : = \cup \De_k^m$ will be  called  the {\it
  necklace} of $f$ of depth $m$. 
We extend $\pi_m$ to he necklace (except for the last piece)  by means of the dynamical
periodicity,  $\pi (fz) = \pi(z)$.
*****************}

\comm{******
\sss{Pseudo-coverings}
Consider two plane discs  $D$ and $ V \ni 0$, and let $V^*=V\sm \{0\}$.  
Let us say that  a map $\pi:  (D , z) \ra  (V^*, \zeta)$ is a {\it
  pseudo-covering of degree} $\geq d$ if any path $\gamma $ in $V^* $
that begins at $\zeta$ and winds at most $d$ times around $0$,
i.e.,
$$
      \frac 1{2\pi}  \left |\int_\gamma d\arg\zeta \right|  \leq d
$$
 lifts to a path in $D$ that begins at $z$. 

Let  $M_m$ be the M\"obius map $(\hat \C, \D )  \ra (\hat \C, \D) $ sending $\zeta_m$ to
$\infty$, and let  $\Phi_m  = M_m^{-1} \circ \exp$  
  be the universal covering map  $ \C\ra   \hat \C\sm \{\zeta_m,  1/\bar\zeta_m\} $. 

\begin{lem}\label{control of chages of variable: circle}
  Let $f\in \Cir(\bar N, \mu, H)$. There exists $r>1$ and 
  $\underline{l}$ depending only on $(\bar N, \mu, H)$ such that
  the change of variable $\pi_m$ admits a lift%
\footnote{i.e., $\hat \pi_m= \pi_m\circ \Phi_m$.}
 $\hat \pi_m$ (that begins, say, at
  some $\Phi_m$-preimage  of $c_0$) to  $\Phi_m^{-1} (A^{m+\underline{l}} )$ which is a
  pseudo-covering  of degree at least $\Bq_m$ over the annulus $\{  1/r <
  |\zeta|< r \} $.  
\end{lem}
************************}

\comm{
*******************

In turn, this yields control of the changes of variables $\pi_m$,
which is very important in applications of the renormalization techniques:

\begin{lem}\label{control of chages of variable: circle -- old}
  Let $f$ be  a critical circle map of bounded type. 
   For any point $z\in \T$ and $m\geq l $,
 there is  a quasidisk  $Q^m= Q^m_f(z)$  centered at $z$ with a bounded shape  whose size has order
 of the dynamical scale at depth $m$  and such that the change of variable
 $\pi_m$ maps it with bounded distortion on a quasidisk covering
a definite arc of the circle.
Moreover,  there exists an $N$ such that 
the maps $\pi_N|\, Q^N $ are are expanding, and for any $m\geq 2N$ we have
\begin{equation}\label{Markov}
    \pi_N  (Q^m_f (z))  = Q^{m-N}_{R^N f}   (\pi_N(z)) \Supset Q^m_{R^N f}   (\pi_N(z)). 
\end{equation}
(All constants and bounds depend only on
the rotation number and the geometry of $f$.)
\end{lem}

\begin{proof}
    The changes of variable can be represented as follows:
$$
      \pi_m= \exp\circ \phi_m \circ \log \circ M_m.
$$
Here $M_m$ is the M\"obius map $\Hyp\ra \D$ sending $\alpha^m$ to
$\infty$, so  $\Pi_m:= \log\circ M_m  (\De_m)$ is a topological
strip that has a locally  bounded geometry. The map $\phi_m$
straightens this strip, equivariantly on the boundary, to the genuine vertical strip of width 1.
 It has a locally bounded distortion.

Let us   consider a ball $D_m := \D(\pi_m(z), 1/2)$. The above decomposition
implies that there exists a well defined branch $\pi_m^{-1}$ on $D_m$
that sends $\pi_m(z)$ back to $z$ and  has a bounded distortion.  
Hence the disk $Q^m := \pi_m^{-1}(D_m)$ is a quasidisk with a bounded shape  whose size has order
 of the dynamical scale at depth $m$.

In particular, it follows that the maps $\pi_N : Q_N\ra D_N$  are
uniformly expanding for $N$ big enough (with derivative of order
$s_m(z)^{-1}$, where $s_N (z )$ is the dynamical scale at $z$).  
Since $s_{m-N} / s_m\to  \infty $  as $N\to \infty$ uniformly in $m$
(and also: $m-N\geq N$),
we obtain inclusion (\ref{Markov}) for $N$ big enough.  
\end{proof}


*****************}

\subsection{Quasiconformal conjugacy}

\begin{thm}\label{qc conjugacy on circle}
  Two quasicritical circle maps,
$$
\mbox{  $f: \Dom^h f \ra Y$  and  $\tl f:   \Dom^h {\tl  f} \ra \tl
  Y$,  of class $\Cir(\bar N, K, \eps)$ }
$$
with the same rotation number  are $L$-qc conjugate, with 
$L= L(\bar N, K, \eps)$.  
\end{thm}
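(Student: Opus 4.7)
The plan is to bootstrap from the quasisymmetric conjugacy on $\T$ (Theorem~\ref{HS}) to a global quasiconformal conjugacy via a pullback construction through the butterfly renormalization tower. First, apply Theorem~\ref{HS} to get an $H(\bar N, K, \eps)$-qs conjugacy $h: \T \to \T$ with $h \circ f = \tilde f \circ h$ on $\T$, and extend it by Beurling--Ahlfors to a qc homeomorphism $H_\circ: \C \to \C$, with dilatation depending only on $(\bar N, K, \eps)$; this will serve as our conjugacy far from $\T$.

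Next, fix a depth $l \geq \underline l$ and consider the butterflies $F_l = R_{\cp}^l f$ and $\tilde F_l$, which by Proposition~\ref{compactness} lie in a common compact family $\PP(\mu, \kappa, B)$ with parameters depending only on $(\bar N, K, \eps)$. On the fundamental annulus $\Omega := \hat Y^l \setminus (\overline{\hat X^l_+} \cup \overline{\hat X^l_-})$ I would construct a seed qc map $\Psi: \Omega \to \tilde \Omega$ by prescribing its boundary values: on $\partial \hat Y^l$ it extends $h|_{\T \cap \partial \hat Y^l}$ quasisymmetrically; on $\partial \hat X^l_\pm$ it is forced by the equivariance relation $\Psi \circ F_l = \tilde F_l \circ \Psi$, which is well defined because $F_l$ maps each inner boundary arc homeomorphically onto $\partial \hat Y^l$. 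Interpolating by a standard qs extension inside the annulus, one gets a map $\Psi$ whose dilatation is controlled by compactness of the butterfly class, hence by $(\bar N, K, \eps)$ alone.

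Finally, extend $\Psi$ inside each $\hat X^l_\pm$ by the pullback rule $\Psi := \tilde F_l^{-1} \circ \Psi \circ F_l$, using the univalent inverse branches of $\tilde F_l$. Since $F_l$ is holomorphic away from $c_0$, each pullback preserves the dilatation; the critical point $c_0$ is a single boundary point, hence qc-removable. Iterating extends $\Psi$ to $\bigcup_{n \geq 0} F_l^{-n}(\hat Y^l)$, which covers a full neighborhood of $\T$; gluing with $H_\circ$ outside this neighborhood yields the desired global qc conjugacy with $L = L(\bar N, K, \eps)$. The main obstacle will be ensuring that the two univalent inverse branches of $\tilde F_l^{-1}$ landing in $\overline{\hat X^l_+}$ and $\overline{\hat X^l_-}$ fit together continuously at $c_0$ and match the quasiregular structure of $\tilde F_l$ there with bounded dilatation; this is handled by the wedge property at $c_0$ from Theorem~\ref{bounds for comm pairs}, which guarantees that $F_l$ and $\tilde F_l$ have quasisymmetrically equivalent boundary geometries at the critical point.
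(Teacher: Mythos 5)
Your proposal has the right overall shape---Herman--Swiatek to get a qs circle conjugacy, then a pullback argument relying on the complex bounds (Theorem~\ref{bounds for comm pairs}) and compactness (Proposition~\ref{compactness})---and this matches the paper's approach via Sullivan's Pullback Argument. But there is a genuine gap at the very end.

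The set $\bigcup_{n\geq 0} F_l^{-n}(\hat Y^l)$ is \emph{not} a neighborhood of $\T$. For the butterfly map $F_l: \hat X^l_+ \cup \hat X^l_- \to \hat Y^l$ we have $\hat X^l_\pm \subset \hat Y^l$, so the preimages $F_l^{-n}(\hat Y^l)$ form a \emph{decreasing} nest contained in $\hat Y^l$; the union of all of them is just $\hat Y^l$ itself, which is a Jordan disk around a single arc $I^l \cup I^{l+1}$ of $\T$, not around the whole circle. What the pullback under $F_l$ actually does is fill in the conjugacy on $\hat Y^l$ minus the nonescaping set (essentially the arc, which has zero area). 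That is fine as far as it goes, but the statement asks for a conjugacy on $\Dom^h f$, which is a full annular neighborhood of $\T$. Gluing with the Beurling--Ahlfors extension $H_\circ$ away from $\hat Y^l$ cannot finish the job, because $H_\circ$ is merely a qc extension, not equivariant: it does not conjugate $f$ to $\tilde f$ anywhere off $\T$.

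What is missing is the spreading-around step: one must also pull the conjugacy back under the \emph{original} dynamics $f$ (not just under the return map $F_l$), using the landing maps $f^k: I^l_k \to I^l_0$ and their holomorphic extensions to transport $\Psi$ from $\hat Y^l$ to the rest of the annulus, or equivalently (as the paper does) lift the adjusted map under $f$ itself to make it equivariant on larger and larger subsets of $\Om^h_f$, then extract a limit using normality of uniformly qc maps. Since $f$ is conformal on $\Om^h_f$, these lifts preserve dilatation, and since a lifted map agrees with the previous one on the region where equivariance already holds, the process is consistent. Your seed construction on the fundamental annulus $\Omega = \hat Y^l \setminus (\overline{\hat X^l_+} \cup \overline{\hat X^l_-})$ and the observation that $c_0$ is qc-removable via the wedge geometry are both sound and essentially coincide with the paper's adjustment step; you just need to replace the butterfly-only pullback and the $H_\circ$-gluing with a pullback under $f$ covering all of $\Om^h_f$.
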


\begin{proof}
   It is an application of  {\it Sullivan's Pullback Argument},
   see \cite{MvS}. 
By Theorem~\ref{HS}, there is a quasiconformal map $h: \C\ra \C$
conjugating $f$ and $\tl f$ on the unit circle (with dilatation
depending only on $\bar N$). Using the complex bounds
(Theorem \ref{bounds for comm pairs}) this map can be adjusted so that it is equivariant
on the boundary of the butterfly, with dilatation depending only on $(K,\eps)$.

We can now start lifting the map $h$ under the dynamics to make it equivariant
on bigger and bigger parts of $\Om^h_f$. Since $f$ is  conformal on
$\Om^h_f$, these lifts preserve the dilatation of $h$. By
compactness of the space of normalized $L$-qc maps, we can pass to a
limit and produce the desired conjugacy.  
\end{proof}

\section{Siegel maps and their perturbations} \label{Siegel maps sec}

\comm{*********************
\subsection{Analytic critical circle maps}\label{analytic circle maps}

An  {\it analytic critical circle map } is an analytic  homeomorphism $f: \T\ra \T$ of the circle  $\T= \R/\Z $
with   a single critical point $c_0$ of cubic type (i.e.,
$f'''(c_0)\not=0)$. It is usually normalized so that $c_0=0$ in the angular coordinate.

By the classical theory supplemented with Yoccoz's No Wandering
Intervals Theorem  \cite{Y1}, 
if this map does not have periodic points then it is topologically conjugate to a rotation $T_\theta:  x\mapsto x+\theta\ \mod \ 1$,
where $\theta\in \R\sm \Q \ \mod\ \Z$ is the {\it rotation number } of $f$. The further theory largely depends on the Diophantine properties of $\theta $
encoded in its continuous fraction expansion $[N1,\,  N2,  \dots]$.  
Let  $\num_m/\Bq_m = [N_1,\dots, N_m]$ be the $m$-fold rational approximand to $\theta$. 
The rotation number (and the map $f$ itself) is called of {\it bounded type} if the entries of the expansion are bounded by some $\bar N$.

\begin{thm}[\cite{H,Sw}] 
  A critical circle map of bounded type is quasi-symmetrically conjugate to the rotation $T_\theta$.     
\end{thm}

The circle dynamics naturally encodes the continued fraction expansion
of the rotation number, as the denominators $q_n$ are the {\it
  moments of closest approaches} of the critical orbit $\{c_n\}$ back
to the critical point $c_0$. Let us consider the corresponding
intervals  $I^n=[c_0, c_{q_n}]$. The orbits of two consecutive ones,
\begin{equation}\label{circle tilings}
     I_k^n := f^k(I^n), \ k=0,1,,\dots, q_{n+1}-1\quad  {\mathrm{and}}
     \quad   I_k^{n+1}  ,   \ k=0,1,,\dots, q_n-1
\end{equation}
form a dynamical tiling  $\II^n$ of $\T$.  Moreover, these tilings are
nested: $\II^{n+1}$ is a refinement of $\II^n$. 

We let $I^n(z)$ be the interval of $\II^n$ containing $z\in \T$
(neglecting a slight ambiguity at the postcritical points).
Each of these intervals is diffeomorphically  mapped onto either
$f^{q_{n+1} } (I^n)$ or
  $f^{q_n} (I^{n+1} ) $ by some iterate of $f$. We call it the  ``landing
  map'' $L$. 

\note{as defined, the landing map is not the identity in $I^n \cup
  I^{n+1}$, but the first return map}

 In case  of bounded type,
Theorem \ref{HS} ensures that these
tilings have bounded geometry, i.e., the
neighboring tiles are comparable (and hence the consecutive nested
tiles are also comparable). This gives us a notion of {\it
  dynamical scale at depth $n$} at any point $z\in \T$ (well defined
up to a constant): it the the size of  any tile $I^n(z)\in \II^n$ containing $z$.  
 
*************************}

\subsection{Douady-Ghys surgery}

\sss {Blaschke model for Siegel polynomials}\label{Blaschke model}


Let us consider a quadratic polynomial 
\begin{equation}\label{P_theta}
   \Bf_\theta: z\mapsto e^{2\pi  i \theta} z + z^2, \quad \theta \in \R/\Z.
\end{equation}
When the rotation number $\theta$ has bounded type, it is linearizable
near the origin, and thus has a Siegel disk $\BS\equiv \BS_{P_\theta} \equiv \BS_\theta$.  
Here we will  briefly describe  the  Blaschke model  for this
quadratic map due to Douady and Ghys. 
It is based on a surgery that turns an appropriate
Blaschke product into $\Bf_\theta$.
 
Consider a family of Blaschke products
$$
   B_\alpha (z)= e^{2\pi i \alpha} z^2 \, \frac {z-3} {1-3z}  .
$$ 
It induces a family of critical circle maps on the unit circle $\T$.
 Adjusting the parameter $\alpha$ one can make the rotation number  of $B_\la$
assume an arbitrary value, so it can be made equal to the rotation
number $\theta$ from (\ref{P_theta}). 

Assume $\theta$  is of bounded type. 
Then by Theorem \ref{HS}, $B_\la: \T \ra \T$ is quasi-symmetrically conjugate to the pure rotation $T_\theta$. 
We can use this conjugacy to glue the Blaschke product on $\C\sm \D$
to the rotation of $\D$. 
This produces a degree two  quasiregular map $F$ of a quasiconformal sphere.
Moreover,  $F$  preserves the conformal structure  obtained by
spreading around the standard structure on the disk $\D$. By the
Measurable Riemann Mapping Theorem,  $F$ is quasiconformally conjugate
to some quadratic polynomial $ z\mapsto \la z + z^2 $. Since this
quadratic polynomial  has an invariant
Siegel disk with rotation number $\theta$, it coincides with $\Bf_\theta$.

\subsection {Expansion}\label{expansion sec}

Let us endow the complement $\C\sm \bar \BS $  of a Siegel disk
$ \BS = \BS_\theta$  of bounded type 
with the hyperbolic metric $\|\cdot\|_\hyp$. A standard application of
the Schwarz Lemma shows that
 the map $\Bf = \Bf_\theta$ is expanding in this metric,
$$
    \| D\Bf  (z)\|_\hyp >1, \quad   {\mathrm{if }} \ 
   z, \Bf (z) \in \C\sm  \bar\BS.
$$ 
Indeed, the map $\Bf : \C\sm f^{-1} (\bar \BS) \ra \C\sm \bar \BS$ is a
covering and hence a hyperbolic  isometry.
By the Schwarz Lemma, the embedding
\begin{equation}\label{i}
    i : \C\sm  \Bf^{-1} (\bar \BS) \ra \C\sm \bar \BS
\end{equation}
is a hyperbolic contraction.  Hence $ \Bf \circ i^{-1} : \C\sm \bar \BS \ra
\C\sm \bar \BS$ is  expanding on its domain of definition (i.e., on
$\C\sm \Bf^{-1}(\bar \BS)$). 

Using the Blaschke model,
 McMullen showed  that the expansion is uniform near the critical point:

\begin{lem}[\cite{McM1}]\label{expansion}
  Let $\Bf= \Bf_\theta$ be a Siegel quadratic polynomial of type bounded
  by $\bar N$,
and let $C>0$. 
Then there exists $\rho= \rho(\bar N,C) > 1$ such that 
$$
    \| D\Bf  (z)\|_\hyp > \rho  \quad   {\mathrm{if }} \ z, \Bf (z) \in
    \C\sm \bar \BS, \ {\mathrm{and}} \  |z-c_0|\leq C\, \dist(z, \BS),
$$
where the $\dist$ stands for the Euclidean one. 

\end{lem}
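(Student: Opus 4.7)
The plan is to pass to the Blaschke model of \S\ref{Blaschke model} and reduce the claim to a quantitative Schwarz contraction statement that is then handled by compactness and a wedge model at the critical point.

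First I would transfer the problem. The Douady--Ghys surgery gives a qc homeomorphism $h:(\C,\bar\BS)\to(\C,\bar\D)$ conjugating $\Bf_\theta$ on $\C\sm\bar\BS$ to the Blaschke product $B\equiv B_\alpha$ on $\C\sm\bar\D$, whose dilatation is bounded in terms of $\bar N$ only (the qs-constant of the conjugacy $B|\T\to T_\theta$ is controlled by $\bar N$ via Theorem~\ref{HS}). Consequently $h$ is bi-Lipschitz between the hyperbolic metrics on $\C\sm\bar\BS$ and $\C\sm\bar\D$ with constant $L(\bar N)$, and the Euclidean Stoltz condition $|z-c_0|\leq C\,\dist(z,\BS)$ pushes forward to an analogous condition $|w-c_0'|\leq C'\,\dist(w,\D)$ with $c_0'=h(c_0)\in\T$ and $C'=C'(\bar N,C)$. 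It therefore suffices to prove: for $B=B_\alpha$ of type bounded by $\bar N$ and any $C'>0$, there is $\rho=\rho(\bar N,C')>1$ such that $\|DB(w)\|_\hyp\geq\rho$ whenever $w,B(w)\in\C\sm\bar\D$ and $|w-c_0'|\leq C'\,\dist(w,\D)$.

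Next I would set up the Schwarz argument. Since $B:\C\sm B^{-1}(\bar\D)\to\C\sm\bar\D$ is an unbranched holomorphic covering, it is a local hyperbolic isometry, so the expansion of $B$ at $w$ is the reciprocal of the Schwarz contraction factor of the inclusion
\[
\iota:\C\sm B^{-1}(\bar\D)\;\hookrightarrow\;\C\sm\bar\D
\]
at $w$. Because $B$ is two-to-one and $c_0'\in\T$ is its critical point on the circle, the preimage $B^{-1}(\bar\D)\sm\bar\D$ is a second closed topological disk $\bar\D'$ touching $\bar\D$ only at $c_0'$; locally at $c_0'$ it is a cubic wedge of opening $\pi/3$ (this is the $\T$-exterior analogue of the wedge property in Theorem~\ref{bounds for comm pairs} and follows from property~Q2 for $B|\T$, with geometric constants controlled by $\bar N$). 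Away from $c_0'$, the set $B^{-1}(\bar\D)\sm\bar\D$ contains a Euclidean neighborhood of $\T\sm\{c_0'\}$ of definite size (again controlled by $\bar N$ via the real and complex bounds, Theorems~\ref{HS} and~\ref{bounds for comm pairs}). Thus the inclusion $\iota$ is strictly contracting at every $w\in\C\sm\bar\D$, with a factor that degenerates towards $1$ only as $w$ approaches $c_0'$ \emph{tangentially} to $\T$.

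Finally I would rule out such tangential degeneration by a compactness/rescaling argument. The parameter $\alpha$ giving rotation number of type bounded by $\bar N$ lies in a compact subset of $\R/\Z$, so the family of Blaschke products $B_\alpha$ is compact in the uniform topology on a neighborhood of $\T$; the associated wedge data at $c_0'$ varies continuously and is non-degenerate. The Stoltz condition $|w-c_0'|\leq C'\,\dist(w,\D)$ forces $w$ to stay in a cone at $c_0'$ of aperture strictly less than $\pi$, hence at bounded hyperbolic distance (relative to $|w-c_0'|$) from the boundary of the cubic wedge that forms $\bar\D'$ near $c_0'$. Rescaling any potential degenerating sequence $w_n\to c_0'$ by $|w_n-c_0'|$, one obtains in the limit an inclusion of a slit plane (minus the wedge model of $\bar\D\cup\bar\D'$) into a bigger slit plane (minus $\bar\D$ alone), which is still a strict Schwarz contraction at the accumulation point of the rescaled $w_n$. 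This contradicts degeneration, giving the uniform bound $\rho>1$.

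The only real obstacle is the quantitative Schwarz estimate near $c_0'$; the resolution is the cubic wedge model combined with the observation that the Stoltz hypothesis forces $w$ to avoid the tangency direction where $\iota$ fails to be uniformly contracting. Everything else is compactness and transfer of estimates through the bounded-dilatation conjugacy $h$.
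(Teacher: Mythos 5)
Your proposal follows the same skeleton as the paper's proof: reduce $\|D\Bf(z)\|_\hyp$ to the Schwarz contraction factor of the inclusion $\C\sm\Bf^{-1}(\bar\BS)\hookrightarrow\C\sm\bar\BS$, control that factor by the hyperbolic distance from $z$ to the ``other'' preimage component of $\bar\BS$, and verify the needed bound by computing in the Blaschke model and invoking the surgery qc equivalence. Your wedge/Stoltz analysis in the Blaschke plane is a legitimate (if more elaborate) route to the estimate the paper disposes of in one sentence.

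There is, however, one imprecise step that should be flagged. A $K$-qc homeomorphism is \emph{not} bi-Lipschitz between the hyperbolic metrics; the correct quantitative statement (Gehring--Osgood) is that $d'(h(x),h(y))\le C(K)\max\bigl(d(x,y),\,d(x,y)^{1/K}\bigr)$, i.e.\ qc maps are Lipschitz at large hyperbolic scales but only H\"older at small ones, and at the infinitesimal level a $K$-qc map need not distort the hyperbolic density by a bounded factor (a $K$-qc self-map of $\D$ is generically not even Lipschitz in the Euclidean metric). Consequently, your ``it therefore suffices to prove'' the expansion bound $\|DB(w)\|_\hyp\ge\rho$ in the Blaschke plane is not actually justified by the bi-Lipschitz claim: a derivative-level bound for $B$ does not automatically yield one for $\Bf$. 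The paper sidesteps this by applying the Schwarz argument directly in the $\Bf$-plane, reducing the derivative bound for $\Bf$ to an upper bound on the hyperbolic \emph{distance} $\dist_{\C\sm\bar\BS}(z,\Bf^{-1}\bar\BS\sm\bar\BS)$, and only that distance bound is what gets transferred from the Blaschke model (for which the large-scale Lipschitz half of Gehring--Osgood suffices, since one only needs an upper bound). Your work is easily salvaged: the cubic-wedge and Stoltz-cone analysis you carry out is, up to rephrasing, precisely a proof of the Blaschke-side distance bound, so simply reorganize so that what you transfer through $h$ is the distance estimate rather than the derivative estimate, and then run the Schwarz contraction in the $\Bf$-plane.
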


\begin{proof}
From the above argument  we see that $\| D\Bf (z)\|_\hyp = \| Di^{-1} \|_\hyp  $,
where $i$ is embedding (\ref{i}). The latter is bounded in terms of 
the hyperbolic distance from $z$ to $P^{-1} \bar S$ (in $\C\sm \bar
S$).  For the  Blaschke model,  this hyperbolic distance is bounded in terms of $C$.
The Blaschke model is $K$-qc equivalent to $\Bf $ where $K$ is bounded in
terms of $N$. The conclusion follows. 
\end{proof}


Let us now consider  a perturbation
$ \tl\Bf = \Bf_{\tl\theta}$
(not necessarily with real $\tl\theta$)
 of the Siegel polynomial $\Bf =\Bf_\theta $.
%
Let $ \tl\BO$ be the postcritical set of $\tl \Bf $.
Endow its complement $\C\sm \tl\BO$ with the hyperbolic metric
$\|\cdot\|_{\tl \hyp }  $.
Then the map $\tl \Bf $ is expanding with respect to this metric
(for the same reason as the  Siegel map $\Bf$). 
In fact,  it is also uniformly expanding near
 the critical point: 

\begin{lem}\label{perturbative expansion}
Let the type of $\theta$ be bounded by $\bar N$, and
let $C >0$.
Then there exists $\rho= \rho(\bar N,C) > 1$ 
such that for any compact set 
$K\Subset \C\sm  \bar\BS$ there exists $\de>0$ 
with the following property. 
Let $ |\tl\theta - \theta| < \de$, and assume  $\tl\BO$ is contained in the
$\de$-neighborhood of the Siegel disk $\BS$.  Then
for any point $z\in K\sm \tl\Bf^{-1} ( \bar \BO) $
such that 
\begin{equation}\label{z-position}
  |z -  \Bc_0|\leq C\, \dist(z, \bar\BS), 
\end{equation}
we have:
$$
    \| D   \tl\Bf (z)\|_{\tl\hyp}   \geq \rho.
$$
\end{lem}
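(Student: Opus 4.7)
\smallskip\noindent\emph{Proposal.} The plan is to mirror the proof of Lemma \ref{expansion}, replacing the Siegel map $\Bf$ by the perturbation $\tl\Bf$, and then to check that the resulting hyperbolic geometric estimates survive the small perturbation. Exactly as in the unperturbed case, the map $\tl\Bf\colon\C\setminus\tl\Bf^{-1}(\tl\BO)\to\C\setminus\tl\BO$ is an unbranched covering and hence a hyperbolic isometry between its domain and target. Composing with the local inverse of the inclusion $\tilde\imath\colon\C\setminus\tl\Bf^{-1}(\tl\BO)\hookrightarrow\C\setminus\tl\BO$ and applying the Schwarz lemma as in the proof of Lemma \ref{expansion}, one obtains
$$
\|D\tl\Bf(z)\|_{\tl\hyp}\;=\;\|D\tilde\imath^{-1}\|_{\tl\hyp},
$$
which is bounded below by a definite constant $>1$ as soon as $z$ lies at bounded hyperbolic distance (measured in $\C\setminus\tl\BO$) from the ``extra sheet'' $\tl\Bf^{-1}(\tl\BO)\setminus\tl\BO$. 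It therefore suffices to produce a uniform upper bound, depending only on $(\bar N,C)$, on this hyperbolic distance for every $z\in K$ satisfying (\ref{z-position}).

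For the unperturbed Siegel map this is exactly what Lemma \ref{expansion} achieves: via the Blaschke model, the position hypothesis (\ref{z-position}) converts into a uniform upper bound $R_0=R_0(\bar N,C)$ on the hyperbolic distance from $z$ to $\Bf^{-1}(\bar\BS)\setminus\bar\BS$ in $\C\setminus\bar\BS$ (the two sets share the critical point $\Bc_0$ on their boundary, so the Euclidean control in (\ref{z-position}) translates directly into the hyperbolic one). To transfer this estimate to $\tl\Bf$, I fix $K\Subset\C\setminus\bar\BS$ and invoke continuity of the hyperbolic metric under perturbation of the removed set: the hypothesis that $\tl\BO$ lies in the $\delta$-neighborhood of $\bar\BS$ sandwiches $\C\setminus\tl\BO$ between $\C\setminus\bar\BS$ and the complement of that neighborhood, and by monotonicity combined with a Koebe-type comparison the hyperbolic densities on $K$ converge uniformly to those of $\C\setminus\bar\BS$ as $\delta\to 0$. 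Moreover the extra preimage sheet $\tl\Bf^{-1}(\tl\BO)\setminus\tl\BO$, being essentially the reflection of $\tl\BO$ through the quadratic's critical point $\Bc_0$, also Hausdorff-converges to its Siegel analogue. Choosing $\delta=\delta(K,\bar N,C)$ small enough gives a hyperbolic distance bound of $2R_0$ in $\C\setminus\tl\BO$, and hence $\|D\tl\Bf(z)\|_{\tl\hyp}\ge\rho=\rho(\bar N,C)>1$.

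The main obstacle is executing the continuity step cleanly: $\tl\BO$ depends genuinely discontinuously on $\tl\theta$ (Siegel, parabolic, attracting and Cremer regimes are interleaved in every neighborhood of $\theta$), so one cannot argue by a soft convergence statement alone. What rescues the argument is that the hypothesis $\tl\BO\subset\{\dist(\cdot,\bar\BS)<\delta\}$ is purely geometric and $\tl\theta$-independent: combined with the monotonicity of hyperbolic metrics under inclusion of their underlying domains and the fact that round annuli around $\bar\BS$ of definite modulus are free of $\tl\BO$ for $\delta$ small, it yields the required two-sided comparison of hyperbolic metrics on $K$ without any further knowledge of the fine structure of $\tl\BO$. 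This reduces the perturbative bound to the unperturbed one supplied by Lemma \ref{expansion}.
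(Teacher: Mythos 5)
Your proof has the same skeleton as the paper's: reduce to bounding the $\tl\hyp$-distance from $z$ to $\tl\Bf^{-1}(\tl\BO)$, and transfer the bound from the unperturbed estimate of Lemma~\ref{expansion} using the two hypotheses. The weak point is the sentence asserting that $\tl\Bf^{-1}(\tl\BO)\setminus\tl\BO$ Hausdorff-converges to its Siegel analogue. This requires $\tl\BO$ to be not only \emph{contained} in the $\de$-neighborhood of $\bar\BS$ but also $\eps$-\emph{dense} in $\di\BS$: otherwise $\tl\Bf^{-1}(\tl\BO)$ could simply miss the region near the reflected sheet of $\Bf^{-1}(\bar\BS)$ where condition~(\ref{z-position}) places $z$, and the required distance bound would fail.

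The density does \emph{not} follow from the geometric containment hypothesis, yet your closing paragraph attributes the rescue of the argument precisely to that containment being $\tl\theta$-independent. It actually follows from minimality of $\Bf|\,\di\BS$ plus shadowing: for every $\eps>0$ there is a finite $N$ so that the first $N$ postcritical points of $\Bf$ form an $\eps/2$-net in $\di\BS$; for $\de$ small those of $\tl\Bf$ $\eps/2$-shadow them; hence $\tl\BO$ is an $\eps$-net for $\di\BS$ and $\tl\Bf^{-1}(\tl\BO)$ an $O(\eps)$-net for $\Bf^{-1}(\BS)$. This is exactly the step the paper spells out (``Since the dynamics of $\Bf$ on $\di\BS$ is minimal, the set $\tl\BO$ makes an $\eps$-net for $\di\BS$\dots''). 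The geometric containment supplies only the one-sided metric comparison $\|\cdot\|_{\tl\hyp}\leq C_1\|\cdot\|_\hyp$ on $K$, which you do use correctly; but without the net property your reduction to Lemma~\ref{expansion} does not close. (Incidentally, your claim that the hyperbolic densities on $K$ converge to those of $\C\setminus\bar\BS$ is also an over-statement --- monotonicity gives only the upper bound, which happens to be the direction needed.)
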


\begin{proof}
  As the proof of Lemma \ref{expansion}  shows,
the expansion factor $\rho$ is bounded from below in terms of the
hyperbolic distance from $z$ to $ \tl\Bf^{-1}( \tl\BO )$ 
in $\C\sm \tl \BO $.

Let $U=U_\de$ be the $\de$-neighborhood of $\bar \BS$.
For $\de$ small enough,  $\bar U$ is disjoint from $K$.
Then the hyperbolic metrics on $\C\sm \bar \BS$
and on  $\C\sm \bar U$ restricted to $K$ are comparable
(and in fact, close for $\de$ small). 

By assumption,  the postcritical set
$\tl\BO$ is  contained in $U$.
By the Schwarz Lemma, the hyperbolic metric  $\|\cdot \|_{\tl\hyp} $ on
$\C\sm \tl\BO$ restricted to $K$ is  bounded by the
hyperbolic metric on $\C\sm\bar U$.  
Altogether, for $\de$ sufficiently  small we conclude: 
$$
   \|\cdot \|_{\tl \hyp}   \leq C_1 \,  \| \cdot \|_\hyp \quad
   {\mathrm{on}} \ K,
$$
with the constant  $C_1$ depending only on $N$
(in fact, $C_1$ can be taken arbitrary close to 1 for $\de$ small).

Since the dynamics of $\Bf$ on $\di \BS$ is minimal,
the set $\tl\BO$ makes  an $\eps$-net for $\di \BS$
provided $\de$ is small enough. Hence
 $ \tl\Bf^{-1} ( \tl\BO ) $ makes an $ O(\eps) $-net for 
$\Bf^{-1} (\BS)$. 
As we know (see the proof of Lemma \ref{expansion}), 
 condition  (\ref{z-position}) implies that 
the hyperbolic distance from $z$ to $ \Bf^{-1} (\BS)$ in  $\C\sm \bar \BS $ is
bounded. 
It follows that the hyperbolic  distance from $z$ to $\tl\Bf^{-1} (\tl\BO)$ in 
$\C\sm  \tl\BO $ is bounded as well. 
\end{proof}

\comm{****
Let us now consider  a sequence 
of  perturbations $ \Bf _m= \Bf_{\theta_m}$ of the Siegel polynomial
$\Bf =\Bf_\theta $ (not necessarily with real $\theta_m$). 
%
Let $ \BO_m$ be the postcritical set of $\Bf_m$.
Endow its complement $\C\sm \BO_m$ with the hyperbolic metric
$\|\cdot\|_m $.
Then the map $ \Bf_m$ is expanding with respect to this metric
(for the same reason as the  Siegel map $\Bf$). 
In fact,  it is also uniformly expanding near
 the critical point: 

\begin{lem}\label{perturbative expansion}
Let the type of $\theta$ be bounded by $\bar N$, and
let $C >0$. 
Then there exist $\rho= \rho(\bar N,C) > 1$ 
 with the following property. 
If
\begin{equation}\label{limsup O}
 \limsup \BO_m \subset  \bar\BS\equiv \bar S_\Bf.
\end{equation}
then for all sufficiently big $m $, we have
$$
    \| D   \Bf_m  (z)\|_m  > \rho  \quad   {\mathrm{if }} \  
z\in \C\sm   \bar \BS,\   
\Bf_m(z) \in    \C\sm  \BO_m 
$$
 and 
\begin{equation}\label{z-position}
  |z -  c_0|\leq C\, \dist(z, \bar\BS).
\end{equation}
\end{lem}

\begin{proof}
  As the proof of Lemma \ref{expansion}  shows,
the expansion factor $\rho$ is bounded from below in terms of the
hyperbolic distance from $z$ to $ \Bf_m^{-1}( \BO_m )$ in $\C\sm\ \BO_m $.

Let us take
a compact set $K\Subset \C\sm\bar S$ containing $z$
 and a  disjoint small disk neighborhood $U$ of $\bar \BS$.
 Then the hyperbolic metrics on $\C\sm \bar \BS$
and on  $\C\sm \bar U$ restricted to $K$ are comparable
(and in fact, close). 
Also, by assumption (\ref{limsup O}), the postcritical sets 
$\BO_m$ are eventually contained in $U$.
By the Schwarz Lemma, the hyperbolic metrics  $\|\cdot \|_m $ on
$\C\sm \BO_m$ restricted to $K$ are bounded by the
hyperbolic metric on $\C\sm U$.  
Altogether, we conclude for $m$ sufficiently  big:
$$
   \|\cdot \|_m  \leq C_1 \,  \| \cdot \|_\hyp \quad
   {\mathrm{on}} \ K,
$$
with the constant  $C_1$ depending only on $N$
(in fact, $C_1$ can be taken arbitrary close to 1 for $m$ big).

Also, since the dynamics of $\Bf$ on $\di \BS$ is minimal,
the set $\BO_m$ makes  an $\eps$-net for $\di S$
for $m$ big enough. Hence
 $ \Bf_m^{-1} ( \BO_m ) $ makes an $ O(\eps) $-net for 
$\Bf^{-1} (\BS)$. 
As we know (see the proof of Lemma \ref{expansion}), 
 condition  (\ref{z-position}) implies that 
the hyperbolic distance from $z$ to $ \Bf^{-1} (\BS)$ in  $\C\sm \bar \BS $ is
bounded. 
It follows that the hyperbolic  distance from $z$ to $\Bf_m^{-1} (\BO_m)$ in 
$\C\sm  \BO_m $ is bounded as well. 
\end{proof}
*****}

\subsection{Siegel maps}
\sss{Definition}\label{def of Siegel maps}
A {\it Siegel map} $f: (\Om , 0)\ra (\C,0)$,
i.e., a holomorphic map on a Jordan disk $\Om \equiv \Om_f = \Dom f$ with the following properties:

\ssk \nin S1.
 $f$  has a Siegel disk $S=S_f$ (centered at  0)  which is  a {\it quasidisk
   compactly contained in $\Om $}.  

\ssk \nin S2. 
  $f$ has a non-degenerate  critical point $c_0\in \di S$;
we let $c_n= f^n c_0$;

\ssk \nin S3. 
The domain  $\Om^h_f= \{ z\in \Om\sm \bar S: \; f z \in \Om \sm \bar S
\} $
is obtained from the annulus  $\Om \sm \bar S$ by
  removing a  topological triangle
$$
 \TT= \TT_f : =  (\Om \sm \bar \D)  \cup \{ c_0  \}
$$    
with a vertex at $c_0$ and the opposite side 
on   the  boundary of  $\Om$;

\ssk\nin S4.  
  $f :  \Om^h_f \ra \C$ is an immersion, and $f:  \TT \ra S \cup \{c_1\} $ is an embedding. 

\ssk We let $\Dom^h f = \Om^h_f \cup \bar S$.

\begin{rem}
  Note that Siegel maps are holomorphic by definition, 
so in this case  superscript $``$h$"$ is taken only by analogy with  the circle case. 
\end{rem}

Given $\bar N\in \N$ and $\mu>0$, 
let $\Siegel(\bar N, \mu, K) $ stand for the space of Siegel maps $f :  \Om
\ra \C$ of type bounded by $\bar N$ and such that  $\mod (\Om \sm
S_f)\geq \mu$ and $\di S_f$ is a $K$-quasicircle.  
(If irrelevant, some of these parameters can be skipped in the notation.)

We will later use notation
$\Siegel_\theta (\mu, K)\equiv \Siegel_N(\mu, K)$ for
 the class  of  Siegel maps $f\in \Siegel(\mu, K)$
with stationary rotation number 
 $\theta= \theta_N$ and such that
 $\mod (\Om \sm S_f)\geq \mu$.

\sss{Circle model for Siegel maps}
 By performing the Douady-Ghys surgery on
an arbitrary {\it analytic} critical circle map $g $ of
bounded type (not only on the Blaschke map), 
we can produce  plenty of Siegel maps. 
However, to produce all of them, we need to allow  {\it quasicritical}
circle maps.

\begin{prop}\label{Siegel map surgery}
  Any  Siegel map  $f: (\Om , 0)\ra (\C,0)$ of bounded type can be obtained by performing
  the Douady-Ghys surgery on a quasicritical circle map. 
\end{prop}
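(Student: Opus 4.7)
The plan is to invert the Douady--Ghys construction of \S\ref{Blaschke model}: I will produce a quasiconformal global change of coordinates $\chi:\C\to\C$ which is \emph{conformal} on the exterior $\hat\C\sm\bar\D$ and carries $\D$ onto $S$, so that the conjugate $F := \chi^{-1}\circ f\circ\chi$ already has the form produced by the surgery---holomorphic on $\hat\C\sm\bar\D$ and a quasiconformal conjugate of the rotation $T_\theta$ on $\D$. The sought-after quasicritical circle map $g$ will then appear as the restriction of $F$ to the outer annular piece, extended across $\T$ by Schwarz reflection.

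The map $\chi$ is assembled from two conformal pieces glued by a qc boundary extension. Let $\phi:(\D,0)\to(S,0)$ be the conformal linearization of $f|_S$, satisfying $f\circ\phi=\phi\circ T_\theta$, and let $\psi:\hat\C\sm\bar\D\to\hat\C\sm\bar S$ be a Riemann map. Since $\di S$ is a $K$-quasicircle, both $\phi$ and $\psi$ extend to quasisymmetric homeomorphisms $\T\to\di S$, and the discrepancy $H_0:=\psi|_\T^{-1}\circ\phi|_\T$ is a qs self-homeomorphism of $\T$; extend $H_0$ to a qc homeomorphism $H:\bar\D\to\bar\D$ via the Douady--Earle extension. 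Now set $\chi=\psi$ on $\hat\C\sm\bar\D$ and $\chi=\phi\circ H^{-1}$ on $\bar\D$; the very definition of $H_0$ makes the two pieces agree on $\T$, so $\chi$ is a well-defined global qc homeomorphism that is conformal outside $\bar\D$. A direct computation gives $F|_\D=H T_\theta H^{-1}$ and $F|_{\hat\C\sm\bar\D}=\psi^{-1}\circ f\circ\psi$, the latter being holomorphic on the complement (within the outer annulus) of the transported triangle $\psi^{-1}(\TT_f)$.

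Let $g$ be the restriction of $F$ to the outer piece of $\chi^{-1}(\Om)$, extended to a $\T$-symmetric annulus by Schwarz reflection. To check $g\in\Cir$: property Q1 holds because $g$ is holomorphic on the outer side near $\T\sm\{c_0^g\}$ and reflects analytically across $\T$; property Q2 is obtained from the non-degenerate local form $f(w)=c_1^f+h(w)^2$, which pulls back under the qc map $\chi$ to a quasiregular model $g(z)=c_1^g+\tilde h(z)^2$ at $c_0^g$, quasisymmetrically equivalent to cubic type by the footnote to Q2; properties Q3 and Q4 come by direct transport of $\TT_f$ and S3--S4 under $\chi$. Finally, the Douady--Ghys surgery applied to this $g$ reproduces $F$ exactly, since the circle conjugacy that the surgery requires already extends to $\bar\D$ as our $H$; the measurable Riemann mapping theorem straightening then uses the Beltrami coefficient of $\chi$, returning $\chi\circ F\circ\chi^{-1}=f$.

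The principal technical obstacle is the verification of Q2 at the critical point: since $\chi$ is merely quasisymmetric on $\T$, one must confirm that composing the quadratic singularity of $f$ at $c_0^f$ with the non-conformal qc factors in $\chi$ does produce a quasiregular local model of the form required by Q2, using the flexibility of the exponent $\delta>1$ granted by the footnote.
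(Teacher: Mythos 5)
Your construction agrees with the paper's for the central step: conjugate $f$ by the exterior Riemann map $\psi_+$ of $\C\sm \bar S$ to obtain a map which is holomorphic on the outer side of $\T$, then make it $\T$-symmetric by Schwarz reflection. The additional Douady--Earle gluing that produces your global map $\chi$ is correct and neither helps nor hurts the construction of $g$; it makes the ``surgery on $g$ recovers $f$'' assertion a little more explicit, which the paper handles more briefly via uniqueness of welding. So the overall route is essentially the same.

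There is, however, a genuine error in the step you yourself flag as ``the principal technical obstacle,'' the verification of Q2. You write that the quadratic local form $f(w)=c_1^f+h(w)^2$ ``pulls back under the qc map $\chi$ to a quasiregular model $g(z)=c_1^g+\tilde h(z)^2$,'' and that the footnote excuses the exponent. This computes the local degree of $\chi^{-1}\circ f\circ\chi$ at $c_0^g$, which indeed is $2$ because $\chi$ is a local homeomorphism. But $\chi^{-1}\circ f\circ\chi$ is \emph{not} the map $g$ you have constructed. On the inner half-disk around $c_0^g$, your $g$ is the Schwarz reflection $z\mapsto 1/\overline{g(1/\bar z)}$ of the outer holomorphic piece, not the $\chi$-conjugate of $f$ (which over there is $H\circ T_\theta\circ H^{-1}$, a local homeomorphism of local degree $1$). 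Replacing a degree-$1$ half by the reflected copy of a degree-$2$ branch is exactly what raises the topological degree at $c_0^g$ from $2$ to $3$. Two independent checks confirm this: (i) in the Blaschke model, running your recipe starting from $P_\theta=\Phi\circ F\circ\Phi^{-1}$ gives $\psi_+=\Phi^{-1}$ on the outside, so $g|_{\text{outer}}=B|_{\text{outer}}$ and the Schwarz reflection returns exactly the Blaschke product $B$, which has a cubic critical point; (ii) a quasiregular germ of the form $c_1+H(z)^2$ with $H$ qc restricts to a \emph{fold} on any curve through the critical point ($(H|_{\Gamma})^2$ is two-to-one near $0$), whereas $g|\T$ must be a homeomorphism, so the local degree is forced to be odd, hence $3$. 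The footnote you invoke is about quasisymmetric equivalence of power laws on the \emph{circle} and does not affect the two-dimensional topological degree, which is a qc-conjugacy invariant and cannot be changed from $2$ to $3$. The paper's one-line claim ``at the critical point $c_0=1$, it has local degree $3$'' is precisely this point; your argument for Q2 needs to be replaced by one that takes the Schwarz reflection into account rather than quoting a single quadratic model for $g$ on a whole neighborhood of $c_0^g$.
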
  
                   
\begin{proof} 
Let $\psi_+: \C\sm  S\ra \C\sm  \D$ be the 
uniformization  of the complement of $S$ normalized so that $\psi_+(c_0)=1$.
Since $S$ is a quasidisk, it extends to a global quasiconformal map
$
    \psi_+:  (\C,  S) \ra  (\C,  \D).
$
Then 
$$
   g := \psi_+\circ f\circ \psi_+^{-1}:  (\C, \D) \ra (\C, \D)
$$ 
is a global quasiregular map in a neighborhood of $\bar \D$
which is a holomorphic immersion on $\psi_+(\Om^h)$. 
Applying the Schwarz Reflection Principle,
we obtain a quasiregular map $g$ near $\T$ that restricts to a homeomorphism $\T\ra \T$. 
Moreover, it 
is a holomorphic immersion  on  $\Dom^h g$, and hence  is real analytic on
$\T\sm \{1\}$. 
At the critical point $c_0=1$,  it has  local degree $3$. 
Moreover, properties (S3) and (S4) of $f$ readily translate to properties (Q3)
and (Q4) of $g$.   
Thus, $g$ is a quasicritical circle map.  

On the other hand, the uniformization $\psi_-:  \bar S\ra \bar \D$
conjugates $f$ to the rotation $T_\theta$ (and extends to a global qc
map).  Hence  $f$ is
the quasiconformal welding  between $g$ and $T_\theta$.
\end{proof}

\subsection {Circle $\leadsto$ Siegel transfer}

By means of the Douady-Ghys surgery,
we can transfer the objects defined above for quasicritical  circle maps
to their Siegel counterparts.
Somewhat abusing notation, 
we will usually keep the same notation for the transferred objects. 

\sss{Dynamical scales}                                
For any $f\in \Siegel ( \bar N, \mu, K)$, 
we can transfer  the circle  dynamical tilings
(\ref{circle tilings}) to the boundary of the Siegel disk $S$. Since the surgery is
quasisymmetric,
these {\it Siegel dynamical tilings} $\II^m$ have bounded geometry as well
(depending only on ($\bar N, \mu, K$)), 
which gives  us for any $z\in \di S$  a notion of the 
{\it  dynamical scales} near $z$.%
\footnote{  with the constant $C_0$ from (\ref{dyn scales}) replaced
with an analogous constant $C_0= C_0(\bar N, \mu, K)$ controlling   the geometry
of the tilings for Siegel maps.} 


\sss{Siegel butterfly renormalization}

Since any Siegel map $f$ of bounded type is conjugate on 
the boundary of  $ S$ to a quasicritical circle map,  
we can immediately define the {\it Siegel pairs} renormalizations $R_\Sp f$ 
on  $\di S$.  The complexification of  this notion, a {\it Siegel
  butterfly} 
\begin{equation}\label{Siegel cp}
      R_\Sp^m :  X_+^m \cup X_-^m \ra Y^m,
\end{equation}  
corresponds, via the surgery,  to the  external  part of the circle  butterfly.  
Theorem~\ref{bounds for comm pairs} implies:

\begin{thm}  \label{bounds for Siegel pairs}
 Let  $f\in \Siegel (\bar N, \mu, K) $ be a Siegel map of bounded type.
 Then there exists an $\underline{l}$ depending only on 
$(\bar N, \mu, H)$ 
 such that for all  $m\geq \underline{l}-1$
the renormalizations $R_\Sp^m f $ on $\di S$ can  be extended to
Siegel butterflies
$$
      R^m_\Sp f:    X^m_-\cup X_+^m \ra Y^m
$$
 with 
$Y^{\underline {l}-1 }\Supset Y^{\underline{l}}\Supset\dots$ such that  
the $Y^m$ are quasicircles of bounded shape and 
$$
  \dist (\di Y^m \sm \di S , \,  Y^{m+1}  ) \asymp 
  \dist (\di Y^m \sm \di S, \,  X^m_\pm )   \asymp \diam Y^m. 
$$

All constants and bounds depend on $(\bar N, \mu, K ) $ only.
\end{thm}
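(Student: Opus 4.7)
The plan is to reduce to the quasicritical circle case via the Douady--Ghys surgery and then transfer the complex bounds of Theorem~\ref{bounds for comm pairs} back to the Siegel side. First, given $f\in \Siegel(\bar N,\mu, K)$, Proposition~\ref{Siegel map surgery} produces a quasicritical circle map $g$, conjugate to $f$ on $\C\sm S$ via a global quasiconformal map $\psi_+:(\C,S)\to(\C,\D)$ uniformizing the complement of the Siegel disk. The key preliminary point is that $g\in \Cir(\bar N, K', \eps)$ with $K'$ and $\eps$ depending only on $(\bar N,\mu,K)$: the dilatation of $\psi_+$ is controlled by the quasicircle constant $K$ of $\di S$ (together with $\mu$), so the qc extension $h$ of $g$ near the critical point inherits controlled dilatation from the holomorphic immersion property of $f$ on $\Dom^h f$; the exterior geometry of $g$ is controlled by $\mod(\Om\sm \bar S)\geq \mu$ transferred by $\psi_+$.

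Second, I apply Theorem~\ref{bounds for comm pairs} to $g$. For all $m\geq \underline{l}-1$, with $\underline{l}=\underline{l}(\bar N,K',\eps)$, this produces circle butterflies $\hat X^m_-\cup \hat X^m_+\to \hat Y^m$ of bounded shape, nested with $\mod(\hat Y^m\sm \hat Y^{m+1})\geq \mu'$ and with the wedge property at $c_0$. Restricting to the closure of the upper half plane (the exterior part of the butterfly in the circle model), we obtain topological disks $X^m_\pm\subset \{\Im z>0\}\cup \R$ and $Y^m$ of bounded shape with the same moduli estimates; the wedge property translates into $\dist(\di Y^m\cap\{\Im z>0\},\, \di S)\asymp \diam Y^m$ and $\dist(\di Y^m,\, X^m_\pm)\asymp \diam Y^m$.

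Third, I push forward under $\psi_+^{-1}$ to the Siegel side. The images
\[
Y^m_f := \psi_+^{-1}(Y^m), \qquad X^m_{\pm,f}:=\psi_+^{-1}(X^m_\pm)
\]
give the desired Siegel butterfly extension of $R_\Sp^m f$ on $\di S$, because $\psi_+$ conjugates $f$ to $g$ on $\C\sm S$ and because the Siegel pair renormalization was defined precisely to correspond to the external circle pair renormalization under the surgery. Since $\psi_+$ is $K''(\bar N,\mu,K)$-quasiconformal on a fixed neighborhood of the relevant domains, it distorts moduli by a bounded factor and preserves quasicircles and bounded shape up to constants, so the boundaries $\di Y^m_f$ remain quasicircles of bounded shape, the nesting $Y^{m-1}_f\Supset Y^m_f$ and the inequality $\mod(Y^m_f\sm Y^{m+1}_f)\geq \mu''$ are preserved, and the wedge/distance estimate transfers to the required
\[
\dist(\di Y^m_f\sm \di S,\, Y^{m+1}_f)\asymp \dist(\di Y^m_f\sm \di S,\, X^m_{\pm,f})\asymp \diam Y^m_f.
\]

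The main obstacle is the first step, namely showing that the quasicritical circle map produced by the inverse surgery actually lies in a class $\Cir(\bar N,K',\eps)$ with $K',\eps$ depending only on the Siegel data $(\bar N,\mu,K)$. This requires combining the quasisymmetric control of $\di S$, the definite annular space $\mu$ around $S$, and the immersion property on $\Dom^h f$ to verify properties Q2--Q4 with uniform constants; once this is in place, the rest of the proof is a routine transfer of the quasicritical circle bounds through a controlled quasiconformal homeomorphism.
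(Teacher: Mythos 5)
Your proof follows exactly the paper's intended route: the paper states Theorem~\ref{bounds for Siegel pairs} as an immediate consequence of Theorem~\ref{bounds for comm pairs} via the Douady--Ghys surgery (``Theorem~\ref{bounds for comm pairs} implies:''), which is precisely the transfer-through-$\psi_+$ argument you carry out. Your attention to verifying that the inverse surgery lands in a uniformly controlled class $\Cir(\bar N, K', \eps)$ is a worthwhile point the paper leaves implicit; the one small imprecision is that the distance-comparability assertion for $\di Y^m\sm\di S$ follows from the modulus bound $\mod(\hat Y^m\sm \hat Y^{m+1})\geq\mu$ together with bounded shape (pushed through the $K''$-qc map), rather than from the wedge property itself, but this does not affect the validity of the argument.
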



As in the circle case, these {\it a priori} bounds lead to external  expansion:

\begin{cor}\label{expansion for Siegel maps}
Under the circumstance of Theorem \ref{bounds for Siegel pairs},
the renormalizations $f_m:= R_\Sp^m f$ are expanding in the hyperbolic
metric of $Y^m$. Moreover, 
$$
     \| D f_m (z) \|_\hyp \geq \rho >1 
$$ 
with $\rho$ depending only on $(\bar N, K, \eps)$ and a lower bound
on  
 $ \dist( z, \bar S )  / \dist (z, c_0) $. 
\end{cor}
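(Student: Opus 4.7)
The plan is to follow the proof of Corollary \ref{expansion for circle maps} essentially verbatim, replacing the circle butterfly (\ref{hol circle pair}) by the Siegel butterfly (\ref{Siegel cp}) and transferring all geometric input through the Douady--Ghys surgery of Proposition \ref{Siegel map surgery}.

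Each branch of the renormalization $f_m|\, X^m_\pm \colon X^m_\pm \to Y^m$ is a conformal isomorphism onto $Y^m$, so by the Schwarz Lemma the inclusion $i \colon X^m_\pm \hookrightarrow Y^m$ is a hyperbolic contraction, and the required lower bound $\|Df_m(z)\|_\hyp \geq \rho > 1$ translates into an upper bound $\|Di(z)\|_\hyp \leq \rho^{-1} < 1$. A standard quantitative form of the Schwarz Lemma controls the latter in terms of an upper bound on the hyperbolic distance (in $Y^m$) from $z$ to the portion $\partial X^m_\pm \setminus \partial Y^m$ of the boundary that lies in the interior of $Y^m$, which by a Koebe comparison is controlled by the relative Euclidean ratio $\dist(z, \partial X^m_\pm \setminus \partial Y^m)/\dist(z, \partial Y^m \setminus \partial S)$.

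For points $z \in X^m_\pm$ at a definite Euclidean distance from the critical vertex $c_0$, the geometric input from Theorem \ref{bounds for Siegel pairs} is already sufficient: the bounded shape of $Y^m$, the modulus bound $\mathrm{mod}(Y^m \setminus X^m_\pm) \geq \mu$, and the equivalence $\dist(\partial Y^m \setminus \partial S, X^m_\pm) \asymp \diam Y^m$ together yield the required comparison with constants depending only on $(\bar N, \mu, K)$. The delicate region is near $c_0$, where both boundaries $\partial X^m_\pm$ and $\partial Y^m$ pinch together and the above estimate degenerates.

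The main obstacle is thus the behaviour at the critical vertex, and it is handled by the wedge clause of Theorem \ref{bounds for comm pairs}, inherited via the qc surgery: $\partial X^m_\pm$ forms a wedge of angle $\pi/3$ at $c_0$, one side of which lies along $\partial S$. The hypothesis that $\dist(z, \bar S)/\dist(z, c_0)$ is bounded below says precisely that $z$ sits in the opening of this wedge, separated from both of its sides by a definite angular amount; combined with the fixed wedge angle this gives the comparisons $\dist(z, \partial X^m_\pm \setminus \partial Y^m) \asymp \dist(z, c_0)$ and $\dist(z, \partial Y^m \setminus \partial S) \asymp \dist(z, c_0)$, so the Koebe ratio above is bounded and the Schwarz estimate closes, with the final $\rho > 1$ depending on $(\bar N, \mu, K)$ and on the hypothesized ratio lower bound.
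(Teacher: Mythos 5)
Your proposal is correct and takes essentially the same approach as the paper: the paper gives no separate proof for this corollary, instead writing only ``As in the circle case, these a priori bounds lead to external expansion,'' and your argument is exactly an adaptation of the circle-case proof of Corollary~\ref{expansion for circle maps} (conformal isomorphism of the branch, Schwarz contraction of the inclusion, Koebe control of the hyperbolic density ratio, wedge property at $c_0$) with $\ang z$ replaced by its Siegel surrogate $\dist(z,\bar S)/\dist(z,c_0)$ and the transfer effected through Proposition~\ref{Siegel map surgery}.
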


\begin{cor}\label{expansion for Siegel maps-2}
   Let $f\in \Cir (\bar N, \mu,  K)$ be a  Siegel map.
Then there exist $a>0$ and  $\rho >1$ depending only on 
$ (\bar N, \mu, K) $  such that 
if $z\in Y^m\cap \Dom^h f^n$ and  $f^n z\in Y^{m-k}$ for some $n\in \N$,  $0< k< m$
(with $m-k > \ul $), 
then $$
    \| Df^n (z) \|_\hyp \geq a \rho^k,
$$ 
where the norm is measured in the hyperbolic metric of  $\C\sm \bar S$.
\end{cor}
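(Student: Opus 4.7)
The plan is to carry over the argument of Corollary~\ref{expansion for circle maps-2} to the Siegel setting, using the Siegel butterflies of (\ref{Siegel cp}) in place of the circle butterflies and the hyperbolic metric of $\C\setminus\bar S$ in place of that of $\C\setminus\bar\D$. The three ingredients I will need are all in hand: the nested Siegel butterflies $Y^{\ul-1}\Supset Y^{\ul}\Supset\cdots$ of Theorem~\ref{bounds for Siegel pairs}, the definite expansion near each wing supplied by Corollary~\ref{expansion for Siegel maps}, and the fact (from the same theorem) that each annulus $Y^{m-i-1}\setminus Y^{m-i}$ has bounded shape, with $\dist(\partial Y^{m-i}\setminus\partial S,\, Y^{m-i+1}) \asymp \diam Y^{m-i}$.

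First I would follow the orbit $z,\, fz,\, \ldots,\, f^n z$ as it climbs from $Y^m$ to $Y^{m-k}$. By the hierarchical structure of Siegel pair renormalization, each passage from $Y^{m-i}$ to $Y^{m-i-1}$ forces the orbit to visit the middle of one of the wings $X^{m-i-1}_+\cup X^{m-i-1}_-$ and then be carried back out by $R^{m-i-1}_\Sp f$; thus $\asymp k$ such visits occur along the way. At each visit Corollary~\ref{expansion for Siegel maps} yields a uniform expansion factor $\rho_0 > 1$ in the hyperbolic metric of the enclosing $Y^{m-i-1}$, since being in the middle of a wing keeps $\dist(z,\partial S)/\dist(z,c_0)$ bounded below. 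Chaining these gives an expansion factor at least $\rho_0^{ck}$ measured across the successive ambient metrics. To pool the answer into the single fixed metric of $\C\setminus\bar S$, I would use that the annulus $Y^{m-i-1}\setminus Y^{m-i}$ has definite modulus and that the orbit actually lives in the core $Y^{m-i}$ at the relevant moment; standard Schwarz and Koebe comparisons then give uniformly bounded ratios of these hyperbolic metrics on the portion of orbit used, and these bounds merge into a single prefactor $a > 0$ depending only on $(\bar N,\mu,K)$.

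The main obstacle I anticipate is the metric comparison near $\partial S$, where hyperbolic densities blow up and tiny changes in the reference domain produce unbounded distortion. The hypotheses $m-k > \ul$ and the ``middle of the wing'' condition are tailored to keep the active part of the orbit at bounded hyperbolic distance from $\partial Y^{m-i-1}\setminus \partial S$, which is exactly where the uniform comparability of the two hyperbolic metrics holds; verifying this uniformly across all levels $0 \leq i \leq k$ is the most delicate technical step. Once it is in place, concatenating the per-wing expansion bound with the bounded per-level metric comparison yields the desired estimate $\|Df^n(z)\|_\hyp \geq a\rho^k$ in the hyperbolic metric of $\C\setminus\bar S$, just as in the circle case.
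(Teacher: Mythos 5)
Your proposal is correct and follows the same route the paper takes: Corollaries~\ref{expansion for Siegel maps} and \ref{expansion for Siegel maps-2} are stated in the paper without proof precisely because they are obtained by running the circle‐case argument (Corollaries~\ref{expansion for circle maps} and \ref{expansion for circle maps-2}) verbatim on the Siegel butterflies supplied by Theorem~\ref{bounds for Siegel pairs}, with $\C\setminus\bar S$ replacing $\C\setminus\bar\D$. Your filling-in of the metric-comparison step — using the bounded shape and definite moduli of the nested $Y^l$ to compare $\rho_{Y^{m-i-1}}$ with $\rho_{\C\setminus\bar S}$ at the wing-visiting moments — is exactly the content the paper compresses into the phrase ``by the complex bounds, this metric restricted to $Y^m$ is boundedly equivalent to the hyperbolic metric on $\C\sm\bar\D$'' in the proof of Corollary~\ref{expansion for circle maps-2}.
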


\sss{Periodic points $\alpha^l$} 

Proposition \ref{periodic pts} implies: 

\begin{cor}\label{Siegel periodic pts}
 For any Siegel map  $f\in \Siegel (\bar N, \mu, K)$,
there exists $\underline{l}= \underline {l} (\bar N, \mu)  $
such that for any $ l \geq \underline{l}-1 $,  $f$  has a repelling  periodic point $\alpha_l$ of
period $q_l$    in the $l$-th dynamical scale  near
the critical point $c_0$. 
\end{cor}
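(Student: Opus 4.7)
The plan is to deduce the corollary from Proposition \ref{periodic pts} via the Douady--Ghys surgery supplied by Proposition \ref{Siegel map surgery}. By that proposition, $f=\psi_+^{-1}\circ g\circ\psi_+$, where $\psi_+\colon(\C,S_f)\to(\C,\D)$ is a global quasiconformal map that is \emph{conformal} on $\C\setminus\bar S_f$, and $g$ is a quasicritical circle map with the same rotation number $\theta$. Since $\di S_f$ is a $K$-quasicircle and $\mod(\Om_f\setminus S_f)\geq\mu$, one checks that $g\in\Cir(\bar N,K',\eps')$ with $(K',\eps')$ controlled by $(\bar N,\mu,K)$. Applying Proposition \ref{periodic pts} to $g$ then yields $\ul=\ul(\bar N,\mu,K)$ such that for all $l\geq\ul-1$, $g$ has a repelling periodic point $\tilde\alpha^l\in X_+^l\cup X_-^l$ of period $q_l$, with multiplier bounded and bounded away from $1$, and with $\dist(\tilde\alpha^l,\T)$ comparable to the $l$-th circle dynamical scale at $c_0$.

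Next, I would set $\alpha_l:=\psi_+^{-1}(\tilde\alpha^l)$. Because $\tilde\alpha^l$ lies in $\C\setminus\bar\D$, where $\psi_+^{-1}$ is holomorphic and conjugates $g$ to $f$, the point $\alpha_l$ is periodic of period $q_l$ and carries the same multiplier as $\tilde\alpha^l$; in particular it is repelling with the required bound. To check that $\alpha_l$ lies in the $l$-th Siegel scale near $c_0$, one observes that the quasisymmetry of $\psi_+|_{\T}$ pushes the bounded-geometry circle tilings $\II^l$ forward to the Siegel tilings on $\di S_f$ with comparable scales, while Theorem \ref{bounds for Siegel pairs} ensures that the Siegel butterfly (\ref{Siegel cp}) sits around $\di S_f$ with bounded shape and external diameter $\asymp|I^l(c_0)|$. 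Hence the image $\alpha_l$ of $\tilde\alpha^l\in X_+^l\cup X_-^l$ is trapped in precisely the correct scale.

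There is no substantial obstacle beyond careful bookkeeping of constants; the corollary is essentially a dictionary translation between the two sides of the Douady--Ghys surgery. The one point that deserves most care, and which I would flag as the main thing to verify, is that the exterior part of the conjugacy is genuinely \emph{conformal} rather than merely quasiconformal. This is exactly what transports the quantitative bounded-multiplier conclusion of Proposition \ref{periodic pts} (and not just topological repulsion) to the Siegel side, and it is also what makes the geometric statement about dynamical scales lossless under the transfer.
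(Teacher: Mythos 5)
Your proof is correct and takes essentially the same approach as the paper: the corollary is stated there as a direct consequence of Proposition \ref{periodic pts} through the circle $\leadsto$ Siegel dictionary of Proposition \ref{Siegel map surgery}, and your argument simply spells out that transfer, including the key point that $\psi_+$ is conformal off $\bar S_f$ so the repelling multiplier and the dynamical-scale location are carried over without loss.
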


\begin{rem}\label{parabolic explosion}
  If $\Bf=\Bf_\theta $ is a Siegel quadratic polynomial with  rotation
  number of bounded type, then  the periodic
  point $\alpha^l $ was born in the {\it parabolic explosion} from the
  parabolic approximand $\Bf_{p_\kappa/\ q_\kappa}$. It can be
  characterized as the landing point of a ray with rotation number
  $ p_\kappa/ q_\kappa$.  
\end{rem}

\sss{External collars of $A^l$ and trapping disks $D^l$}

Let us now  transfer, by means of the surgery,  the collars 
and trapping disks from the circle plane to the
Siegel plane. It is a direct consequence of Lemma \ref{collar lemma}
and quasisymmetry of quasiconformal maps. 

\begin{prop}\label{disk D for Siegel}
For any Siegel map  
$f\in \Siegel(\bar N, \mu, K)$ and any $l\geq \underline{l}-1$,  
there exists a  pair of smooth annuli (collars)
$A_0^l \Subset A^l $ surrounding  the Siegel disk  $S=S_f$ in $\Dom f\sm \bar S$,
and a smooth quasidisk $D^l \Subset  \Dom f \sm \bar S$ containing $\alpha^l$
 with  the following properties:

\ssk\nin \rm {(A1)}
  For any $z\in \di^o A_0^l,$ 
$\dist (z, \di^o A^l )\asymp \dist (z,  \di^o S)$, 
 and similarly for the inner boundaries $\di^i A_0^l$ and $\di^i A^l$; 


\ssk \nin \rm {(A2)}
It is impossible to``jump over the collar'':
$$
\mbox{ If $z\in \Comp_0(\C\sm A_0^l) $ while  
$f( z)\not \in \Comp_0(\C\sm A_0^l )$ then $f(z)\in A_0^l$; } 
$$ 

\ssk \nin {\rm {(D1)}}
The disk $D^l$ has a  bounded shape around  $\alpha^l $ and
it  has  the hyperbolic diameter of order 1  in $\C\sm \bar S$; 
  
\ssk \nin {\rm {(D2)}} 
A definite portion of $D^l$ is contained in $f^{-1} (S)\sm  S $;
moreover, 
$$\mbox{
there is a point $\beta\in f^{-1} (\di S )\sm  \bar S$
that lies in the middle of $\Disc^l$; 
}$$

\ssk\nin \rm { (D3) } If  $z \in A^l $ 
 then there exists a moment  $k  < q_{l+1}$ such that $f^k  z$ lies in the
middle of  $D^l$;

\ssk\nin \rm { (D4) } 
 There exists 
$\underline{\iota}= \underline{\iota}(\bar N, \mu, K)$ 
such that 
for any 
$ \iota > \underline{\iota}$  and $l> \underline{l}+\iota$,
 we have under the circumstances of  \rm { (D3) }:
$$
   f^i z\not \in D_1^{l-\iota} , \quad i=0,1,\dots, k,
$$ 
where $ D^{l-\iota}_1 \Subset \Dom f \sm \bar S $ is a disk containing 
$D^{l-\iota}  $ with a definite $\mod (D^{l-\iota}_1\sm D^{l-\iota})$; 
in particular, 
 $\Disc^l \cap \Disc^{l-\iota}_1=\emptyset$. 

\ssk \nin \rm { (D5) } Moreover,  under the above circumstances, 
$$
     f^i z  \in \Comp_0(\C\sm A^{l-\iota})   , \quad i=0,1,\dots, k,
$$
and $A^{l-\iota} \Subset \Comp_0 \C\sm (A^{l-2\iota} ) $. 

\ssk
All  bounds and constants depend only on $(\bar N, \mu, K)$. 
\end{prop}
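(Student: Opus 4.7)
The plan is to transfer the circle-case construction of Lemma~\ref{collar lemma} to the Siegel plane via the Douady--Ghys surgery. By Proposition~\ref{Siegel map surgery}, any $f\in \Siegel(\bar N, \mu, K)$ arises from a quasicritical circle map $g$ through a global quasiconformal straightening $\psi_+: (\C, \bar S)\to (\C, \bar \D)$ that is conformal on $\C\sm \bar S$ and conjugates $f$ there to $g$. The hypotheses $\mod(\Om\sm S)\geq \mu$ and $\di S$ a $K$-quasicircle, combined with the Ahlfors characterization of quasicircles, yield that $\psi_+$ has dilatation bounded in terms of $K$ and that $g\in \Cir(\bar N, K', \eps')$ with $(K', \eps')$ depending only on $(\bar N, \mu, K)$; in particular $\Dom g$ contains a definite external collar of $\bar \D$, and the local cubic quasiregular structure of $g$ at $c_0$ is controlled by $K$.

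I would then apply Lemma~\ref{collar lemma} to $g$ to produce circle-side collars $A^l_0(g)\Subset A^l(g)$ and trapping disks $D^l(g)\ni \alpha^l(g)$ satisfying (A1)--(A2) and (D1)--(D5), and define
\[
A^l_0(f)=\psi_+^{-1}(A^l_0(g)),\quad A^l(f)=\psi_+^{-1}(A^l(g)),\quad D^l(f)=\psi_+^{-1}(D^l(g)).
\]
Under $\psi_+^{-1}$, the periodic point $\alpha^l(g)$ maps to the periodic point $\alpha^l(f)$ of Corollary~\ref{Siegel periodic pts}, and $g^{-1}(\T)\sm \bar \D$ maps to $f^{-1}(\di S)\sm \bar S$, so the data on the Siegel side are of the required type.

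Next I would check the listed properties. The geometric assertions (A1), (D1), and the ``middle of'' clause in (D2) concern comparabilities of Euclidean distances and bounded shape around marked points; since $\psi_+$ is a global $K'$-quasisymmetric homeomorphism, each of these is preserved up to constants depending only on $K'$, and therefore follows from its circle-side analogue. The ``definite portion'' clause of (D2) follows from the bounded distortion of $\psi_+^{-1}$ on compact subsets of the region of interest (both $f^{-1}(S)\sm S$ and $D^l(f)$ sit at definite hyperbolic size in $\C\sm \bar S$). The remaining properties (A2), (D3), (D4), (D5) are purely dynamical statements about $f$-orbits relative to the chosen sets; since $\psi_+$ intertwines $f$ and $g$ outside $\bar S$ and the chosen Siegel sets are the $\psi_+$-preimages of the circle sets, these properties pull back verbatim to the corresponding statements for $g$-orbits and for $A^l(g), D^l(g)$, which are given by Lemma~\ref{collar lemma}.

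The main obstacle is really the bookkeeping of constants in the first step: one must verify that the circle map $g$ obtained from the surgery lies in $\Cir(\bar N, K', \eps')$ with $(K', \eps')$ depending only on $(\bar N, \mu, K)$, and that the chain $(\bar N, \mu, K)\rightsquigarrow (K', \eps')\rightsquigarrow$ (constants in Lemma~\ref{collar lemma}) does not introduce any hidden dependencies. This reduces to the quasiconformal extension theorem for $K$-quasicircles applied to $\di S$, together with the fact that the annulus $\Om\sm \bar S$ of modulus $\geq \mu$ produces via $\psi_+$ an external annular neighborhood of $\bar \D$ of comparable modulus inside $\Dom g$. Once this control is secured, every constant appearing in the conclusion depends only on $(\bar N, \mu, K)$, as required.
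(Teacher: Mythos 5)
Your proposal is correct and follows essentially the same route as the paper: the text preceding the Proposition states that it is a direct consequence of the circle Collar Lemma~\ref{collar lemma} together with the quasisymmetry of the Douady--Ghys surgery map, exactly the transfer argument you spell out. (As a small simplification, $\psi_+$ is in fact conformal on $\C\sm\bar S$, so the hyperbolic-diameter clauses transfer isometrically, and smoothness of the pulled-back collars can be restored by an innocuous adjustment.)
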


\bignote{Old collars and trapping disks are here}
\comm{**************

We let 
$$
\hull_0\,  A^m = A^m \cup \bar S.
$$
These sets form a basis of neighborhoods of $S$. 
Properties (A1)--(A3) transfer to the following properties of these neighborhoods:

\ssk\nin (T0)  
For $l\geq \underline{l}-1$, we have $\hull_0\,  A^l \Supset A^{l + 1}$ ;

\ssk\nin (T1)  Distance from any   point $\zeta \in \di (\hull_0\,  A^m) $ to $\di S$ is
comparable (with a constant depending only on $\bar N, \mu, K$)
 with the $m$th dynamical scale at $z\in \di S$, 
where $z\in \di S$ is the closest point to $\zeta$; 

\ssk \nin (T2)
$f(A^m\sm Y^m)\subset\hull_0\,  A^m$, and  
 there exists $\iota$ (depending only on $\bar N, \mu, K$)
such that  $f(Y^{m+\iota } ) \subset \hull_0\, A^m$ for 
$m  \geq \underline{l}-1 $.


\ssk \nin (T3)
 $f^{-1} (A^m)\subset \hull_0\,  A^m$.


\sss{Trapping disk $D$}

Finally, we can also transfer the trapping disk $D^l$ around
$\alpha^l$ (see \S \ref{trapping disk for circle})
to the Siegel plane: 

\begin{prop}\label{disk D for Siegel}
Let us consider a Siegel map $f$ of class $\Siegel(\bar N, \mu, K)$,
and let $l\geq \underline{l}-1$.
 Then   there exists a quasidisk $\Disc^l \subset Y^l $ 
 with bounded shape   around $\alpha^l $ 
and a natural number $\iota=\iota(\bar N, \mu, K)$
with the following properties:

\ssk\nin {\rm {(i)}}
 $D^l$  has hyperbolic diameter of order 1 in 
 $ A^{ l -1 }\sm \bar A^{l+\iota-1}$ and in $\C\sm \bar S$; 
in particular, $\Disc^l \cap \Disc^{l+\iota}=\emptyset$;


\ssk\nin {\rm {(ii)}}
A definite portion of $D^l$ is contained in $f^{-1} (S )\sm S $;
moreover, there is a point $\beta\in f^{-1} (\di S)\sm  \bar S$
that lies in the middle of $\Disc^l$;  

\ssk\nin {\rm {(iii)}}
 Any point $z\in  A^l $ whose orbit escapes $ A^l$
    at  moment $n+1$ 
 must pass through $D^l$ at the  moment $n$;

\ssk\nin {\rm {(iv)}}
$D^l$ can be univalently and with bounded distortion pulled
 back along the orbit $\{f^k z\}_{k=0}^n$ of the above point, 
and this pullback is contained in  $ A^{l-1}$.  

\ssk 
All  the bounds and constants  depend only on $(\bar N, \mu, K)$.
\end{prop}

***********************}

\bignote{Siegel renormalization fixed point is hidden here}

\comm{****
\subsection{Fixed point for $R_\Sp$ and geometry of a Siegel disk}

\sss{Fixed point} 
Using the Blaschke model and {\it a priori} bounds,
McMullen \cite{McM3}   proved existence of a fixed point 
for the Siegel pairs renormalization of stationary type:

\begin{thm}\cite{McM3} \label{Siegel fixed point}
   Let  $\theta=\theta_N$ be  
a stationary rotation number,
and let $\Bf_\theta: z\mapsto e^{2\pi \theta} z + z^2$ be the
corresponding Siegel quadratic polynomial. 
Then the renormalizations $R_\Sp^m (\Bf_\theta) $ converge exponentially
fast to an invariant  Siegel pair $F_\#$.  
\end{thm}

************}

\bignote{McMullen's Thm is hidden here} 

\comm{******

\subsection{Geometry of a Siegel disk}

The following result plays an important role in the work of Buff and Cheritat \cite{BC},
and will be as important for us.  

\begin{thm}[\cite{McM3}]\label{density pts}
Let $\Bf_\theta$  be a  Siegel quadratic polynomial of stationary type.
 Then for any $z\in \di S_\theta $, 
$$
    \area (K(\Bf_\theta) \cap \D_\eps(z)  ) \geq (1-\de) \,  \pi \eps^2,
$$ 
where $\de= O(\eps^\alpha)$ for some  $\alpha > 0$. 
\end{thm}
**********}

\subsection{Siegel cylinder renormalization}

\sss{Definition}
Using the circle model, 
we can extend Yampolsky's construction of 
the {\it cylinder renormalization}   $R_\Sieg$ \cite{Ya-posmeas} to all Siegel
maps  $f\in \Sieg_\theta$ of bounded type.
Let $g$  be the quasicritical circle map corresponding to $f$ through
the surgery.  
Let us transfer the arc used for the $m$th cylinder renormalization of
 $g$  (see  \S \ref{complex cyl renorm})
 to an arc $\de_m$ connecting the periodic point $\alpha^m$ of $f$ from
Corollary \ref{Siegel periodic pts} to the boundary of $S_f$. 
By continuing along the internal ray of $S_f$,
 extend $\de_m$  to an arc  $\gamma_m$ connecting $\alpha^m$  to the
 Siegel fixed point $0$. Then $ f^{q_m} (\gamma_m)$ does not intersect $\gamma_m$,
and these two arcs bound a {\it fundamental crescent}  $\CC^m$ for
$f^{ q_m}$.
Now we can proceed with the construction as in the circle case:
identifying the boundary arcs of $\CC^m$,
we  produce a map of the standard cylinder $\C/\Z$ whose upper end 
corresponds to the Siegel fixed point.  To recover this point  back,
let us map $\C/\Z$ onto $\C^*$ by means of $e^{iz}$. We obtain a Siegel
map with rotation number $-1/\theta\ \mod \ 1$.

The following statement is a  Siegel counterpart of 
Lemma \ref{definite geometry of De: circle} 
that  follow from the latter by surgery.  

\begin{lem}\label{definite geometry of De: Siegel}
  Let $f$ be a Siegel  map of  class $\Siegel (\bar N, \mu, K)$.
  For any $m\geq \underline{l} -1$,  
  the fundamental crescent $\CC^m$ is $\kappa$-qc
  equivalent to the quadrilateral composed by attaching 
the half-strip (\ref{half-strip})  (corresponding to  $\CC^m \sm S$)
  to a triangle with angle $2\pi/q$ at $0$  (corresponding to
 $\CC^m  \cap \bar S$). The dilatation  $\kappa$
  depends only on $(\bar N, \mu, K)$.
\end{lem}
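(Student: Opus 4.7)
The plan is to reduce the Siegel case to the circle case (Lemma~\ref{definite geometry of De: circle}) via the Douady--Ghys surgery. By Proposition~\ref{Siegel map surgery}, $f$ is obtained by surgery on a quasicritical circle map $g\in \Cir(\bar N, K',\eps)$ with $(K',\eps)$ depending only on $(\bar N,\mu,K)$. The surgery furnishes two globally qc conjugacies $\psi_+:(\C\sm S,f)\to(\C\sm\D,g)$ and $\psi_-:(\bar S,f|_S)\to(\bar\D,T_\theta)$ whose dilatations are controlled by $(\bar N,\mu,K)$, since $\di S$ is a $K$-quasicircle. The plan is to split $\CC^m$ along $\CC^m\cap\di S$ into an external piece $\CC^m_o:=\CC^m\sm S$ and an internal piece $\CC^m_i:=\CC^m\cap\bar S$, produce a controlled qc model on each piece, and glue.

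For the external piece, I would push $\CC^m_o$ forward by $\psi_+$: by the way $\gamma_m$ was constructed via transfer from the circle model in \S\ref{complex cyl renorm}, the image is exactly the triangle bounded by $\psi_+(\delta_m)$, $\psi_+(f^{q_m}(\delta_m))$ and an arc of $\T$. This is precisely the triangle to which Lemma~\ref{definite geometry of De: circle} applies (via the half-plane normal form furnished by Lemma~\ref{nice curve}), yielding a $\kappa_1$-qc map onto the half-strip (\ref{half-strip}), with $\kappa_1=\kappa_1(\bar N,\mu,K)$. Composing with $\psi_+$ gives a qc map $\CC^m_o\to\text{half-strip}$ of controlled dilatation.

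For the internal piece, I would push $\CC^m_i$ forward by $\psi_-$: since $\psi_-$ conjugates $f$ to $T_\theta$, the image is a round sector in $\D$ rooted at $0$ of aperture $2\pi|q_m\theta-p_m|$, which for bounded type is comparable to $2\pi/q_m$. An affine sectorial reparametrization (qc with dilatation depending only on $\bar N$) rescales this aperture to the declared value $2\pi/q$; composing with $\psi_-$ yields a qc map $\CC^m_i\to\text{sector}$.

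Finally I would glue the two model maps along the shared arc $\CC^m\cap\di S$. Because $\di S$ is a $K$-quasicircle and both $\psi_\pm$ are globally qc, the two induced parametrizations of that boundary arc differ by a quasisymmetric homeomorphism with dilatation bounded in $(\bar N,\mu,K)$. Extending this qs identification qc-homeomorphically into one of the model pieces (say the sector, which has ample conformal room) and pre-composing, the two maps will agree along the common boundary and assemble into a single qc map $\CC^m\to(\text{half-strip}\cup\text{sector})$ of total dilatation $\kappa=\kappa(\bar N,\mu,K)$. The main obstacle lies precisely in this gluing step: one must verify that the angle between the external arc $\delta_m$ and the internal ray at the junction point $z_*\in\di S$ is compatible---up to bounded qc distortion---with the junction angle of the model quadrilateral, so that no angular singularity exceeding the qc budget is introduced. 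This in turn will follow from the quasicircle bound on $\di S$ together with the bounded relative shape of the junction, which is furnished by Proposition~\ref{disk D for Siegel} and the complex bounds of Theorem~\ref{bounds for comm pairs} transferred through the surgery.
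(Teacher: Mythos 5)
Your proposal follows the same route the paper indicates (the paper's entire proof is the one sentence that the lemma "follows from the circle case by surgery"), so you are fleshing out what is left implicit, and you do it along the correct lines. The decomposition into $\CC^m\sm S$ (pushed forward by $\psi_+$ to the outer half of the circle fundamental region, then to a half-strip via Lemma~\ref{nice curve}/Lemma~\ref{definite geometry of De: circle}) and $\CC^m\cap\bar S$ (linearized by $\psi_-$ to a round sector of aperture $2\pi|q_m\theta-p_m|\asymp 2\pi/q_m$, then affinely rescaled) is exactly the content of the surgery transfer, and the identification of the controlled constants $(\bar N,\mu,K)$ through $\Cir(\bar N,K',\eps)$ is right.

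On the one point you flag as the main obstacle -- compatibility at the junction -- your instinct is correct but your proposed justification (bounded shape of the junction via Proposition~\ref{disk D for Siegel} and Theorem~\ref{bounds for comm pairs}) is not quite the most direct route, and as stated it leaves a genuine gap. The issue is not the \emph{angle} at $z_*$ per se; it is whether the discrepancy $\varphi$ between the two boundary parametrizations of $\CC^m\cap\di S$, when concatenated with the identity on the two radii of the model sector, yields a \emph{globally} quasisymmetric boundary map. A generic $K$-qs homeomorphism of an arc fixing its endpoints does \emph{not} concatenate with the identity to give a qs map (take $x\mapsto x^\alpha$ near an endpoint). What saves you here is not the shape of the junction but the fact that $\varphi$ is the boundary restriction of a globally quasiconformal map: indeed $\psi_+\circ\psi_-^{-1}$ is a composition of the two global qc extensions $(\C,S)\to(\C,\D)$, hence itself a global qc map of $\hat\C$, and the explicit strip/sector reparametrizations you compose with are conformal or qc on a full neighbourhood of the relevant arcs. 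Once $\varphi$ is known to be the restriction of a qc map $\Psi$ defined on a two-sided neighbourhood of the arc, one can interpolate qc between $\Psi$ and the identity inside the sector with dilatation controlled only by the constants already in play, and the glued map $\Phi_+\cup(E\circ\Phi_-)$ is then qc by removability of the quasiarc $\CC^m\cap\di S$. In short: your plan is the right one and matches the paper; to make the gluing rigorous you should invoke the qc extendability of the welding homeomorphism $\psi_+\circ\psi_-^{-1}$ rather than only its quasisymmetry and the quasicircle bound on $\di S$.
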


As in the circle case, 
let $\pi_m= \pi_m^f $ stand for the  change of variable projecting the
original dynamical plane to the renormalized one:
it starts in the fundamental crescent 
$\CC^m $ and then is spread  around by means of the dynamics.

\bignote{Siegel necklace is hidden here}

\comm{**********
\sss{Siegel necklace}
The following statement is a  Siegel counterpart of 
Lemma~\ref{Delta} that follows from the latter by the surgery. 

\begin{lem}\label{Delta-Siegel}
Let  $f\in \Siegel_\theta(\bar N, \mu, K)$.
Then for any $m\geq m_0(\bar N, \mu, H)\geq \underline{l}$,  there is
a choice of the crescent $\CC^m$ and a
pentagon $\De^m\Subset \Om\sm \{0\} $ (obtained by cutting off a  wedge   of  $\CC^m$ attached to $0$) 
with the following properties:


\ssk \nin $ \mathrm{ (i)}$ $\pi_m |\, \inter \De^m$ is a univalent map with bounded
distortion onto an annulus $A^m=A^m(R_\Sieg^mf)$ around $0$ slit along a
  quasiarc $J^m$ which is disjoint from $\De^m (R_\Sieg^m f)$;

\ssk \nin $ \mathrm { (ii)}$   
  $\mod ((A^m \sm J^m) \sm \De^m (R_\Sieg^m f) )\geq \nu (\bar N, \mu, H)$; 

\ssk \nin $ \mathrm { (iii)}$ The pullbacks $\De_{-k}^m$, $k=0,1, \dots,
\Bq_m,$ of $\De^m $ along the circle form a closed chain 
consecutively attached one to another;
moreover, for $k< \Bq_m$, the interiors of the above pullbacks, $\inter \De_{-k}^m$, 
except for the first and the last
ones, are all disjoint;

\ssk \nin $  \mathrm { (iv)}$ All the maps $f^k: \De_{-k} \ra \De_0$,
  with $k< \Bq_m$,   are univalent with bounded distortion;
 the last map $f: \De^m_{-\Bq_m} \ra \De^m_{-\Bq_m+1}$ is a double branched covering;

\ssk \nin $  \mathrm { (v)}$
the sizes of the $\De_{-k}^m $ are comparable with the $m$th
 scale.  

\end{lem}

The union $\BDe^m : = \cup \De_k^m$ will be  called  the {\it
Siegel  necklace} of $f$ of depth $m$. 
As in the circle case, we extend $\pi=\pi_m$ to he necklace (except for the last piece)  by means of the dynamical
periodicity,  $\pi (fz) = \pi(z)$.

We let $\Om^m$ be the {\it filled necklace} $\BDe^m$ obtained by
adding to $\BDe^m$ the complementary component containing $0$.
Obviously, $\Om^m\subset S$. 

\begin{cor}\label{Om-domain}
  The domains $\Om^m$ shrink  to the Siegel disk $S_f$.  
\end{cor}

***********************}

\comm{***************
Let $\Phi_m$ be the universal covering $(\C, 1) \ra (\hat\C^*  \sm\{\zeta_m\}, c) $.  \note{correct normalization ? }

\begin{lem}\label{changes of variable}
  Let $( f: (\Om_f , 0)\ra (\C, 0) ) \in \Siegel(\bar N, \mu)$. There exists a nest of  disks
 $$
      \Om_f =\Om_0 \Supset \Om_1\Supset \dots  \Supset S_f, \quad
      \Om^*_m: = \Om_m\sm \{0\},
$$
such that 

\ssk \nin $\bullet$
  For any $z\in \di \Om_m$, $\dist(z, S_f) $ is comparable with the
  dynamical scale at depth $m$ at the closest point $u\in \di S_f$;

\ssk\nin $\bullet$
The change of variable $\pi_m$ admits a lift  $\hat \pi_m$ 
  to  $\Phi_m^{-1} (\Om_m^*)$ which is a
  pseudo-covering  of degree at least $\Bq_m$ over $\Om_{R_\Sieg^m f }^*$.
\end{lem}

*******************}

\bignote{S/u manifolds for Siegel renormalization are hidden here}


\comm{*****
\sss{Hybrid classes and stable manifold}

Let us say that two Siegel maps, $f$ and $\tl f$,  are {\it $L$-hybrid conjugate}
if there exists an equivariant $L$-qc map $\Dom^h_f\ra \Dom^h_{\tl f}$
which is conformal on the Siegel disk $S_f$.  

By means of the Douady-Ghys surgery,
Theorem \ref{qc conjugacy on circle}  can be immediately transferred to
the Siegel setting:

\begin{thm}\label{qc conjugacy for Siegel}  
   Two Siegel maps $f, \tl f\in \Siegel(\bar N, \mu, K)$   with the same rotation number  are
   hybrid conjugate.
\end{thm}

As in the usual quadratic-like   theory,
the hybrid classes become the renormalization stable manifolds:

\begin{thm}\label{exp convergence}
   Let $\theta=\theta_N $ be a  stationary rotation number.
Then the Siegel cylinder renormalization operator $R_\Sieg: \Siegel_\theta\ra
\Siegel_\theta$ has  a unique fixed point  $f_\# $ and  for any Siegel map
$f\in\Siegel_\theta$, the renormalizations $R_\Sieg^n f$ converge to
$f_\#$ exponentially fast. 
\end{thm}

\begin{proof}
  The fixed point $f_\#$ can be obtained by taking the cylinder
  renormalization of the butterfly fixed point (denoted in the same way)
of Theorem \ref{Siegel fixed point}.  
Exponential convergence to $f_\#$ can be proved 
by means of  McMullen's towers theory
in the same way as  Theorem \ref{Siegel fixed point}.
Namely, Theorem \ref{density pts} implies that    
$R_\Sieg^n  f$ and $R_\Sieg^n (\Bf_\theta)$ are $L_n$-qc conjugate with
the exponentially decaying dilatation $L_n$.  Hence the orbit $\{R_\Sieg^n
f\}$  exponentially converges  to the same fixed point ($f_\#$) as $\{ R_\Sieg^n
(\Bf_\theta)\}$. 
\end{proof}

%
%

\sss{Unstable manifold}

Given a Siegel map $f_\circ\in \Siegel_N$,
let $\BB_\circ =\BB(f_\circ)$ be the Banach space of bounded
holomorphic functions on $\Dom f$. 
The fundamental crescent $\CC_\circ$ of $f_\circ$ gives an origin  to
holomorphically moving crescent $\CC_f$ over a neighborhood of
$f_\circ$.  This allows one to extend  analytically  the  renormalization operator
$R_\Sieg$ to a neighborhood of $f_\circ$.
In particularly, we can consider $R_\Sieg$ near the renormalization fixed point
$f_\#$.

\begin{thm}[\cite{Ya-cylinder}]\label{hyp fixed point}
Let $\theta=\theta_N$ be a stationary rotation number.
  Assume all the holomorphic maps $f$ with $f(0)=0$ and 
$f'(0)=e^{2\pi  i \theta}$ that are sufficiently close to the renormalization fixed
point $f_\#$ on $\Dom f_\#$  belong to some class $\Siegel_\theta(K,
\eps)$.  Then $f_\#$ is a hyperbolic fixed for $R_\Sieg$.  
 \end{thm}

\begin{proof}
   The conditions  $f(0)=0$, $f'(0)=e^{2\pi  i \theta}$ specify
   a codimension-one subspace  in the space $\BB$ of bounded holomorphic functions
   on $\Dom f_\#$ with a fixed point at $0$.  By our assumption, there
   is a ball in this subspace filled with maps of class $\Sieg_\theta
   (K, \eps)$. By Theorem \ref{exp convergence}, this class is
   contained in the stable manifold of $f_\#$. Hence the codimension
   of the latter is at most 1. 

The multiplier $\la= f'(0)$ is a linear functional of $\BB$ that gives
a transverse coordinate to $\Sieg_\theta(K, \eps)$,
and so is the rotation number $\theta= (2\pi i)^{-1} \log \la$. 

The cylinder  renormalization $R_\Sieg$ analytically extends to a
small neighborhood of $f_\#$ 
(using a holomorphically moving  fundamental crescent).  
It becomes an operator fibered over the $\alpha$-axis,
with the Gauss map $\alpha\mapsto \, -1/\alpha\, \mod 1$ acting as the
quotient. Since the latter is expanding,  
the hyperbolicity follows. 
\end{proof}

****************}

\comm{************
Given a real analytic family $\FF=\{f_\theta\}$ of  maps
transversally passing through
a Siegel map $f_\circ \in \Siegel_N$ of stationary type, 
we define $R_\Sieg \FF$ by applying $R_\Sieg$ to the interval of
renormalizable $f_\theta$'s (i.e., such that $\theta\in [ 1/N, 1/(N-1)$) 
around $\theta_\circ$.  In fact, we can extend $R_\Sieg$ to a small
complex neighborhood of the renormalization interval 
(within the compexification of  $\FF$). This
neighborhood is not canonically defined, but we fix some and refer to
these maps as ``renormalizable'' by $R_\Sieg$. This yields a notion of 
$n$-times renormalizable maps as well.   Hyperbolicity of the
renormalization fixed point implies that the renormalizations
$R^n_\Sieg (\FF)$ form a precompact family.
In particular, we obtain:
************}


\bignote{Perturbed Siegel maps are hidden here}

\comm{****

\subsection{Perturbed Siegel maps} 

\sss{Perturbed necklace}

Let us begin with a  straightforward perturbation of Lemma \ref{Delta-Siegel}:

\begin{lem}\label{perturbed Delta-Siegel}
Let  $f_\circ \in \Siegel_\theta(\bar N, \mu, K)$.
Then for any $m\geq m_0(\bar N, \mu, K)\geq \underline{l}$,  there is
a Banach $\de$-neighborhood $\UU=\UU_\de $ of $f_\circ$  in the Banach space $\BB_\circ$ 
such that any $f\in \UU$ has 
a crescent $\CC^m= \CC^m(f)$ and a
pentagon $\De^m=\De^m(f) $
with the following properties:

\ssk \nin $ \mathrm{ (0)}$ 
$\CC^m(f)$ and $\De^m(f)$ move holomorphically with $f\in \UU$;
the crescents determine the renormalization change of variable $\pi_m=\pi^f_m$;

\ssk \nin $ \mathrm{ (i)}$ 
 The change of variable $\pi_m |\, \inter \De^m$ is a univalent map with bounded
distortion onto an annulus $A^m=A^m(R_\Sieg^m f)$ around $0$ slit along a
  quasiarc $J^m= J^m_f $ which is disjoint from $\De^m (R_\Sieg^m f)$;

\ssk \nin $ \mathrm { (ii)}$   
  $\mod ((A^m \sm J^m) \sm \De^m (R_\Sieg^m f) )\geq \nu (\bar N, \mu,
  H, \de)$; 

\ssk \nin $ \mathrm { (iii)}$ The pullbacks $\De_{-k}^m = \De_{-k}^m(f)$, $k=0,1, \dots,
\Bq_m,$ of $\De^m $ along the critical orbit form a closed chain 
consecutively attached one to another;
moreover, for $k< \Bq_m$, the interiors of the above pullbacks, $\inter \De_{-k}^m$, 
except for the first and the last
ones, are all disjoint;

\ssk \nin $  \mathrm { (iv)}$ All the maps $f^k: \De_{-k} \ra \De_0$,
  with $k< \Bq_m$,   are univalent with bounded distortion;
 the last map $f: \De^m_{-\Bq_m} \ra \De^m_{-\Bq_m+1}$ is a double branched covering;

\ssk \nin $  \mathrm { (v)}$
the sizes of the $\De_{-k}^m(f) $ are $(1+O(\de))$-comparable with those
of $\De_{-k}^m(f_\circ)$.  

\ssk \nin $  \mathrm { (vi)}$ There exist $\rho_m \to \infty$ such that all the maps $\pi_m |\, \De_m$
are $\rho_m$-expanding.
\end{lem}

The following modification of the previous lemma, uniform in $m$, 
is crucial. 

\begin{prop}\label {perturbed Delta-Siegel II}
Assume that the renormalization fixed point
 $f_\# \in \Siegel_\theta(\bar N, \mu, K)$ is hyperbolic (see Lemma
 \ref{hyp fixed point}).
Let  $f_\circ \in \Siegel_\theta(\bar N, \mu, K)$
(e.g., $f_\circ$ is a Siegel quadratic map), and let
$\FF=\{f_\la\}$ be a holomorphic family in $\BB_\circ$  through $f_\circ=f_{\la_0}$ transverse to
$\Sieg_\theta$ (e.g., $\FF$ is the quadratic family).
Then there exists  $\de>0$  such that
 for any $m\geq m_0(\bar N, \mu, K, \FF)\geq \underline{l}$,  
 any map  $f_\la$ with 
\begin{equation}\label{perturbed multiplier}
  |G^n (\theta(\la )) - G^m (\theta(\la_\circ )) |< \de, \quad
  {\mathrm{where}} \ \theta(\la)= f_\la'(0),\ G(\theta) = -1/\theta\
  \mod 1
\end{equation} 
has   polygons $\De^n_{-k} =\De^n_{-k}(\la) $
satisfying properties of Lemma \ref{perturbed Delta-Siegel}
and such that 
$$
   \diam \De^n_{-k} (\la) =O(\rho ^{-n })
$$
for some $\rho=\rho (\bar N) > 1$.
Moreover,  for any $\theta$ with $|G^n (\theta)-
G^n (\theta(\la_\circ)|< \de$ there is a map $f_\la\in \FF$   satisfying (\ref{perturbed multiplier}).  
\end{prop}

\begin{proof}
Since the Gauss map $G$ is expanding, we have:
$$
    |  \theta(G^j (\theta(\la)) - \theta(G^j ( \theta(\la_\circ) | <
    \de, \quad j =0,1,\dots, n.
$$
Since by Corollary \ref{Siegel renorm of famiies},
the renormalizations $R^m (\FF)$ form a precompact family of  families,
we obtain:
$$
   \dist (R_\Sieg^j f_\la, R_\Sieg^j f_\circ) < \eps =\eps(\FF, \de), \quad
   j =0,1,\dots n.  
$$
Hence we can select some $m$ for which
Lemma \ref{perturbed Delta-Siegel} is  applicable for
the renormalization  $R^m$ of any family $R^j (\FF)$, $j =0,1\dots,
n-m$.
The Telescoping Argument produces for us the desired  polygons
$\De^n_{-k}$. 

The last assertion follows from the  dynamical $\la$-lemma 
applied to $R_\Sieg$ near the hyperbolic fixed point $f_\#$. 
\end{proof}

Similarly to the unperturbed case,
the union $\BDe^n : = \bigcup_k \De_k^n $ will be  called  the {\it
perturbed Siegel  necklace} of $f$ of depth $n$.
We also let $\Om^n = \Om^(\la) $ be the {\it filled perturbed necklace} $\BDe^m$ obtained by
adding to $\BDe^n $ the complementary component containing $0$.

\begin{cor}\label{Om-domain}
Under the circumstances of the previous lemma, 
 the domains $\Om^n (\la) $ are contained in the $O(\rho^{-n})$-neighborhood
 of the Siegel disk $S_\circ$.  
\end{cor}

************}

\section{Inou-Shishikura class}\label{IS sec}

\subsection{Parabolic Renormalization}

Here we will briefly outline the Parabolic Renormalization Theory
that provides us with a good  control of bifurcations of parabolic
maps. It was laid down in the work by   Douady and Sentenac 
(see \cite{DH:Orsay,D-discont}), Lavaurs \cite{La},  
and Shishikura \cite{Sh}, 
which can be consulted for details.

\sss{Parabolic Piuseaut germs and their  transit maps}
\label{parabolic  sec} 

For $q\in \N$ and a small neighborhood $U$ of $0$,  
let $\GG_0(U)$ be the space of 
parabolic germs near $0$ given by Piuseaut series  
\begin{equation}\label{parabolic germ}
          f: z\mapsto z+z^2  + \sum_{k\in \N} a_k  z^{2+k/q},
\end{equation}
(continuous up to the boundary). By definition, 
it is isomorphic to the space of holomorphic germs
$$
       \hat f: \zeta \mapsto \zeta^q+\zeta^{2q}+\sum_{k\in \N} a_k \zeta^{2q+k} 
$$
on the neighborhood $\hat U$, the full preimage of $U$ under
the power change of variable $z = \zeta^q$. 
The latter space is  endowed  with uniform topology,
which is inherited by $\GG_0(U)$.   

Let us consider the
principal branch of $f$ (which is real on $\R_+$) in the slit plane
$\C\sm  ( i \R_-)$.  
It is endowed with the following structure: 

\ssk\nin (C1)
An  {\it attracting petal}  $\PP^a \equiv \PP^a (f)$,
which is an open 
piecewise smooth Jordan  disk with the following properties:

\ssk\nin $\bullet$
$\PP^a $ is $\R$-symmetric and $\PP^a \cap \R=(-\de, 0 )$ for some $\de>0$;

\ssk\nin $\bullet$
$\PP^a $ touches the origin at a certain angle $\alpha$ which can
be selected arbitrary in the range $(0, \pi)$. To be definite, we let
$\alpha=\pi/2$;

\ssk\nin $\bullet$  
$f$ univalently maps $\PP^a $ into itself, 
$
 f (\di \PP^a )\cap \di \PP^a =\{0\} ,
$
and $f^{n} (z)\to 0$ as $n\to +\infty$ uniformly on $\PP^a $.


\ssk 
Along with the attracting petal, there is a {\it repelling petal}
$\PP^r \equiv \PP^r (f)$ containing an interval $(0, \de)$ with some
$\de>0$  that can be defined as the attracting petal for $f^{-1}$. 

\ssk\nin (C2)
{\it The horn  map $H\equiv H_f : \PP^r \ra \PP^a$}.
  For any angle $\theta>0$,  
  there exist $\eps>0$ and $n\in \N$ such that for any point $z\in
  \PP^r$ with  $|z|<\eps$ and  $\arg z>\theta$  
(where $\arg z$ is the principal value of the argument)
we have $f^n z\in  \PP^a$. Moreover, $\eps$ and $n$ can be selected 
the same for all maps $\tl f\in \GG_0(U)$ near $f$.

\ssk\nin (C3)
The  attracting and repelling  {\it Fatou coordinates}%
\footnote{To make sure that the Fatou coordinates are isomorphisms
  onto the corresponding half-planes requires a special choice of the
  petals, which will also be assumed in what follows.} 
$$
    \phi^a\equiv \phi_f^a  : \PP^a\ra \{ \Re z> 0\}, \quad   
     \phi^r\equiv \phi_f^r: \PP^r \ra \{ \Re z < 0 \}.
$$
that conformally conjugate $f$ and $f^{-1}$ to the translations
$z\mapsto z+1$ and $z\mapsto z-1$ respectively.
The Fatou coordinates are defined up to translation, so they are
uniquely determined  by normalization that
specifies which points $c^{a/r} \equiv c^{a/r}_f \in \PP^{a/r}(f) $  correspond to 
$\pm 1\in \C$.
Moreover, if the base points $c^{a/r}_f$ depend holomorphically on $f$ 
then so do the normalized  Fatou coordinates.

\ssk\nin (C4)
 An  {\it  attracting fundamental crescent} $\CC^a  \equiv \CC^a (f)$.
It is the strip $\{ 1/2 \leq  \Re z \leq 3/2 \}$  properly embedded into the
attracting petal $\PP^a$ 
such that $\di \CC^a \cap \di \PP^a = \{0\}$
and $f  (\CC^a ) \cap \CC^a $ is  a  boundary component
of $\CC^a $.   To be definite, we will use the following choice:
$$
     \CC^a\equiv \CC^a(f)  = \{ z\in \PP^a: \   3/4 \leq  \Re \phi^a (z) \leq 7/4\} .
$$ 
Since the Fatou coordinate depends holomorphically on $f$,
the crescent $\CC^a(f)$ moves holomorphically with $f$. 

Similarly, one can define the {\it repelling fundamental crescent}
$$
\CC^r\equiv \CC^r (f) =  \{ z\in \PP^r: \   -1/4 \leq  \Re \phi^r (z) \leq -5/4\} .  
$$

\ssk\nin (C5)        
The  {\it \'Ecalle-Voronin cylinders} $\Cyl^{a/r}\equiv
\Cyl^{a/r}(f)$, which are
 the quotients of the petals $\PP^{a/r}$ by the dynamics.
They  can be 
obtained by identifying  the boundary components of the corresponding
fundamental crescents $\CC^{a/r }$ by means of $z \sim  f(z)$.
The normalized Fatou coordinates induce  isomorphisms of the pointed
cylinders $(\Cyl^{a/r}, c^{a/r} ) $ to the standard cylinder $(\C/\Z, 0) $,
and in what follows, we will freely identify the cylinders
 with the standard model.   


\ssk\nin (C6)
A  complex one parameter family of {\it transit } isomorphisms
\begin{equation}\label{EV iso}
          I_\la : \Cyl^a\isom \C/ \Z \ra \C/\Z\isom \Cyl^r, \quad z\mapsto z+\la,\quad \la \in \C/\Z.              
\end{equation}
For any    $\la\in \D_{1/4}$, the isomorphism
$I_\la$ lifts to  translation 
$$
  \{ 3/4\leq \Re z \leq 7/2\} \ra \{ \Re z <0\},\quad z\mapsto z-2+\la,
$$
which induces, by means of the Fatou coordinates  $\phi^{a/r}_f$, 
an  embedding 
\begin{equation} \label{lift I}
   I_{f,\la}  : \CC^a(f) \ra \PP^r(f).
\end{equation}
Holomorphic dependence of the Fatou coordinates on $f$ implies that these
embeddings  depend nicely on the parameters:

\begin{lem}\label{continuous dependence of I}
Assume the base points   $c^{a/r}_f\in \PP^{a/r}(f) $ are selected
holomorphically in $f$ over some neighborhood $\UU_0 \subset \GG_0(U)$.
Then  the  family of transit maps (\ref{lift I})  
depends holomorphically on $(f,\la)\in \UU_0 \times \D_{1/4}$.
\end{lem}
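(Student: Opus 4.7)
The plan is to express $I_{f,\lambda}$ as a composition of three maps that are individually holomorphic in the relevant variables, and then invoke the chain rule. Concretely, by the very definition of the lifted transit map in (\ref{lift I}), we have
\begin{equation*}
    I_{f,\lambda}(z) \; = \; (\phi^r_f)^{-1}\bigl(\phi^a_f(z) - 2 + \lambda\bigr), \qquad z \in \CC^a(f),\ \lambda \in \D_{1/4}.
\end{equation*}
So the job is reduced to two separate tasks: (i) verify that the expression is well-defined on the claimed domain, and (ii) argue that each factor depends holomorphically on all its arguments jointly.

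For (i), recall that $\CC^a(f)$ was defined in (C4) as the $\phi^a_f$-preimage of the strip $\{3/4 \leq \Re w \leq 7/4\}$. Thus for $z\in \CC^a(f)$ and $\lambda\in \D_{1/4}$ one has $\Re(\phi^a_f(z) - 2 + \lambda) \in (-3/2,\, 0)$, which lies comfortably inside the half-plane $\{\Re w < 0\}$ that serves as the target of $\phi^r_f$; by property (C3), $(\phi^r_f)^{-1}$ is defined on all of $\{\Re w < 0\}$, so the composition makes sense. Since $\CC^a(f)$ itself moves holomorphically with $f$ (as it is defined via $\phi^a_f$), the joint statement should be understood in the standard way: for any $f_\circ\in\UU_0$ and $z_\circ\in\CC^a(f_\circ)$, the map $(f,\lambda,z)\mapsto I_{f,\lambda}(z)$ is holomorphic on a neighborhood of $(f_\circ,\lambda_\circ,z_\circ)$ in $\UU_0\times\D_{1/4}\times\CC^a(f)$.

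For (ii), the key input is the classical fact that the normalized Fatou coordinates $\phi^{a/r}_f$ depend holomorphically on $f$ whenever the base points $c^{a/r}_f$ do. I would recall the standard construction: in suitable local coordinates the germ (\ref{parabolic germ}) has a formal Fatou coordinate $\psi_0(z) = -1/z$ for the principal part $z\mapsto z+z^2$, and one sets
\begin{equation*}
    \phi^a_f(z) \; = \; \lim_{n\to\infty} \bigl(\psi_0(f^n(z)) - n\bigr), \qquad z\in \PP^a(f),
\end{equation*}
with an analogous construction using $f^{-1}$ on $\PP^r(f)$ (working on the Piuseaut cover $\zeta^q = z$ so that everything is genuinely holomorphic). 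Standard estimates in parabolic bifurcation theory (\cite{DH:Orsay,La,Sh}) show that the convergence is locally uniform not only in $z$ but also in $f\in \UU_0$, so the limit is jointly holomorphic in $(f,z)$. Normalizing by subtracting $\phi^a_f(c^a_f)+1$ (and similarly for the repelling coordinate) preserves joint holomorphy, since $c^a_f$ is assumed holomorphic in $f$. The inverse $(\phi^r_f)^{-1}$ is then jointly holomorphic in $(f,w)$ by the holomorphic implicit function theorem (applied on the half-plane $\{\Re w<0\}$, where $\phi^r_f$ is a biholomorphism).

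Putting the pieces together, the composition $I_{f,\lambda}(z) = (\phi^r_f)^{-1}(\phi^a_f(z) - 2 + \lambda)$ is a composition of jointly holomorphic maps in $(f,\lambda,z)$, so it is itself jointly holomorphic, which is the desired conclusion. The only substantive step is the holomorphic dependence of the Fatou coordinates on the germ $f$; this is the classical ingredient underlying all of parabolic bifurcation theory and can simply be cited from the references listed in \S\ref{parabolic sec}.
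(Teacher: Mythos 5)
Your proof is correct and follows essentially the same route as the paper, which states no proof at all and simply prefaces the lemma with the remark that ``holomorphic dependence of the Fatou coordinates on $f$ implies that these embeddings depend nicely on the parameters.'' You have correctly unpacked this into the explicit composition
$$
I_{f,\la}(z)=(\phi^r_f)^{-1}\bigl(\phi^a_f(z)-2+\la\bigr),
$$
verified the range arithmetic (noting in passing that the paper's $\{3/4\le\Re z\le 7/2\}$ is a misprint for $7/4$), and reduced the claim to the joint holomorphy of the normalized Fatou coordinates, which is exactly the content of item (C3). One small technical imprecision in your optional expansion of (C3): the naive limit $\lim_n(\psi_0(f^n(z))-n)$ with $\psi_0(z)=-1/z$ does not in general converge, because the r\'esidu it\'eratif of $z+z^2$ is nonzero; the standard construction uses $\psi_0(z)=-1/z-\beta\log(-1/z)$ with the appropriate $\beta$, or works after a preliminary change of coordinates killing the residue. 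This does not affect your argument, since you correctly cite the references \cite{DH:Orsay,La,Sh} for the underlying fact rather than relying on the displayed formula, but the formula as written should not be taken at face value.
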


The horn map $H\equiv H_f$ from (C2) also descends to the cylinders, 
and we will keep the same notation, $H: \Cyl^r \ra \Cyl^a$, 
for the quotient. 

\ssk\nin (C7)  {\it Parabolic renormalization $R_\parab f$.} 
Composing the transit maps with the   horn  map, 
we obtain  a one-parameter family of   return maps
\begin{equation}\label{returns to C}
      I_\la\circ  H_f  :   \C/\Z \ra \C/\Z
\end{equation}
defined near the ends of the repelling cylinder $\Cyl^r\isom \C/\Z$. 
By means of%
\footnote{This special normalization of the exponential map is chosen 
   to make it consistent with the  one used by Inou and  Shishikura, see
  below.}  
$$\Exp: \C/\Z \ra \C^*, \quad \Exp(z) = -(4/27 ) e^{-2\pi i z}, $$  we can
identify  the cylinder  $\C/\Z$ with $\C^*$ so
that its upper end corresponds to  $0$ and the boundary of the 
fundamental crescents $\CC^{a/r}$  correspond to the ray $i\R_-$.
Then family of  return maps (\ref{returns to C})
becomes  a one-parameter family
$g_{ f, \la}$ of conformal germs near $0$.  

Moreover, there is a unique choice of the transit  parameter $\la$
that makes the map  $g_{f, \la}$ parabolic, with  multiplier $1$ 
at $0$. 
This map $g_{f,\la} $ is called   the 
{\it parabolic renormalization } $R_\parab f$    of $f$.

\sss{Transit maps for perturbations and their geometric limits}
   Let us now consider the space $\GG(U)$  of Piuseaut germs
(continuous  up to the boundary)   
\begin{equation}\label{perturbed parabolic map}
    f:    z\mapsto e^{2\pi i \gamma} (z + z^2) + 
               \sum_{k\in \N} a_k  z^{2+k/q} 
\end{equation}   
on $U$. 
We will refer to $\gamma\in \C/\Z$ as the {\it complex   rotation number} of $0$.


Let $\UU_0\subset \GG_0 (U) $ be a  neighborhood of a parabolic map $f_0$ 
Let us consider a neighborhood $\UU$ in $\GG(U)$
consisting of maps $f= e^{2\pi i \gamma} \tl f$, where $\tl f\in
\UU_0$ and  $|\arg \gamma | < \pi/4$.  

If $\UU $ is sufficiently small then
any map $f\in\UU\sm \UU_0$ 
has a second fixed point $\beta=\beta_f$ near $0$,
and there exist  crescent-shaped domains 
bounded by   (closed)  arcs
$\om^{a/r} = \om^{a/r}_f $ connecting $0$ to $\beta$ and 
their respective images $f^{\pm 1} (\om^{a/r})$. Moreover, all four
arcs, $\om^{a/r}$ and $f^{\pm a} (\om^{a/r} ) $ are pairwise disjoint
(except for the endpoints).
The domain $\PP=\PP (f)$  bounded by the arcs
 $\om^a$ and $\om^r$ will be referred as 
the {\it  petal } for $f$.%
\footnote{ What happens is  that the attracting and repelling petals
of a parabolic map ``merge'' under perturbation to form $\PP$.}

As in the parabolic case, the perturbed map can be linearized on its
petal. The linearizing coordinate
$$
    \phi \equiv   \phi_f : \PP  \ra \CC, \quad \phi(f z) = \phi(z) +1,
    \ z\in \PP\cap f^{-1} (\PP)
$$
is called the {\it Fatou-Douady coordinate} (or {\it perturbed Fatou
  coordinate}).  It is  defined uniquely up to translation, so it can
be normalized by prescribing a point $c^a\in \PP$ corresponding to $1$,
or a point $c^r$ corresponding to $-1$.  The normalized Douady
coordinate depends holomorphically on $f\in \UU\sm \UU_0$.

The petal $\PP$ can be selected so that $\phi(\PP)$ is a vertical
strip $\{  A < \Re z < B \} $ with  big $B-A$, 
and in what follows we will assume such a choice.  
Let us define the {\it attracting and repelling fundamental crescents} as
$$
   \CC^a\equiv \CC^a(f) = \{  A+3/4 \leq \Re \phi(z) \leq A+7/4\}, 
$$    
$$ 
  \CC^a\equiv \CC^a(f) = \{  B-5/4 \leq \Re \phi(z) \leq B-1/4 \}.
$$
If a point $c_f^a$ is selected in $\CC(f)$ holomorphic in
$f\in \UU $ (including  parabolic maps $f \in \UU_0$), 
then the linearizing coordinate
$\phi^a_f$ depends   holomorphically,  and hence continuously,  on $f\in \UU$.
Thus, 
if $f_n\to f$ then for any compact set $K\subset \PP^a (f)$,
the $\phi_{f_n} $ are eventually well defined on $K$, and
$\phi_{f_n}\to \phi_f$ uniformly on $K$.    

A similar discussion applies to the repelling fundamental crescents. 

The quotients of the petals $\PP^{a/r}$ 
 by  the dynamics  provide us with  a pair of 
{\it Douady cylinders}  $\Cyl^{a/r} =\Cyl^{a/r}(f)$.
They can be  obtained by  
identifying the boundary arcs of the crescents $\CC^{a/r}$ 
by  means of $z\sim f(z) $.
As in the purely parabolic case, the Fatou-Douady coordinate $\phi$
induces an isomorphism between the cylinders $\Cyl^{a/r}$ and the
standard cylinder $\C/\Z$,  and we we will freely identify the
cylinders with the standard model. 

Let us consider the transit map  $ T \equiv T_f : \CC^a \ra \CC^r$,
i.e., $Tz = f^j z$ where $f^k z\in \PP$, $k=0,1,\dots, j$, 
and $f^j z\in \CC^r$. It is usually discontinuous, but it induces a
conformal isomorphism between the cylinders,%
\footnote{
Notice an essential  difference with the parabolic case:  in that
case, there is a one-parameter family of isomorphisms between the
cylinders, all on equal footing, while  in the perturbed case, 
 (\ref{perturbed iso})  is a {\it preferred} isomorphism induced by the dynamics. }
\begin{equation}\label{perturbed iso}
   I_f  : \Cyl^a  \isom \C/\Z  \ra  \C/\Z \isom \Cyl^r, 
   \quad z\mapsto z+\la, \quad \la=\la(f)\in \C/\Z.
\end{equation}



\begin{thm}\label{parabolic bifurcation}
Assume the base points   $c^{a/r}_f\in \CC^{a/r} (f) $ are selected
holomorphically in $f$ over some neighborhood $\UU \subset \GG(U)$.  
Let  $(\La_f, c^r_f)  $ be the lift of $(\D_{1/8},0)\subset 
(\C/\Z,0)$ to $\PP(f)$ (by means of the Fatou-Douady coordinate). 
Then for every sufficiently big $j $, 
there exist a holomorphic embedding
$$
   \Phi_j :  \UU_0\times \bar \D_{1/8} \ra \UU,\quad (\tl f, \la) \mapsto  
      e^{2\pi i \gamma_j } \tl f,
$$
where $\gamma_{j, \tl f} : \bar \D_{1/8} \ra \C$ is a conformal embedding 
such that: 

\ssk\nin $\bullet$
$f= \Phi_j (\tl f, \la)$ for some 
$(\tl f, \la) \in \UU_0\times \bar\D_{1/8} $ iff
$$
\mbox{
$f^k(c^a)\in \PP$, $k=0,1,\dots, j$, $f^j (c^a)\in \La_f$, and
  $\la(f)  =\la $.
}$$ 

\ssk \nin $\bullet$ 
For $(\tl f, \la) \in \UU_0\times \bar\D_{1/8}$  and $\eps>0$,
let  $ f=\Phi_j(\tl f, \la)$ 
and 
$$ \CC^a_\eps (f)= \{ z:  1/2-\eps < \Re\phi^a_f (z) <  3/2 +\eps \}, $$ 
Then the  transit maps $f^j : \CC_\eps^a(f) \ra \PP(f)$ 
converge to the  parabolic  transit map
$I_{ \tl f, \la}: \CC^a (\tl f) \ra \PP(\tl f) $  uniformly on  compact subsets of $\CC^a (\tl f)$, 
and  uniformly over the tube $\UU_0\times \bar \D_{1/8}$.%
\footnote{Under these circumstances, the pair $(\tl f_\infty ,I_\la)$ is called the
{\it geometric limit} of the sequence $\{f_j \} $. } 
%

\ssk \nin $\bullet$  $\diam \Im \gamma_{j, \tl f}  \asymp j^{-2 }$.

\end{thm}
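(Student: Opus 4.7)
The plan is to deduce this statement from the classical Douady--Lavaurs--Shishikura parabolic implosion theory, in a parametric form over $\UU_0 \subset \GG_0(U)$. The underlying picture is that each small perturbation $f = e^{2\pi i \gamma}\tl f$ of a parabolic map $\tl f\in \UU_0$ opens a gate between the would-be attracting and repelling petals, and as $\gamma\to 0$ the transit through this gate, measured in the Fatou--Douady coordinate, takes $L_{\tl f}(\gamma)$ steps, where $L_{\tl f}(\gamma)$ is meromorphic in $\gamma$ with a simple pole of residue $-1$ at $\gamma = 0$ and depends holomorphically on $\tl f$. Tuning $\gamma$ so that exactly $j$ iterates realize a prescribed translation $\la$ between the \'Ecalle--Voronin cylinders yields the claimed family $\Phi_j$.

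First I fix holomorphic choices of the base points $c^{a/r}_f$, so that the normalized Fatou coordinates $\phi^{a/r}_{\tl f}$ of the limiting parabolic maps and the normalized Fatou--Douady coordinate $\phi_f$ of the perturbations depend holomorphically on their parameters. The heart of the argument is the parametric matching estimate
\begin{align*}
 \phi_f - \phi^a_{\tl f} &\to 0 \quad \text{on compact subsets of } \PP^a(\tl f),\\
 \phi_f - \phi^r_{\tl f} - L_{\tl f}(\gamma) &\to 0 \quad \text{on compact subsets of } \PP^r(\tl f),
\end{align*}
uniformly in $\tl f\in\UU_0$ as $\gamma\to 0$. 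This matching is the content of Shishikura's version of parabolic bifurcation: writing $f$ in an \'Ecalle-type coordinate where the parabolic dynamics becomes approximately a unit translation, constructing $\phi_f$ via the functional equation $\phi_f(fz) = \phi_f(z) + 1$ over a long almost-invariant strip, and comparing to the limit Fatou coordinates at both ends by the standard normal-form estimates near the saddle-node. One obtains $L_{\tl f}(\gamma) = -1/\gamma + O_{\tl f}(1)$, with the $O(1)$-term holomorphic jointly in $(\tl f,\gamma)$.

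Given the matching, the transit $f^j:\CC^a(f)\to\CC^r(f)$ is, in the limit Fatou coordinates, the shift by $j - L_{\tl f}(\gamma) + o(1)$, while $I_{\tl f,\la}$ is the shift by $\la + c(\tl f)$ for a constant $c(\tl f)$ determined by the normalization of $\CC^{a/r}(\tl f)$. Equating the two leads to
$$
   j - L_{\tl f}(\gamma) = \la + c(\tl f) + o(1),
$$
which by the implicit function theorem (using that $L_{\tl f}$ has a simple pole of residue $-1$) has, for every sufficiently large $j$ and every $\la\in\bar\D_{1/8}$, a unique small holomorphic solution $\gamma_{j,\tl f}(\la)$ with asymptotics $\gamma_{j,\tl f}(\la) = 1/(j - \la - c(\tl f)) + O(j^{-3})$. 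A direct computation with this M\"obius-type expression gives $\diam \Im \gamma_{j,\tl f}(\bar\D_{1/8}) \asymp j^{-2}$. The map $\Phi_j(\tl f,\la) = e^{2\pi i\gamma_{j,\tl f}(\la)}\tl f$ is then a holomorphic embedding: injectivity follows because $\gamma = \log f'(0)/(2\pi i)$ (small branch) recovers $\gamma$ from $f$, hence $\tl f = e^{-2\pi i\gamma}f$, and then $\la$ from the conformal embedding $\gamma_{j,\tl f}$.

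The dynamical characterization via $f^k(c^a)\in\PP$ for $k\le j$, $f^j(c^a)\in\La_f$, and $\la(f) = \la$ is built into the construction, since $\La_f$ is the lift of $\D_{1/8}$ to $\PP(f)$ via $\phi_f$, and the defining equation for $\gamma_{j,\tl f}(\la)$ forces exactly the required first-hit position together with the correct transit parameter. Uniform convergence of $f^j|_{\CC^a_\eps(f)}$ to $I_{\tl f,\la}$ over the tube $\UU_0\times\bar\D_{1/8}$ then follows by combining the Fatou-coordinate matching with the choice of $\gamma_{j,\tl f}(\la)$, via Koebe distortion in the Fatou charts. The main obstacle is the parametric matching estimate itself: making the Shishikura--Lavaurs construction of Fatou--Douady coordinates, and the explicit form $L_{\tl f}(\gamma) = -1/\gamma + O(1)$, uniform in $\tl f\in\UU_0$. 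Once this is in place, everything else reduces to the implicit function theorem and standard compactness arguments.
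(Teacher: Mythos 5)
The paper itself gives no proof of this theorem: it is stated as a consequence of the Douady--Sentenac, Lavaurs, and Shishikura parabolic implosion theory, to which the reader is referred. Your proposal correctly identifies the intended route to the statement and the structure of the argument. The three essential ingredients you isolate are exactly the right ones: (i) the parametric matching estimate
$\phi_f \approx \phi^a_{\tl f}$ on the attracting petal and $\phi_f \approx \phi^r_{\tl f} + L_{\tl f}(\gamma)$ on the repelling petal, uniformly over $\UU_0$, with $L_{\tl f}(\gamma) = -1/\gamma + O_{\tl f}(1)$ holomorphic jointly in $(\tl f,\gamma)$; (ii) the implicit function theorem applied to $j - L_{\tl f}(\gamma) = \la + c(\tl f)$, using that $L_{\tl f}$ has a simple pole at $\gamma = 0$; (iii) the M\"obius asymptotics $\gamma_{j,\tl f}(\la) \sim 1/(j - \la - c(\tl f))$, whose derivative in $\la$ is $\asymp j^{-2}$, yielding the claimed diameter bound. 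Your remark about recovering $\gamma$ from $f'(0)$ and then $\la$ from the inverse of $\gamma_{j,\tl f}$ is the correct reason $\Phi_j$ is injective, and holomorphic dependence of the base points $c^{a/r}_f$ is what makes $c(\tl f)$ and the whole construction holomorphic over the tube.

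You are also right that the load-bearing step is the parametric matching estimate; everything downstream is routine given it. A few harmless cosmetic points: the exact residue of $L_{\tl f}$ at $\gamma = 0$ depends on normalization conventions (here one computes, by integrating $1/(z^2 + 2\pi i\gamma z)$ across the gate, that the transit phase is $\sim 1/\gamma$ for $f = e^{2\pi i\gamma}\tl f$), but any nonzero residue holomorphic in $\tl f$ and bounded away from zero over the compact $\UU_0$ gives the same $j^{-2}$ scaling; and the uniform convergence of $f^j$ on the slightly fattened crescent $\CC^a_\eps(f)$ requires checking that the transit is defined there, which follows from openness of the petal condition and the same Fatou-coordinate estimates. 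Neither of these affects the substance. In short, your approach is the one the paper presupposes when it cites the implosion literature, and the argument is sound.
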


The images $Q^j= \Im \Phi_j$ will be called {\it parabolic tubes}. 
They are endowed with the {\it horizontal foliation} whose leaves
$\LL^j(\la)\isom \UU_0$, $\la\in \bar\D_{1/8} $, correspond to the same transit
parameter $\la\in \bar\D_{1/8}$.

\ssk
The  horn map  from (C2)  is robust 
under a perturbation  $f= e^{2\pi i \gamma } \tl f$
(\ref{perturbed parabolic map}).
The perturbed  horn  map 
$H\equiv H_f: \PP \ra \PP $ is  defined  for $z\in \PP$  with
$|z|< \eps$ and $0< \theta < \arg z < \pi/2$. 
It induces the cylinder horn map $\Cyl^r \ra \Cyl^a$  near the upper %
\footnote{The assumption that $arg \gamma> \theta$  breaks the symmetry
  between the ends  as it ensures that the
  points within a compact set of $\CC^a_f$ escape through the upper
  end of $\CC_f^r$. }  
 end of the
Douady cylinders. We will use the same notation $H\equiv H_f$ for this map. 

Composing it with the transit map $I_f: \Cyl^a\ra \Cyl^r$, 
we obtain   the return map 
$ I_f\circ H_f   : \Cyl_f^r \ra \Cyl_f^r$ near the upper end of the cylinders. 
Viewed in the
$\exp$-coordinate, 
it becomes a germ $g_f: (\C,  0)\ra (\C, 0)$. Its  rotation
number is given by the (modified) complex  Gauss map 
$G_*(\gamma)  =  -1/\gamma\ \mod \Z$.
If $G_*(\gamma)$ is small then this return map is close to the
parabolic renormalization of $\tl f$.
It  is  called the {\it almost parabolic renormalization}  
 of $f$. We will keep the same notation $R_\parab $ for this operator.


\sss{Case of  rotation number $p/q$} 
Let us now consider a holomorphic parabolic germ
\begin{equation}\label{p/q}
  f: \zeta\mapsto e^{2\pi i p/q} \zeta + \zeta^2+ \dots      
\end{equation}
with  rotation number  $p/q$.   Assume it is non-degenerate, 
i.e., it has $q$ petals (rather than a multiple of $q$ petals).
Then the  $q$-th iterate $f^q$ has a form
$$
     f^q: \zeta\mapsto \zeta+ a_{q+1} \zeta^{q+1} +\dots, \quad \mathrm{with}\ a_{q+1}\not=0.
$$
Performing a power change of variable $z=c \zeta^q$, we bring $f^q$ to
Piuseaut form (\ref{parabolic germ}).

Let us now perturb the parabolic map $f$  to
\begin{equation}\label{p/q perturbed}
  f_\eps: \zeta\mapsto e^{2\pi i( p/q+\eps) } \zeta + \zeta^2+ \dots  .  
\end{equation}
The $q$-th iterate $f_\eps^q$ has non-vanishing terms $a_kz^k$ with
$1< k< q+1$, but these terms can be killed by a conformal change of
variable. Performing further a power change of variable  $z=c \zeta^q$,
we bring $f_\eps$ to Piuseaut form (\ref{perturbed parabolic map}).
As all the above coordinate changes depend holomorphically on $f$,
this   allows us to apply the above theory to the space of germs
(\ref{p/q}). \note{enough?}

\subsection{Inou-Shishikura class}\label{IS class sec} 

Inou and Shishikura \cite{IS} have constructed a class $\IS_0$  of maps
with the following properties:

\ssk\nin (P1)
  Any map $f\in \IS_0$ is holomorphic on some quasidisk $\Om_f$ 
    containing $0$,  and  has a form $P_0\circ \phi^{-1}$ where
    $P_0$ is the restriction of $z\mapsto z(1+z)^2$ to some domain $\Om_0$,
    and $\phi: \Om_0\ra \C $ is an appropriately normalized 
    univalent map that admit a global qc extension to $\C$;

\ssk\nin  (P2) 
  $0$ is the parabolic fixed point of any $f\in \IS_0$;

\ssk\nin (P3) 
   Any $f\in \IS_0$ has a single  quadratic critical point $c_0=c_0(f)$;
moreover, the orbit of $c_0$ does not escape $\Om_f$, and 
 $f^n(c_0)\to 0$ as $n\to \infty$;

\ssk\nin (P4)
  The class  is endowed with the {\it Bers-Teichm\"uller  topology} and complex
  structure inherited from the space of Schwarzian derivatives $S\phi$;
  they make it  isomorphic to the Universal  Teichm\"uller Space;

\ssk\nin (P5)
   The class is also endowed with  {\it weak}  topology induced by  the
   compact-open topology on the space of univalent functions 
    $\phi  : \Om_0\ra \C$; the weak completion $\overline{ \IS}_0 $ is compact; 

\ssk\nin (P6) 
  The parabolic renormalization  $R$ acts from $\overline{ \IS}_0 $ to $
  \IS_0$; its restriction to $\IS_0$  is a compact holomorphic operator;

\ssk\nin (P7) 
 The parabolic renormalization of the quadratic map $z\mapsto z+z^2$
 has a restriction in $\IS$.


 
\ssk
For $\theta \in \R/ \Z$, 
define the class $\IS_\theta $ as $e^{2\pi i\theta } \cdot \IS_0$, and
let 
$
 \displaystyle{  \IS = \bigcup_\theta \IS_\theta.}.
$ 
(Notation  ${\overline\IS}_\theta$ and $\overline\IS$ has a similar meaning.) 
Property (P6) is robust under perturbation: 

\begin{thm} [\cite{IS} ] \label{IS thm}
If $\theta$ is sufficiently small then the almost parabolic
  renormalization $R_\parab$ induces an operator $R_\IS :
  \overline{IS}_\theta  \ra \IS_{-1/\theta} $ that restricts to 
a compact holomorphic  operator 
$R_\IS: \IS_\theta \ra \IS_{-1/\theta } $.
\end{thm}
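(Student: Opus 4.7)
The plan is to follow the strategy of Inou--Shishikura: use the perturbative parabolic bifurcation theory developed in \S \ref{parabolic sec} (in particular Theorem~\ref{parabolic bifurcation}) to compare $R_\parab$ on $\overline{\IS}_\theta$ with the genuine parabolic renormalization $R$ on $\overline{\IS}_0$, and then transfer the invariance property (P6) from $\overline{\IS}_0$ to a neighborhood in the rotation-number parameter.

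First, I would set up the domain of definition uniformly over $\overline{\IS}_\theta$ for $|\theta|$ small. For $f = e^{2\pi i \theta} f_0 \in \overline{\IS}_\theta$ with $f_0 \in \overline{\IS}_0$, apply the parabolic bifurcation theorem to place $f$ inside a parabolic tube $Q^j$: specifically, choose normalization points $c^{a/r}_{f_0}$ depending holomorphically on $f_0 \in \overline{\IS}_0$ (using property (P5) that $\overline{\IS}_0$ is compact, together with (P1)), and invoke Theorem~\ref{parabolic bifurcation} to realize $f$ as $\Phi_j(f_0,\la)$ for a suitable transit parameter $\la$. The compactness of $\overline{\IS}_0$ and of the parameter $\la \in \bar\D_{1/8}$ makes the choice $j = j(\theta)$ uniform as $\theta \to 0$.

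Second, I would verify the limiting identification of the almost parabolic renormalization with the parabolic one. By the uniform convergence statement in Theorem~\ref{parabolic bifurcation}, the transit maps $f^j : \CC^a_\eps(f) \to \PP(f)$ converge uniformly to the parabolic transit map $I_{f_0,\la}$ on compact subsets of $\CC^a(f_0)$. Composing with the horn map $H_f$, whose dependence on $f$ is continuous (by Lemma~\ref{continuous dependence of I} applied to the relevant Fatou coordinates), and then exponentiating via $\Exp$, one obtains $R_\parab(f) \to R(f_0)$ in the weak topology of (P5) as $\theta \to 0$ uniformly in $f_0 \in \overline{\IS}_0$. Since $R(\overline{\IS}_0) \subset \IS_0$ by (P6) and $\IS_0$ is open in $\overline{\IS}_0$ (as a neighborhood of the compact image of the compact operator $R$), this forces $R_\parab(f) \in \IS$ for all $f \in \overline{\IS}_\theta$ once $|\theta|$ is small enough. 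The rotation number of $R_\parab(f)$ equals $-1/\theta \mod \Z$ by construction, so the image lies in $\IS_{-1/\theta}$ as required. Holomorphicity of $R_\IS : \IS_\theta \to \IS_{-1/\theta}$ follows from the holomorphic dependence on $f$ of the Fatou-Douady coordinates, the transit parameter $\la(f)$, and the horn map, all of which enter $R_\parab$; compactness is inherited from the compactness of $R$ in (P6) together with the precompactness in $\overline\IS$ provided by (P5).

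The main obstacle is the \emph{uniformity} of the Inou--Shishikura construction itself: the class $\IS_0$ is engineered so that the horn map of any $f \in \overline{\IS}_0$ factors through a specific univalent restriction of a fixed model, and one must check that this factorization survives perturbation, i.e., that the perturbed horn map of $f \in \overline{\IS}_\theta$ still factors through an element of $\IS_{-1/\theta}$ with a \emph{controlled} uniformization domain. This is precisely the delicate quasiconformal surgery carried out in \cite{IS}, where one extracts the renormalized germ by pulling back the Schwarzian data along the (almost) parabolic transit and checks that the resulting univalent map still belongs to the specified family. The openness of this property in $\theta$ is what allows the parabolic statement (P6) to persist to the almost parabolic setting for small $|\theta|$.
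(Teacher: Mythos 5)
This theorem carries the citation \cite{IS} in the paper; there is no proof of it in the present paper, so there is no ``paper's own proof'' to compare against. Your proposal is an outline of how the Inou--Shishikura argument might go, and at the framework level it is reasonable (parabolic bifurcation tubes, uniform convergence of transit maps to the geometric limit, compactness of $\overline{\IS}_0$), but the middle step contains a genuine gap.

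You claim that ``$\IS_0$ is open in $\overline{\IS}_0$'' and use this to deduce that $R_\parab(f)\in\IS$ for all $f\in\overline{\IS}_\theta$ once $|\theta|$ is small, by a soft perturbation argument from the parabolic case. This is not available for free. By property (P5), $\overline{\IS}_0$ carries the \emph{weak} (compact--open on the univalent functions $\phi$) topology, in which it is compact; by property (P4), $\IS_0$ is a copy of the Universal Teichm\"uller Space with the \emph{Bers} topology, which is strictly finer. Bers-openness (where $\IS_0$ is the whole ambient ball) tells you nothing about openness inside the weakly-topologized $\overline{\IS}_0$, and indeed the set of univalent maps admitting a global qc extension is not obviously weakly open. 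Thus the conclusion ``$R_\parab(f)\in\IS$ for small $\theta$'' does not follow from convergence $R_\parab(f)\to R(f_0)$ plus (P6). The actual Inou--Shishikura proof does not run this way: it directly constructs the renormalized germ, exhibits the factorization $P_0\circ\phi^{-1}$, and estimates the univalent data to show that the image lies in a uniformly compact subset of $\IS_{-1/\theta}$. You do correctly flag this as ``the main obstacle'' at the end of your write-up, but in the body of the argument you substitute the soft openness step for it; establishing that openness (say, that $R(\overline{\IS}_0)$ lies in a weakly-open subset of $\IS_0$) is essentially equivalent to the hard step you deferred, so the proposal as written is circular at that point.
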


We will call this operator $R_\IS$ (and in this section we will often
abbreviate it, without saying,  to $R$).

\begin{cor}\label{underline N}
  There exists $\underline{N}$ such that if $\theta=[N_1,  N_2,\dots
  N_m,\dots ]_*$   with $N_i >  \underline{N}$, $i=1\dots, m$,  then
  any map $\bar f\in {\overline\IS}_\theta$ is
  $m$ times renormalizable under $R_\IS$. Hence it is infinitely
  renormalizable  if $\theta$ is irrational.
\end{cor}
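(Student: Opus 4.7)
The plan is to apply Theorem~\ref{IS thm} inductively, choosing $\underline{N}$ so large that the smallness hypothesis on the rotation number is preserved under the dynamics of the modified Gauss map.

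Let $\delta_0 > 0$ be a threshold small enough that Theorem~\ref{IS thm} produces the operator $R_\IS: \overline{\IS}_\theta \to \IS_{-1/\theta}$ for every $\theta \in (0, \delta_0)$. If $\theta = [N_1, N_2, \dots]_*$ with $N_i \geq 2$, then setting $\theta' = [N_2, N_3, \dots]_* \in [0,1)$ gives $\theta = 1/(N_1 - \theta')$, whence $\theta < 1/(N_1 - 1)$. Choose $\underline{N}$ large enough that $1/(\underline{N} - 1) < \delta_0$; then any $\theta$ whose first modified partial quotient exceeds $\underline{N}$ automatically lies in the domain of $R_\IS$.

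The key observation is that the modified Gauss map $G_*(\theta) = -1/\theta \mod \Z$ acts as a shift on the modified expansion:
\[
   G_*([N_1, N_2, \dots]_*) = [N_2, N_3, \dots]_*,
\]
as is immediate from $1/\theta = N_1 - \theta'$ together with $\theta' \in [0,1)$. Consequently, if $N_1, \dots, N_m$ all exceed $\underline{N}$, then for each $j < m$ the rotation number $\theta_j := G_*^j(\theta) = [N_{j+1}, N_{j+2}, \dots]_*$ still has first entry greater than $\underline{N}$, so $\theta_j \in (0, \delta_0)$. Starting from $\bar f \in \overline{\IS}_\theta$, Theorem~\ref{IS thm} produces $R_\IS \bar f \in \IS_{\theta_1} \subset \overline{\IS}_{\theta_1}$, and iterating this step $m$ times yields $R_\IS^m \bar f \in \IS_{\theta_m}$, giving the desired $m$-fold renormalizability.

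The second claim is then immediate: if $\theta$ is irrational, its modified continued fraction expansion is infinite with every entry exceeding $\underline{N}$, and the inductive scheme delivers renormalizations of every order. The substantive content---holomorphy and compactness of $R_\IS$ for small rotation numbers---is already contained in Theorem~\ref{IS thm}; beyond it, only the elementary shift property of $G_*$ and the estimate $\theta < 1/(N_1-1)$ are needed, so I expect no significant obstacle.
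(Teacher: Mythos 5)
Your argument is correct and is precisely the intended one: the paper states this corollary with no proof, the evident point being that Theorem~\ref{IS thm} supplies a threshold $\delta_0$ and the modified Gauss map $G_*(\theta)=-1/\theta \bmod \Z$ shifts the modified expansion, so choosing $\underline{N}$ with $1/(\underline{N}-1)<\delta_0$ keeps every iterate $\theta_j$ in the range where Theorem~\ref{IS thm} applies. Your verification of the shift property and the bound $\theta<1/(N_1-1)$ is what makes the induction close, and the inclusion $\IS_{\theta_1}\subset\overline{\IS}_{\theta_1}$ (so the output of one renormalization is an admissible input for the next) is correctly invoked.
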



We say that a rotation number  $\theta\in \R/\Z$  (rational or
irrational) 
has {\it high type} if  all $N_i > \underline{N}$ with $\uN$ as above. 
Let $\IS({\underline{N}})$ stand for the union of  the spaces $\IS_\theta$
over all $\theta$ of high type.
For $\theta=[N,N,\dots]_*$ of high stationary  type
($N>\underline{N}$)   
we will also use notation $\IS_N\equiv \IS_\theta$. 
Similar notation will be used for the weak completion $\overline{\IS}$.

\subsection{Postcritical set}

  Inou and Shishikura have deduced from the above results

\begin{prop} [\cite{IS}]\label{postcrit set for IS}
    For any map $f\in \overline{\IS} ( {\underline{N}})$,  the critical
    point is non-escaping (i.e., $ f^n (c_0)\in  \Om_f$, $n=0,1,\dots$)
 and  stays away from the boundary of $\Dom f$.
 Thus, the postcritical set $\OO_f$ is compactly contained in 
$\Om_f$ (uniformly over $\overline{IS}$).  
In the parabolic case we have:  $f^n (c_0)\to 0$ as $n\to \infty$. 
In general, $\orb c_0$ is non-periodic.
\end{prop}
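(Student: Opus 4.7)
The plan is to extract the statement from the renormalization tower structure provided by Corollary~\ref{underline N}: every $f\in \overline{\IS}(\underline{N})$ admits an infinite sequence of almost parabolic renormalizations $R_\IS^k f \in \IS(\underline{N})$, all lying in a uniformly compact subfamily by properties (P5), (P6), and Theorem~\ref{IS thm}. The critical orbit of $f$ will be shown to decompose as a concatenation of finite excursions through the successive petals $\PP(R_\IS^k f)$, and the uniform geometric control on those petals forces $\OO_f$ to lie compactly inside $\Om_f$, uniformly over $\overline{\IS}(\underline{N})$.

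First I would establish the following uniform estimate: there exists $\delta>0$ such that for every $g\in \overline{\IS}(\underline{N})$, the critical point $c_0(g)$ lies in the interior of the petal $\PP(g)$ with $\dist(c_0(g),\di\PP(g))\geq \delta$ and $\dist(\PP(g),\di\Om_g)\geq \delta$. In the purely parabolic case this reduces to (P3) combined with the weak compactness of $\overline{\IS}_0$ from (P5): the critical orbit converges to $0$, so it must enter any fixed attracting fundamental crescent after uniformly bounded time, and the petal can be chosen to contain this entire finite segment. In the almost parabolic case it follows from Theorem~\ref{parabolic bifurcation} and Theorem~\ref{IS thm}: a map $g\in \IS_\theta$ of high type is well defined as a renormalization only when its critical orbit cleanly traverses the petal to land near $c_0(R_\IS g)$, which by induction sits in $\PP(R_\IS g)$. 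Next I would unwind the critical orbit recursively. By the very construction of $R_\IS$ through the Fatou-Douady coordinate and $\Exp$, the critical point $c_0(R_\IS f)$ is identified with a specific iterate $f^{N_1}(c_0)$ and the intermediate points $f^j(c_0)$, $0\leq j<N_1$, all lie in $\PP(f)$. Applying the same assertion to $R_\IS f$ and pulling the next block of length $N_2$ back through the coordinate change of the first renormalization places it inside $\Om_f$, at definite distance from $\di\Om_f$ by the uniform estimate above. Iterating, the full orbit $\orb c_0$ is confined to a nested sequence of pulled-back petals, each contained in $\Om_f$ with uniform margin.

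The main obstacle will be controlling the accumulation set of $\orb c_0$, not merely each individual iterate: one must show that the pulled-back petals shrink towards the parabolic fixed point $0\in \Om_f$, so that their closure meets $\di\Om_f$ only possibly at $0$, which is interior. This contraction is produced by the fact that each coordinate change of the form $\Exp\circ\phi_f^{-1}$ is exponentially contracting near $0$ on the repelling side, uniformly over the compact class $\IS(\underline{N})$ (Theorem~\ref{IS thm}); this uniformity is the quantitative core of the statement and bears the full weight of the Inou-Shishikura theory. Non-periodicity of $\orb c_0$ in the non-parabolic case then follows because the sequence of renormalized rotation numbers $G_*^k(\theta)$ is infinite and non-repeating for irrational $\theta$, so the critical point of $R_\IS^k f$ cannot stabilize at any renormalization level; periodicity of $\orb c_0$ under $f$ would force precisely such a stabilization, since the block lengths $N_k$ would eventually have to close up a cycle.
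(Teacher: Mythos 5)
The main confinement argument — that the critical orbit decomposes into blocks following successive petals, and that compactness of $\overline{\IS}(\uN)$ forces a uniform margin from $\di\Om_f$ — is essentially the paper's argument (which says only that well-definedness of $R_\IS f$ confines the first $N_1$ iterates, and iterating confines the whole orbit, with uniformity from compactness). Your version usefully fills in the mechanism via the petal structure and the exponential contraction of $\Exp\circ\phi_f^{-1}$.

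However, your non-periodicity argument has a genuine gap. You assert that ``the sequence of renormalized rotation numbers $G_*^k(\theta)$ is infinite and non-repeating for irrational $\theta$,'' but this is false: for $\theta$ of stationary type, $\theta=[N,N,N,\dots]_*$, the modified Gauss map $G_*$ fixes $\theta$ and the sequence is constant. Since stationary high type is precisely the case the paper cares most about (renormalization fixed points), the premise fails exactly where it matters. Moreover, even granting the premise, the implication ``periodicity of $\orb c_0$ would force the block lengths $N_k$ to close up a cycle'' is unjustified — nothing prevents the $N_k$ from being periodic while $\orb c_0$ is infinite and non-periodic (this is the generic situation for Siegel boundary dynamics with stationary combinatorics). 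The paper's argument is different and does work: every $R^m f$ is a small perturbation of a parabolic map $g\in\IS_0$, and for such $g$ property (P3) gives $g^n(c_0)\to 0$ with $c_0\neq 0$, hence $g(c_0)\neq c_0$; by continuity $R^m f(c_0)\neq c_0$ for all $m$. If $c_0$ were $f$-periodic it would be fixed by some renormalization $R^m f$ (since the renormalizations realize returns of arbitrarily long combinatorial depth), a contradiction. You should replace your non-periodicity step with this argument, or invoke the Siegel-disk boundary dynamics of Proposition~\ref{compact S} if one is willing to accept a slight forward reference.
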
    

\begin  {proof}    ({\it Sketch.}) 
  The mere fact that the IS renormalization $R f$
is well defined implies that
  the first $N_1$ iterates of the critical point stay in 
$\Om_f$ (where $N_1$ is the first entry of the rotation number). 
Existence of all the renormalizations imply that the whole
critical  orbit stays in $\Om_f$. 
Uniform bounds on the postcritical
set follow from compactness of $\overline{\IS}$.  

In the parabolic case, the map is finitely renormalizable and its last renormalization
falls to the class $\IS_0$. Property (P3) implies that  $f^n (c_0)\to
0$ as $n\to \infty$.  In the  irrational case, 
$f$ is infinitely renormalizable and all the renormalizations $R^m f$ 
are small perturbations of  parabolic maps of class $\IS_0$. 
Hence $R^m f (c_0)\not=c_0$.
On the other hand, if $c_0$ was periodic, then it would be the fixed
point for some renormalization.
\end{proof}

\subsection{Renormalization Telescope}\label{telescope}

In this section we will collect some technical results,
essentially contained in the work of Buff \& Cheritat \cite{BC} 
and Cheraghi \cite{Ch}. 

Given a map $f\in  \overline{\IS}_\theta$ and a  topological sector $\Sec$ centered at $0$, 
a {\em principal branch of the first return map to $\Sec$} is
an iterate  $f^l  : V\ra \Sec$, where $V$ is a relatively open subset of $\Sec$ with
$0\in \di V$ such that for any $z\in V$, $f^l(z)$ is the first return
of $\orb z$ to $\Sec$.
     
The following statement provides us with a convenient  domain of
definition for the renormalization change of variable:

\begin{lem}[\cite{Ch}, \S 2]\label{Davoud}
  For any map $f\in  \overline{\IS}_\theta$ with $\theta=[N_1,N_2,\dots]_*$ sufficiently small, 
there exists a smooth sector $\Sec= \Sec_f$  attached 
  to the fixed point $0$ with the following properties:

\ssk\nin {\rm (0)}  
  It has angle $\theta$ at $0$;

\ssk\nin {\rm (i)}  There exists  a bounded $s=s_f$ such that $f^s (\Sec)$
is a sector containing the critical value $c_1$ of $f$. In an appropriate Fatou
coordinate,%
\footnote{This coordinate is normalized so that  
         the critical value is placed at $1$.}
 the latter sector becomes the half-strip
\begin{equation}\label{Fatou strip}
      \{    3/4 \leq \Re  z \leq 7/4, \ \Im z \geq -2 \}. 
\end{equation} \note{choice is different from Davoud, OK?}

\ssk\nin {\rm (ii)} There exists a well defined 
 change of variable  $\pi=\pi_f: \Sec \ra \C$ which is univalent on $\Sec$ 
and  $\sim z^{1/\theta} $ as $z\to 0$ (uniformly over the class). Moreover, 
  $\pi (\Sec )\supset \Sec_{Rf} $, and  the boundary of $\pi(\Sec)$
touches the boundary of $\Sec_{Rf}$ at a single point, the fixed point $0$.

\ssk \nin {\rm (iii)}  
  The change of variable is equivariant:
it conjugates two  principal branches of  first return map to
$\Sec$ and $Rf$ on its full domain.%
%
\footnote{There is a precise formula for  the return times in
  terms of the arithmetic of $\theta$, see Lemma 2.2 in \cite{Ch}.}

\ssk\nin {\rm{ (iv)} }
For some $k$ independent of $f$, the union
$$
  \Om_f^1 =  \bigcup_{n = 0}^{  N_1+s-k }   f^n (\Sec) 
$$
is a neighborhood of $0$ compactly containing $\{ c_n \}_{n = 0}^{ N_1+s-k } $.

\ssk \nin {\rm (v)}  The sectors $\Sec_f$ depend continuously on 
$f\in \overline\IS (\underline{N} ) $. 
\end{lem}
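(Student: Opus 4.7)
The plan is to build $\Sec_f$ by pulling back a canonical fundamental crescent defined in Fatou--Douady coordinates, and then to define $\pi_f$ through the exponential identification used by the parabolic renormalization. By Theorem~\ref{parabolic bifurcation}, any $f\in\overline{\IS}_\theta$ with $\theta=[N_1,N_2,\dots]_*$ of sufficiently high type sits near the parabolic class $\IS_0$ and admits a perturbed Fatou coordinate $\phi_f:\PP_f\to\C$ on its petal, which I normalize by declaring $\phi_f(c_1)=1$. Proposition~\ref{postcrit set for IS} guarantees that $c_1$ lies deep inside $\Om_f$ uniformly over the compact class $\overline{\IS}(\underline N)$, so both $\phi_f$ and this normalization depend continuously on $f$ in the weak topology.

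Next, I introduce the half-strip $\tilde\CC:=\{3/4\leq\Re w\leq 7/4,\ \Im w\geq -2\}$, a fundamental crescent for the translation $w\mapsto w+1$ containing $w=1=\phi_f(c_1)$, and set $\tilde\Sec:=\phi_f^{-1}(\tilde\CC)\subset\PP_f$. I then take $s=s_f$ to be the least positive integer such that $f^{-s}(\tilde\Sec)$ is a single smooth sector abutting the fixed point $0$, and set $\Sec_f:=f^{-s}(\tilde\Sec)$. Since $f'(0)=e^{2\pi i\theta}$, each preimage iterate $f^{-1}$ rotates near $0$ by $-2\pi\theta$, so the successive preimages $f^{-n}(\tilde\Sec)$ swing angularly around $0$ until at step $s$ they close down onto the fixed point with opening $2\pi\theta$; this gives (0), and (i) follows by construction together with the normalization placing $\phi_f(c_1)=1$ inside $\tilde\CC$. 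A uniform bound on $s_f$ over $\overline{\IS}(\underline N)$, forced by compactness and uniform control of $\phi_f$, combined with continuity of $\phi_f$ in $f$, yields the continuous dependence of $\Sec_f$ on $f$ asserted in (v).

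For (ii) and (iii), I define $\pi_f:=\Exp\circ\phi_f\circ f^s$ on $\Sec_f$, where $\Exp(w)=-(4/27)e^{-2\pi i w}$. Univalence holds because $f^s:\Sec\to\tilde\Sec$ is injective by minimality of $s$, $\phi_f:\tilde\Sec\to\tilde\CC$ is a Fatou coordinate, and $\Exp$ is injective on any vertical strip of width less than $1$. The image $\pi_f(\Sec)=\Exp(\tilde\CC)$ is a neighborhood of $0$ whose boundary touches $\di\Sec_{Rf}$ only at the fixed point, and the inclusion $\pi_f(\Sec)\supset\Sec_{Rf}$ follows from Theorem~\ref{IS thm} applied to the target rotation number $-1/\theta\bmod 1$. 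The asymptotic $\pi_f(z)\sim z^{1/\theta}$ as $z\to 0$ is the wedge-to-punctured-disk uniformization of a sector of opening $2\pi\theta$, valid uniformly over $\overline{\IS}(\underline N)$ thanks to the uniform control of $\phi_f$ and $s_f$. Equivariance (iii) is then the defining property of cylinder renormalization: the principal branch of the first return to $\Sec$ is an iterate $f^l$ whose descent through $\Exp\circ\phi_f$ is by construction a representative of $Rf$ on $\Sec_{Rf}$.

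Finally, for (iv) the iterates $f^n(\Sec)$ for $n=0,1,\dots,N_1+s-k$ form a chain of smooth sectors attached to $0$ whose angular openings accumulate to approximately $(N_1+s-k)\cdot 2\pi\theta\approx 2\pi$ (since $N_1\theta$ is just below $1$), so their union is a topological neighborhood of $0$; the uniform postcritical bound of Proposition~\ref{postcrit set for IS} then forces the finitely many critical iterates $c_n$ with $n\leq N_1+s-k$ to lie inside this swept region. The main obstacle is the combinatorial bookkeeping that pins down a single bounded constant $k$ independent of $f$ and of $\theta$ of high type --- one must verify that a uniformly bounded angular deficit always suffices to close up the chain and that $s_f$ is controlled purely by the geometry of the petal --- and this is precisely where the uniform perturbed-parabolic Fatou-coordinate estimates of Inou--Shishikura, refined by Buff--Cheritat and Cheraghi, are applied.
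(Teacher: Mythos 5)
The paper does not give its own proof of Lemma~\ref{Davoud}; the lemma is attributed to Cheraghi (\cite{Ch}, \S2) and stated without proof, so there is no internal argument to compare against. Judged as a reconstruction of Cheraghi's construction, your proposal identifies the right skeleton (a perturbed Fatou coordinate, pulling back a fundamental crescent to a sector at $0$, composing with $\Exp$ to define $\pi_f$, compactness of $\overline{\IS}(\underline{N})$ for uniformity), but two steps do not hold up.

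First, your definition of $s_f$ does not pin anything down. You set $s$ to be ``the least positive integer such that $f^{-s}(\tilde\Sec)$ is a single smooth sector abutting $0$''. But $\tilde\Sec=\phi_f^{-1}(\tilde\CC)$ is \emph{already} such a domain before any pullback: the half-strip $\tilde\CC$ extends to the upper end of the Fatou cylinder, which corresponds to the fixed point $0$, so $\tilde\Sec$ abuts $0$ with opening angle $2\pi\theta$. Hence your criterion is vacuous (or would give $s=0$). The supporting heuristic that the preimages ``swing angularly around $0$ until at step $s$ they close down onto the fixed point with opening $2\pi\theta$'' is also incorrect: $f$ is a local diffeomorphism at $0$ with $f'(0)=e^{2\pi i\theta}$, so each $f^{-n}(\tilde\Sec)$ has the \emph{same} opening angle as $\tilde\Sec$ up to lower-order terms; nothing ``closes down'', the sectors simply rotate. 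What actually determines $s_f$ is combinatorial: it is the bounded number of iterates needed so that the base sector $\Sec$ sits where the first return descends, under $\pi_f$, to $Rf$, and its boundedness uses the uniform postcritical control of Proposition~\ref{postcrit set for IS} and the weak compactness of $\overline{\IS}$ — not the ``sector'' criterion you propose.

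Second, the equivariance in (iii) is asserted, not derived. You write that ``the principal branch of the first return to $\Sec$ is an iterate $f^l$ whose descent through $\Exp\circ\phi_f$ is by construction a representative of $Rf$''; this is precisely the statement to be proven. The near-parabolic renormalization $Rf=R_\IS f$ is defined via a horn map composed with a transit through the Douady cylinder (see (C7) in \S\ref{parabolic sec} and Theorem~\ref{IS thm}); it is not a priori the $\Exp\circ\phi_f$-image of a raw first return to a sector in the petal. One must trace a return orbit through the petal, out the end, and back, identify the sub-orbit realizing the horn map, and verify that $\pi_f$ conjugates. Without this the argument is circular.

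A smaller point: the asymptotic $\pi_f(z)\sim z^{1/\theta}$ needs a sign check. With the attracting normalization $\phi_f(z)\sim\frac{\log z}{2\pi i\theta}$ and $\Exp(w)=-(4/27)e^{-2\pi i w}$ as written in (C7), your $\Exp\circ\phi_f$ gives $z^{-1/\theta}$. Obtaining $z^{1/\theta}$ requires either the repelling Fatou coordinate or the opposite sign convention in $\Exp$; the proposal should say which.
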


For $t\geq 2$, 
 let $\De=\De_f(t)$ be the subset of the sector $\Sec_f$ corresponding to the
box
$$
     \{    3/4 \leq \Re  z \leq 7/4, \   -2 \leq  \Im z  \leq t \} 
$$
in the Fatou coordinate (compare (\ref{Fatou strip})).

\begin{lem}\label{Delta}
Under the circumstances of Lemma \ref{Davoud},
  for $t$ sufficiently big, the image $\pi_f (\De_f(t) )$ compactly contains
  $\De_{Rf}(t)$, with a definite space in between.
Moreover,  the domain $\De_f(t)$ depends continuously on $f$.  
\end{lem}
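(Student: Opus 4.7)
The plan is to work in Fatou coordinates: let $\Phi_f$ and $\Phi_{Rf}$ denote the Fatou coordinates of $f$ and $Rf$ at $0$, normalized by~(\ref{Fatou strip}) so that in either case the box $\De_\cdot(t)$ corresponds to the rectangle $R_t := [3/4,7/4] \times [-2,t]$ in the corresponding Fatou coordinate. Set $\tilde\pi_f := \Phi_{Rf}\circ \pi_f \circ \Phi_f^{-1}$. Then the lemma reduces to the statement that $\tilde\pi_f(R_t) \Supset R_t$ with definite modulus of the complementary annulus, plus continuity in $f$.

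The key step is analyzing the map $\tilde\pi_f$ near the upper end of the Fatou strip. From the equivariance of Lemma~\ref{Davoud}(iii), applied to the principal branches of first return to $\mathcal{S}_f$ and $\mathcal{S}_{Rf}$ (which act in the respective Fatou coordinates as horizontal translations), one sees that $\tilde\pi_f$ decomposes as an affine function plus a holomorphic correction that is periodic in the horizontal direction. Using the asymptotic $\pi_f(z) \sim z^{1/\theta}$ from Lemma~\ref{Davoud}(ii) together with the standard asymptotics of the Fatou coordinate near a parabolic fixed point, and matching the normalizations at the critical value, the affine part reduces to translation by a constant $c_f$ (with slope $1$).

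Next, I would show that the periodic correction decays exponentially toward the upper end by a standard Fourier argument on the cylinder: any bounded holomorphic function with real period on a bi-infinite cylinder converges to its mean exponentially toward each end. Uniformity of the decay rate over the Inou--Shishikura class $\overline{\IS}(\uN)$ follows from weak compactness (property P5) combined with continuous dependence of $\pi_f$ and of both Fatou coordinates on $f$. Together this gives $\tilde\pi_f(w) = w + c_f + O(e^{-c\,\Im w})$ on $R_t$, with $c>0$ and $|c_f|$ both uniform in $f$, whence $\tilde\pi_f(R_t) \Supset R_t$ with definite complementary modulus once $t$ exceeds a threshold depending only on $\uN$. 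Continuity of $\De_f(t)$ in $f$ is then immediate from continuity of $\mathcal{S}_f$ (Lemma~\ref{Davoud}(v)) together with continuous dependence of the Fatou coordinate on the map (Theorem~\ref{parabolic bifurcation}).

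The main obstacle will be the second step: identifying the linear part of $\tilde\pi_f$ as \emph{translation by $1$} rather than a nontrivial dilation. The functional equation forced by the equivariance has the general form $\tilde\pi_f(w + N_f) = \tilde\pi_f(w) + N_{Rf}$, where $N_f, N_{Rf}$ are the return times to the two sectors, and slope $1$ is equivalent to $N_f = N_{Rf}$ in the relevant sense. Verifying this requires unpacking how the factor $1/\theta$ in the asymptotic $\pi_f(z)\sim z^{1/\theta}$ converts $f$'s intrinsic scale at $0$ into that of $Rf$, and keeping track of the normalizations through the entire renormalization construction from Lemma~\ref{Davoud}.
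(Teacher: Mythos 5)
Your proposal takes a genuinely different route from the paper, and I believe it contains a gap beyond the one you acknowledge.

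The paper's argument is short and more elementary. By Lemma~\ref{Davoud}(ii), $\pi_f(\Sec_f)\supset \Sec_{Rf}$ with boundaries touching only at $0$, so the ``sides'' and the top ($\Im z=-2$) of $\De_{Rf}(t)$ are automatically contained. The only thing left to control is the truncation edge $\{\Im z = t\}$, which is the part of $\partial\De_f(t)$ closest to the fixed point $0$. Here the paper invokes the asymptotic $\pi_f\sim z^{1/\theta}$ with $1/\theta\geq\underline{N}$ large: this is a strongly attracting map near $0$ (uniformly over $\overline\IS$ by weak compactness and Lemma~\ref{Davoud}(v)), so the image of $\{\Im z = t\}\cap\De_f(t)$ is pushed strictly deeper toward $0$ than the corresponding edge of $\De_{Rf}(t)$. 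Compact containment follows, and weak compactness upgrades it to a definite gap. No Fatou-coordinate decomposition is needed.

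Your decomposition $\tilde\pi_f(w)=w+c_f+O(e^{-c\,\Im w})$ conflicts with this picture, and I think it is incorrect rather than merely harder to verify. The map $\pi_f$ ``unfolds'' the narrow sector $\Sec_f$ of angle $\theta$ at $0$ to a full neighbourhood of $0$, via a map comparable to $z^{1/\theta}$; transported to Fatou coordinates this is a strongly expanding map toward the end corresponding to $0$, not a near-translation. Two concrete signs that your form cannot be right: (a) Lemma~\ref{Davoud}(ii) forces $\tilde\pi_f(\Sigma)\Supset\Sigma$ with boundaries disjoint away from the end at $0$, which a translation-plus-decaying-error cannot produce (the vertical sides would remain asymptotically at distance $|c_f|$, not spread apart); and (b) even granting your form, $\tilde\pi_f(R_t)$ would be essentially a translate of $R_t$ near the far edge (where the $O(e^{-c\,\Im w})$ term is tiny), and a translate of a rectangle cannot compactly contain the rectangle with a definite gap on all sides. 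So even if you fixed the ``slope~$1$'' issue you flag as the main obstacle, the argument would still not close. Abandoning the affine-plus-periodic ansatz and using directly that $\pi_f$ is strongly contracting toward $0$ (together with item~(ii)) is the way through.

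Your statement and proof of the continuity claim are fine and match the paper's.
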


\begin{proof}
The last statement follows from item (v)  of  Lemma \ref{Davoud}
and continuous dependence of the Fatou coordinate 
of $f$.
 Together with the weak compactness of the Inou-Shishikura  class
$\overline{\IS}$
and item (ii) of the lemma,  this  implies that the change of variable $\pi_f$  
on $S_f$ is uniformly  comparable with $z\mapsto z^{1/\theta}$. 
This map is attracting near $0$, so the ``bottom'' of $\De_f$ 
(corresponding to $\{\Im z= t\}$ in the Fatou coordinate) 
goes even closer to $0$.  Together with item (ii) of the Lemma, this
implies that $\pi_f(\De_f(t))$ compactly contains $\De_{Rf}(t)$.
Using weak compactness of $\overline \IS$   once again, we conclude that there is a definite
space in  between.
\end{proof}

From now on, $t$ will be fixed, and will not appear in notation. 

If $f$ is $m$ times IS-renormalizable then we can compose the 
 above changes of variable,  to obtain a map
$$
  \pi^m_f = \pi_{R^{m-1} f} \circ \dots  \circ \pi_f,  
$$ 
well defined and univalent on a sector  $\Sec_f^m$ attached to $0$.
Spreading these sectors around by the iterates of $f$,  
we  obtain a neighborhood of $0$: 
\begin{equation}\label{Om-domains}
   \Om_f^m = \bigcup_{n=0}^{r_m}  f^n (\Sec_f^m), 
\end{equation}
where  $r_m$ is an appropriate time expressed in terms of the arithmetic
of $\theta$, 
and $f^n|\, \Sec^m_f$ is at most 2-to-1 for $n\leq r_m$
(note that these maps are not branched coverings over their images). 
Moreover, the iterate $f^{s_m-1} |\, \Sec^m_f$ 
(whose image $\Sec^m(c_0) \equiv \Sec^m_f(c_0)$  contains the critical point $c_0$) is univalent. We let
\begin{equation} \label {Pi_m def}
       \Pi_m\equiv \Pi^m_f =  \pi_m \circ f^{-(s_m-1) }  : \Sec^m(c_0) \ra \C , 
\end{equation}
where $ f^{-(s_m-1)} | \,  \Sec^m(c_0)$ 
 is the branch of the inverse map
 with image $\Sec^m$.

 The domain $\Om^m_f$ can be inductively obtained from
$\Om^{m-1}_{Rf}$
 by lifting the latter by an appropriate inverse
branch of $\pi_f$, and then applying $O(N_1)$ number of iterates of $f$
to ``close up the gaps''. 
 (See \S 2.2 of \cite{Ch}) for a detailed description). 

Property (P3) and Lemma 
\ref{Davoud} (iv) imply:

\begin{lem}\label{O is trapped}
  Let $f$ be an $m$ times IS-renormalizable map such that 
  $R^m f$ is a  parabolic map with multiplier 1.
Then the postcritical set $\OO_f$  is trapped inside  $\Om^m_f$. 
\end{lem}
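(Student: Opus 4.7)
The plan is to argue by induction on $m$, using Proposition \ref{postcrit set for IS} as the base case and the equivariance of the change of variable $\pi_f$ as the inductive step. For $m=0$, the map $f$ is itself a parabolic element of $\overline{\IS}(\underline{N})$, and Proposition \ref{postcrit set for IS} directly gives $\OO_f \Subset \Om_f = \Om^0_f$ under the natural convention that $\Om^0_f$ denotes the ambient domain $\Om_f$; equivalently, the parabolic convergence $f^n(c_0) \to 0$ confines all but finitely many postcritical points to $\Sec_f$, and the initial segment is handled by Lemma \ref{Davoud}(iv).

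For $m \geq 1$, I apply the inductive hypothesis to $Rf$, which is $(m-1)$-times IS-renormalizable with $R^{m-1}(Rf) = R^m f$ parabolic; this gives $\OO_{Rf} \subset \Om^{m-1}_{Rf}$. The task is to transfer this containment to $\OO_f$ via $\pi_f \colon \Sec_f \to \C$. I organize the critical orbit $\{c_n = f^n(c_0)\}_{n \geq 0}$ of $f$ into a short initial segment and the subsequent returns to $\Sec_f$ joined by intermediate iterates. The initial segment $\{c_n\}_{n=0}^{N_1+s-k}$ lies in $\Om^1_f \subset \Om^m_f$ by Lemma \ref{Davoud}(iv). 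Beyond this segment the orbit returns to $\Sec_f$ at an increasing sequence of times $\{n_k\}$, and by the equivariance in Lemma \ref{Davoud}(iii) each $\pi_f(c_{n_k})$ equals the corresponding iterate of the critical point of $Rf$. Hence $\pi_f(c_{n_k}) \in \OO_{Rf} \subset \Om^{m-1}_{Rf}$ by the inductive hypothesis, so $c_{n_k} \in \pi_f^{-1}(\Om^{m-1}_{Rf}) \cap \Sec_f \subset \Sec^m_f \subset \Om^m_f$.

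The intermediate iterates $c_n$ with $n_k < n < n_{k+1}$ are obtained by a bounded number of $f$-iterations (controlled by the entries $N_j$ of the continued-fraction expansion of $\theta$) applied to points of $\Sec^m_f$, exactly replicating the ``gap-closing'' construction by which $\Om^m_f$ is inductively assembled from $\Om^{m-1}_{Rf}$, as recalled after \eqref{Om-domains}; hence these iterates lie in $\bigcup_{j=0}^{r_m} f^j(\Sec^m_f) = \Om^m_f$. Combining the three cases yields $\OO_f \subset \Om^m_f$, completing the induction. The main technical obstacle is the indexing bookkeeping that matches each return $c_{n_k}$ with the correct iterate of the renormalized critical point $c_0^{Rf}$, and that certifies the intermediate iterates do not escape the enumerated $f$-translates of $\Sec^m_f$. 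This combinatorial structure is developed in Section 2 of \cite{Ch} together with Lemma \ref{Davoud}, whose return-time formulas in terms of the arithmetic of $\theta$ can be invoked directly.
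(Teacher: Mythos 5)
Your strategy---induction on $m$ with Property (P3) as the base and the equivariance of $\pi_f$ as the inductive step---is the natural one and is essentially what the paper intends (the lemma is presented as a consequence of (P3) and Lemma~\ref{Davoud}(iv), with the telescoping left implicit). However, two of your key inclusions are reversed, and they break the argument as written.

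First, the domains $\Om^m_f$ form a \emph{decreasing}, not increasing, nest in $m$: they shrink towards the Siegel disk (or the analogous filled object in the parabolic case); this is the content of Corollary~\ref{Delta-m} and Lemma~\ref{perturbed Delta-Siegel II}, which place $\Om^m_f$ in an $O(\rho^{-m})$-neighborhood of $S_\circ$. So $\Om^1_f \supset \Om^m_f$, not $\Om^1_f \subset \Om^m_f$, and your treatment of the initial segment via the latter inclusion fails. (The initial segment is in fact handled directly: since $c_0 \in f^{s_m-1}(\Sec^m_f)$, one has $c_j \in f^{s_m-1+j}(\Sec^m_f) \subset \Om^m_f$ for $s_m-1+j \leq r_m$; no detour through $\Om^1_f$ is needed.)

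More seriously, the chain $c_{n_k} \in \pi_f^{-1}(\Om^{m-1}_{Rf}) \cap \Sec_f \subset \Sec^m_f$ has the middle inclusion backwards. By definition $\Sec^m_f$ is the domain of $\pi^m_f = \pi_{R^{m-1}f} \circ \cdots \circ \pi_f$, i.e.\ $\Sec^m_f = \pi_f^{-1}(\Sec^{m-1}_{Rf}) \cap \Sec_f$. Since $\Sec^{m-1}_{Rf}$ is only the $n=0$ term of $\Om^{m-1}_{Rf} = \bigcup_{n=0}^{r_{m-1}} (Rf)^n(\Sec^{m-1}_{Rf})$, one has $\Sec^m_f \subsetneq \pi_f^{-1}(\Om^{m-1}_{Rf}) \cap \Sec_f$. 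The return point $c_{n_k}$ projects to \emph{some} iterate of the $Rf$-critical orbit, which by the inductive hypothesis lies in $\Om^{m-1}_{Rf}$ but not, in general, in $\Sec^{m-1}_{Rf}$; hence $c_{n_k}$ need not lie in $\Sec^m_f$. Consequently your treatment of the intermediate iterates, which presumes they are reached from points of $\Sec^m_f$, also fails. The inclusion you actually need is $\pi_f^{-1}(\Om^{m-1}_{Rf}) \cap \Sec_f \subset \Om^m_f$, which is true but must be extracted directly from the lifting-and-spreading description of $\Om^m_f$ given after (\ref{Om-domains}): $\Om^m_f$ is built precisely by pulling $\Om^{m-1}_{Rf}$ back by $\pi_f^{-1}$ and then applying $O(N_1)$ iterates of $f$. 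Establishing that identification, and then observing that the returns, intermediate iterates, and initial segment all fall into it, is the real content of the inductive step; the $\Sec^m_f$ shortcut does not carry the orbit.
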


Let us  also consider the lifts $\De^m_f$ of the  domains 
$\De_{R^m   f}$ under $\pi^m_f$. 
We let
$$
  \Neck^m_f = \bigcup_{n=0}^{r_m} f^n (\De^m_f),
$$ 
where the times $r_m$ are the same as in (\ref{Om-domains}).
Moreover,  $f^{s_m-1} $ maps $\De^m$  univalently and with
bounded distortion \note{precise ref?} 
onto its image $\De^m(c_0)\equiv \De^m(c_0)$ containing the
critical point $c_0$.  Thus,  change of variable $\Pi_m$
(\ref{Pi_m def})  restricted to $\De^m(c_0)$,
\begin{equation}\label{restricted Pi}
    \Pi_m : \De^m(c_0) \ra \C,
\end{equation}
is a univalent map with bounded distortion.
Notice also that by compactness of $\overline{\IS}$ and continuous
dependence of $\De^m_g$ on $g=R^m f$, the image of the restricted map $\Pi_m$ contains a
definite neighborhood of the critical point. 

Like the $\Om^m_f$, 
the sets $\Neck^m_f$ can be inductively constructed by lifting and spreading.
We call these sets {\it necklaces}. 

Lemma \ref{Delta} implies: 

\begin{cor}\label{Delta-m}
  The image $\pi^m_f(\De^{m-1}_f)$ compactly contains
  $\De_{R^mf}$, with a definite space in between.
There exist $\rho=\rho(\bar N) > 1$  such that
 $\diam \De^m_f = O(\rho^{-m} )$.
Moreover, for each $m$, the domain $\De^m_f$ depend continuously on $f$.  
\end{cor}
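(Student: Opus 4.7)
The plan is to establish the three claims separately, leveraging Lemma \ref{Delta} and the compactness of $\overline{\IS}$.

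For the first claim, I would argue directly from Lemma \ref{Delta} applied to $R^{m-1} f$ in place of $f$: that lemma gives $\pi_{R^{m-1}f}(\De_{R^{m-1}f}) \Supset \De_{R^m f}$ with definite space. Since $\pi^{m-1}_f(\De^{m-1}_f) = \De_{R^{m-1}f}$ by construction and $\pi^m_f = \pi_{R^{m-1}f} \circ \pi^{m-1}_f$, we obtain $\pi^m_f(\De^{m-1}_f) = \pi_{R^{m-1}f}(\De_{R^{m-1}f}) \Supset \De_{R^m f}$, with the same definite gap.

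For the diameter estimate, my strategy is to iterate a Schwarz-lemma type hyperbolic contraction. I would introduce the intermediate sets $\tilde\De^k := (\pi_{R^k f} \circ \cdots \circ \pi_{R^{m-1} f})^{-1}(\De_{R^m f})$ for $0 \le k \le m$, so that $\tilde\De^0 = \De^m_f$ and $\tilde\De^m = \De_{R^m f}$. By downward induction on $k$, using the first claim applied at each level (with modulus bounded below by a constant $\mu = \mu(\bar N)$ from Lemma \ref{Delta} and the weak compactness of $\overline{\IS}$), I would show that $\tilde\De^k \Subset \De_{R^k f}(t)$ with $\mod(\De_{R^k f}(t) \setminus \tilde\De^k) \geq \mu$. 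The univalent inverse branch $\pi_{R^k f}^{-1} : \De_{R^{k+1} f}(t) \to \De_{R^k f}(t)$ maps into a subset of definite modulus inside the target, so by Schwarz it contracts the hyperbolic metric of $\De_{R^k f}(t)$ by a factor $\rho^{-1} < 1$ depending only on $\mu$. Iterating $m$ steps yields hyperbolic diameter $O(\rho^{-m})$ for $\tilde\De^0$ inside $\De_f(t)$. The passage back to Euclidean diameter is the step I expect to be slightly delicate: it uses that $\tilde\De^0 \subset \tilde\De^1$ (pulled forward once it sits deep inside $\De_f(t)$ with modulus $\geq \mu$), where the hyperbolic metric is uniformly comparable to the Euclidean one, with a constant controlled by the compactness of $\overline{\IS}$ and the uniform shape bounds on the Fatou coordinate normalization from Lemma \ref{Davoud}.

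For the continuity statement (iii), I would proceed by induction on $m$. The base case $m = 0$ is trivial, and $m = 1$ follows because $\De_{Rf}$ depends continuously on $Rf$ by Lemma \ref{Delta}, because $R : \overline{\IS}(\uN) \to \IS(\uN)$ is continuous by Theorem \ref{IS thm}, and because $\pi_f$ and its univalent inverse branch on $\Sec_f$ depend continuously on $f$ (via continuous dependence of the Fatou coordinate and of the sector $\Sec_f$, cf.\ Lemma \ref{Davoud}(v)). For the inductive step, $\De^m_f = \pi_f^{-1}(\De^{m-1}_{Rf})$, and the right-hand side depends continuously on $f$ by the inductive hypothesis applied to $Rf$ together with continuity of $\pi_f^{-1}$ in $f$.

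The main obstacle is the diameter bound (ii). While the hyperbolic contraction is conceptually clean, I expect some bookkeeping to ensure that (a) the contraction ratio $\rho$ is genuinely uniform over $\overline{\IS}(\uN)$, which requires invoking weak compactness together with the uniform modulus bound of Lemma \ref{Delta}, and (b) the hyperbolic-to-Euclidean comparison is applied at a point with bounded-shape neighborhood, so that no loss is incurred at the last step when converting hyperbolic diameter $O(\rho^{-m})$ into the Euclidean estimate $\diam \De^m_f = O(\rho^{-m})$.
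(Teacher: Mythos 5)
Your argument correctly elaborates the deduction from Lemma~\ref{Delta} that the paper leaves implicit: claim~(i) is immediate from Lemma~\ref{Delta} applied at level $m-1$; claim~(ii) follows from iterated Schwarz contraction between the nested boxes $\De^1_{R^k f} \Subset \De_{R^k f}(t)$ (uniform in $k$ and $f$ by weak compactness of $\overline{\IS}$), with a final hyperbolic-to-Euclidean conversion justified by the same compactness; and claim~(iii) follows by induction using continuity of $R$ and of $f \mapsto \pi_f^{-1}$. Two small notational slips that do not affect the substance: the composition defining $\tilde\De^k$ should read $\pi_{R^{m-1}f}\circ\cdots\circ\pi_{R^k f}$, and the inclusion you invoke at the end should be $\De^m_f \subset \De^1_f$ (inside the $f$-plane), not $\tilde\De^0 \subset \tilde\De^1$, since $\tilde\De^0$ and $\tilde\De^1$ live in different dynamical planes.
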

 
\begin{cor}\label{recurrence}
 Let $f\in \overline{\IS}_\theta$ be a map of IS class with irrational
 rotation number. Then the critical point is recurrent.
\end{cor}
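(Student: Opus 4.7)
The plan is to use the renormalization telescope of \S\ref{telescope} to construct a nested decreasing family $\{\De^m(c_0)\}$ of neighborhoods of $c_0$ shrinking to $\{c_0\}$, each containing a forward iterate $f^{n_m}(c_0)$ with $n_m\geq 1$.

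First, I would use equivariance of the renormalization change of variable $\Pi_m : \De^m(c_0) \to \C$, which is univalent with bounded distortion by (\ref{restricted Pi}) and the discussion after. Under the natural identification built into the telescope, $\Pi_m(c_0) = c_0^{(m)}$, the critical point of $R^m f$. Since $\Pi_m(\De^m(c_0))$ contains a definite neighborhood of $c_0^{(m)}$ uniformly over the weakly compact class $\overline{\IS}$, the critical value $c_1^{(m)} = R^m f(c_0^{(m)})$ lies in this image. Equivariance of $\Pi_m$ with respect to $R^m f$ and to a return iterate $f^{n_m}$ of $f$ then gives $f^{n_m}(c_0) = \Pi_m^{-1}(c_1^{(m)}) \in \De^m(c_0)$. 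Non-periodicity of $\orb c_0$ (Proposition~\ref{postcrit set for IS}) forces $n_m \geq 1$.

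Second, I would establish $\De^m(c_0) \searrow \{c_0\}$. Applying Lemma~\ref{Delta} to $R^m f$ gives $\pi_{R^m f}^{-1}(\De_{R^{m+1} f}) \Subset \De_{R^m f}$, hence
\[
\De^{m+1}_f = (\pi^m_f)^{-1}\bigl(\pi_{R^m f}^{-1}(\De_{R^{m+1} f})\bigr) \subset (\pi^m_f)^{-1}(\De_{R^m f}) = \De^m_f,
\]
so the sets $\De^m_f$ form a nested decreasing sequence. Combined with $\diam \De^m_f = O(\rho^{-m})$ from Corollary~\ref{Delta-m}, and the fact that $f^{s_m - 1}$ maps $\De^m_f$ univalently with bounded distortion onto $\De^m(c_0)$ uniformly in $m$, one transfers both the nesting and the diameter decay: $\De^{m+1}(c_0) \subset \De^m(c_0)$ and $\diam \De^m(c_0) \to 0$. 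Since $c_0 \in \De^m(c_0)$ for all $m$, the nested intersection is $\{c_0\}$, so $f^{n_m}(c_0) \to c_0$, giving $c_0 \in \omega(c_0)$.

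The main obstacle is the uniform control on $f^{s_m - 1} : \De^m_f \to \De^m(c_0)$: specifically, a bound on $\|Df^{s_m-1}\|$ on the nested sets $\De^m_f$ that transfers the diameter decay $O(\rho^{-m})$ from $\De^m_f$ to $\De^m(c_0)$. This follows from a careful analysis of the telescope construction, exploiting that $s_m$ stabilizes once the $\De^m_f$ are suitably nested in a single fundamental sector, together with weak compactness of $\overline{\IS}$; alternatively, one can argue via the bounded distortion of $\Pi_m$ directly, showing $\|D\Pi_m(c_0)\| \to \infty$ from the expansion of $\pi_m$ near $0$ and the uniform diameter lower bound on $\Pi_m(\De^m(c_0))$.
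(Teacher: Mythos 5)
Your overall strategy coincides with the paper's, which is a single terse sentence ("the critical point returns to all the domains $\De^m_f$, and these domains shrink"), and you correctly isolate the nontrivial point: that $\diam\De^m(c_0)\to 0$, since $\De^m_f$ shrinks to $0$, not to $c_0$, and one must transfer the decay through $f^{s_m-1}$ to the domain actually containing $c_0$.

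However, the route you present as primary---``exploiting that $s_m$ stabilizes once the $\De^m_f$ are suitably nested in a single fundamental sector''---does not work. Here $s_m$ is the number of $f$-iterates needed to carry the sector $\Sec^m_f$, which lies exponentially close to the fixed point $0$, out to a neighborhood of $c_0$; by the equivariance of $\pi^m_f$ this grows with $m$ roughly like the continued-fraction denominators $q_m$ and certainly does not stabilize. For the same reason your nesting $\De^{m+1}(c_0)\subset\De^m(c_0)$ cannot be obtained by applying one fixed iterate $f^{s_m-1}$ to the nesting $\De^{m+1}_f\subset\De^m_f$, since a different iterate is needed at each level. The nesting claim is, in any case, superfluous: $c_0\in\De^m(c_0)$, $\diam\De^m(c_0)\to 0$, and non-periodicity of $\orb c_0$ (Proposition~\ref{postcrit set for IS}, which forces each return time $n_m\geq 1$) already suffice. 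Your ``alternatively'' is the correct track and is what the paper's brief proof tacitly relies on: $\Pi_m|_{\De^m(c_0)}$ is univalent with uniformly bounded distortion and image of definite size, so the shrinking of $\De^m(c_0)$ is equivalent to $\|D\Pi_m\|\to\infty$; this in turn follows because each $\pi_{R^i f}$ behaves like $z\mapsto z^{1/\theta_i}$ and hence is uniformly expanding on its sector (Lemma~\ref{Davoud}(ii) together with weak compactness of $\overline{\IS}$), and these expansions compound through the composition $\pi^m_f$ faster than $f^{-(s_m-1)}$ can contract. That observation, rather than any stabilization of $s_m$, is what closes the gap you identified.
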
 

\begin{proof}
  Indeed,  the critical point returns to all the domains $\De^m f$,
and these domains shrink.
\end{proof}

\subsection{Siegel disks}

The next statement shows that maps $f\in \IS_\theta$ 
with  $\theta$ of high bounded type are Siegel maps:

\begin{prop}[\cite{Ya-posmeas}]\label{compact S}
    Let  $f\in \overline\IS_\theta$, where $\theta$ is a rotation number of
    high type bounded by some $\bar N$.
 Then $f$ is a Siegel map:  its
    Siegel disk $S_f$  is a quasidisk compactly contained in
 $\Om_f$,  and $\di S_f\ni c_0$. 
Moreover, 
 $f|\, \di S_f$ is quasisymmetrically conjugate to
   $\Bf_\theta|\, \di \BS_\theta$.
\end{prop}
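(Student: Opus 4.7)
The plan is to extract the Siegel disk from the Inou--Shishikura renormalization telescope of \S \ref{telescope} and use compactness of $\overline{\IS}(\underline{N})$ to get uniform geometric control. Since $\theta$ has high type bounded by $\bar N$, Corollary \ref{underline N} ensures that $f$ is infinitely renormalizable under $R_\IS$ and all iterates $R_\IS^m f$ remain in the weakly compact class $\overline{\IS}(\underline{N})$. Consequently, by Corollary \ref{Delta-m} the polygons $\De^m_f$ shrink at a geometric rate $O(\rho^{-m})$, and the necklaces $\Neck^m_f$ (together with their bounded filling components) form a nested sequence of neighborhoods of $0$ shrinking to a compact $\R$-symmetric set $S_f$ which will be the Siegel disk. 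The change of variable $\Pi_m$ from \eqref{restricted Pi} has bounded distortion by the compactness of $\overline{\IS}(\underline{N})$, and spreading this control around by the iterates used to build $\Neck^m_f$ shows that each necklace is bounded by a quasicircle of bounded shape, with dilatation depending only on $\bar N$. Passing to the limit, $\di S_f$ is a quasicircle with the same uniform dilatation, so $S_f$ is a quasidisk compactly contained in $\Om_f$.

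To identify $S_f$ as the Siegel disk and place $c_0$ on its boundary, I would use the inductive structure of the telescope together with Proposition \ref{postcrit set for IS} and Corollary \ref{recurrence}. The first return map to $\De^m_f$ is conjugate via $\Pi_m$ to the first return map of $R_\IS^m f$ to its analogous polygon, so the combinatorics of $f|\di S_f$ matches the rotation $T_\theta$ (via the modified Gauss map iterates $G_*^m(\theta)$ driving the renormalization). This yields a topological conjugacy of $f|\di S_f$ to $T_\theta$. Linearizability of $f$ at $0$ with rotation number $\theta$ now implies that the interior of $S_f$ is genuinely the maximal linearization domain (so $f|\inter S_f$ is conformally conjugate to the rotation). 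Recurrence of $c_0$ (Corollary \ref{recurrence}) together with non-periodicity (Proposition \ref{postcrit set for IS}) prevents $c_0 \in \inter S_f$, while $c_0 \in \Om_f^m$ for every $m$ forces $c_0 \in \overline{S_f}$; hence $c_0 \in \di S_f$.

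For the quasisymmetric conjugacy to $\Bf_\theta|\,\di \BS_\theta$, I would run a Sullivan pullback argument across the telescope. Both $f$ and $\Bf_\theta$ generate, through $R_\IS$, infinite towers whose rungs lie in $\overline{\IS}(\underline{N})$ with identical combinatorics, so at each level the maps $R_\IS^m f$ and $R_\IS^m \Bf_\theta$ are quasiconformally equivalent with uniformly bounded dilatation (by compactness of $\overline{\IS}$). A suitably normalized initial qc equivalence can then be lifted level by level, using bounded distortion of the $\Pi_m$, down to the original dynamical plane; the resulting limit equivariant map restricts on $\di S_f$ to the desired quasisymmetric conjugacy with dilatation depending only on $\bar N$. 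The main obstacle is establishing the uniform qc control of the shape of successive necklaces: distortion of the compositions $\pi_{R^{m-1}f}\circ\cdots\circ\pi_f$ could in principle blow up, and controlling it requires exploiting both the bounded type (so $G_*^m(\theta)$ stays in a compact subset of $(0,1/\underline{N}]$) and the compactness of $\overline{\IS}(\underline{N})$ to keep each factor $\pi_{R^k f}$ in a compact family of univalent maps with bounded distortion on $\De^k$.
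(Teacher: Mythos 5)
Your proposal takes a genuinely different route from the paper's, and it has a real gap that you have in fact half-acknowledged at the end.

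The paper does not touch the renormalization telescope at all for this proposition. It argues via a holomorphic motion: since the assertion is already known for the Blaschke (hence Douady--Ghys) model $\Bg := R(\Bf_\theta) \in \IS_\theta$, and since $\IS_\theta$ is isomorphic to the universal Teichm\"uller space, any $f\in \IS_\theta$ is joined to $\Bg$ by a Beltrami path $f_\la$, $\la\in\D$. The points $c_n(\la)$ of the critical orbit are then well defined (Proposition \ref{postcrit set for IS}), depend holomorphically on $\la$, and never collide (non-periodicity together with Corollary \ref{recurrence}), so they form a holomorphic motion. The $\la$-lemma extends this motion to $\overline{\OO}_{\Bg}$, which for the Blaschke model is a quasicircle; hence $\OO_f$ is a quasicircle, and normality of $\{f^n\}$ on the bounded Jordan domain $D$ it encloses, together with density of precritical points on $\di D$, forces $S_f = D$. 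The quasisymmetric conjugacy of $f|\di S_f$ to $\Bf_\theta|\di \BS_\theta$ comes out of the same motion, not from a Sullivan pullback. This bypasses all control of the telescope geometry.

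The gap in your argument is exactly at the sentence ``spreading this control around by the iterates used to build $\Neck^m_f$ shows that each necklace is bounded by a quasicircle of bounded shape.'' Bounded distortion of the change of variable $\Pi_m$ on $\De^m(c_0)$ does \emph{not} propagate to the whole necklace by Koebe alone: the number $r_m$ of iterates of $f$ used to spread $\De^m_f$ grows like $q_m$, the spreading passes through the critical point (where the map is only 2-to-1, not univalent), and nothing in \S\ref{telescope} gives a uniform bound on the qc shape of the union $\Neck^m_f$ over all $m$. You flag this obstacle yourself but leave it as a hope (``controlling it requires exploiting both the bounded type \dots and the compactness \dots''); that is precisely the nontrivial content that would have to be supplied, and it is much harder than the single compactness-plus-Koebe step you cite. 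The paper's motion argument is the way it sidesteps the issue.

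A smaller point: ``recurrence of $c_0$ together with non-periodicity prevents $c_0 \in \inter S_f$'' is not the right reasoning. Every point of $\inter S_f$ is recurrent and non-periodic under an irrational rotation, so those two facts exclude nothing. The correct (elementary) reason is that $f$ is locally injective on the Siegel disk, while $c_0$ is a branch point; or, as in the paper, that precritical points accumulate on $\OO_f$ and hence the Siegel disk cannot cross $\OO_f$.
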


\begin{proof}
By replacing $f$ with its IS renormalization $R f \in \IS$, 
we can assume that $f\in \IS$ (see Property (P6)). 

By \S \ref{Blaschke model},
    we know that the assertion is valid for the quadratic map 
$\Bf_\theta$ and hence for its renormalization  
$\Bg:= R (\Bf_\theta)  \in \IS_\theta$.  
Since $\IS_\theta$ is isomorphic to  the Universal
    Teichm\"uller Space,  
 any other map    $f\in \IS_\theta$ can be connected to $\Bg $ by a 
    holomorphic Beltrami path $f_\la$, $\la \in \D$.

Let $c_0(\la)$ be the critical point of $f_\la$, and let
$c_n(\la)=f^n(c_0(\la))$, $n\in \N$. 
By Proposition \ref{postcrit set for IS},  all points $c_n(\la)$ are
well defined, and then, they depend holomorphically on on $\la$. 
Moreover,  they do not collide: $c_n(\la)\not= c_m(\la)$ for
$n\not=m$ (by Proposition \ref{postcrit set for IS} and Corollary
\ref{recurrence}). 
Hence, they form a holomorphic motion over $\D$. 

By the $\la$-lemma, this motion extends to the 
postcritical set $\BO$ of $\Bg$,  and provides us with a family of quasisymmetric
homeomorphisms  $h_\la : \OO\ra \OO_\la$, $\la\in \D$,  where  
$ \OO_\la$ is the postcritical set for $f_\la$.
It follows that $\OO_\la$  
is a quasicircle for any $\la\in
  \D$, in particular, for the original map $f$.

Let  $D$ be a quasidisk bounded by $\OO_f$.  Then the family of iterates
$f^n$ is normal on $D$, so $D\subset S_f$. On the other hand, as 
the Siegel disk $S_f$
does not contain preimages of $c_0$,  which are dense in $\di D = \OO_f$,
 $S_f$  is contained in $D$.  
%
\end{proof}

\subsection{IS Renormalization fixed point}

Now the whole theory of Siegel maps developed in \S \ref{Siegel maps sec}
(external tilings, periodic points, trapping disks,  renormalization
fixed points, etc.)
is applicable to any class $\IS_N$, $N> \underline{N}$.


\begin{thm}[\cite{IS}]\label{hyp fixed point}
Let $\theta=\theta_N$ be  
a stationary rotation number of high type.
  Then the IS renormalization   $R $ 
has a unique hyperbolic fixed point  $f_\infty\in \IS_N$ .
The unstable manifold $\WW^u (f_\infty)$ is a complex curve that can be parametrized
by the complex rotation number ranging over a neighborhood of $[0,\theta]$.
Moreover, $R^n f \to f_\infty$ exponentially fast for any Siegel
map $f \in \II_N$.
 \note{space? size  of the unstable manifold?}
 \end{thm}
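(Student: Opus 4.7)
The plan is to establish the theorem in four stages, following the template developed by Yampolsky \cite{Ya-cylinder} for analytic critical circle maps but replacing the analytic setting with the Inou--Shishikura class.

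\textbf{Existence of a fixed point.} First I would observe that for stationary $\theta=\theta_N$ we have $-1/\theta = \theta$, so Theorem \ref{IS thm} yields a compact holomorphic self-map $R:\IS_N\to\IS_N$. Pick any Siegel quadratic seed $\Bf_{\theta_N}$ and consider its orbit $f_m := R^m \Bf_{\theta_N}$. By compactness, any subsequential weak limit $f_\infty$ lies in $\overline{\IS}_N$; property (P7) together with the weak-to-strong smoothing property of one additional application of $R$ (part of Theorem \ref{IS thm}) then places $f_\infty$ in $\IS_N$. Exponential convergence in the stable direction (next step) will show the full limit exists and is $R$-invariant.

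\textbf{Exponential stable convergence.} This is the deepest input. For any Siegel map $f\in\IS_N$, Proposition \ref{compact S} and Theorem \ref{qc conjugacy for Siegel} furnish a hybrid quasiconformal conjugacy between $f$ and $\Bf_{\theta_N}$ (after one IS-renormalization, to land inside $\IS_N$). I would then run McMullen's towers argument as in \cite{McM2,McM3}: build a bi-infinite renormalization tower for the conjugate pair, invoke rigidity of such towers (the Siegel disk together with bounded combinatorics forces the dilatation to decay along the deep scales of the tower), and conclude that $R^n f$ and $R^n \Bf_{\theta_N}$ are $L_n$-qc conjugate with $L_n\to 1$ at a geometric rate. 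Combined with the Bers embedding (property P4), the Schwarzian derivatives of the uniformizations converge exponentially in the Bers--Teichm\"uller norm, hence $R^n f \to f_\infty$ exponentially in $\IS_N$.

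\textbf{Hyperbolicity.} The exponential contraction from the previous step identifies the local stable manifold of $f_\infty$ with the hybrid class $\IS_N$ itself, which is a codimension-one complex-analytic subvariety of the ambient Banach slice (cut out by the constraint $f'(0)=e^{2\pi i \theta_N}$). A transversal is given by the multiplier $\lambda=f'(0)$, equivalently the complex rotation number $\gamma$ with $e^{2\pi i\gamma}=\lambda$. On this transversal the renormalization acts by the modified Gauss map $\gamma\mapsto -1/\gamma\bmod 1$, which is uniformly expanding near the fixed point $\theta_N$ of its stationary branch (derivative $1/\theta_N^2>1$). Standard invariant-manifold theory for compact holomorphic operators (e.g.\ as in \cite{Ya-cylinder}) upgrades this fibered structure to genuine hyperbolicity of $f_\infty$, with one-dimensional unstable direction tangent to the $\lambda$-axis. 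The unstable manifold $\WW^u(f_\infty)$ is then a one-dimensional local complex submanifold which, by the implicit function theorem, is graphed over a neighborhood of $\theta_N$ on the complex rotation-number axis; iterating $R$ on this local piece (and using that $R$ maps rotation number $\gamma$ to $-1/\gamma$) extends the parametrization over a neighborhood of $[0,\theta_N]$.

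\textbf{Uniqueness and the main obstacle.} Uniqueness of $f_\infty$ in $\IS_N$ follows because any fixed point must lie in the hybrid class $\IS_N$ and hence, by the exponential stable convergence, must coincide with the attractor $f_\infty$ of $R$ restricted to $\IS_N$. The principal technical hurdle is the second step: adapting McMullen's tower rigidity to the IS class requires that the Siegel-disk geometry of $\Bf_{\theta_N}$ controls the geometry of every $f\in\IS_N$ via the hybrid conjugacy, and that the complex bounds of Theorem \ref{bounds for Siegel pairs} persist uniformly through all scales. Once those uniform Siegel butterfly bounds are in hand, the measurable deep points argument of \cite{McM2,McM3} transplants to the IS setting, and the remaining verifications (Banach-manifold structure, compactness of $DR$, expanding transversal action) are then essentially formal.
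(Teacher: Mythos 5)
The paper does not prove this theorem: it is stated as a cited result, attributed to Inou and Shishikura \cite{IS}, and the text proceeds to use it (through Corollary~\ref{Siegel renorm of famiies}) without further argument. So there is no ``paper's own proof'' against which to match your sketch; the relevant comparison is with the cited source, and there the approaches diverge.

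Your outline follows the McMullen--Yampolsky towers template, whereas Inou and Shishikura argue via the compact holomorphic structure of $R$ on the Bers--Teichm\"uller slice $\IS_0$ combined with the Schwarz lemma in the Teichm\"uller/Kobayashi metric. The distinction is not cosmetic and is where your proposal has a real gap. Towers rigidity (McMullen's deep-points argument) is established for qc-conjugate bi-infinite tower pairs in the quadratic-like and critical-circle categories; transplanting it to the near-parabolic class $\IS_N$ would require re-proving complex {\it a priori} bounds, deep-point density, and the tower-rigidity mechanism inside $\IS_N$, none of which appears in this paper and none of which is supplied by the tools you invoke. You are, in effect, replacing the actual IS input with a different and heavier one that would itself need to be proved. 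Two further local problems: (1) you cite ``Theorem~\ref{qc conjugacy for Siegel}'', but no such result is in the paper---what exists is Theorem~\ref{qc conjugacy on circle} for quasicritical circle maps, and the Siegel transfer via surgery is not carried out in the compiled text; (2) the claim that iterating $R$ extends the local unstable leaf to a parametrization over a neighborhood of the whole segment $[0,\theta]$ (not merely a neighborhood of the fixed point $\theta$) is asserted rather than argued, and it is precisely this ``size of $\WW^u(f_\infty)$'' issue that the authors themselves flag in the line following the theorem.
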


\begin{cor}\label{Siegel renorm of famiies}
Under the circumstances of the above lemma,
let us consider a holomorphic family $\FF\subset \BB_\circ$ 
passing through a Siegel map
$f_\circ \in \IS_N$ transversally to $\IS_N$. 
Then the sequence of the IS renormalizations $R^n (\FF) $,
$n=0,1,\dots$, is precompact, and in fact, it converges to the unstable manifold
$\WW^u(f_\infty)$.  
\end{cor}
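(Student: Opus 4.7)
The plan is to apply a holomorphic $\lambda$-lemma (inclination lemma) at the hyperbolic fixed point $f_\infty$. By Theorem~\ref{hyp fixed point}, $R$ is a compact holomorphic operator with $f_\infty$ as a hyperbolic fixed point, whose local stable manifold contains $\IS_N$ (via exponential convergence of any Siegel map in $\IS_N$ to $f_\infty$) and whose unstable manifold $\WW^u(f_\infty)$ is a one-dimensional complex curve parametrized by the complex rotation number over a neighborhood of $[0,\theta_N]$. Since $\FF$ is a holomorphic disk through $f_\circ \in \IS_N$ transverse to $\IS_N$, the standard hyperbolic principle predicts that $R^n(\FF)$ converges to $\WW^u(f_\infty)$.

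First I would parametrize $\FF = \{f_\la\}_{\la\in\D}$ with $f_{\la_\circ} = f_\circ$ and take the complex rotation number $\gamma(\la) = (2\pi i)^{-1}\log f_\la'(0)$ as a transverse coordinate. Transversality of $\FF$ to $\IS_N$ gives $(d\gamma/d\la)|_{\la_\circ} \neq 0$, so after shrinking $\FF$ the map $\gamma$ is a biholomorphism onto a disk $\De \ni \theta_N$. Under $R$ the rotation number evolves by the modified Gauss map $G_*\colon \gamma \mapsto -1/\gamma\ \mod \Z$, which is uniformly expanding near its fixed point $\theta_N$. Thus on $R^n(\FF)$ the complex rotation number ranges over the exponentially growing disk $G_*^n(\De)$.

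Next I would fix a small neighborhood $\De_0 \ni \theta_N$ and consider the subfamily $\FF_n \subset R^n(\FF)$ carrying rotation number in $\De_0$. By expansion of $G_*$, this subfamily corresponds to an exponentially small disk of parameters around $\la_\circ$; its elements therefore lie exponentially close to $\IS_N$, hence (by hyperbolic contraction along the stable direction at $f_\infty$) exponentially close to $\WW^u(f_\infty)$ over $\De_0$. Combined with the compactness of $R$ (property~(P6)), this should yield both precompactness of $\{R^n(\FF)\}$ in the weak topology and convergence of the reparametrized subfamilies $\FF_n$ to the restriction of $\WW^u(f_\infty)$ over $\De_0$. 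Iterating further, $G_*^n(\De)$ eventually covers any prescribed compact subset of the rotation-number disk over which $\WW^u(f_\infty)$ is defined, yielding convergence to the full unstable manifold.

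The main obstacle I anticipate is making this $\lambda$-lemma argument precise in the infinite-dimensional Banach setting of $\BB_\circ$: one must balance the expansion of $G_*$ in the transverse direction against the stable contraction, and control the distortion of the reparametrization $\la \mapsto \gamma$ uniformly in $n$. These are exactly the ingredients provided by the hyperbolicity at $f_\infty$ established in Theorem~\ref{hyp fixed point}; together with compactness (P6) of $R$ and one-dimensionality of $\WW^u(f_\infty)$, they should upgrade uniform closeness on compact subsets of the rotation-number disk to genuine convergence of the families in $\BB_\circ$.
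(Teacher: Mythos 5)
The paper states this corollary without proof, treating it as a direct consequence of the hyperbolicity of $f_\infty$ established in Theorem~\ref{hyp fixed point}. Your argument correctly identifies and spells out the implicit reasoning: a $\lambda$-lemma (inclination lemma) at the hyperbolic fixed point, using the complex rotation number as the transverse coordinate on which $R$ acts by the expanding Gauss map $G_*$, with precompactness coming from compactness (P6) of $R$; this is exactly the intended argument.
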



\subsection{Perturbations of Siegel maps}

The above  control of one renormalization,
together with existence of the hyperbolic renormalization fixed point,
provides us with a good control of perturbations of  
Siegel disks of stationary type:


\begin{lem}\label {perturbed Delta-Siegel II}
Let  $f_\circ $ be a Siegel map of Inou-Shishikura class with stationary
rotation number $\theta_\circ=[N,N,\dots]_*$, 
and let $\FF=\{f_\la\}$ be a holomorphic family through $f_\circ=f_{\la_0}$ transverse to
$\IS_N $.
Then for any rotation number $\theta\in \R/\Z$,  \note{OK?}
there exists a map $f_\la\in \FF$ such that the  
renormalization $R^m f_\la$ 
with the same combinatorics as $R^m f_\circ$  is well
defined and has rotation number $\theta$.
Moreover,  
 the set $\Om^m_f$ is contained in $O(\rho^{-m})$-neighborhood
of $S_\circ$, where $\rho=\rho(N)>1$. 
\end{lem}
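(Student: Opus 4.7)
The plan is to combine the hyperbolicity of the Inou--Shishikura renormalization fixed point $f_\infty \in \IS_N$ (Theorem \ref{hyp fixed point}) with the exponentially shrinking telescope of \S \ref{telescope}.

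\emph{Step 1: existence of the perturbation.} Since $\FF$ is transverse to $\IS_N$ at $f_\circ$, Corollary \ref{Siegel renorm of famiies} gives $R^n(\FF) \to \WW^u(f_\infty)$. The $\lambda$-lemma, applied at the hyperbolic fixed point $f_\infty$ (whose one-dimensional unstable manifold is parametrized by complex rotation number), upgrades this to exponential convergence: there exists $\rho = \rho(N) > 1$ such that for large $m$, $R^m(\FF)$ is a holomorphic graph over $\WW^u(f_\infty)$ of slope $O(\rho^{-m})$. The rotation number of $R^m f_\la$ is $G_*^m(\theta(f_\la))$ where $G_*$ is the modified Gauss map, and $G_*^m$ is uniformly expanding on the set of high-type rotation numbers whose first $m$ entries equal $N$. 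Hence for any prescribed target $\theta\in\R/\Z$, there is a unique $\la$ in a ball around $\la_\circ$ such that $\theta(f_\la)$ begins with $m$ copies of $N$ and $G_*^m(\theta(f_\la)) = \theta$. Expansion of $G_*$ also gives the quantitative bound $|\la - \la_\circ| = O(\rho^{-m})$.

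\emph{Step 2: control of $\Om^m_{f_\la}$.} For $f_\circ$ itself, iterating Corollary \ref{Delta-m} yields $\diam \De^m_{f_\circ} = O(\rho^{-m})$. Propagating $\De^m_{f_\circ}$ around its orbit produces the necklace $\Neck^m_{f_\circ}$; by Proposition \ref{compact S} and Proposition \ref{postcrit set for IS}, the postcritical orbit of $f_\circ$ sits on $\di S_\circ$, so each piece of the necklace lies in an $O(\rho^{-m})$-neighborhood of $\di S_\circ$. Thus the filled necklace $\Om^m_{f_\circ}$ lies in an $O(\rho^{-m})$-neighborhood of $S_\circ$. For the perturbed map $f_\la$, weak compactness of $\overline{\IS}$ together with continuous dependence of each $\De^m$ and of the changes of variable $\Pi_m$ on $f$ (Lemma \ref{Davoud}(v), Lemma \ref{Delta}, Corollary \ref{Delta-m}) imply that once $|\la - \la_\circ| = O(\rho^{-m})$, the entire telescope deforms by $O(\rho^{-m})$, so $\Om^m_{f_\la}$ remains in an $O(\rho^{-m})$-neighborhood of $\Om^m_{f_\circ}$, and hence of $S_\circ$ (after harmlessly adjusting $\rho$).

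\emph{Main obstacle.} The delicate point is to keep the errors geometric across all $m$ levels of the telescope: a naive propagation of continuity at each step would compound multiplicatively and destroy the exponential bound. The fix uses hyperbolicity in two complementary ways: expansion of $G_*$ on the parameter side supplies $|\la - \la_\circ| = O(\rho^{-m})$, while the contraction of $R_\IS$ transverse to $\WW^u(f_\infty)$ absorbs perturbative errors on the dynamical side when one renormalizes backwards. Concretely, one fixes a large $k$, applies uniform continuity of the telescope only on the top $k$ levels (where weak compactness of $\overline{\IS}$ is enough), and uses the $\lambda$-lemma at $f_\infty$ to control the first $m-k$ levels exponentially; this decoupling is the essential technical step.
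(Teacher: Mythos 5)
Your proposal is correct and takes essentially the same approach as the paper: existence of $f_\la$ via the hyperbolic fixed point (Theorem \ref{hyp fixed point}), exponential shrinking from Corollary \ref{Delta-m}, and a shadowing estimate ($\lambda$-lemma at $f_\infty$) to control the drift of the necklace. The paper organizes the telescoping estimate as a level-by-level lifting-and-spreading induction driven by the shadowing bound $\dist(R^n f, R^n f_\circ) \leq C\,\rho_0^{-(m-n)}$, whereas you package it as a two-part decoupling, but the underlying geometric series estimate is the same.
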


\begin{proof}
Existence of $f=f_\la$ follows from the Renormalization Theorem \ref{hyp fixed point}.
Moreover,  the renormalizations of $f_\la$ shadow those of $f_\circ$: 
\begin{equation}\label{shadowing-2}
  \dist (R^n f , R^n f_\circ) \leq C |\theta-\theta_0| \,
  \rho_0^{-{ (m-n) }}, \ n=0,1,\dots, m,
\end{equation}
where $\rho_0=\rho_0 (N) > 1$.

Let us now apply the lifting and spreading procedure to control the
 necklaces, and hence the $\Om^m$-domains. 
Assume we have already constructed a necklace $\NN^{m-n}_{R^n f}$
which is confined to a $\de$-neighborhood of $S_{R^n f_\circ} $ 
By Corollary \ref{Delta-m},  under sufficiently many further lifts, it will shrink
by a big  factor. Spreading this pullback around by a bounded
number of iterates of $R^{m-n-k} f $ , the necklace can be pulled
father away from
$S_\circ$ by  exponentially small (in $m-n)$) distance, see (\ref{shadowing-2}). 
These two  mechanisms imply  the desired.
\end{proof}


Together with Lemma \ref{O is trapped}, this leads us to the
following important conclusion: 

\begin{cor} [\cite{BC}]\label{confinement of PP}
Under the circumstances of Lemma \ref{perturbed Delta-Siegel II},
assume the map $R^m f_\la$ is parabolic with multiplier 1.   
Then the postcritical set $\OO_\la $ of $f_\la$  is contained in  the 
$O(\rho^{-m})$-neighborhood of 
 the Siegel disk $S_\circ$. 
\end{cor}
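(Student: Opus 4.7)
The plan is to combine the two results cited just before the statement, which together cover the assertion essentially verbatim.

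First, I would invoke Lemma \ref{O is trapped}: since $f_\la$ is $m$ times IS-renormalizable and the hypothesis of the corollary says that $R^m f_\la$ is parabolic with multiplier $1$, the postcritical set $\OO_\la$ of $f_\la$ is trapped inside the domain $\Om^m_{f_\la}$ built from the telescope of sector lifts (\ref{Om-domains}). This reduces the problem to controlling the location of $\Om^m_{f_\la}$ itself.

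For that control I would invoke the second conclusion of Lemma \ref{perturbed Delta-Siegel II}, which states precisely that $\Om^m_{f_\la}$ lies in an $O(\rho^{-m})$-neighborhood of the Siegel disk $S_\circ$ of $f_\circ$, with $\rho=\rho(N)>1$. Here one uses both the fact that $f_\la$ is the $\FF$-parameter selected so that $R^m f_\la$ has the prescribed combinatorics/rotation number (which in our case is $p/q$, producing the parabolic fixed point of multiplier $1$), and the exponential shadowing estimate (\ref{shadowing-2}) driven by hyperbolicity of the IS renormalization fixed point $f_\infty$ (Theorem~\ref{hyp fixed point}).

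Chaining the two inclusions $\OO_\la \subset \Om^m_{f_\la}$ and $\Om^m_{f_\la}\subset \{z:\dist(z,S_\circ)=O(\rho^{-m})\}$ yields the desired bound. The only point that needs a brief remark is that the applicability of Lemma~\ref{perturbed Delta-Siegel II} requires $f_\la$ to be at least $m$ times IS-renormalizable with the combinatorics of $f_\circ$ up to level $m$; but this is exactly the setting selected by that lemma (and the parabolic map with multiplier $1$ at level $m$ is just a specific choice of rotation number $\theta\in\R/\Z$, namely $0$). There is no substantial obstacle — the corollary is essentially a bookkeeping statement packaging Lemma~\ref{O is trapped} with Lemma~\ref{perturbed Delta-Siegel II} into the form in which it will be used in the main construction.
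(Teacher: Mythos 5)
Your proposal is correct and matches the paper's own reasoning: the corollary is introduced with the words ``Together with Lemma~\ref{O is trapped}, this leads us to the following important conclusion,'' so the paper itself treats it as the direct chaining of the inclusion $\OO_\la \subset \Om^m_{f_\la}$ from Lemma~\ref{O is trapped} with the $O(\rho^{-m})$ estimate on $\Om^m_{f_\la}$ from Lemma~\ref{perturbed Delta-Siegel II}, exactly as you describe.
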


\comm{****
Finally, it is convenient to build a renormalization telescope near the
critical point:

\begin{lem}\label{De near c_0}
There exists an $m$ such that 
  for any map $f\in \overline\IS$ which is $m$ times renormalizable,  
there exists a definite neighborhood 
$\Ups_f \subset  f^{s-1} (\De_f)$ of the critical point  $c_0$,  
and an equivariant  change of variable
  $ \Pi_f$ defined on $ \Ups_f $ such that $\Pi_f (\Ups_f)
  \Supset \Ups_{R f} $ for some $m\in \N$, 
where
\begin{equation}\label{Pi_m}
\Pi_f^m= \Pi_{ R^{m-1}  f } \circ\dots \circ \Pi_f
\end{equation}

\end{lem}

\begin{proof}
Let $s_m$ be the moment such that $f^{s_m-1} (\De^m_f)\ni c_0$.
At this moment $f^{s_m-1} |\, \De^m_f $ is a univalent map with
bounded distortion onto its image $D^m$ . \note{OK?}
Hence  $\Pi^m := \pi_m\circ f^{-s_m+1} $  univalently maps $D^m$ onto
the domain $\Om_{R^m f} $ slit along the ray $i\R_-$.  
Hence its image 
contains well inside a definite neighborhood $\Ups_{Rf}$ of $c_0$.  

Let $\Ups^m_f :=  (\Pi^m))^{-1} (\Ups_f)$. 
By the Koebe Distortion Theorem, 
$\Pi^m : \Ups^m_f \ra \Ups^m$ has a bounded distortion.

Let us consider the
domain $f^{s-1} (\De_f)\ni c_0$ and the change of variable 
$\Pi_f:= \pi_f\circ f^{-s+1}$ on it, where $f^{-s+1}$ is the inverse branch 
$ f^{s-1} (\De_f ) \ra \De_f $.
 
Iterating, we obtain changes of variables  
$\Pi^m_f  : \Ups^m \ra \Ups_{R^m f} $ (\ref{de near c_0}) 
equal to $\pi_m \circ f^{-s_m=1} $ for the inverse branch
of the map $f^{s_m-1} \De^m_f$,...

By Corollary \ref{Delta-m},  map $\Pi^m_f$ (\ref{Pi_m})  is
 expanding for $m$ big enough,  
so the pullback of $\Ups_{R^m f }$ under $\Pi^m_f$ is compactly contained
in $\Ups_f$, with a definite space in between.  

\end{proof}
******************}

\comm{***********

The above statement can be refined as follows:

\begin{prop}\label {perturbed Delta-Siegel II}
Let  $f_\circ \in \Siegel_\theta(\bar N, \mu, H)$
(e.g., $f_\circ$ is a Siegel quadratic map), and let
$\FF=\{f_\la\}$ be a holomorphic family in $\BB_\circ$  through $f_\circ=f_{\la_0}$ transverse to
$\Sieg_\theta$ (e.g., $\FF$ is the quadratic family).
Then there exists  $\de>0$  such that
 for any $m\geq m_0(\bar N, \mu, H, \FF)\geq \underline{l}$,  
 there exists a  map  $f_\la$ with 
\begin{equation}\label{perturbed multiplier}
  |G^m (\theta(\la )) - G^m (\theta(\la_\circ )) |< \de, \quad
  {\mathrm{where}} \ \theta(\la)= f_\la'(0),\ G(\theta) = -1/\theta\
  \mod 1.
\end{equation} 
Moreover, this map has   domain $\De^n_s (\la) $
satisfying properties of Lemma \ref{perturbed Delta-Siegel}.
\end{prop}

\begin{proof}
Existence of $f_\la$ follows from the hyperbolicity of the
renormalization fixed point $f_\#$.
Existence of the domains $\De^n_s (\la) $
follow from Lemma \ref{perturbed Delta-Siegel}.
%
%
\end{proof}

\sss{Perturbed Siegel postcritical set}
The following  important result  gives control of   the postcritical set 
 under perturbations of a Siegel disks:

\begin{prop}[\cite{BC}]\label{confinement of PP}
   Let  $f_\circ \in \IS_\circ= \IS_{\theta_\circ} $ with $\theta_\circ $ of high bounded type,
and let $\FF=\{f_\theta \}$ be a real analytic  family through $f_\circ=f_{\theta_\circ}$
transverse to $\IS_\circ$.
For any $\eps>0$, if  $\theta$ is a rotation number of high type 
which is sufficiently close to $\theta_\circ$ then its postcritical
set $\OO_\theta= \OO_{f_\theta} $ is contained in  the 
filled necklace $\hat \BDe^m$. 
\end{prop}

************************}

\comm{*****

\begin{proof}
Take a large $n\in \N$.
If $\theta$ is sufficiently close to $\theta_\circ$ (depending on the
family $\FF$) then the map $f_\theta$ is $n$ times renormalizable
under $R_\Sieg$.   By Proposition \ref{postcrit set for IS},
the postcritical set of $R^n f_\circ$ is contained in $\Dom f_\circ$.
Hence the postcritical set of $f$ is contained in the domain $\Om^n $
defined before Corollary \ref{Om-domain}. Application of that Corollary
concludes the proof. 
\end{proof}

************}

\bignote{Parabolic flower is hidden here}

\comm{*********

\sss{ Parabolic flower}

Applying the previous discussion to a parabolic perturbation of a
Siegel map, we obtain:

\begin{prop}\label{parabolic flower} 
Under the circumstances of Proposition \ref{confinement of PP},  
assume that $\theta=\theta_\kappa$ is a parabolic approximand $r_\kappa/q_\kappa$ of
$\theta_\circ$.  Then $f_\theta$ 
has  an invariant  flower $\Flower= \Flower_\theta$ with $q=q_\kappa$ petals
$
     \Petal_i,  \ i=0,1,\dots, q-1,
$ 
with the following properties:   \note{picture}

\ssk\nin ${\mathrm {(i)}}$
$ \Petal_0\ni c_0$; 

\ssk\nin $ {\mathrm {(ii)}}$
 The {\it flower} $ \Flower = \bigcup \Petal_i$ is forward invariant
(and hence contains the postcritical set $\OO_{ f}$); 

\ssk\nin  ${\mathrm {(iii)}}$
  It is is contained in an $\eps(\kappa, \FF)$-neighborhood of the Siegel
   disk $S_\circ$, where $\eps(\kappa, \FF)\to 0$ as $\kappa\to \infty$
   with the rate depending only on the geometry of $\FF$;%
\footnote{meaning that the bound is uniform as long as $\FF$
stays within a precompact family of families} 

\ssk \nin $ {\mathrm {(iv)}} $
  It projects to two ends of the repelling  Ecall\'e-Voronin cylinder
(so, the complement of the flower on the cylinder is a bounded
annulus);  


\ssk\nin $ {\mathrm {(v)}} $
For a given $\kappa$, 
$ \Pi:=  \CC^r \sm  \Flower$ is a rectangle with bounded geometry,
depending only on the geometry of  $\FF$;

\end{prop}

 We let $ \Petal^\sym_0$ be the domain attached to $c_0$ which is
dynamically symmetric to the petal $ \Petal_0$ (in some neighborhood of
$c_0$ depending only on the geometry of $ f$).  

****************}

\section{Construction of an example}\label{construction sec}

\subsection{Outline}
Let us start with a rough description of 
our example. Take a big $l\in \N$, a bigger $\kappa\in \N $,
 and an even much bigger $m\in \N$. 
Begin with a {\it Siegel} quadratic polynomial 
$$
   \Bf =\Bf_\theta: z\mapsto e^{2\pi i \theta} z + z^2
$$ 
with a stationary rotation number of high type,
and consider its cylinder renormalization $f=R_\Sieg^{m-\kappa}\Bf $.  
It is a Siegel map of Inou-Shishikura class.

Moreover, $f$  has a  distinguished repelling periodic
point $\alpha= \alpha^l $ of period $q_l$
(that approximates the dynamics on $\di S_f$ in scale  $l$).
Perturb $f$ to a {\it parabolic}
approximand  $\tl f $ with rotation number $p_\kappa/q_\kappa$.  
Then $\alpha$ gets perturbed to a periodic
point $\tl \alpha $ with  the same period. 

Furthermore, using the theory of parabolic bifurcation,    one can perturb $\tl f$ to a
{\it Misiurewicz} map $f_{\mathrm{Mis}}$ for which  $\tl \alpha $ becomes a
postcritical point $\alpha_{\mathrm{Mis}}$. 
Since $\alpha_{\mathrm{Mis}}$ can be approximated with precritical points, 
$f_{\mathrm{Mis}}$  can be further perturbed to a  {\it
  superattracting} map $f_\circ$.

The last  map  can be anti-renormalized to obtain a superattracting
quadratic polynomial $\Bf_\circ$ such that $f_\circ=R_\Sieg^{m-\kappa} \Bf_\circ$.
This quadratic polynomial
determines a renormalization combinatorics. 
The unique infinitely renormalizable quadratic polynomial $\Bf_*$ with
this combinatorics  is  desired. 

Our construction depends on six large integer parameters $N, l,
\kappa, t$, and $ m, j$, selected consecutively as listed,
where the last two play somewhat different role than the first four.
 Once we select one of the first four parameters, 
we assume, sometimes  without saying, that all the rest  depends on this choice.
A statement  {\it For any consecutively selected} $(N,l, \kappa)>
(\underline N, \underline l, \underline \kappa)...$  (or {\it For any
consecutively selected sufficiently big $(N,l, \kappa)$}...) will mean
$$
   \exists \uN \quad \forall N> \uN \ \exists\, \ul=\ul(N) \quad \forall \, l> \ul\
   \exists\, \ukappa =\ukappa (N,l) \quad \forall\, \kappa > \ukappa \dots
$$  
We will also assume  that the choice $\ul(N)$ is made {\it
  monotonically increasing } in $N$, the choice of $\ukappa (N,l) $
  is monotonically increasing in each variable,   
and similarly for any other parameter in question. 

Let us now supply the details.

\subsection{Perturbed periodic points and trapping disks} 
\label{level l}

\sss{General perturbations}


Recall that $\Siegel (\bar N, \mu, K)$ stands for the space of Siegel maps
$f: (\Om, 0) \ra (\C, 0) $ introduced in \S \ref{def of Siegel maps}.
%

When we perturb $f$ below,  we will use the uniform metric on $\Om$. 

\begin{lem}\label{definite disc}
There exist natural numbers%
\footnote{In the polynomial case, we can let $\underline{l}=1$.}
$\underline{l}$ and  $\iota$ depending on  $(\bar N, \mu, K)$
such that for any   $l\geq  \underline{l}$,
there exists a  $\de_0= \de_0 (\bar N,\mu, K, l) > 0 $  
with the following property.
For any $\de<\de_0$,
if a holomorphic map $\tl f: \Om \ra \C $ is $\de$-close to a Siegel map
$f: \Om \ra \C$ of class $\Siegel (\bar N,\mu, K)$   
 then 

\ssk \nin {\rm{(i)}}
There exists  a periodic point $\tl \alpha^l $ of  period
 $q_l $  which  is a perturbation%
\footnote{meaning that $\tl \alpha^l$ is 
 $\eps(\de)$-near $\alpha^l $ 
where $\eps(\de)\to 0$ as $\de\to 0$. }
 of the $\alpha^l$;

\ssk \nin {\rm{(ii)}}
There exists a  collar%
\footnote{Objects associated with $\tl f$ are usually marked with
  ``tilde'', but it can be skipped if the object is independent of
  $\tl f$, e.g., $\tl A^l \equiv A^l$,   $\tl D^l\equiv  D^l$, etc.}      
$A^l$  in $\Om \sm \bar S_f$ such that:
it is impossible to jump over it under $\tl f$:
$$
 \mathrm{ If}\  z\in  \Comp_0 (\C\sm A^l) , \ 
  \tl f(z)\not\in \Comp_0 (\C\sm A^l) , \ 
  \mathrm{then} \ \tl f(z)\in A^l  ;
$$

\ssk \nin {\rm { (iii) } }
There exists a trapping quasidisk
$\Disc^l \Subset \Om \sm \bar S_f$  
with  bounded shape  around $\tl\alpha^l$    
whose hyperbolic diameter  in $\Om \sm \bar S_f$ is  of order 1;  moreover, 
$$
  D^l \cap D^{l+\iota}= \emptyset;
$$

\ssk \nin {\rm{(iv)}}
A definite part of the disk $ D^l $ is contained in 
$\tl f^{-1} (S_f) \sm \bar S_f$;  
moreover, there is a point $\tl\beta\in \tl f^{-1} (\di S_f)\sm  \bar S_f$
that lies in the middle of $\Disc^l $;

\ssk \nin {\rm{(v)}}
If $z \in A^l  $ then at  some moment $k< q_{l+1}$, 
 $f^ k z$ lands in the middle of   $ \Disc^l $, while
$$
   f^i z  \in \Comp_0(A^{l- 2\iota}) \sm D^{l-\iota}, \quad i=0,1,\dots, k.
$$ 

\ssk
All geometric bounds depend only on $N$,  $\mu$, and $K$.
\end{lem}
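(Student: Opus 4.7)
The strategy is to reduce every assertion to Proposition \ref{disk D for Siegel} applied to the unperturbed Siegel map $f$, and then propagate each conclusion to $\tilde f$ by standard stability on a uniformly bounded number of iterates. Fix, once and for all, the objects furnished by Proposition \ref{disk D for Siegel} at level $l$: the repelling periodic point $\alpha^l$ of period $q_l$, the pair of collars $A_0^l \Subset A^l$ around $\bar S_f$, the trapping quasidisk $D^l$ centered at $\alpha^l$, and the constant $\iota = \underline{\iota}(\bar N, \mu, K)$. All of (A1), (A2), (D1)--(D5) hold for $f$ with definite constants depending only on $(\bar N, \mu, K)$.

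For (i), the point $\alpha^l$ is a repelling fixed point of $f^{q_l}$ whose multiplier is bounded and bounded away from $1$ in modulus by Proposition \ref{periodic pts}(ii). Applying the implicit function theorem to the equation $\tilde f^{q_l}(z) = z$ near $\alpha^l$ produces a unique nearby fixed point $\tilde \alpha^l$ depending holomorphically on $\tilde f$, still repelling once $\delta_0$ is small enough. For (iii), recenter the disk $D^l$ at $\tilde \alpha^l$ by the translation of size $\varepsilon(\delta)$ provided by (i); the bounded shape, the definite hyperbolic diameter in $\Om \setminus \bar S_f$, and the disjointness $D^l \cap D^{l+\iota} = \emptyset$ (which in the unperturbed setting is encoded in (D4) as a definite modulus $\mod(D^{l-\iota}_1 \setminus D^{l-\iota})$) are open geometric conditions, invariant under a small translation.

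Properties (ii), (iv), (v) are all open conditions involving at most $q_{l+1}$ iterates. For (ii), the collar $A^l$ is chosen a definite distance inside $A_0^l$, so the unperturbed no-jumping property (A2) of Proposition \ref{disk D for Siegel} leaves a definite margin; uniform continuity of $f$ on $\Om$ then transfers the property to $\tilde f$ once $\|\tilde f - f\|_{\Om} < \delta_0$. For (iv), the point $\beta \in f^{-1}(\partial S_f)$ lying in the middle of $D^l$ from (D2) gives, by Hurwitz's theorem applied to $\tilde f(z) - w$ on $D^l$ (for each $w \in \partial S_f$ in a neighborhood of $f(\beta)$), a nearby $\tilde\beta \in \tilde f^{-1}(\partial S_f)$ still in the middle of $D^l$; the ``definite portion of $D^l$ in $\tilde f^{-1}(S_f)$'' is the persistence of the open set $D^l \cap \tilde f^{-1}(S_f)$. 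For (v), (D3) and (D5) give, for every $z \in A^l$, an escape moment $k(z) < q_{l+1}$ at which $f^k(z)$ lies in the middle of $D^l$ while the orbit $\{f^i(z)\}_{i \le k}$ avoids $D^{l-\iota}$ and remains in $\Comp_0(A^{l-2\iota})$, with definite margins. Shadowing of an $f$-orbit by a $\tilde f$-orbit over the bounded time $q_{l+1}$ (estimated by the Gronwall-type bound $\|\tilde f^i - f^i\|_{\Om} \le C^i \delta$) transfers this to $\tilde f$ once $\delta_0$ is sufficiently small.

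\textbf{Main obstacle.} The control in (ii)--(v) must be robust over $q_{l+1}$ iterates, during which derivatives can amplify and the safe margins narrow; this is what forces $\delta_0$ to depend on $l$ in addition to $(\bar N, \mu, K)$. Since the statement allows exactly this dependence, the shadowing argument on a bounded (for each fixed $l$) number of iterates suffices, with no uniformity in $l$ being claimed or needed.
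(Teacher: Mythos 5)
Your proposal is correct and follows the same strategy as the paper's proof, which simply remarks that the properties of Proposition \ref{disk D for Siegel} are manifestly robust under perturbation, with the auxiliary collars $A_0^l$, disks $D^{l-\iota}_1$, and collars $A^{l-2\iota}$ having been built precisely to supply the slack you invoke; you have merely filled in the detail. One small slip: you wrote that ``$A^l$ is chosen a definite distance inside $A_0^l$,'' but the paper's convention is $A_0^l \Subset A^l$, so the unperturbed no-jumping property (A2) holds for the \emph{inner} collar $A_0^l$ and then transfers to the \emph{outer} collar $A^l$ for $\tilde f$ because the perturbation cannot push $\tilde f(z)$ past $\di^o A^l$ when $f(z)$ stays inside $\overline{\Comp_0(\C\sm A_0^l)}\cup A_0^l$. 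The idea — use the safety margin between the two nested collars — is right; only the orientation of the inclusion is inverted.
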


\begin{proof}  
  The properties of Proposition \ref{disk D for Siegel} are manifestly robust under
  perturbations, keeping the same collars $A^l$ and trapping disks $D^l$. 
(The auxiliary collars $A_0^l$ and disks $D^l_1$,
as well as the  collars $A^{l - 2\iota}$ in the  last statement,    
were designed to secure robustness.)
\end{proof}

As before, we say that the trapping disk $D=D^l$ is centered at
$\alpha^l$, or that $\depth D= l$.  

\comm{***
We let $\tl A^l $ be the union of the  pullbacks of the
domain  $Y^l =Y^l_f$ under the same number of iterates of $\tl f$  
as was used to construct the collars
$A^l$ in the circle and Siegel cases. Since only bounded number of iterates
are used, the collar $\tl A^l$
is a small perturbation of the collar $A^l$. 
Hence   properties (T0)--(T3) 
are preserved under this perturbation.

Using these collars,
the domain $\tl \Disc^l$ can be constructed  in the same way as
the domain $\Disc^l$ in Proposition \ref{disc D} 
or in its Siegel counterpart \ref{disk D for Siegel}. 

All other asserted properties  are clearly robust under perturbations
of any $f\in \Siegel (\bar N, \mu, K)$.
Since the space $\Siegel (\bar N, \mu, K)$ is compact,
all of them are inherited by any $\tl f$ under consideration.
%
%
***********}


\comm{****
\sss{Transit from $\tl\CC^r$ to $\tl \alpha^l$}

In case of a parabolic perturbation,  we can make a transit from the 
repelling crescent to a neighborhood of  $\tl\alpha^l$:

\begin{lem}\label{point in petal}
Given $\bar N$,  $\mu>0$, $K>0$, and $l\in \N$,  
let   $\de_0= \de_0 (N,\mu,K,l) $  be from Lemma \ref{definite disc}, 
and let $\de<\de_0$.
Let   $f: \Om \ra \C$ be a Siegel map of class $\Siegel (N,\mu, K)$, 
and let   $\tl f: \Om \ra \C $ be a  $p_\kappa/ q_\kappa$-parabolic approximand
which is $\de$-close to $f$. 
Then  there exists $\bar s = \bar s(\kappa, \de )$ and  a point $\tl a=\tl a_{\tl f} $ in the repelling
crescent $\tl \CC^r$  of $0$ such
that  $\tl f^s (\tl a)\in  \Disc^l $ for some $s\leq \bar s$,
and this happens before the orbit of $\tl a$  passes through the collar 
$ A^{ l-\iota } $.
Moreover, the point $\tl a$ can be selected  
in the middle of  $\tl \CC^r$.%
\footnote{
 i.e., so that  $\dist (\tl a, \di \tl C^r) $ is comparable with $\diam \tl \CC^r$
(with  the constants depending only  on $N,\mu,K,l$ and $\kappa$)} 
\end{lem}

\begin{proof}
Let us first fix such a parabolic map $\tl f: \Om \ra \C$.
 Then  the crescent $\tl\CC^r$  contains a  point
  $\tl a$  that escapes  the domain $\Om $. 
(For otherwise, 
the union of the repelling and attracting petals would
form a neighborhood of $0$  on which
the family of iterates, $\{\tl f^n\}_{n=0}^\infty$, would be well defined and normal.)
 This forces  $\orb \tl a$ to pass through the 
quasidisk $ \Disc^l $ at some moment $s=s_{\tl f} $. 

If we fix $\kappa$ and $\de$, then we obtain a compact family of maps $\tl f$, 
and the fundamental crescent  $\tl\CC^r $ can be selected in a continuous way.
This allows us to make a continuous choice of $a_{\tl f}$,
which puts $\tl a$ in the middle of $\tl \CC^r$ and  makes the escaping time $s $ bounded.
\end{proof}

**************}

\sss{Expansion} 

For a perturbation $\tl f$ of a Siegel map $f$,
we will use notation 
$R^l_\Sp \tl f : X^l_+\cup X^l_-\ra \C$ for the corresponding
perturbation of the butterfly renormalization
$R^l_\Sp f$.  

Away from the Siegel disk,
Corollary \ref{expansion for Siegel maps} is robust under
perturbations:

\begin{lem}\label{expansion for perturbed maps}
Let $f: \Om \ra \C$ be a Siegel map of class $\Siegel (\bar N,\mu, K)$.  
For  any $\eps>0$
there exists   $\de= \de (\bar N,\mu, K; \eps) > 0 $  
with the following property.
Let  $\tl f: \Om \ra \C $  be a holomorphic map which  is $\de$-close
to $f$,   and  let $z\in X^l_+\cup X^l_-$  
be a point with the property that 
$R^l f (z)\in Y^l $ and $\dist (R^l f(z) , \bar S) \geq  \eps$.  
Then 
$$
     \| D( R^l f) (z) \|_\hyp \geq \rho >1 
$$ 
with $\rho$ depending only on $(\bar N, \mu, K)$ and $\eps$.
\end{lem}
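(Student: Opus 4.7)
The plan is to derive the perturbed expansion from the unperturbed analogue and then transfer it via continuous dependence of the finite iterate defining the butterfly renormalization.

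First I would establish the unperturbed version: for $f$ itself, if $z\in X^l_+\cup X^l_-$ with $R^l f(z)\in Y^l$ and $\dist(R^l f(z),\bar S)\geq \eps$, then $\|D(R^l f)(z)\|_\hyp\geq \rho_0(\bar N,\mu,K,\eps)>1$. By Theorem~\ref{bounds for Siegel pairs}, $R^l f: X^l_+\cup X^l_-\to Y^l$ is a conformal isomorphism with $X^l_\pm$ sitting inside $Y^l$ with definite modulus and bounded shape. The inverse branch $(R^l f)^{-1}:Y^l\to X^l_\pm$ is then a holomorphic embedding, and by the Schwarz lemma it contracts the hyperbolic metric of $Y^l$. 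The contraction factor at $w:=R^l f(z)$ is governed by the hyperbolic distance in $Y^l$ from $w$ to $\partial X^l_\pm$. By the complex bounds, the portion of $\partial X^l_\pm$ not lying on $\partial S$ stays at definite Euclidean distance from $w$ once $\dist(w,\bar S)\geq \eps$, while the portion $\partial X^l_\pm\cap\partial S$ is itself at distance $\geq \eps$ from $w$. These together yield a definite lower bound $\rho_0>1$ on the expansion factor.

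For the perturbation, I use continuous dependence of the finite composition $R^l \tl f = \tl f^{\,q_l}|_{X^l_\pm}$ on $\tl f$ in the uniform topology on $\Om$. Since $l$ is fixed, $q_l$ is a fixed integer; and by the complex bounds for $f$, the orbit of $X^l_\pm$ under the iterates composing $R^l f$ stays in a precompact subset of $\Om$. Hence, for $\tl f$ uniformly close to $f$ on $\Om$, $R^l \tl f$ is uniformly close to $R^l f$ on $X^l_\pm$, and Cauchy estimates transfer this closeness to first derivatives. Since the hyperbolic metric of $Y^l$ is intrinsic to $f$ (unchanged under the perturbation) and the images $R^l \tl f(z)$ lie in the compact set $\{w\in Y^l:\dist(w,\bar S)\geq \eps/2\}$ where that metric is uniformly comparable to the Euclidean one, we conclude that $\|D(R^l \tl f)(z)\|_\hyp\to\|D(R^l f)(z)\|_\hyp$ uniformly as $\tl f\to f$. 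Taking $\de$ small (depending on $\bar N,\mu,K,\eps$ and implicitly on $l$) gives the desired bound with $\rho:=\rho_0/2$.

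The main technical subtlety will be the behavior of the intermediate $q_l$ iterates near the critical point $c_0\in\partial S$: at such passages $\tl f$ is a branched covering, and small perturbations could in principle cause nearby orbits to diverge significantly. However, the hypothesis $\dist(R^l \tl f(z),\bar S)\geq \eps$ selects trajectories whose endpoint avoids the dangerous vicinity of $\partial S$; the $R^l f$-preimage of this $\eps$-far region is a compact subset of $X^l_\pm$ kept, by the complex bounds of Theorem~\ref{bounds for Siegel pairs}, at positive distance from the preimages of $c_0$ along the iterates. On this compact set the dependence $\tl f\mapsto R^l\tl f$ is uniformly continuous, which is exactly what the argument requires.
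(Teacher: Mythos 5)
The paper states this lemma without proof (it is presented as a remark that Corollary~\ref{expansion for Siegel maps} is ``robust under perturbations''), so there is no argument of the authors' to compare against. Your two-step strategy — first establish the unperturbed expansion, essentially Corollary~\ref{expansion for Siegel maps} rephrased with the hypothesis on the image point $R^l f(z)$ rather than on $z$, and then transfer the bound to $R^l_\Sp\tilde f$ via continuity of a finite composition of iterates on a precompact set — is the natural route, and it is correct in substance. Note also that since the hypothesis $\dist(R^l f(z),\bar S)\geq\eps$ is phrased in terms of the \emph{unperturbed} renormalization, it carves out a compact subset of $X^l_\pm$ that does not depend on $\tilde f$; this is what makes the precompactness argument clean, and it is worth stating this explicitly rather than drifting, as your last paragraph does, to a hypothesis on $R^l\tilde f(z)$.

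Two points could be tightened. First, in the unperturbed part you phrase the key estimate in terms of the Euclidean position of $w=R^l f(z)$, but the quantity that actually governs the expansion factor $\rho_{X^l_\pm}(z)/\rho_{Y^l}(z)$ is the position of the \emph{preimage} $z$ relative to $\di X^l_\pm\cap\di Y^l$ (the common boundary on $\di S$). The passage from ``$w$ is $\eps$-far from $\bar S$'' to ``$z$ is bounded hyperbolically away from $\di X^l_\pm\cap\di S$'' uses the bounded-shape and $\asymp$-estimates of Theorem~\ref{bounds for Siegel pairs}; spell this out, since it is the one place the $\eps$-hypothesis genuinely enters. Second, your claim of \emph{uniform} convergence $\|D(R^l\tilde f)(z)\|_\hyp\to\|D(R^l f)(z)\|_\hyp$ is too strong as stated near $\di Y^l\sm\di S$, where the hyperbolic density $\rho_{Y^l}$ blows up and a small Euclidean displacement of the image point can change it by a large factor. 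This does not break the proof, because in exactly that regime the unperturbed expansion $\rho_{X^l_\pm}(z)/\rho_{Y^l}(z)$ itself tends to infinity, so a one-sided (multiplicative) estimate of the form $\|D(R^l\tilde f)(z)\|_\hyp\geq (1-o(1))\,\|D(R^l f)(z)\|_\hyp$ on a slightly smaller compact set, combined with the unconditional lower bound $\rho_0$, still gives the result; but you should replace the uniform-convergence assertion with this one-sided estimate, since the former asks for more than is true.

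Finally, bear in mind that the lemma's conclusion as printed refers to $R^l f$; this must be a typo for $R^l_\Sp\tilde f$, the reading you adopt, since otherwise $\tilde f$ plays no role. Likewise, the constant $\de$ claimed to depend only on $(\bar N,\mu,K,\eps)$ tacitly uses that the condition $\dist(R^l f(z),\bar S)\geq\eps$ is vacuous once $l$ is large, so only finitely many levels $l$ (bounded in terms of $\eps$) are in play; you acknowledge this, and it is worth keeping visible.
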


In turn, it implies a perturbed version of Corollary \ref{expansion
  for Siegel maps-2}:

\begin{cor}\label{expansion for perturbed maps-2}
Let $f: \Om \ra \C$ be a Siegel map of class $\Siegel (\bar N,\mu, K)$.  
For  any $\eps>0$,
there exist $a>0$,   $\rho >1$, and $\de$
with the following property.
For any holomorphic map $\tl f: \Om \ra \C $   which  is $\de$-close
to $f$,  
if $z\in Y^m $,  $f^n z\in Y^{m-k}$ for some $n\in \N$,  $0< k< m$
(with $m-k > \ul $),  while 
$$
  \dist (f^i z , \bar S) \geq \eps,\quad  i=0,1,\dots,n,
$$
then 
$$
    \| Df^n (z) \|_\hyp \geq a \rho^k,
$$ 
where the norm is measured in the hyperbolic metric of  $\C\sm \bar S$.
\end{cor}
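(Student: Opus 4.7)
The plan is to reduce the corollary to an iterated application of the one-step perturbed expansion estimate of Lemma \ref{expansion for perturbed maps}, closely paralleling the proof of the unperturbed Corollary~\ref{expansion for Siegel maps-2}.

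First I would set up the combinatorial structure. As in the unperturbed argument, the key geometric observation is that the orbit segment $z, \tl f z, \ldots, \tl f^n z = w$ travelling from $Y^m$ down to $Y^{m-k}$ must pass through the middle of the butterfly domains $X^{m-i}_{\pm}$ for $\asymp k$ values of $i \in \{0,1,\dots,k\}$; equivalently, the returns of this orbit under the first-return maps $R^{m-i} f$ visit the ``central'' region of the Siegel pair at each corresponding level. This part is combinatorial and depends only on the nested structure $Y^m \Subset Y^{m-1} \Subset \cdots$ from Theorem~\ref{bounds for Siegel pairs}. Since $f$ and $\tl f$ agree up to a small $\de$-perturbation and only finitely many iterates are involved in each return (bounded by the complex bounds at each level), for $\de$ sufficiently small this combinatorial itinerary is preserved under $\tl f$.

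Next I would install the one-step expansion estimates. At each visit to $X^{m-i}_\pm$, the hypothesis $\dist(\tl f^j z, \bar S) \geq \eps$ keeps the relevant iterate $\eps$-away from the Siegel disk, which is precisely the condition required by Lemma~\ref{expansion for perturbed maps}. Thus each such visit contributes a factor $\rho_0 > 1$ (depending only on $\bar N, \mu, K, \eps$) of expansion in the hyperbolic metric of $Y^{m-i}$. Applying this at all $\asymp k$ visits and using the chain rule in the (varying) hyperbolic metrics of the nested domains, one obtains an expansion of order $\rho_0^k$.

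Finally I would convert this to expansion in the hyperbolic metric of $\C \setminus \bar S$. By Theorem~\ref{bounds for Siegel pairs}, the domains $Y^l$ have bounded shape around $\bar S$ and nested moduli are bounded below, so by the Schwarz lemma the densities $\rho_{Y^{m-i}}/\rho_{\C\setminus \bar S}$ are uniformly comparable on the ``central'' part of $Y^{m-i}$ where our orbit sits. Telescoping the density ratios along the orbit yields $\|D\tl f^n(z)\|_\hyp \geq a \rho^k$ with $a, \rho$ depending only on $(\bar N, \mu, K; \eps)$, exactly as in the proof of Corollary~\ref{expansion for Siegel maps-2}. The main obstacle is the uniformity of $\de$ in the depth $m$: one must verify that the $\de$ furnished by Lemma~\ref{expansion for perturbed maps} at a single level suffices simultaneously for all the butterfly returns that the orbit crosses. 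This is ensured by compactness of the class $\Siegel(\bar N, \mu, K)$ together with the fact that the one-step expansion is a property of the first-return map $R^{m-i} f$, whose relevant part is controlled uniformly in the level once complex {\it a priori} bounds are in force; hence a single $\de = \de(\bar N,\mu,K;\eps)$ handles every level.
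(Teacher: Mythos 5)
Your proposal follows exactly the route the paper intends: the unperturbed Corollary~\ref{expansion for Siegel maps-2} is deduced from the one-step expansion via the observation that the orbit passes through the middle of $\asymp k$ butterfly levels, and the perturbed version is meant to be obtained by replacing the one-step input with Lemma~\ref{expansion for perturbed maps}, whose $\de$ is already uniform in the level $l$. One small imprecision: you justify the combinatorial stability of the itinerary by ``only finitely many iterates are involved in each return,'' but those iterate counts $q_l$ grow exponentially with the level, so this is not the right reason; the stability actually comes from the robustness of the collar/trapping-disk and butterfly structure under $\de$-perturbation (as in Lemma~\ref{definite disc}), which keeps the orbit from jumping over intermediate levels regardless of how many iterates each return takes.
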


\sss{ Cylinder renormalization of polynomial maps}

To make the exposition more transparent, we will focus on the
stationary case when $\theta=\theta_N$ is a stationary rotation
number with   $N > \underline N$.
 Let $  \Bf = \Bf_\theta: z\mapsto e^{2\pi i \theta}z + z^2$   be the corresponding
Siegel quadratic polynomial,
and let $\tl\Bf=\Bf_{\tl \theta}$ be its polynomial  perturbation
(where $\tl\theta$ is not necessarily real).
By the Inou-Shishikura theory, all cylinder renormalization of $\Bf$
are well defined and belong to the IS class:
\begin{equation}\label{m-2 kappa}
    f_i   =   R_\Sieg^i  (\Bf )\in \IS_\theta ,\quad i = 1, 2, \dots
\end{equation}
 Moreover, for any $n$,
if $\tl\theta$ is sufficiently close to $\theta$, 
then the same is true for the first $n$ cylinder
renormalizations of $\tl \Bf$.
In this case, we let
\begin{equation}\label{tl f}
    \tl f_i  =  R_\Sieg^i  (\tl \Bf )\in \IS_{G_*^i  \tl \theta} ,
      \quad  i = 1, 2, \dots, n, 
\end{equation}
where $ G_*: \gamma \mapsto -1/\gamma\ \mod \Z$ 
is the modified and complexified Gauss map. 


Theorem \ref{hyp fixed point} and  its Corollary
\ref{Siegel renorm of famiies} provide us with a good
control of the maps $\tl f_i $:

\begin{lem} \label{shadowing}
There exist positive  $\mu,  K, \eps_0, C$,  and $\rho>1$ 
depending  only   on $N$
such that:

\ssk \nin $\bullet$ $f_i \in \Siegel( N, \mu, K), \quad i=0,1,\dots$;

\ssk \nin $\bullet$ 
For any $\gamma\in \C$ which is $\eps_0$-close to $\theta$  and any  $n \in \N$, there exists a
unique $\tl \theta$ such that the cylinder renormalizations $\tl f_i $,
$i =0,1,\dots, n$, 
are well defined, and $\tl f_n $ has
complex rotation number $\gamma$; 

\ssk \nin $\bullet$
  $$
   \dist (f_i , \tl f_i) \leq  C\dist (f_n, \tl f_n)  \, \rho^{-(n-i)}, \quad
   i =0,1,\dots, n. 
  $$

\ssk \nin $\bullet$ The Siegel maps $f_i$ converge to the Siegel renormalization
fixed point $f_\infty$, while the nearby maps $\tl f_i $ converge to a  map
$\tl f_\infty$ in the unstable manifold $\WW_\Sieg (f_\infty)$.   
\end{lem}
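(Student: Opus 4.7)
The plan is to deduce the lemma from the hyperbolicity of $f_\infty$ as a fixed point of $R_\Sieg$ (Theorem~\ref{hyp fixed point}), combined with the fact that the quadratic family $\gamma \mapsto \Bf_\gamma$ furnishes a holomorphic transversal through $\Bf_\theta$ to the stable manifold $\IS_N$ (Corollary~\ref{Siegel renorm of famiies}). The first bullet will follow by inspection: since $f_i = R_\Sieg^i(\Bf_\theta)$ converges to $f_\infty \in \IS_N$ exponentially fast, the $f_i$ eventually lie in any preassigned neighborhood of $f_\infty$; by Proposition~\ref{compact S} applied uniformly on this neighborhood, together with the compactness of $\overline\IS$, I would obtain the required $\mu, K$ bounds, absorbing the finitely many initial terms into the constants.

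For the second bullet, I would use that under $R_\Sieg$ the rotation number transforms by the modified complex Gauss map $G_*$, which is uniformly expanding on a neighborhood of $\theta = \theta_N$ with expansion factor of order $N^2$. Since $\IS_N$ has complex codimension one in the Banach space $\BB$ of bounded holomorphic functions on $\Dom f_\infty$, and the quadratic family is transverse to $\IS_N$ at $\Bf_\theta$, a standard implicit-function argument would show that, as long as $\gamma$ is $\eps_0$-close to $\theta$, one can pull $\gamma$ back $n$ times under $G_*$ staying inside this neighborhood; thanks to expansion, this lifts uniquely to a choice of $\tilde\theta$ such that all intermediate complex rotation numbers remain small and the cylinder renormalizations $\tilde f_i$ are well defined for $i = 0, \dots, n$, with $\tilde f_n$ having complex rotation number $\gamma$.

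The third bullet, the exponential shadowing estimate, is the main technical content and should follow from a $\lambda$-lemma at the hyperbolic fixed point $f_\infty$. By Theorem~\ref{hyp fixed point}, $DR_\Sieg(f_\infty)$ splits $\BB$ into the codimension-one stable subspace tangent to $\IS_N$ and a one-dimensional unstable direction with multiplier $\lambda = G_*'(\theta)$, $|\lambda|>1$. On a small neighborhood of $f_\infty$ the operator $R_\Sieg$ would admit a cone-field splitting, and decomposing the deviation $\tilde f_i - f_i$ accordingly, its unstable component is recovered from time $n$ by reversing the expansion, while its stable component is controlled by forward contraction. Choosing any $\rho \in (1, |\lambda|)$ will absorb the nonlinear remainders over the entire orbit segment (which stays in the neighborhood by the first two bullets) and yield
\[
\dist(f_i, \tilde f_i) \leq C \dist(f_n, \tilde f_n)\, \rho^{-(n-i)}.
\]

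The final bullet is an immediate by-product: $f_i \to f_\infty$ by Theorem~\ref{hyp fixed point}, and the shadowing estimate combined with the $\lambda$-lemma will show that the $\tilde f_i$ accumulate on the graph of $\WW_\Sieg(f_\infty)$ over the unstable parameter. The hard part will be the uniform control of the nonlinear remainder of $R_\Sieg$ along a long orbit segment, but this is precisely what the Inou--Shishikura compactness together with the exponential convergence part of Theorem~\ref{hyp fixed point} supplies, so the argument amounts to packaging classical hyperbolic dynamics into the present Banach setting.
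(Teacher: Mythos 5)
Your proposal is correct and follows the same route the paper intends: the paper states Lemma~\ref{shadowing} without further proof, introducing it with the words ``Theorem \ref{hyp fixed point} and its Corollary \ref{Siegel renorm of famiies} provide us with a good control of the maps $\tilde f_i$,'' so the entire content is exactly the hyperbolic-fixed-point / $\lambda$-lemma packaging you describe, with the quadratic family as a transversal to the codimension-one stable manifold $\IS_N$ and the Gauss-map expansion driving the unstable direction. You have merely made explicit what the authors leave implicit.
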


\comm{****
\sss {Safe trapping disks}

\begin{lem}\label{safe trap disks}
  For any natural $j\in [l, m-\kappa+l]$, the map $\Bf_*$ has a collar
  $\BA^j$ and a trapping disk
$\BD^j\ni \Bal^l$ satisfying the properties of  
 Lemma \ref{definite disc}.  In particular, 
$\BD^j$ is disjoint from 
$\BD^{j-\iota}$ for any $j\in [l,m-\kappa+l-\iota]$, where $\iota=\iota(N)$ .  
\end{lem}

\begin{proof}
Let $\de<\de_0$ and $\iota$ be  as in Lemma \ref{definite disk}.
  Let us consider the Siegel renormalizations $R_\Sieg^{m-\kappa-s\iota} \Bf_*$ with
 $k\geq \kappa$, $s=0,1,\dots, [(m-kappa)/\iota]$. 
Theorem   ...ensures that these renormalizations
stay  $\de$-close to the corresponding renormalizations 
 $R_\Sieg^{m-\kappa- s\iota} \Bf $  of the Siegel polynomial $\Bf$. 
Hence for each  $R_\Sieg^{m-\kappa- s\iota} \Bf_*$,  and $j\in [l, l-\iota]$, 
 Lemma \ref{definite disk}
provides us with collars a $A^j (R_\Sieg^{m-\kappa-s\iota} \Bf_*)$ and trapping
disks  $D^l(R_\Sieg^{m-\kappa-s\iota} \Bf_*)$.   
 
\end{proof}
***********}

\sss{Parabolic approximand $\tl\Bf$}
We will now specialize a perturbation $\tl\Bf=\Bf_{\tl \theta}$ of the 
Siegel polynomial $\Bf=\Bf_\theta$.
Take two natural numbers $\kappa< m$.
Let $\tl\theta= p_m/q_m$  be  the (modified)  continued fraction
approximand to $\theta$, so that $\tl \Bf$ is the parabolic quadratic
polynomial with rotation number $p_m/q_m$ at $0$.
It is  $m$ times cylinder  renormalizable with all the
renormalizations $\tl f_i = R_\Sieg^j \tl\Bf$, $i\geq 1$,  in the IS class.
Moreover, $f_i $ is parabolic with rotation number $p_{m-i }/q_{m-i }$ at
$0$.  We will consider the maps 
\begin{equation}\label{f_m}
       f_{m-\kappa} =   R_\Sieg^{m-\kappa}  (\Bf ) \in
     \IS_\theta, \quad
     \tl f_{m-\kappa} =   R_\Sieg^{m-\kappa}  (\tl \Bf ) \in \IS_{p_{\kappa}/ q_{\kappa}},
\end{equation}
and their limits $f_\infty$ and $\tl f_\infty$. 
To simplify notation, we will often skip the subscript $m-\kappa\in \bar \N$ letting
$$
    f  \equiv  f_{m-\kappa},  \quad \tl f  \equiv \tl f_{m-\kappa},\ m\in \bar \N_\kappa. 
$$

By Lemma \ref{shadowing}, $\tl f$ is $\de$-close to 
$f: \Om\ra \C$ for $\kappa$ big enough, so Lemma
\ref{definite disc} is
applicable, providing us with  the trapping discs $ D^l $
and the collars $A^l$.

\comm{*****
\begin{lem}\label{delta-close}
For any $\eps>$ there exists $\underline{\kappa}$
such that for any natural $\kappa>\underline{\kappa}$ 
and any $m>\kappa$, we have:
$  \dist (f_{m-\kappa} , \tl f_{m-\kappa}) < \eps$, $m\in \bar \N$.
\end{lem}

\begin{proof}
Let $\BFF$ be the quadratic family.
Then $R_\Sieg (\BFF)$ is a family of IS class. 
   By Corollary \ref{Siegel renorm of famiies}, 
its renormalizations $R^{m-\kappa} \BFF$ belong to a compact space of
families.  
Since $p_\kappa/ q_\kappa\to \theta$ as $\kappa\to \infty$,
where the $p_\kappa/q_\kappa$ are the rotation numbers  of the 
$\tl f_{m-\kappa}\in R_\Sieg^{m-\kappa} (\BFF) $,
we conclude that 
$$
\dist(\tl f_{m-\kappa}, f_{m-\kappa} ) \to 0 \ \mathrm{as}\  
\kappa\to \infty \quad
{\mathrm{uniformly\ in}} \ m\in \N.
$$
Since the families $R^{m-\kappa}_\Sieg (\BFF) $ converge to the unstable
manifold $\WW^u(f_\infty)$ (Corollary \ref{Siegel renorm of famiies}), the assertion is also
valid for $m=\infty$.
\end{proof}
**********}

%

\sss{Transit from $\tl\CC^r$ to $\tl \alpha^l$}

For the parabolic map $\tl f=\tl f_{m-\kappa}$,
we let:

\ssk\nin $\bullet$
$\tl \CC^r$  be its the repelling crescent;

\ssk\nin $\bullet$
 $ \tl \De^\kappa$ be the domain of the renormalization change of
 variable $\tl \pi_\kappa$, see \S \ref{telescope};

\begin{lem}\label{point in petal}
For any consecutively selected $N$ and $l$, 
there exists $\underline \kappa$ such that  for any natural
 $m> \kappa> \underline \kappa$,
 the parabolic map $ \tl f= \tl f_{m-\kappa} $ has the following property.
There exists $\bar s = \bar s(N,l,\kappa )$ and  a point $\tl a\in \tl
\CC^r \cap \tl \De^\kappa$
such that  $\tl f^s (\tl a)\in  \tl \Disc^l $ for some $s\leq \bar s$,
and this happens before the orbit of $\tl a$  passes through the collar 
$ A^{ l-\iota } $, where $\iota=\iota(N)$.
Moreover, the projection $\tl \pi_\kappa (\tl a) $ lies
in the middle of  the repelling crescent $ \CC^r(\tl f_m)$,
with a constant depending on $\kappa$ but independent of $N$ and $l$.
\end{lem}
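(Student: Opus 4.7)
The plan is to construct $\tl a$ by lifting a base point $b$ chosen in the middle of the repelling crescent $\CC^r(\tl f_m)$ of the deepest parabolic renormalization $\tl f_m = R_\Sieg^\kappa \tl f \in \overline\IS_0$ back through the change of variable $\tl\pi_\kappa$, and then to translate the bounded-time escape of $b$ under $\tl f_m$ into a corresponding escape of $\tl a$ under $\tl f$ terminating in $\tl D^l$. To this end I first pick $b$ in the middle of $\CC^r(\tl f_m)$: by Lemma \ref{Davoud}(v) and the compactness of $\overline\IS$, the fundamental crescents $\CC^r(\tl f_m)$ have bounded geometry and depend continuously on $\tl f_m$, so this choice can be made with constants depending only on $\kappa$. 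I then set $\tl a := \tl\pi_\kappa^{-1}(b)$, taking the inverse branch that places $\tl a$ inside $\tl\CC^r$; such a branch exists because $\tl\pi_\kappa$ equivariantly conjugates a first-return iterate of $\tl f$ on $\tl\De^\kappa$ to $\tl f_m$ (Lemma \ref{Davoud}(iii)), so the piece of $\tl\De^\kappa$ projecting to $\CC^r(\tl f_m)$ sits precisely in $\tl\CC^r$.

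Next I would show that the forward orbit of $b$ under $\tl f_m$ leaves any prescribed bounded neighborhood of $0$ within $s_0 = s_0(\kappa) = O(1)$ iterates: this follows because $b$ has bounded height in the repelling Fatou coordinate, and a further application of Lemma \ref{expansion for perturbed maps} (using the $O(1)$-closeness of $\tl f_m$ to $f_\infty$ given by Lemma \ref{shadowing}) ensures that after $O(1)$ more iterates the orbit sits at definite hyperbolic distance from $S_{f_\infty}$ in $\tl f_m$'s plane. Via the equivariance of $\tl\pi_\kappa$, each single iterate of $\tl f_m$ on $b$ lifts to a block of at most $O(q_\kappa)$ iterates of $\tl f$ on the orbit of $\tl a$, so after a total of $O(q_\kappa s_0)$ iterates the orbit of $\tl a$ exits the filled necklace $\Om^\kappa_{\tl f}$ into an outer scale of $\tl f$'s hierarchy, at depth strictly less than $l$ (since $\kappa > l$ in our selection order).

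From that moment the orbit is forced to descend through the nested collars $A^{l'}$ in decreasing order of $l'$, because property (A2) of Lemma \ref{definite disc} forbids jumping across any collar in a single step. It therefore enters $A^l$ after $O(q_{l+1})$ further iterates. Property (v) of the same lemma then supplies a moment $k < q_{l+1}$ at which $\tl f^k(\tl a)$ lies in the middle of $\tl D^l$, with the intermediate orbit contained in $\Comp_0(A^{l-2\iota}) \setminus D^{l-\iota}$; in particular the orbit never meets $A^{l-\iota}$ before landing in $\tl D^l$. The total hitting time is then bounded by $\bar s(N,l,\kappa) = O(q_\kappa s_0) + O(q_{l+1})$, and the ``middle of $\CC^r(\tl f_m)$'' conclusion for $\tl\pi_\kappa(\tl a) = b$ is built in from the initial choice.

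The hard part will be the scale-by-scale descent in the second paragraph: verifying that once the orbit of $\tl a$ leaves $\Om^\kappa_{\tl f}$ it is truly positioned at a scale of $\tl f$ smaller than $l$, rather than trapped in some intermediate parabolic gate at an intermediate level $l < l' < \kappa$ of $\tl f$'s renormalization telescope. This rests on combining the shadowing estimate of Lemma \ref{shadowing} — exponentially accurate between $\tl f_i$ and $f_i$ at intermediate levels, since those intermediate renormalizations are still far from parabolic — with the confinement of the postcritical set given by Corollary \ref{confinement of PP}, which keeps $\tl a$'s orbit safely away from the trouble zones attached to the critical orbit at every intermediate scale.
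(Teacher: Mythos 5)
Your proposal misses the argument the paper uses to establish that $\tl a$ escapes, and what you propose in its place has genuine gaps. The paper works directly at the level of $\tl f$: after lifting a \emph{truncated} crescent $\CC^r_\tr(\tl f_m):=\CC^r(\tl f_m)\cap \D_\eps$ (with $\eps$ chosen, via the horn-map property (C2) of \S\ref{parabolic sec} and compactness of $\overline{\IS}_0$, small enough that $\CC^r_\tr(\tl f_m)$ lies inside an attracting crescent of $\tl f_m$) under $\tl\pi_\kappa$ to a truncated crescent $\tl\CC^r_\tr$ for $\tl f$, it shows that \emph{some} point $\tl a\in\tl\CC^r_\tr$ escapes $\Om_{\tl f}$ by a normality contradiction: if no point escaped, the attracting and repelling petals of $\tl f$ would form a neighborhood of $0$ on which $\{\tl f^n\}$ is well defined and normal, which is impossible at a parabolic point. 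Lemma \ref{definite disc} then forces the orbit of $\tl a$ to hit $\Disc^l$ before crossing $A^{l-\iota}$. Boundedness of $s$ and the centering (of $\tl a$ in $\tl\CC^r_\tr$, hence of $\tl\pi_\kappa(\tl a)$ in $\CC^r_\tr(\tl f_m)$ by bounded distortion of $\tl\pi_\kappa$ on $\tl\De^\kappa$) come \emph{a posteriori} from compactness of $\overline{\IS}_{p_\kappa/q_\kappa}$ at fixed $\kappa$, not from an a priori choice of $b$ as in your construction.

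Your substitute route --- escape of $b$ under $\tl f_m$ followed by lifting --- has three gaps. First, a point chosen in the middle of $\CC^r(\tl f_m)$ need not escape a bounded neighborhood of $0$ under $\tl f_m$: the horn map returns a large part of the repelling crescent into the attracting petal, where the orbit then converges to $0$, so existence of an escaping point is exactly the nontrivial content, and the normality argument is what supplies it. Second, the ``$O(1)$-closeness of $\tl f_m$ to $f_\infty$'' you invoke is false: $\tl f_m$ is parabolic with multiplier $1$, while $f_\infty$ is a Siegel map with irrational rotation number $\theta$; the shadowing estimate of Lemma~\ref{shadowing} degrades to $O(1)$ exactly at the top level $n=m$. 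Third, lifting an escaping orbit of $b$ under $\tl f_m$ back through $\tl\pi_\kappa$ is only meaningful while the orbit stays inside the telescope; once it exits $\Om_{\tl f_m}$ the equivariance of $\tl\pi_\kappa$ no longer applies, so the lifting argument is circular. The scale-by-scale descent you flag as the hard part is unnecessary once escape of $\tl a$ under $\tl f$ is in hand: Lemma~\ref{definite disc} (properties (A2) and (v)) handles the passage to $\Disc^l$ in a single step.
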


\begin{proof}
The range  $\tl \pi_\kappa (\tl\De^\kappa) $ contains an annulus 
$\{ \eps<  |z|< r \} $ 
with a definite $r$ and a small%
\footnote{
How small it is depends on  the truncation level $t$ defining the domains
$\De^\kappa$, see \S \ref{telescope}. }
$\eps$, slit along the straight ray $i\R_-$.
Moreover, 

\ssk\nin $\bullet$
The ray does not intersect the repelling
crescent $\CC^r(\tl f_m)$ (since  the crescent is contained in 
the $\R_+$-symmetric wedge  of size $\pi/2$);

\ssk\nin $\bullet$
$\eps$ is so small that the truncated crescent 
$$
  \CC^r_\tr  (\tl f_m)  := \CC^r(\tl f_m)\cap \D_\eps
$$
is contained in an  attracting crescent  of $\tl f_m$ 
(by property (C2) of \S \ref{parabolic  sec} and  compactness of $\bar \IS_0$). 

\ssk
The above truncated crescent lifts under $\pi_\kappa$ 
to  a truncated  crescent $\tl\CC^r_\tr$ for $\tl f$.
The latter   contains a  point $\tl a$  that escapes  the domain $\Om $. 
(For otherwise, 
the union of the repelling and attracting petals would
form a neighborhood of $0$  on which
the family of iterates, $\{\tl f^n\}_{n=0}^\infty$, would be well defined and normal.)
By Lemma \ref{definite disc},  this forces  $\orb \tl a$ to pass through the 
trapping disk $ \Disc^l $ at some moment $s $ before it passes through
the collar $A^{l-\iota}$ with $\iota=\iota(N)$.

If we fix $\kappa$, then we obtain a compact family of maps
$\tl f\in \bar \IS_{p_\kappa/q_\kappa}$, 
and the fundamental crescent  $\tl\CC^r $ can be selected in a locally
continuous way.
This allows us to make a locally continuous choice of $\tl a_{\tl f}$,
which, by compactness,  
makes the escaping time $s $ bounded and 
puts $\tl a$ in the middle of $\tl \CC^r_\tr$.
Since $\tl \pi_\kappa$ has a bounded distortion on $\tl \De^\kappa$, 
this puts  $\tl \pi_\kappa(\tl a)$ in the middle of 
$\CC^r_\tr (\tl f_m)$.  
\end{proof}

\sss {Pullback of $ \Disc$}

\begin{lem}\label{univalent pullback}
For any consecutively selected $N$ and $l$, 
there exists $\underline \kappa$ such that  for any natural
 $m> \kappa> \underline \kappa$
 the parabolic map $ \tl f_{m-\kappa} $ 
 has the properties of  Lemma~\ref{point in petal}, and 
 the trapping disc $ D=  D^l$ 
can be univalently and with bounded distortion pulled back
to $\tl a$ along the orbit $\{\tl f^i \tl a\}_{i=0}^{  s}$.
Moreover, the whole  pullback $\{ \tl D_{-k} \}_{k=0}^s$
is contained in 
$\Comp_0   (\C\sm A^{l-\iota } )   $  
for some $\iota=\iota(N) $, 
while the last domain  $\tl D_{-s}$ is contained in the repelling crescent
$\tl \CC^r$.  
\end{lem}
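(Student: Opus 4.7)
The plan is to construct the pullback sequence $\{\tl D_{-k}\}_{k=0}^s$ inductively, verifying univalence through confinement of the postcritical set and controlling the geometry via hyperbolic contraction together with the Koebe distortion principle.

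First, following property (D4) of Proposition \ref{disk D for Siegel} (transferred to $\tl f$ via Lemma \ref{definite disc}), I would enlarge $D^l$ to a slightly larger quasidisk $D^l_1 \Supset D^l$ with $D^l_1 \Subset \Om \sm \bar S$ and a definite $\mod(D^l_1 \sm D^l) \geq \nu(N) > 0$. This enlarged disk serves as Koebe space. Define inductively $\tl D_{-k}$ and $\tl D^1_{-k}$ as the connected components of $\tl f^{-1}(\tl D_{-k+1})$ and $\tl f^{-1}(\tl D^1_{-k+1})$ containing $\tl f^{s-k}(\tl a)$. The crux is to verify at each step that (a) the inverse branch is univalent on $\tl D^1_{-k+1}$, and (b) $\tl D^1_{-k} \subset \Comp_0(\C \sm A^{l-\iota})$.

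For univalence, the critical value $\tl f(c_0)$ must lie outside the pullback domain. By Corollary \ref{confinement of PP}, for $\kappa$ sufficiently large (depending on $N$, $l$) the postcritical set $\tl \OO$ of $\tl f = \tl f_{m-\kappa}$ is confined to an arbitrarily small neighborhood of $\bar S_f$, uniformly in $m$. Meanwhile, Lemma \ref{point in petal} together with property (v) of Lemma \ref{definite disc} guarantees that $\{\tl f^i(\tl a)\}_{i=0}^{s-1}$ lies in $\Comp_0(\C \sm A^{l-\iota/2}) \sm A^{l-\iota}$, hence definitely away from $\tl \OO$. Applying Corollary \ref{expansion for perturbed maps-2} along this orbit yields a definite hyperbolic expansion rate $\rho > 1$ (in the hyperbolic metric of $\Om \sm \bar S$), so each pullback $\tl D^1_{-k}$ has hyperbolic diameter bounded by $C\rho^{-k}$. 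Choosing $\iota = \iota(N)$ large enough that this hyperbolic ball fits within the buffer between $A^{l-\iota/2}$ and $A^{l-\iota}$, assertion (b) follows, and since this keeps $\tl D^1_{-k}$ disjoint from the small neighborhood of $\bar S$ containing $\tl \OO$, the inverse branch is univalent. Bounded distortion of $\tl f^s : \tl D_{-s} \to D^l$ is then immediate from the Koebe distortion theorem applied to the annulus $\tl D^1_{-s} \sm \tl D_{-s}$, whose modulus is bounded below by $\mod(D^l_1 \sm D^l)$ via Gr\"otzsch.

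For the final containment $\tl D_{-s} \subset \tl \CC^r$: by Lemma \ref{point in petal}, the projection $\tl \pi_\kappa(\tl a)$ lies in the middle of the truncated crescent $\CC^r_\tr(\tl f_m) \Subset \CC^r(\tl f_m)$. Since $\tl \pi_\kappa$ is univalent with bounded distortion on $\tl \De^\kappa$ (\S\ref{telescope}) and $\tl D_{-s}$ has Euclidean diameter $O(\rho^{-s})$, for $\kappa$ sufficiently large relative to $N$ and $l$ the pullback $\tl D_{-s}$ fits inside $\tl \CC^r$. The main technical obstacle lies in the inductive step: one must simultaneously absorb the shrinking hyperbolic diameter of $\tl D^1_{-k}$ into the collar buffer and keep $\tl D^1_{-k}$ separated from $\tl \OO$. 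This is handled by a two-scale choice, first fixing $\iota$ large depending on $N$ to secure a definite hyperbolic buffer between $A^{l-\iota/2}$ and $A^{l-\iota}$, then fixing $\kappa$ large depending on $(N, l, \iota)$ so that Corollary \ref{confinement of PP} confines $\tl \OO$ well inside $A^{l-\iota/2}$, which keeps both requirements robust throughout the $s \leq \bar s(N, l, \kappa)$ pullback steps.
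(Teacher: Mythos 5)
There is a genuine gap in the middle of the argument: you try to execute the entire pullback directly in the $\tl f$-plane by invoking hyperbolic expansion in $\Om \sm \bar S$ (Corollary \ref{expansion for perturbed maps-2}). But that corollary requires $\dist(\tl f^i z, \bar S) \geq \eps$ along the whole orbit, and the orbit $\{\tl f^i \tl a\}_{i=0}^{s}$ does \emph{not} satisfy this: the point $\tl a$ sits in the repelling crescent $\tl\CC^r$, which is near the parabolic fixed point $0$ deep inside the virtual Siegel disk $S_f$, and the orbit must emerge through the parabolic gate and cross (or pass arbitrarily close to) $\di S_f$ before it ever reaches $D^l$. In particular the assertion that the orbit stays in a fixed thin collar $\Comp_0(\C\sm A^{l-\iota/2})\sm A^{l-\iota}$, ``hence definitely away from $\tl\OO$,'' is false: $\tl\OO$ accumulates on $\di S_f$, and the orbit spends a long initial segment in exactly that region. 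Lemma \ref{point in petal} only promises that the orbit does not go \emph{beyond} $A^{l-\iota}$; it does not keep the orbit off $\bar S$ or off $\tl\OO$, and the lemma's conclusion itself only claims containment of the pullback in $\Comp_0(\C\sm A^{l-\iota})$, which includes a full neighborhood of $\bar S$. So the Koebe/expansion machinery you use breaks down exactly where it is needed.

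The paper's proof gets around this by splitting the pullback into two regimes. First it pulls $D$ back only over the tail $\{\tl a_{s-n}\}_{n=0}^{k}$, which stays outside $\bar S$ and near the collar structure, so the Siegel-plane expansion (Corollary \ref{expansion for Siegel maps-2}) applies and shrinks $\tl D_{-k}$ to small hyperbolic size in $\Om\sm\tl\OO$, landing near $c_0$. It then transfers to the quadratic polynomial $\tl\Bf$-plane through the renormalization change of variable $\tl\pi_{m-\kappa}$: there the dynamics is globally defined, so the remaining pullback to $\tl\Ba$ is automatically univalent, and hyperbolic contraction in $\C\sm\tl\BO$ (the \emph{polynomial} postcritical set, via the McMullen-type Schwarz Lemma argument of \S\ref{expansion sec}) keeps the pulled-back disk small, which is how the paper shows the pullback fits inside $\tl\BDe^m$ and, after applying $\tl\pi_\kappa$, inside the repelling crescent $\CC^r(\tl f_m)$. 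Without this detour through the polynomial plane there is no mechanism to control the pullback through the parabolic gate, so as written your proof does not close. If you want to rescue your outline, you need to interleave the lift by $\tl\pi_{m-\kappa}$ at the moment the pulled-back disk reaches a neighborhood of $c_0$, perform the remaining contraction in $\C\sm\tl\BO$, and only then push down by $\tl\pi_\kappa$; the modulus bookkeeping you set up with $D^l_1\supset D^l$ then survives because all these changes of variable are conformal.
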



\begin{proof} 
By Proposition \ref{confinement of PP},  for  $\kappa$ big enough,
 the postcritical set $\tl \OO$ of $\tl f$ stays close to $S=S_f$. Since
 $\Disc $ is
 contained  well inside $\Om \sm \bar S$, it is also contained well inside
 $ \Om  \sm \tl \OO$.  So it has a bounded hyperbolic diameter in
 $ \Om \sm \tl \OO $. 

Let us consider the parabolic map
$\tl f_m= R^m_\Sieg \tl\Bf = R^\kappa_\Sieg (\tl f_{m-\kappa}) $
with multiplier $1$ at the origin.    
By Lemma  \ref{point in petal},  
there is an escaping  point  $\tl a$ in  $\tl \De^\kappa$
such that $\tl\pi_\kappa(\tl a)$ lies in the middle of the repelling
crescent   $\CC^r  ( \tl f_m ) $,  while $\tl a_s \equiv\tl f^s (\tl a)$ lands in
 $D=D^l$. 

 Corollary \ref{expansion for Siegel maps-2} implies that
 for any $\eps>0$, if  $\kappa$ is sufficiently big,
there is  $k\leq  s$ such that

\ssk\nin (i)
 $ \tl a_{s-k} $ is $\eps$-close to the critical point  $\tl c_0$;

\ssk\nin (ii) 
 $D$ can be univalently pulled back along the orbit 
$\{ \tl a_{s-n}  \}_{n=0}^k$; let $\tl D_{-n}$ denote the corresponding disks;

\ssk\nin (iii) 
The hyperbolic diameter of $\tl D_{-k}$ in  $\Om \sm \tl\OO$ is less than $\eps$. 

\ssk
The last property  allows us to  enlarge $\tl D_{-k }$ to a  disk 
$ \tl D_{-k }' \Subset \Om\sm \tl \OO$ such that
\begin{equation}\label{pair of disks}
\mod (\tl D_{-k }' \sm \tl D_{-k} ) > \mu,  \quad
  \diam_\hyp \tl D_{-k}' < \eps,
\end{equation}
where  $\mu = \mu (\eps) \to \infty$ as $\eps\to 0. $
%

Properties (i) and (iii) ensure that  $\tl a$ lies in the range of the 
renormalization change of variable $\tl \pi_{m-\kappa}$, so 
it can be lifted  to a point $\tl \Ba$ in the domain 
$\tl \BDe^{m-\kappa}$ of $\tl \pi_{m-\kappa}$. 
By equivariance of $\tl \pi_{m-\kappa}$,
there exist moments $\Bs$ and $\Bs-\Bk$ such that points
$\tl \Ba_\Bs $ and $\tl \Ba_{\Bs-\Bk}$ belong to $\tl \BDe^{m-\kappa}$ 
and project by $\tl \pi_{m-\kappa}$ to $\tl a_s$ and $\tl a_{s-k}$ respectively. 

Moreover,  the  disks $\tl D\ni \tl a_s $ and $\tl D_{-k}' \supset \tl D_{-k}
\ni  a_{s-k} $ lift by $\tl \pi_{m-\kappa}$ to disks $\tl \BD\ni \tl \Ba_\Bs$
and   $ \tl \BD_{-\Bk}' \supset  \tl \BD_{-\Bk} \ni \tl \Ba_{-\Bk} $ in $\C\sm \tl\BO$.
Since $\tl\Bf$ is a global polynomial map, the disks 
$\tl \BD_{-\Bk}' \supset \tl \BD_{-\Bk}$ can
be further pulled back to  disks 
${\tl \BD_{-\Bs} }'  \supset \tl \BD_{-\Bs} \ni \Ba_{-\Bs} $
in $\C\sm \tl \BO$.

As we know (see \S \ref{expansion sec}), 
this pullback contracts the hyperbolic diameter in
$\C\sm \tl \BO$. Since $\BD_{-\Bk}' $ has a small hyperbolic diameter,
see (\ref{pair of disks}), 
so does $\tl \BD_{-\Bs}'$.
Hence it has a small  Euclidean diameter
compared with $\dist( \tl\Bc_{-\Bs}, \tl\Bc_{q_m -\Bs} ) $,  
\note{$q_m$, center ?} 
where $\tl\Bc_{-\Bs} $ is the center of $\tl \BDe^m$.  
On the other hand, $\diam \tl\BDe^m$ is comparable with the latter
distance,
and we conclude that $\tl \BD_{-\Bs}' \subset \tl \BDe^m$. 

We can now apply to $\tl\BD_{-\Bs}'\supset \tl\BD_{-\Bs}$ the renormalization change of variable
$\tl \pi_{m-\kappa}$ to obtain disks $\tl D_{-s}'\supset \tl D_{-s} \ni \tl a$ in
$\tl \De^\kappa\sm \tl \OO$ which are univalent pullbacks of the disks
$\tl D_{-k}'\Supset \tl D_{-k}$. 
Moreover, the change of variable $\tl \pi_\kappa$ is well defined on these
disks,  and
$$
    \mod ( \tl \pi_\kappa (\tl D_{-s}' ) \sm  \tl \pi_\kappa (\tl D_{-s}))  =    
   \mod  (\tl D_{-s}'\sm \tl D_{-s})  =  \mod  (\tl D_{-k}'\sm \tl D_{-k}) 
\geq \mu,
$$
with a  big $\mu$, see (\ref{pair of disks}).
Hence the hyperbolic diameter of $\tl \pi_\kappa (\tl D_{-s} ) $ in $\Om\sm
\tl \OO(\tl f_m) $ is small.
Since $\tl \pi_\kappa ( \tl a ) $ lies in the middle of the repelling crescent of
$\tl f_m $,  the disk $\tl\pi_\kappa (\tl D_{-s} ) $ lies  inside the crescent.
\comm{
**************
Since $\{\tl f^i \tl a\}_{i=0}^{ s} \subset \Comp_0 (\C\sm  A^{ l- 2\iota } ) $,
it is contained in some $\tl \Om^{l- \nu }$,  
where $\nu =\nu (N) $ is independent of $l$ and $\kappa$.
By equivariance of the renormalization change of variable, 
 the backward orbit  $\tl a_{ s-k}$, $k=0,1,\dots s$,  lifts to 
a backward orbit 
 $$
    \{\tl \Bf^{ \Bs -\Bk} \tl \Ba \}_{\Bk\in \mathbf K} \subset 
 \tl    \BOm^{m-\kappa+l   -\nu }, 
$$
where $\mathbf K\subset \N$ is the set of $s+1$  moments corresponding
to $k=0,1,\dots, s$.

Since the disk $\Disc $ is close to $\tl c_\circ$ for $l$ big enough,
it is contained in the range of the change of variable
$\tl\Pi_{m-\kappa}$ (\ref{Pi_m}). Let $\tl\BD $ be its lift under $\tl\Pi_{m-\kappa}$.
The disk $\tl\BD$ has a bounded hyperbolic diameter in 
$\C\sm  \tl{ \mathbf{ O}}$  (where $\tl {\mathbf{ O}} =  \OO_{\tl \Bf}$).
and hence it can be univalently and with bounded distortion  pulled
back along  the  orbit 
 $\{ \tl \Bf^{\Bs - \Bk} \tl\Ba\}_{\Bk\in \mathbf K} $.
Let  $\{ \tl \BD_{-k}\}  $ stand for this pullback.  

Since the pullback  procedure  contracts the hyperbolic metric, 
the discs $\tl \BD_{-k} $ have a bounded hyperbolic diameter as well. 
Hence the Euclidean $\diam \tl \BD_{-k}$ are  bounded (with some constant) by  
$\dist (\tl \BD_{-k},  \tl{ \mathbf{O} } )$. 

Since  the distance from the boundary of 
$\tl \BOm^{m-\kappa +l - j  }$ to $\tl {\mathbf{O}} $ grows
exponentially with $j$ (and uniformly in $m, \kappa$ and $l$),
the disks $\tl  \BD_{-k}$ are contained in 
$\tl \BOm^{m- \kappa+l - j }$ for $ j $
sufficiently big (depending only on $N$).
By the equivariance of the renormalization change of variable. 
the disks $\tl D_{-k}$ are contained in $\tl \Om_{l-j}$. 

In fact, by Lemma \ref{},  the hyperbolic diameter of $\BD_{\Bs-k}$
shrinks by  a definite factor every time this domain returns to $\Ups$ while staying away from
the Siegel disk $\BS$. Hence, for any $\eps >0$,  if $\kappa$ is sufficiently big, then
then the  $\diam_\hyp (\BD_{\Bs-k} ) < \eps$ 
for some  domain $\BD_{\Bs-\Bk_0 }\Subset \Ups\sm \BO$ (and the hyperbolic
diameter is measured in $\Ups\sm \BO$).  
This allows us to  enlarge $\BD_{\Bs-\Bk_0 }$ to a domain 
$ \hat \BD_{\Bs-\Bk_0 }$ with 
$$
\mod (\hat \BD{\Bs-\Bk_0 } \sm \BD{\Bs-\Bk_0 }) > \mu (\eps) \to \infty\
\mathrm{as}\ \eps\to 0. 
$$
Let $\hat \BD_{\Bs - k}$, $k=\Bk_0, \dots \Bs$, be the corresponding 
pullback of the bigger disk.


To see that $\tl D_{-s}$ is contained in $\tl \CC^r$, let us apply
Lemma \ref{perturbative expansion}.
If $\kappa$ is sufficiently big, 
then there are many moments $k $ such that  the disk $ \tl\BD_{-k }$ 
stays  bounded hyperbolic distance from $\C\sm \tl \Bf^{-1}( \tl \BO )$
(measured in $\C \sm \tl {\mathbf O} $).
At these moments it is
contracted by a definite factor  under the pullback. Hence the final
pullback $\tl \BD_{-\Bs}$ has a small hyperbolic diameter in $\C\sm \tl\BO$.

The crescents $\tl \CC^r$ and $\tl\BC^r$ are related by
(an appropriate branch) of  the power
change of variable $\Bz= z^{q_\kappa/q_m}$ ($z\in \tl\CC^r$),  where
$m\geq \kappa$.  
Since $\tl a$ is located in the middle of $\tl\CC^r$,  it follows that
$\tl \Ba$  is located in the middle of $\tl\BC^r$.
Since the disk $\tl \BD_{-\Bs}$ has a small hyperbolic diameter, 
it is contained well inside $\tl\BC^r$. 
Using  the power relation between the crescents once again,
we conclude that $\tl D_{-s} $ is contained well inside $\tl\CC^r$. 
******************}
\end{proof}

Passing to the limit as $m\to \infty$ (using Lemma \ref{shadowing}), we conclude:

\begin{lem}\label{univalent pullback for f_infty}
There exists $\underline \kappa$ such that  for any natural
 $\kappa> \underline \kappa$
 the map 
$$
  \tl f_\infty= \lim_{m\to \infty}  \tl f_{m-\kappa}\in \WW^u_\Sieg(f_\infty) 
$$
has the properties listed in Lemma \ref{univalent pullback}.
\end{lem}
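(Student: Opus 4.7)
\medskip\noindent
\textbf{Proof plan.} The strategy is a direct limiting argument: we apply Lemma \ref{univalent pullback} to each finite $\tl f_{m-\kappa}$ and pass to the limit $m \to \infty$, using the exponential convergence $\tl f_{m-\kappa} \to \tl f_\infty$ guaranteed by Lemma \ref{shadowing}. The first task is to verify that every structure appearing in the conclusion of Lemma \ref{univalent pullback} depends continuously on the map along the sequence. Observe that $G_*^{m-\kappa}(p_m/q_m) = p_\kappa/q_\kappa$ for every $m > \kappa$, so each $\tl f_{m-\kappa}$ is parabolic of the same rotation type $p_\kappa/q_\kappa$, and $\tl f_\infty$ is as well. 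Consequently the parabolic objects of \S\ref{parabolic sec} --- the repelling crescent $\tl \CC^r$, the renormalization domain $\tl \De^\kappa$, and the change of variable $\tl\pi_\kappa$ --- depend continuously on the map throughout the sequence by the continuous dependence of Fatou coordinates (Theorem~\ref{parabolic bifurcation} and Lemma~\ref{Davoud}). In parallel, since $f_{m-\kappa} \to f_\infty$ in $\Siegel(N,\mu,K)$, Proposition~\ref{disk D for Siegel} supplies collars $A^l$ and trapping disks $D^l$ depending continuously on $f$, and the periodic point $\tl\alpha^l$, the critical value $\tl c_1$, and the postcritical set $\tl\OO$ move continuously as well (the latter by Corollary \ref{confinement of PP}, which keeps $\tl \OO$ in a uniform neighborhood of $S_\infty$).

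Next, I invoke Lemma \ref{univalent pullback} for each $m$ to extract a point $\tl a^{(m)} \in \tl\CC^r(\tl f_{m-\kappa}) \cap \tl\De^\kappa(\tl f_{m-\kappa})$, an escape time $s^{(m)} \le \bar s(N,l,\kappa)$, and a nested univalent pullback $\tl D_{-k}^{(m)} \Subset \tl D_{-k}^{\prime (m)}$ along $\{\tl f_{m-\kappa}^i(\tl a^{(m)})\}_{i=0}^{s^{(m)}}$, with all moduli, distortion bounds, and the containments $\tl D_{-k}^{(m)} \subset \Comp_0(\C\sm A^{l-\iota})$ and $\tl D_{-s^{(m)}}^{(m)} \subset \tl\CC^r(\tl f_{m-\kappa})$ valid with constants independent of $m$. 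Since $s^{(m)}$ is bounded, pass to a subsequence on which $s^{(m)} = s$ is constant. The key quantitative input of Lemma \ref{point in petal} places $\tl\pi_\kappa(\tl a^{(m)})$ in the middle of the truncated crescent $\CC^r_\tr(\tl f_m)$ with a constant depending only on $\kappa$; lifting through $\tl\pi_\kappa$, the $\tl a^{(m)}$ stay in a compact subset of $\tl\CC^r \cap \tl\De^\kappa$ bounded away from the parabolic fixed point $0$ and from $\partial \tl\De^\kappa$. Extracting a further subsequence, $\tl a^{(m)} \to \tl a^{(\infty)}$ in this compact set.

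Finally, uniform convergence of $\tl f_{m-\kappa}$ to $\tl f_\infty$ on compact subsets of $\Om$, together with the bounded number of iterates $s$, yields $\tl f_{m-\kappa}^i(\tl a^{(m)}) \to \tl f_\infty^i(\tl a^{(\infty)})$ for $i = 0, 1, \dots, s$, and the pullback disks $\tl D_{-k}^{\prime (m)} \Supset \tl D_{-k}^{(m)}$ Carath\'eodory-converge to limit disks $\tl D_{-k}^{\prime (\infty)} \Supset \tl D_{-k}^{(\infty)}$ providing the asserted univalent pullback of $D^l$ under $\tl f_\infty$. All inequalities used in Lemma \ref{univalent pullback} (definite moduli for $\tl D_{-k}^{\prime} \sm \tl D_{-k}$, bounded distortion of the pullback, and the inclusions in $\Comp_0(\C \sm A^{l-\iota})$ and in $\tl \CC^r$) are open conditions witnessed by uniform constants, so they persist in the limit. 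The one point requiring care --- and the only conceivable obstacle --- is non-degeneracy of $\tl a^{(\infty)}$, which is handled precisely by the $\kappa$-uniform ``middleness'' statement of Lemma \ref{point in petal}; once this is in place, the remainder of the argument is a routine compactness-and-continuity exercise.
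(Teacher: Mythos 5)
Your proof is correct and follows exactly the route the paper intends: the paper gives no separate proof for this lemma, only the sentence preceding it ("Passing to the limit as $m\to\infty$ (using Lemma~\ref{shadowing}), we conclude:"), and your argument is a careful fleshing out of that one-line claim — uniform bounds (independent of $m$) from Lemma~\ref{univalent pullback}, compactness of the objects via boundedness of $s^{(m)}\le\bar s(N,l,\kappa)$ and the $\kappa$-uniform middleness from Lemma~\ref{point in petal}, and persistence of the open, quantitative conclusions (definite moduli, bounded distortion, containments) under Carath\'eodory limits and uniform-on-compacts convergence supplied by Lemma~\ref{shadowing}. No gap; same approach.
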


\subsection{Various connections}

By a {\it connection} between two points, $z$ and $\zeta$, 
we mean a trajectory passing from a small neighborhood of $z$ to a
small neighborhood of $\zeta$. 

\sss{Connection between $\tl c_0$ and $0$}  

Property (P3) of the Inou-Shishikura class (\S \ref{IS class sec}) 
and compactness of the space $\Siegel ( N, \mu, K)$
(with $\mu=\mu(N)$ and $K=K(N)$ as in Lemma \ref{shadowing})
 imply:

\begin{lem}\label{n}
  There exists an $\bar n= \bar n (N,\kappa) $ such that  for any  parabolic map
$\tl f= \tl f_{m-\kappa}$, $m\in \bar \N_\kappa $,
 we have: $\tl f^n (\tl c_0)\in  \tl \CC^a$ for some $n \leq\bar n $.
\end{lem}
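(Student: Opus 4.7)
The plan is to combine property (P3) of the Inou-Shishikura class with a compactness argument over the family of parabolic maps $\{\tl f_{m-\kappa} : m \in \bar\N_\kappa\}$.

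First, I observe that each $\tl f = \tl f_{m-\kappa}$ lies in $\bar\IS_{p_\kappa/q_\kappa}$, and that this family is compact in the weak topology of (P5): by Lemma~\ref{shadowing} the sequence $\tl f_{m-\kappa}$ converges to $\tl f_\infty$ as $m\to\infty$, so the family is a convergent sequence together with its limit, hence compact.

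Second, for each such $\tl f$ I show that the critical orbit $\{\tl f^n(\tl c_0)\}$ converges to the parabolic fixed point $0$. Since $p_\kappa/q_\kappa = [N,\ldots,N]_*$ has depth $\kappa$, the map $\tl f$ is $\kappa$ times IS-renormalizable and $g := R_\IS^\kappa \tl f \in \bar\IS_0$ is parabolic with multiplier $1$. Property (P3) gives $g^n(c_0(g)) \to 0$. Through the equivariant telescope of changes of variable $\pi^\kappa_{\tl f}$ from \S\ref{telescope}, together with the postcritical trapping of Lemma~\ref{O is trapped} (which places the critical orbit of $\tl f$ inside $\Om^\kappa_{\tl f}$), this lifts to convergence $\tl f^n(\tl c_0) \to 0$. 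In particular the orbit eventually enters the attracting petal $\tl\PP^a$ attached to $0$ and passes through its fundamental domain $\tl\CC^a$ at some finite first time $n(\tl f)$.

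Third, I upgrade the pointwise bound to a uniform one by upper semicontinuity. For each fixed $n$ the condition $\tl f^n(\tl c_0) \in \tl\CC^a$ defines an open subset of the family, using continuous dependence of the critical point and of the (normalized) Fatou coordinates, hence of $\tl\CC^a$, on $\tl f$. Consequently $n(\tl f)$ is integer-valued and upper semicontinuous on a compact set, and therefore attains a finite maximum $\bar n = \bar n(N,\kappa)$, which is the bound claimed in the lemma.

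The main delicate point is ensuring the continuous selection of $\tl\CC^a$ across the compact family: a priori the Fatou coordinate is defined only up to translation, but the standard constructions of parabolic dynamics (properties (C3)--(C4) and their perturbed analogues) provide a holomorphic family of base points, making $\tl\CC^a$ vary continuously with $\tl f$.
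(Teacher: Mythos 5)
Your proof is correct and follows essentially the same route as the paper, which simply cites property (P3) together with compactness. The one place where you took a slightly longer path is the middle step: to get $\tl f^n(\tl c_0)\to 0$ for the individual parabolic map $\tl f = \tl f_{m-\kappa}$, you go through the $\kappa$-fold telescope of changes of variable $\pi^\kappa_{\tl f}$ and Lemma~\ref{O is trapped}. This works, but it is essentially a re-derivation of the parabolic case of Proposition~\ref{postcrit set for IS}, which already states $f^n(c_0)\to 0$ for any parabolic $f\in\overline{\IS}(\underline N)$ and could have been invoked directly (the rotation number $p_\kappa/q_\kappa = [N,\ldots,N]_*$ is of high type, so that proposition applies). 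Your compactness step is fine: you use that $\{\tl f_{m-\kappa}\}_{m\in\bar\N_\kappa}$ is a convergent sequence together with its limit $\tl f_\infty$ by Lemma~\ref{shadowing}, which is equivalent in spirit to the paper's appeal to compactness of $\Siegel(N,\mu,K)$. One small technical caveat in your third step: since $\tl\CC^a$ is a closed fundamental crescent, the set $\{\tl f : \tl f^n(\tl c_0)\in\tl\CC^a\}$ is not literally open; the upper-semicontinuity argument should be phrased as $\{\tl f : \exists\, n'\leq n+1 \mbox{ with } \tl f^{n'}(\tl c_0)\in\tl\CC^a\}$ being open, using that the crescent is a fundamental domain for the dynamics in the attracting petal. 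This is a minor rephrasing, not a gap.
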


\sss{Connection between $\tl \alpha^l$ and $\tl c_0$}

Let us now make a connection between the periodic point $\alpha^l$ and
the critical point $c_0$:

\begin{lem}\label{u-mand of zeta}
   For any $(N,\mu,K)$ there exists $\underline{l}$ with the following property.
For any natural $l> \underline {l}$ and  any $\rho>0 $ there exists $\underline{t}$
   such that for any  $t>\underline{t}$,
  any Siegel map
   $f\in \Siegel (N, \mu, K)$  has a
$t $-precritical point $c_{-t}$ in the $\rho $-neighborhood of  the
periodic point $\alpha^l $.
Moreover, the orbit $\{ c_n \}_{n=-t}^0$ is contained  
well inside $\Om^{l-\nu} $ with
$\nu$ depending only on $ (N,\mu, K) $.
 In particular, all these properties are valid uniformly
for the maps $f_{m-\kappa}$, $m\in \bar \N_\kappa$.   
\end{lem}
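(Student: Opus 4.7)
The strategy is to combine (i) the minimal rotation dynamics on $\di S$, which produces an initial precritical point near $\alpha^l$, with (ii) the repelling linearizable dynamics of $f^{q_l}$ at $\alpha^l$, which refines this to precritical points arbitrarily close to $\alpha^l$.

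\emph{Step 1 (seed precritical point).} By property (D2) of Proposition~\ref{disk D for Siegel}, the trapping disk $\Disc^l$ contains in its middle a point $\beta\in f^{-1}(\di S)\setminus \bar S$, so that $f(\beta)\in\di S$. By Theorem~\ref{HS} the restriction $f|_{\di S}$ is $H$-quasisymmetrically conjugate to $T_\theta$ with $H=H(\bar N,K)$, hence the backward orbit $\{c_{-k}|_{\di S}\}_{k\ge 1}$ is dense in $\di S$. Compactness of $\Siegel(\bar N,\mu,K)$ then yields a uniform $k_0=k_0(\bar N,\mu,K)$ such that some preimage $c_{-k_0}|_{\di S}$ lies within a prescribed definite fraction of $\dist(\beta,c_1)$ of $f(\beta)$. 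Since $\beta$ stays bounded away from the critical value by (D1)--(D2), the inverse branch of $f$ near $\beta$ has bounded Koebe distortion on a definite neighborhood of $f(\beta)$, and its value $w_0:=f^{-1}(c_{-k_0}|_{\di S})$ is a precritical point $c_{-(k_0+1)}$ lying in $\Disc^l$.

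\emph{Step 2 (refinement by repelling dynamics).} The butterfly restriction $R^l_\Sp f:X^l_+\to Y^l$ from Theorem~\ref{bounds for Siegel pairs} is univalent with $\alpha^l\in X^l_+$ fixed, and $X^l_+\Subset Y^l$ with definite modulus, so the inverse branch $g:=(f^{q_l})^{-1}:Y^l\to X^l_+$ is a hyperbolic contraction of $Y^l$ with fixed point $\alpha^l$ and multiplier $\la^{-1}$, $|\la|\ge \rho_0>1$ uniformly, by Proposition~\ref{periodic pts}(ii). Since $\Disc^l$ has hyperbolic diameter of order $1$ in $Y^l$ by (D1), Koebe on $\Disc^l\subset Y^l$ yields $|g(z)-\alpha^l|\le C|\la|^{-1}|z-\alpha^l|$ for $z\in \Disc^l$, with $C=C(\bar N,\mu,K)$. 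Setting $w_n:=g^n(w_0)$ produces precritical points $w_n=c_{-(k_0+1+nq_l)}\in \Disc^l$ with $|w_n-\alpha^l|=O(|\la|^{-n})$; choosing $n$ large enough so that $|w_n-\alpha^l|<\rho$ determines $\underline t=\underline t(\bar N,\mu,K,l,\rho)$ and yields the sought $t$-precritical point for $t=k_0+1+nq_l$.

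\emph{Step 3 (orbit control).} The forward orbit of $w_n$ decomposes into $n$ ``loops'' of length $q_l$, each shadowing the periodic orbit $\{f^j(\alpha^l)\}_{j=0}^{q_l-1}$, followed by a short tail of length $k_0+1$ shadowing the terminal piece $c_{-k_0}|_{\di S},\dots,c_{-1}|_{\di S},c_0$ of the backward orbit on $\di S$. The periodic orbit of $\alpha^l$ sits in the $l$-th dynamical scale around $\di S$ and so is contained in $\Om^{l-\nu}$ with $\nu=\nu(\bar N,\mu,K)$, by the correspondence between dynamical scales and the telescope domains of \S\ref{telescope}; the tail lies within an arbitrarily small neighborhood of $\di S\subset \Om^{l-\nu}$. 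Koebe control along the pullback branches (which remain in the butterflies $X^{l-i}_\pm$ well away from the postcritical set, thanks to properties (D4)--(D5)) shows the shadowing orbits of all $w_j$ stay in the same $\Om^{l-\nu}$. Uniformity for the sequence $f_{m-\kappa}$, $m\in\bar\N_\kappa$, is immediate from Lemma~\ref{shadowing}, which places these maps uniformly in $\Siegel(\bar N,\mu,K)$.

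The main technical obstacle is ensuring the inverse branch $g$ is defined and uniformly contracting on the full disk $\Disc^l$ rather than only on an infinitesimal neighborhood of $\alpha^l$; this hinges on the definite space between $X^l_+$ and $Y^l$ granted by the complex {\it a priori} bounds (Theorem~\ref{bounds for Siegel pairs}) together with the bounded-shape trapping disk (property (D1)), which is precisely what makes Koebe applicable on all of $\Disc^l$.
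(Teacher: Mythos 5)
Your approach is the same as the paper's: use minimality of $f|\,\di S$ to produce a seed precritical point at the $l$-th dynamical scale near $\alpha^l$, then iterate the inverse branch of $f^{q_l}$ fixing $\alpha^l$ (contracting, by the Schwarz Lemma) to sharpen to accuracy $\rho$. However, Step~1 contains a genuine scale error. The point $\beta$ lies at the $l$-th dynamical scale from the critical \emph{point} $c_0$ (it sits in $\Disc^l$, which is at depth $l$), so it is \emph{not} bounded away from $c_0$; since the critical point is quadratic, $f(\beta)$ is then at distance $\asymp(\text{$l$-th scale})^2$ from the critical \emph{value} $c_1$. The quantity controlling the distortion of the inverse branch at $\beta$ is this distance $|f(\beta)-c_1|$, not $\dist(\beta,c_1)$ (the latter is indeed definite, but irrelevant). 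Consequently the inverse branch of $f$ near $\beta$ has bounded Koebe distortion only on a neighborhood of $f(\beta)$ of size $\asymp(\text{$l$-th scale})^2$, not a definite neighborhood, and to land $c_{-k_0}$ in that small disk you must take $k_0$ growing with $l$. The claim $k_0=k_0(\bar N,\mu,K)$ independent of $l$ is therefore false; the paper's own argument has the analogous $k=k(N,\mu,K;l)$ explicitly. This is not fatal to the lemma, since $\underline t$ is allowed to depend on $l$, but as written the argument misattributes which scale controls the distortion, and the justification offered (``$\beta$ bounded away from the critical value'') is a non sequitur.

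A smaller remark on Step~3: properties (D4)--(D5) govern forward orbits on their way into the trapping disk, not the pullback branches you are using. What is actually needed to place the full orbit $\{c_n\}_{n=-t}^0$ well inside $\Om^{l-\nu}$ is simpler: the entire backward orbit under $f^{-q_l}$ stays inside a fixed hyperbolic-radius disk around $\alpha^l$ in $Y^l$ (this is exactly how the paper argues, via the hyperbolic disk $D$ of radius $2d$); points of this disk lie at depth $\asymp l$, hence inside $\Comp_0(\C\sm A^{l-\iota})\Subset\Om^{l-\nu}$ for $\nu=\nu(N,\mu,K)$, and the intermediate iterates along the $q_l$-cycle, together with the short tail on $\di S$, stay at the same scale.
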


\bignote{Add comments on $m=\infty$ throughout}

\begin{proof}
Let $\eps  =\si\cdot \dist(\alpha^l, c_0)$ with a small $\si\in (0,1)$, 
and let $W$ be the
$\eps^2$-neighborhood of the critical value $c_1$.
 Any point $z\in W\cap \di S$, except $c_1$ itself, has a preimage
    $z_{-1}\not\in \bar S$. Let $k$ be the first moment when the backward
    orbit $\{ c_{-n} \} $ of $c_0$ (along $\di S$) lands in $W$. Then
    $k =k(N,\mu,K; l)$ 
and $\dist(c_1,  c_{-k})\asymp \eps^2$ (with a constant depending on
$(N,\mu,K)$ only).

   The point $c_{-k}$ has a  preimage  $c_{-k-1}\not\in \bar  S$ such that
$$    
        \dist (c_{-k-1}, c_0) \asymp \dist (c_{-k-1} , \bar S ) \asymp \eps.
$$ 
It follows that if $\si$ is sufficiently small then $c_{-k-1}\in Y^l$
and the hyperbolic distance  
$d: = \dist_\hyp  (c_{-k-1}, \alpha^l) $  in $Y^l$  is bounded.
(Here  $Y^l$ corresponds through the surgery to 
  the range of the  holomorphic circle pair from Theorem 
 \ref{bounds for comm pairs}).

Let $ D\ni c_{ -k-1}$ be the hyperbolic disk in $Y^l$ of radius $2d$ centered at $\alpha^l$.
By the Schwarz Lemma, 
  $f^{-q_l} (D)$ is a subset of  $D$ of bounded  hyperbolic  diameter
(where $f^{-l_q}$ is the inverse branch fixing $\alpha^l$). 
  A few more (of order $-\log \rho$)
pullbacks of $c_{-k-1}$ by $f^{-q_l}$  will bring our point to the
$\rho$-neighborhood of $\alpha^l$. 

Since this backward orbit stays in $D$, it is trapped inside
$\Comp_0(\C\sm A^{l-\iota} )$ with $\iota=\iota(N)$.  Since  points of $\di A^{l-\iota}$ lie
on depth $l-\iota$, while those of $\di \Om^{l-\nu}$ lie on
depth $l-\nu$,  we see  that   $A^{l-\iota}$ is contained well inside
 $ \Om^{l-\nu}$ for $\nu$
big enough (depending on $(N,\mu, K)$ only).  
The conclusion follows.
\end{proof}

 The above  connection is robust: 

\begin{cor}\label{perturb u-mand of zeta}
 For any $(N,\mu,K)$ there exists $\underline{l}$ with the following property.
 For any  natural  $l>\underline l$ and any 
$\rho >  0  $ there exist  $\underline t$ and  $\de_0>0$  
such that for any $\de<\de_0$  and any natural $t>\underline{t}$,   the following holds. 
If a map $\tl f $ is $\de$-close to a Siegel map  $f\in \Siegel(N,\mu,K)$ 
 then it has a
$t$-precritical point $\tl c_{-t}$ in the $\rho $-neighborhood of  the
periodic point $\tl \alpha^l$.
Moreover, the orbit $\{ \tl c_n \}_{n=-t}^0$ is contained in 
$\tl \Om^{l-\nu}$ with $\nu=\nu(N,\mu, K)$. 
In particular, these properties are valid for any parabolic map 
$\tl f_{m-\kappa}$, $m\in \bar \N$.
\end{cor}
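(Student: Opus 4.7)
The plan is to repeat verbatim the argument of Lemma \ref{u-mand of zeta} at the perturbed level, using Lemma \ref{definite disc} to transfer each geometric ingredient (the periodic point, the collar $A^l$, the trapping disk, and the conformal structure of $Y^l$) to the map $\tl f$ with essentially the same bounds, provided $\de$ is sufficiently small.

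First I would fix $l > \underline l$ and recall from the proof of Lemma \ref{u-mand of zeta} that there is a bounded integer $k = k(N,\mu,K;l)$ such that the backward orbit $c_{-1},\dots,c_{-k-1}$ of the critical point along $\di S$ reaches a point $c_{-k-1} \notin \bar S$ sitting at uniformly bounded hyperbolic distance $d$ from $\alpha^l$ inside $Y^l$. Since this involves only finitely many backward iterates of $f$ and finitely many a priori choices (the radius $\eps$, the first landing moment $k$ in the neighborhood $W$ of $c_1$), it is manifestly robust: for $\de$ sufficiently small, depending on $(N,\mu,K,l)$, the same finite-time construction applied to $\tl f$ yields a backward orbit $\tl c_{-1},\dots,\tl c_{-k-1}$ of $\tl c_0$ with $\tl c_{-k-1}$ at bounded hyperbolic distance $\tl d \asymp d$ from $\tl \alpha^l$, measured in the perturbed domain (which by Lemma \ref{definite disc} is quasiconformally close to $Y^l$).

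Next, I would use the inverse branch of $\tl f^{q_l}$ fixing $\tl \alpha^l$. For $\de$ small, this branch remains a definite hyperbolic contraction on a domain containing $\tl c_{-k-1}$: indeed, Lemma \ref{definite disc} provides robust trapping around $\tl \alpha^l$, and the Schwarz Lemma applied to the embedding of the small domain into the larger one supplies the contraction rate (which degrades continuously with $\de$ and remains definite). Hence an additional $O(\log(1/\rho))$ pullbacks by $\tl f^{-q_l}$ produce a point $\tl c_{-t}$ in the $\rho$-neighborhood of $\tl \alpha^l$, with total backward length $t = k+1 + O(\log(1/\rho))$. The entire backward orbit stays within a bounded hyperbolic neighborhood of $\tl \alpha^l$ in the perturbed $Y^l$, hence by Lemma \ref{definite disc}(v) it remains inside $\Comp_0(\C\sm A^{l-\iota})$ with $\iota = \iota(N)$; since $\di A^{l-\iota}$ lies on depth $l-\iota$ while $\di \Om^{l-\nu}$ lies on depth $l-\nu$, choosing $\nu = \nu(N,\mu,K)$ large enough yields $A^{l-\iota}\Subset \Om^{l-\nu}$ and hence the required confinement $\{\tl c_n\}_{n=-t}^0 \subset \tl \Om^{l-\nu}$.

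The final assertion for the parabolic maps $\tl f_{m-\kappa}$ follows from Lemma \ref{shadowing}: given $\de_0$, there exists $\underline \kappa = \underline\kappa(N,\mu,K,l,\de_0)$ such that for every $\kappa > \underline\kappa$ and every $m \in \bar\N_\kappa$, we have $\dist(\tl f_{m-\kappa}, f_{m-\kappa}) < \de_0$, so the general perturbative statement applies. The main obstacle is not conceptual but bookkeeping: keeping careful track of the order of quantifiers so that the hyperbolic contraction rate (and thus the logarithmic dependence of $\underline t$ on $\rho$) is controlled uniformly in the perturbation size and, in the parabolic application, uniformly in $m$.
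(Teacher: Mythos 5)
Your proposal is essentially the paper's (implicit) argument: the paper gives no separate proof of this Corollary because it is meant to follow by exactly the robustness reasoning you describe, namely re-running the finite-time construction of Lemma~\ref{u-mand of zeta} for $\tl f$ using the perturbed periodic point, collar, and trapping disk supplied by Lemma~\ref{definite disc}, and then noting that Lemma~\ref{shadowing} puts the parabolic maps $\tl f_{m-\kappa}$ within $\de_0$ of $f_{m-\kappa}\in\Siegel(N,\mu,K)$ once $\kappa>\underline\kappa$, uniformly in $m$.

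Two small inaccuracies worth flagging, neither fatal. First, each pullback by the inverse branch $\tl f^{-q_l}$ fixing $\tl\alpha^l$ adds $q_l$ to the backward time, so the total is $t=k+1+q_l\cdot O(\log(1/\rho))$, not $t=k+1+O(\log(1/\rho))$; more importantly, the argument as you (and the paper) phrase it only produces $t$-precritical points for $t$ in the residue class $k+1\pmod{q_l}$, and covering all $t>\underline t$ requires choosing slightly different landing moments into the window $W$ (or different initial preimages), a gap inherited verbatim from Lemma~\ref{u-mand of zeta} itself. Second, your sentence that ``the entire backward orbit stays within a bounded hyperbolic neighborhood of $\tl\alpha^l$'' is not quite right: only the tail $\{\tl c_{-n}\}_{n\ge k+1}$ stays in the hyperbolic disk $D$ around $\tl\alpha^l$, while the initial segment $\{\tl c_{-n}\}_{n\le k}$ tracks the backward orbit of $c_0$ along $\di S$ and thus lies near $\di S$. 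Both pieces land inside $\tl\Om^{l-\nu}$, the first trivially (it is close to $\bar S$) and the second via Lemma~\ref{definite disc}(v), so the confinement conclusion is still correct, but the justification should separate the two regimes.
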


\sss{Connection between $0$ and $\tl \alpha^l$}


\begin{lem}\label{c_(-s-t)}
For any consecutively selected sufficiently big   $N,l, \kappa $ 
and any  $\rho >  0  $,  there exist  $\underline t$ and $\bar s$  
such that for any natural $t>\underline{t}$ and some $s\leq \bar s$,   the following holds. 
For any parabolic map $\tl f= \tl f_{m-\kappa}$, $m\in \bar \N$,  
there exists a precritical point $\tl c_{-s-t}$ lying in the middle of the repelling crescent
$\tl \CC^r $ such that $\tl f^s (\tl c_{-s-t} ) = \tl c_{-t}$ where
$\tl c_{-t}$ is $\rho$-close to the periodic point $\tl \alpha^l$,  
and the orbit $\{ \tl f^i (c_{-s-t}) \}_{i=0}^s$ is contained in
$\tl \Om^{l-\nu}$ with some $\nu=\nu(N)$. 
\end{lem}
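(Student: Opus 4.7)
The plan is to compose the two connections already constructed in Lemma~\ref{univalent pullback} and Corollary~\ref{perturb u-mand of zeta} into a single precritical orbit: first travel from the middle of $\tl \CC^r$ to the trapping disk $\tl D=\tl D^l$ centered at $\tl\alpha^l$ (via the pullback produced by Lemma~\ref{univalent pullback}), and then recognise the precritical point $\tl c_{-t}$ sitting inside $\tl D$ (via Corollary~\ref{perturb u-mand of zeta}). The required $\tl c_{-s-t}$ will simply be the $\tl f^s$-preimage of $\tl c_{-t}$ inside the last pullback domain $\tl D_{-s}$.

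More precisely, given $\rho>0$ choose an auxiliary $\rho_0 \leq \rho$ small enough (depending only on $(N,l)$) so that the $\rho_0$-neighborhood of $\tl\alpha^l$ is compactly contained in $\tl D$; this is possible since $\tl D$ has bounded shape around $\tl\alpha^l$ by Lemma~\ref{definite disc}(iii). For this $\rho_0$, Corollary~\ref{perturb u-mand of zeta} supplies $\underline{t}$ and $\delta_0>0$ such that for any $t>\underline{t}$ and any map $\tl f$ which is $\delta_0$-close to the Siegel map $f_{m-\kappa}\in \Siegel(N,\mu,K)$, there is a $t$-precritical point $\tl c_{-t}$ in the $\rho_0$-neighborhood of $\tl\alpha^l$, with the orbit $\{\tl c_n\}_{n=-t}^0$ contained in $\tl\Om^{l-\nu_1}$ for some $\nu_1=\nu_1(N)$. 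By Lemma~\ref{shadowing}, the closeness $\dist(\tl f_{m-\kappa},f_{m-\kappa})<\delta_0$ holds for all $m\in \bar\N_\kappa$ once $\kappa$ is large enough, and by Lemma~\ref{univalent pullback for f_infty} the same applies uniformly to the limiting map $\tl f_\infty$. Next, Lemma~\ref{univalent pullback} (applicable once $\kappa$ is large enough depending on $(N,l)$) provides a point $\tl a \in \tl\CC^r$, a time $s\leq \bar s(N,l,\kappa)$, and a univalent pullback $\{\tl D_{-k}\}_{k=0}^s$ of $\tl D$ along $\{\tl f^k \tl a\}_{k=0}^s$ with bounded distortion, such that $\tl D_{-s}$ sits well inside $\tl\CC^r$ and the whole chain is contained in $\Comp_0(\C\setminus A^{l-\iota})$ with $\iota=\iota(N)$.

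Since $\tl c_{-t}\in \tl D$, the univalent branch of $\tl f^{-s}$ that carries $\tl D$ onto $\tl D_{-s}$ produces a unique preimage $\tl c_{-s-t}\in \tl D_{-s}$ satisfying $\tl f^s(\tl c_{-s-t})=\tl c_{-t}$; this is automatically an $(s+t)$-precritical point, and since $\tl D_{-s}$ lies well inside $\tl\CC^r$ it is a point in the middle of $\tl\CC^r$. For the containment claim, the orbit $\{\tl f^i\tl c_{-s-t}\}_{i=0}^{s}$ runs through the domains $\tl D_{-s+i}\subset \Comp_0(\C\setminus A^{l-\iota})$, and concatenating with the Step~1 orbit $\{\tl c_n\}_{n=-t}^0\subset \tl\Om^{l-\nu_1}$ gives the complete trajectory $\{\tl f^i \tl c_{-s-t}\}_{i=0}^{s+t-1}$ (with an $s$--$t$ overlap at $\tl c_{-t}$). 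As in the proof of Lemma~\ref{u-mand of zeta}, the outer boundary of $A^{l-\iota}$ lies at dynamical depth $l-\iota$, while $\partial\tl\Om^{l-\nu}$ lies at depth $l-\nu$, so $\Comp_0(\C\setminus A^{l-\iota})\subset \tl\Om^{l-\nu_2}$ for some $\nu_2=\nu_2(N)$; taking $\nu=\max(\nu_1,\nu_2)$ yields the conclusion. The only delicate point in this plan is that all three ingredients — the perturbation closeness of Lemma~\ref{shadowing}, the existence of $\tl c_{-t}$ of Corollary~\ref{perturb u-mand of zeta}, and the repelling-crescent pullback of Lemma~\ref{univalent pullback} — must hold uniformly for all $m\in\bar\N_\kappa$, including the limit $m=\infty$; this is precisely why the parameter hierarchy $N\ll l\ll\kappa$ was set up in the prescribed consecutive order, and choosing $\underline{\kappa}$ to absorb all the constants from Steps~1 and~2 is the only bookkeeping that matters.
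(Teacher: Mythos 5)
Your proof is correct and follows essentially the same route as the paper: compose Corollary~\ref{perturb u-mand of zeta} (giving $\tl c_{-t}$ near $\tl\alpha^l$ inside the trapping disk), Lemma~\ref{point in petal}/Lemma~\ref{univalent pullback} (giving $\tl a\in\tl\CC^r$ and the bounded-distortion pullback chain $\{\tl D_{-k}\}_{k=0}^s$), and Lemma~\ref{shadowing} to ensure the $\delta$-closeness is uniform over $m\in\bar\N_\kappa$. The paper's proof is slightly terser but identical in structure; your extra care with the auxiliary radius $\rho_0\le\rho$ (to guarantee $\tl c_{-t}\in D$) and the explicit conversion $\Comp_0(\C\sm A^{l-\iota})\subset\tl\Om^{l-\nu}$ via depth comparison (as in the proof of Lemma~\ref{u-mand of zeta}) are correct refinements of steps the paper leaves implicit.
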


\begin{proof}
By Lemma \ref{shadowing},
for $\kappa$ sufficiently big, 
 $\tl f_{m-\kappa} $ is close to $f_{m-\kappa} $, uniformly in 
$m\in \bar\N$. 
Hence  we can apply 

\ssk\nin
-- Lemma \ref{definite disc}
to find a trapping disk $D\equiv D^l$ around $\alpha^l$;

\ssk\nin 
-- Lemma \ref{point in petal} 
to find  $\bar s$ and a point $\tl a\in   \tl \CC^r$ such that
$\tl f^s \tl a \in D$ for some $s\leq \bar s$;

\ssk\nin
-- Corollary \ref{perturb u-mand of zeta} to find,
 for any $t> \underline t$,  
a precritical point $\tl c_{-t} \in D $ which is $\rho$-close to $\alpha^l$.

\ssk
   By Lemma \ref{univalent pullback}, 
the trapping disk $D$ can be univalently pulled back to the
 point $\tl a $. Moreover, this pullback is contained in $\tl
 \Om^{l-\nu}$ for some $\nu=\nu(N)$,  and the last domain $\tl D_{-s}$ 
is compactly contained in the repelling crescent $\tl \CC^r$. 
The  corresponding pullback of the precritical point 
$\tl c_{-t}\in D$ gives us the desired point $\tl c_{-s-t}$.
\end{proof}

\sss{Transit from the repelling crescent to the attracting  one}

Combining the last lemma with  Lemma \ref{n} 
and Corollary \ref{perturb u-mand of zeta}, we obtain:

\begin{cor}\label{precritical point in rep cres}
For any consecutively selected sufficiently big  $N,l,\kappa$,
 and for any $\rho>0$,
there exist  $\underline  t$, $\bar n$ and $\bar s$ with the following properties. 
For any $m\in \bar\N_\kappa$ and any  $t> \underline t$,  
there exist  $n\leq  \bar n $ and $s\leq  \bar s $  
such that  the parabolic map $\tl f= \tl f_{m-\kappa}$ has 
a precritical point  $\tl c_{-s-t}\in \tl \CC^r$  and a postcritical point
$\tl c_n \in \tl \CC^a$ such that  the
whole orbit $\{ \tl c_k\}_{k=-s-t}^n$ is trapped  in $\tl \Om^{l-\nu}$
with some $\nu=\nu(N)$. 
\end{cor}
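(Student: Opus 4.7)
The proof is a straightforward concatenation of orbit pieces produced by the three cited results. The plan is to choose $\bar s$ and $\underline t$ from Lemma~\ref{c_(-s-t)}, $\bar n$ from Lemma~\ref{n}, and then verify that each of the three segments of the critical orbit satisfies the required trapping inside $\tl\Om^{l-\nu}$.

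First, apply Lemma~\ref{c_(-s-t)} to $\tl f=\tl f_{m-\kappa}$ (any $m\in\bar\N_\kappa$, including $m=\infty$). This furnishes, for every $t>\underline t$, some $s\leq \bar s$ and a precritical point $\tl c_{-s-t}\in\tl\CC^r$ sitting in the middle of the repelling crescent, such that $\tl f^s(\tl c_{-s-t})=\tl c_{-t}$ is $\rho$-close to the periodic point $\tl\alpha^l$. Moreover the initial piece $\{\tl c_{-s-t+i}\}_{i=0}^s$ is already trapped in $\tl\Om^{l-\nu_1}$ with $\nu_1=\nu_1(N)$.

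Next, by Lemma~\ref{shadowing}, once $\kappa$ is chosen large enough the map $\tl f_{m-\kappa}$ is $\delta$-close to $f_{m-\kappa}\in\Siegel(N,\mu,K)$ with $\delta$ arbitrarily small and uniformly in $m\in\bar\N_\kappa$. Hence Corollary~\ref{perturb u-mand of zeta} applies and the middle piece $\{\tl c_{-t+i}\}_{i=0}^{t}$ running from $\tl c_{-t}$ forward to the critical point $\tl c_0$ is trapped in $\tl\Om^{l-\nu_2}$, with $\nu_2=\nu_2(N)$. Finally, Lemma~\ref{n} produces $n\leq\bar n(N,\kappa)$ with $\tl c_n\in\tl\CC^a$. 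By Proposition~\ref{confinement of PP}, for $\kappa$ sufficiently large the entire postcritical set $\tl\OO$ lies in an $O(\rho^{-\kappa})$-neighborhood of the Siegel disk $S$; since $\tl\Om^{l-\nu_3}$ contains a definite neighborhood of $\bar S$ for a suitable $\nu_3=\nu_3(N)$, and the forward orbit $\{\tl c_0,\dots,\tl c_n\}\subset\tl\OO$ has bounded length $n\leq\bar n$, this piece is trapped in $\tl\Om^{l-\nu_3}$.

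Setting $\nu=\max\{\nu_1,\nu_2,\nu_3\}$, still a function of $N$ alone, concatenation yields $\{\tl c_k\}_{k=-s-t}^{n}\subset\tl\Om^{l-\nu}$, as required. The only delicate step is the third one: verifying that the $n\leq\bar n$ forward iterates of $\tl c_0$ do not escape the $(l-\nu)$-level domain. This is the main obstacle, but it is resolved by combining postcritical confinement (Proposition~\ref{confinement of PP}) with the compactness of $\overline{\IS}_{p_\kappa/q_\kappa}$, since only a bounded number of iterations need to be controlled and the forward orbit never strays far from $S$.
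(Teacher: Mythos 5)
Your proof takes the same three-piece concatenation the paper uses (the paper's proof is literally ``combining the last lemma with Lemma~\ref{n} and Corollary~\ref{perturb u-mand of zeta}''), and pieces 1 and 2 are handled exactly as one should. However, the justification of the third piece contains a genuine misstep: you assert that ``$\tl\Om^{l-\nu_3}$ contains a definite neighborhood of $\bar S$ for a suitable $\nu_3=\nu_3(N)$.'' This is false. The domains $\tl\Om^j$ shrink to $\bar S$ as $j\to\infty$, so for $\nu_3$ depending only on $N$, the neighborhood $\tl\Om^{l-\nu_3}\sm\bar S$ has thickness of order the dynamical scale at depth $l-\nu_3$, which decays exponentially in $l$; it is not ``definite'' in the paper's sense. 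As written, the comparison with the $O(\rho^{-\kappa})$-neighborhood from Proposition~\ref{confinement of PP} does not go through without invoking the order in which the parameters are chosen.

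The correct (and shorter) route, consistent with the paper's framework, is via Lemma~\ref{O is trapped}: since $R^{\kappa}\tl f$ is parabolic with multiplier~$1$, the whole postcritical set $\tl\OO$ is trapped inside $\tl\Om^{\kappa}$, and since $\kappa>l>l-\nu_3$ and the $\tl\Om^{j}$ form a decreasing nest, $\tl\Om^{\kappa}\subset\tl\Om^{l-\nu_3}$. Alternatively, if one insists on using Proposition~\ref{confinement of PP}, one must observe that the dynamical scale at depth $l-\nu_3$ decays like $\sigma^{l}$ for some $\sigma=\sigma(N)<1$, and then use that $\kappa$ is selected \emph{after} $l$ (so $\kappa$ can be taken large enough that $\rho^{-\kappa}$ is below that scale). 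Also note that the remark about ``bounded length $n\le\bar n$'' is not what is doing the work here; the trapping is a property of the whole postcritical set, not a consequence of the forward orbit having few terms. With the third step fixed as above, the concatenation and the choice $\nu=\max\{\nu_1,\nu_2,\nu_3\}$ correctly yield a $\nu$ depending only on $N$.
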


Recall that $\tl \Bf=\Bf_{p_m/q_m}$ is the parabolic  quadratic
polynomial with rotation number $p_m/q_m$,
and  that  $\tl \BC^{a/r}$ stand for the attracting and repelling crescents for
$\tl\Bf$. As  $\tl f_{m-\kappa} = R_\Sieg^{m-\kappa} \Bf$,
we obtain:

\begin{cor}\label{precritical point in rep cres for polynom}
The points $\tl c_{-s-t}$ and $\tl c_n$ from   
Corollary \ref{precritical point in rep cres} lift to
 a precritical point  $\tl \Bc_{-\Bs-\Bt}\in \tl \BC^r$ and a postcritical point $\tl
\Bc_\Bn \in \tl \BC^a$ for the parabolic polynomial $\tl \Bf$
such that  the whole orbit $\{ \tl\Bc_k\}_{k=-\Bs-\Bt}^\Bn$ is trapped in
$\tl \Om^{m-\kappa+l-\nu }$ with $\nu=\nu(N)$. 
\end{cor}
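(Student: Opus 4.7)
The plan is to transfer the orbit produced by Corollary \ref{precritical point in rep cres} from the renormalization $\tl f = \tl f_{m-\kappa}$ back up to the parent polynomial $\tl \Bf$ via the iterated renormalization change of variable $\tl\pi_{m-\kappa} = \tl\pi_{\tl f_{m-\kappa-1}} \circ \cdots \circ \tl\pi_{\tl\Bf}$, using the equivariance from Lemma \ref{Davoud}(iii).

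First I would observe that, by construction of the telescope in \S \ref{telescope}, the domain $\tl\Om^{l-\nu}$ of $\tl f$ is precisely $\tl\pi_{m-\kappa}(\tl\Om^{m-\kappa+l-\nu})$, with $\tl\pi_{m-\kappa}$ being univalent on each sector piece and the spread out by iterates of $\tl\Bf$ closing up the gaps. Since the entire orbit $\{\tl c_k\}_{k=-s-t}^{n}$ is trapped in $\tl\Om^{l-\nu}$, each point admits a lift to $\tl\Om^{m-\kappa+l-\nu}$. By equivariance of $\tl\pi_{m-\kappa}$ with respect to the appropriate principal branch of the return map on each sector, consecutive points in the $\tl f$-orbit lift to points on a common $\tl\Bf$-orbit, possibly separated by many $\tl\Bf$-iterates (the precise count being determined by the arithmetic of $p_m/q_m$). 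This yields a single $\tl\Bf$-orbit $\{\tl\Bc_k\}_{k=-\Bs-\Bt}^{\Bn}$ projecting onto $\{\tl c_k\}_{k=-s-t}^{n}$, with the critical point $\tl c_0$ of $\tl f$ lifting to $\tl\Bc_0$, the critical point of $\tl\Bf$.

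Next I would check the crescent conditions. The attracting and repelling crescents $\tl\CC^{a/r}$ of $\tl f$ were constructed (see \S \ref{parabolic sec} and Lemma \ref{Davoud}) as images of the corresponding Fatou-strip regions for the parabolic cylinder map that is precisely the renormalization of $\tl\Bf$. In particular, the connected components of $\tl\pi_{m-\kappa}^{-1}(\tl\CC^{a/r})$ inside the appropriate fundamental domain near $0$ for $\tl\Bf$ are exactly the attracting/repelling crescents $\tl\BC^{a/r}$ of $\tl\Bf$ (after spreading by $\tl\Bf$-iterates if needed). Since $\tl c_{-s-t}\in \tl\CC^r$ lies in the middle of $\tl\CC^r$ and $\tl c_n \in \tl\CC^a$, their lifts $\tl\Bc_{-\Bs-\Bt}$ and $\tl\Bc_\Bn$ must lie in $\tl\BC^r$ and $\tl\BC^a$ respectively. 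Trapping in $\tl\Om^{m-\kappa+l-\nu}$ is immediate from the step above.

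The main (minor) obstacle will be bookkeeping of indices: the times $\Bs,\Bt,\Bn$ for the lifted orbit are related to $s,t,n$ by the average $\tl\Bf$-return time to the fundamental sector (essentially of order $q_{m-\kappa}$), so $\Bs \asymp s\, q_{m-\kappa}$ and similarly for $\Bn$, but $\tl c_{-t}$ being a $t$-preimage of $\tl c_0$ under $\tl f$ automatically makes its lift a $\Bt$-preimage of $\tl\Bc_0$ under $\tl\Bf$ for the appropriate $\Bt$. All the other properties (containment in the crescents, trapping in $\tl\Om^{m-\kappa+l-\nu}$) are consequences of equivariance and univalence of $\tl\pi_{m-\kappa}$ on the relevant pieces. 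Finally, the case $m=\infty$ follows by passing to the limit along $\tl f_\infty \in \WW^u_\Sieg(f_\infty)$ using Lemma \ref{shadowing} and continuous dependence of all constructed objects.
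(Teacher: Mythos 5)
Your proposal is correct and matches the paper's (implicit) argument: the paper treats this Corollary as an immediate consequence of $\tl f_{m-\kappa} = R_{\Siegel}^{m-\kappa}(\tl\Bf)$ and supplies no proof at all, while you have spelled out the lifting via the iterated change of variable $\tl\pi_{m-\kappa}$ and its equivariance from Lemma~\ref{Davoud}(iii), together with the identification of the crescents and the shift of telescope level $l-\nu \mapsto m-\kappa+l-\nu$. One minor remark: the aside about $m=\infty$ is extraneous here, since the Corollary is stated specifically for the parabolic polynomial $\tl\Bf = \Bf_{p_m/q_m}$, which only exists for finite $m$; the limiting statement in the unstable manifold is handled separately in the paper (Lemma~\ref{univalent pullback for f_infty}).
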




\subsection{Quadratic-like Renormalization}
\sss { Superattracting parameter}\label{super par}


Let us now perturb the parabolic  map $\tl f\equiv \tl f_{m-\kappa}$, $m\in \bar\N_\kappa$, to a
superattracting  map $f_\circ\equiv f_{m-\kappa, j;\circ}$, $j\in \N$, that  will determine the desired 
renormalization combinatorics.
Its superattracting cycle%
\footnote{We will mark the objects related to $f_\circ$ with a
  subscript or superscript ``$\circ$''.
On the other hand, 
for  the (pre-/post-) critical points $c_k$,
we skip (here and below) subscripts indicating their dependence  on various parameter's:
$m, j$, etc. } 
 $\{ c_k^\circ\}_{k=0}^{p-1}$ follows the following route:

\ssk\nin $\bullet$
first, it  passes from the critical point
  $ {c}^\circ_0$ to a postcritical point $  {c}^\circ_n$ in the
 attracting crescent $\CC^a_\circ $
 (where $n$ comes from Lemma  \ref{n});

 \ssk\nin $\bullet$
 then it  goes through the parabolic
 gate to a precritical point  $c_{-t-s}^\circ $ 
in the   repelling crescent  $\CC^r_\circ$
(where $s$ and $t$ come from Lemmas \ref{point in petal} 
and \ref{u-mand of zeta});

\ssk\nin $\bullet$
then it penetrates trough the
 boundary of the virtual Siegel disk $S_f$, approaches a periodic point
    $\alpha_\circ$ just  missing it  to land  at $ c^\circ_{-t}$;

\ssk\nin $\bullet$ 
   and finally, it  returns to  $ c^\circ_0$. 

\ssk Here is a formal statement: 

\begin{lem}\label{superattracting map}
Let $\theta=\theta_N$ be a stationary rotation number of high type $N>\uN$, 
 and let $l>\ul$ be a level selected in Lemma \ref{definite disc}.
For any $\de>0$, 
there exists $\underline{\kappa}= \underline{\kappa} ( N,l; \delta) $ 
such that for any $\kappa>\underline{\kappa}$, some
  $n< \bar n(\kappa) $,  $s< \bar s(\kappa)$, 
and any $t> \underline{t}(\underline {\kappa} ) $ and
 $m \geq  \kappa $, 
  there exists a superattracting  map 
$$
   f_\circ\equiv f_{m-\kappa,j;\circ} =R_\Sieg^{m-\kappa} ( \Bf_\circ) : \Om \ra \C
$$  
$\de$-close to the parabolic map $\tl f $  (\ref{f_m}),
with a superattracting cycle   of period $p=n+j+s+t$,
such that near the critical point  $c_0^\circ$ we have
$$
   f_\circ ^p=  f_\circ^{s+t}\circ I_\circ\circ f_\circ^n, 
$$
where $I_\circ:  \CC^a_\circ \to \CC^r_\circ$ is a transit map
between the crescents of $f_\circ$.
Moreover, the whole  cycle  of $c_0^\circ$  is contained in 
$\Comp_0 (\C\sm  A_\circ^{l-\iota} ) $ with some $\iota=\iota(N)$.

The same properties hold for the limit map 
$$
    f_{\infty,j; \circ} =\lim_{m\to \infty}   f_{m-\kappa, j; \circ}
$$
in the unstable manifold of the renormalization fixed point
 (compare  Lemma \ref{univalent pullback for f_infty}). 
\end{lem}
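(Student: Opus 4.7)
The plan is to realize $\Bf_\circ$ as a superattracting quadratic polynomial lying in the parabolic tube $Q^j$ attached to $\tl\Bf=\Bf_{p_m/q_m}$, for $j=j(\de,\kappa)$ sufficiently large, and then to set $f_\circ=R_\Sieg^{m-\kappa}\Bf_\circ$. The transit parameter $\la\in\bar\D_{1/8}$ produced by Theorem~\ref{parabolic bifurcation} is the free variable we shall tune in order to close up the critical orbit after exactly $p=n+j+s+t$ iterates along the prescribed route. By Lemma~\ref{shadowing}, $\tl f$ is $\de/2$-close to $f$ once $\kappa$ is large; since $\diam Q^j = O(j^{-2})$ by Theorem~\ref{parabolic bifurcation}, choosing $j$ large delivers the full $\de$-closeness.

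For $\la\in\bar\D_{1/8}$ write $c(\la)=\Phi_j(\tl c,\la)$; Theorem~\ref{parabolic bifurcation} then furnishes uniform convergence of the transit map $\Bf_{c(\la)}^j:\CC^a_\eps\to\PP$ to the parabolic transit $I_\la$ as $j\to\infty$. Corollary~\ref{precritical point in rep cres for polynom} provides a finite critical-orbit segment $\{\tl\Bc_k\}_{k=-\Bs-\Bt}^{\Bn}$ of $\tl\Bf$ trapped in $\tl\Om^{m-\kappa+l-\nu}$, with $\tl\Bc_\Bn\in\tl\BC^a$ and $\tl\Bc_{-\Bs-\Bt}\in\tl\BC^r$, and the univalent bounded-distortion pullback of the trapping disk from Lemma~\ref{univalent pullback} ensures that the entire holomorphic motion $\la\mapsto\Bc_k(\la)$ extends across $\bar\D_{1/8}$ for $k$ in both the attracting leg $\{0,\dots,\Bn\}$ and the repelling leg $\{-\Bs-\Bt,\dots,0\}$. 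The superattracting condition is the holomorphic equation $\Psi_j(\la):=\Bf_{c(\la)}^{p-1}(c(\la))=0$, and its geometric limit as $j\to\infty$ is
$$
  \Psi_\infty(\la)\ =\ \tl\Bf^{\Bs+\Bt}\bigl(I_\la(\tl\Bc_\Bn)\bigr).
$$

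The key geometric point is that $\Psi_\infty:\bar\D_{1/8}\to\C$ is a quasiconformal embedding whose image is a quasidisk containing $0$ in its interior. This is because $I_\la$ is rigid translation by $\la$ in the \'Ecalle--Voronin cylinder (property~(C6)), and Lemma~\ref{point in petal} places $\tl\pi_\kappa(\tl\Bc_\Bn)$ in the middle of $\CC^r(\tl f_m)$, so $I_\la(\tl\Bc_\Bn)$ sweeps a full disk centered essentially at $\tl\Bc_{-\Bs-\Bt}$ as $\la$ ranges over $\bar\D_{1/8}$; the univalent map $\tl\Bf^{\Bs+\Bt}$, with bounded distortion from Lemma~\ref{univalent pullback}, then sends this disk to a quasidisk through the critical value. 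Hence $\Psi_\infty$ has winding number one about $0$, and Rouch\'e's theorem yields $\la_\circ\in\bar\D_{1/8}$ with $\Psi_j(\la_\circ)=0$ for all sufficiently large $j$. The resulting $\Bf_\circ=\Bf_{c(\la_\circ)}$ is superattracting of period $p$ with the prescribed combinatorics: the cycle passes through $\CC^a_\circ$, across the parabolic gate via $j$ iterates, into $\CC^r_\circ$, closely past the periodic point $\alpha_\circ$, and back to $c_0^\circ$. Containment in $\Comp_0(\C\setminus A^{l-\iota}_\circ)$ follows from the trapping in $\tl\Om^{l-\nu}$ inherited through the closeness within $Q^j$, while the limit case $m=\infty$ is handled via Lemma~\ref{univalent pullback for f_infty} and the convergence of Lemma~\ref{shadowing}. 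The main obstacle will be the winding-number-one claim for $\Psi_\infty$: making it precise requires a careful identification of the cylinder coordinates with the dynamical coordinates near the parabolic fixed point and an explicit appeal to the bounded-distortion pullbacks, to ensure that the image $\la$-disk genuinely covers a neighborhood of the critical-point target rather than missing it.
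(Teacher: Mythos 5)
Your proposal is correct and takes essentially the same route as the paper: start from the pre/post-critical endpoints supplied by Corollary~\ref{precritical point in rep cres for polynom}, place $\Bf_\circ$ in the parabolic tube $Q^j$ of $\tl\Bf$ via Theorem~\ref{parabolic bifurcation}, and choose the transit parameter so that the critical orbit closes up after $p=n+j+s+t$ steps. The paper's proof is three lines long and leaves the closing-up step implicit in its appeal to Theorem~\ref{parabolic bifurcation}; you have simply surfaced the Argument-Principle/Rouch\'e step (the same ``Phase-Parameter'' device the paper does make explicit later in Lemma~\ref{ql family}), modulo some minor indexing and normalization slips (it is $\tl\Bc_{-\Bs-\Bt}$, not $\tl\Bc_\Bn$, that Lemma~\ref{point in petal} puts in the middle of $\CC^r$, and the superattracting condition should be written with the critical point of $\Bf_\theta$, not of $z\mapsto z^2+c$).
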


\begin{proof}
Let us consider postcritical  point $\tl\Bc_\Bn\in \tl \BC^a$ and
a precritical point $\tl \Bc_{-\Bs-\Bt}\in \tl \BC^r$
from  Corollary \ref{precritical point in rep cres for polynom}.
Let $\BI : \Cyl^a\ra \Cyl^r $ be the isomorphism between the
cylinders such that $\BI (\Bc_\Bn) = \Bc_{-\Bs-\Bt} $.
%
By the Parabolic  Bifurcation Theory (Theorem \ref{parabolic bifurcation}) 
for any $j$ sufficiently big,   $\tl \Bf$ can be perturbed to a 
superattracting polynomial map $\Bf_\circ\equiv \Bf_{j; \circ} $
for which
$$
 \Bf_\circ^j (\Bc_\Bn^\circ) = \Bc_{-\Bs-\Bt}^\circ .
$$
Let $f_\circ=R_\Sieg^{m-\kappa} (\Bf_\circ)$ for $m\in \N_\kappa$.
The desired properties for these maps, and their limit is $m\to
\infty$, are evident.
\end{proof}

\sss {Quadratic-like families for parabolic maps}

Similarly, we can  construct the whole quadratic-like family 
with  the desired renormalization combinatorics:

\begin{lem}\label{ql family on Cyl}
For any consecutively selected  natural 
$( N,l, \kappa, t ) > (\underline{N}, \underline{l},  \underline{\kappa},
\underline{t}) $, 
and any  $m\in \bar\N_\kappa$, 
any parabolic  map $\tl f_{m-\kappa } $   admits  a family of  transit maps $ I_\la: \tl \Cyl^a\ra \tl \Cyl^r$,
  $\la \in \La$, 
with the following properties.
There is a family of disks
 $\tl U_\la\subset \tl V$ around $\tl c_0$ and moments 
$ (n,s )\leq  (\bar n ,\bar s)$  
(from Lemmas \ref{n} and  \ref{point in petal}) 
such that:

\ssk\nin {\rm { (0) }} $V$ is a quasidisk with bounded dilatation and
definite size depending only on $(N, l, \kappa)$;%
\footnote{In fact, for given $(N,l,\kappa)$, 
the disk itself can be selected independently of $t$; 
for $m$ sufficiently big,
it can be selected independently of  $m$ either.}

\ssk\nin {\rm{(i)}}
 The maps 
\begin{equation}\label{F_la}
   \tl F_\la = \tl f^{s + t} \circ  I_\la \circ \tl f^n  : 
        \tl U_\la    \ra  \tl V 
\end{equation}
form  a proper  unfolded quadratic-like family over $\La$;

\ssk\nin  {\rm{(ii)}}
  The closures of all intermediate disks,
$$
   \tl f^k (\tl U_\la), \ k=0, 1, \dots, n, \quad \mathrm{and} \quad
   \tl f^k \circ I_\la\circ \tl f^n  (\tl U_\la), \ k=0,1, \dots s+t-1,
$$
 that appear in  composition (\ref{F_la})
  are pairwise disjoint;

\ssk\nin  {\rm{(iii)}}
$$
 \bar\mu(N,l,\kappa,t) \geq   \mod ( V\sm \tl U_\la)  
   \geq \underline{\mu} ( N, l,  \kappa, t)\to \infty
   \ \mathrm{as}\ t \to \infty \ \mbox{ with $N,l,\kappa $ fixed }.
$$

\ssk\nin  {\rm{(iv)}}
In case of connected Julia set $J(\tl F_\la)$ 
(i.e., when $\la$ belongs to the Mandelbrot set
$\MM'_{N,l,\kappa,t,m}$ of q-l family (\ref{F_la})),  
the disk $\tl U_\la$ is an $L( N,  l, \kappa,t)$-quasidisk with   
$$
     \area \tl U_\la \geq  c ( N,  l,  \kappa, t )>0 .
$$


\ssk
All constants and bounds are independent of $m$.
\end{lem}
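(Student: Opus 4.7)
The plan is to parametrize the construction of the superattracting map from Lemma \ref{superattracting map} by letting the transit parameter $\la$ vary, while holding the combinatorial data $n,s,t$ fixed. First I would fix a target quasidisk $\tl V$ of definite size around $\tl c_0$, obtained as a univalent pullback of the trapping disk $\Disc^l$ along the initial orbit segment using the disjointness of $\{\tl c_0,\dots,\tl c_{n-1}\}$ from the rest of the considered orbit (Lemma \ref{n} and confinement to $\tl\Om^{l-\nu}$). The natural parameter domain $\La$ is a subdomain of the disk $\D_{1/8}\subset \C/\Z$ on which the family of transit maps $I_\la:\tl\CC^a\to\tl\CC^r$ is well-defined in the sense of Lemma \ref{continuous dependence of I}. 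For each $\la\in\La$, I would pull $\tl V$ back successively: by the univalent branch of $\tl f^{-(s+t)}$ staying in $\tl\Om^{l-\nu}$ (as in Lemma \ref{univalent pullback}), then by $I_\la^{-1}$, then by the univalent branch of $\tl f^{-n}$ landing at $\tl U_\la\ni \tl c_0$.

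The quadratic-like structure of $\tl F_\la$ then follows because among the iterates composing (\ref{F_la}) only $\tl f^n$ contains the critical point in its forward dynamics from $\tl c_0$; pairwise disjointness (ii) of the intermediate disks reduces to the fact that the whole orbit is trapped inside $\Comp_0(\C\sm A^{l-\iota})$ at different depths, using the separation property (D4) of Proposition \ref{disk D for Siegel} together with the bounded-multiplier expansion of $\tl f^t$ near $\tl\alpha^l$. The critical lower bound $\underline\mu\to\infty$ in (iii) is the main technical estimate: the inverse branch of $\tl f^t$ fixing $\tl\alpha^l$ contracts by a factor $|\mathrm{mult}(\tl\alpha^l)|^{-t}$ (Proposition \ref{periodic pts}(ii)), so pulling back a disk of definite hyperbolic diameter around $\tl\alpha^l$ by this branch produces a disk whose fundamental annulus has modulus growing linearly with $t$. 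The upper bound $\bar\mu$ reflects that the transit $I_\la\circ \tl f^n$ contributes only a bounded modulus, determined by $(N,l,\kappa)$.

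For properness and unfoldedness in (i), I would verify that as $\la$ traces $\partial\La$, the critical value $\tl F_\la(\tl c_0)$ traces $\partial \tl V$ with winding number one: the transit map acts as $z\mapsto z+\la$ in the standard cylinder model, and composition with the Fatou coordinates induces a degree-one dependence of $\tl F_\la(\tl c_0)$ on $\la$. Theorem \ref{M-sets of ql families} then gives the standard Mandelbrot copy structure, and for $\la\in\MM'$, property (iv) follows by applying the Douady-Hubbard Straightening Theorem: the dilatation of $\tl U_\la$ as a quasidisk and the area lower bound depend only on $\mod(\tl V\sm \tl U_\la)$ and $\mathrm{diam}\,\tl V$, both controlled in terms of $(N,l,\kappa,t)$. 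The main obstacle will be the uniform-in-$m$ aspect, including the limit $m=\infty$: this requires combining the exponential shadowing of Lemma \ref{shadowing} with the renormalization telescope of \S \ref{telescope} and Lemma \ref{univalent pullback for f_infty} to ensure that all the geometric quantities entering the construction stabilize as $m\to\infty$.
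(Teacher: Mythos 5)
Your proposal follows essentially the same pullback-plus-transit-map strategy the paper uses, and most of the key steps are in the right place (the parameter domain in the repelling cylinder, the winding-number argument for properness and unfoldedness, the role of the shrinking pullbacks near $\tl\alpha^l$ for the lower modulus bound, and the appeal to Lemma \ref{shadowing} and Lemma \ref{univalent pullback for f_infty} for uniformity in $m$). But there are several gaps in the details.

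First, the proposed construction of $\tl V$ as a \emph{univalent pullback of the trapping disk $\Disc^l$} does not work as stated: $\Disc^l$ sits around the periodic point $\tl\alpha^l$, whereas $\tl V$ must be a disk around $\tl c_0$, fixed independently of $\la$. The paper simply selects $\tl V\ni\tl c_0$ directly, with size tuned so that, using a \emph{linearization domain} $W$ around $\tl\alpha^l$, the pullbacks $\tl V_{-i}$ enter $W$ after a bounded number $t_0(N,l,\kappa)$ of steps and remain pairwise disjoint inside $W$ for all $i\leq t$. This linearization-domain argument is the real content of disjointness (ii); your appeal to (D4) and confinement to $\tl\Om^{l-\nu}$ keeps the orbit in the right region but does not by itself give pairwise disjointness of all $t+s+n$ intermediate disks uniformly in $t$.

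Second, $\tl\alpha^l$ has period $q_l$, not $1$, so the contraction after $t$ iterates is governed by $D\tl f^{q_l}(\tl\alpha^l)$; the factor is roughly $|\mathrm{mult}(\tl\alpha^l)|^{-t/q_l}$, not $|\mathrm{mult}(\tl\alpha^l)|^{-t}$. This is a slip, but the mechanism (geometric shrinking of $\tl V_{-t}$ as $t\to\infty$, recorded as (\ref{V_-t})) is exactly what the paper uses, and (iii) only requires $\underline\mu\to\infty$, not a precise rate.

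Third, and most substantively, the justification of (iv) via the Straightening Theorem does not go through. Straightening gives a qc conjugacy on a neighborhood of $K(\tl F_\la)$, but it does not directly control the dilatation of $\di\tl U_\la$ as a quasicircle, nor bound $\area\tl U_\la$ from below; a quadratic-like domain $U\Subset V$ with $\mod(V\sm U)\geq\mu$ and $V$ a quasidisk need not in general be a quasidisk with dilatation controlled by $\mu$ alone. The paper instead obtains (iv) (and the upper bound in (iii)) from a compactness argument: the total number of $\tl f$-iterates in $\tl F_\la$ is bounded in terms of $(N,l,\kappa,t)$, and the transit maps $\{I_\la\}_{\la\in\bar\La}$ form a compact family, so $\tl U_\la$ ranges over a compact family of quasidisks of definite size.

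Finally, $\La$ is not merely a subdomain of $\D_{1/8}$; for the winding-number argument to close it must be precisely the disk $\tl V_{-t-s}$ (under the normalization that places $\tl c_n$ and $\tl c_{-s-t}$ at the marked points of the cylinders), so that $\la\mapsto I_\la(\tl c_n)$ carries $\di\La$ once around $\di\tl V_{-t-s}$ and hence $\tl F_\la(\tl c_0)$ once around $\di\tl V$.
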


\begin{proof}
In the $\tl f$-plane, select a disk  $\tl V\ni\tl  c_0$,
and let $\tl V_{-i}$, $i=0,1,\dots, t$,  be its  pullback to $\tl
c_{-t}$. Let us show that if $\tl V$ is small enough, 
depending on $N,l,$ and $ \kappa$ but independently of $t$,
then the closures of these disks are pairwise disjoint.
Consider a linearization domain $W$ around the
periodic point $\tl a_l$ (so, $f^{q_l}$ maps $W$ univalently onto
$f^{q_l} (W)\Supset W$). Note that  its size depends on $N$ and $l$ only. 
It takes a bounded number of iterates ( $\leq t_0 (N,l,\kappa)$ )
for the pullback in question to get trapped in $W$. 
 By adjusting  $W$ and selecting  $\tl V$
 sufficiently small, we ensure that  the first $t_0$ pullbacks
$\tl V_{-i}$, $i < t_0$, are pairwise disjoint and disjoint from $W$, while
$\tl V_{-t_0}\Subset W$. Then the further pullback $\tl V_{-i}\subset W , \ t_0 \leq t$,
will stay pairwise disjoint  and disjoint form the first ones.
So, independently of $t$, the whole pullback $\tl V_{-i}$, $i=0,1,\dots t$, will consists of
pairwise disjoint domains.   Moreover,
\begin{equation}\label{V_-t}
  \diam \tl V_{-t}\to 0\ \mathrm{as} \ t\to \infty\ 
   \mbox{with  $(N,l,\kappa)$ fixed}.
\end{equation}

Let us now pull $\tl V_{-t}$  further to $c_{-t-s}$.
The number $s$ of iterates is bounded by  $\bar s ( N,l,\kappa)$ from
Lemma \ref{point in petal}, so for  $t$ sufficiently big, 
 (\ref{V_-t}) ensures that these pullbacks stay small and pairwise disjoint. 
Since $c_{-t-s}$ lies in the middle of the repelling crescent $\tl
\CC^r$, the final domain $\tl V_{-t}$ is trapped well inside $\CC^r$.
Hence it projects to a disk compactly contained  in the repelling cylinder $\Cyl^r\isom \C/\Z$.
(We will keep the same notation for it.)

Consider a parameter domain $ \La\subset \C/\Z$ such  that 
$I_\la (\tl c_n)\in \tl V_{-t-s}$ for any transit parameter $\la$ in
$\La$ (in fact, under  our normalizations and notational  conventions, $\La=\tl V_{-t-s}$).
Pull $\tl V_{-t-s}$ further back by this transit
  map,  and then further back to $\tl c_0$  by the iterates of $\tl f$.    
Call the corresponding pullbacks $\tl V_{\la, -t-s-I-i}$, $i\leq n$. 
By Lemma \ref{n}, it involves at most $\bar n (N,l, \kappa)$ iterates.
Hence all these  pullbacks have
  a small diameter  and stay pairwise disjoint and disjoint from the
  initial pullbacks, which proves assertion (ii).
 It also follows that  the disc $\tl U_\la  :=  \tl V_{\la, -t-s-I-n}$ is
  trapped well inside  $\tl V$, which implies  that the maps $F_\la$
  defined by (\ref{F_la}) are quadratic-like. 

Since the transit map $I_\la: \CC_\la^a\ra \CC^r$ 
depends holomorphically on $\la\in \La$, these q-l maps form a
quadratic-like family. For the same reason, the domains 
$\tl V_{\la, -t-s-I}$, and hence their further pullbacks $\tl V_{-s-t-I-i}$,
 move holomorphically with $\la$, so our family is equipped
(see \S \ref{ql families}). For $\la\in \di \La$, we have
$I_\la(c_n)\in \di \tl V_{-t-s}$, 
and  hence $F_\la(c_0)\in \di \tl V$. Thus, our q-l family is proper.    
Finally, as $\la$ goes once around $\di \La$ then $I_\la(c_n)$ 
goes once around $\tl V_{-t-s}$
(recall that with our normalizations,   $\La=\tl V_{-t-s} $). 
So, our q-l family is unfolded. This completes the proof of (i).

The upper estimate in item (iii) and item (iv) follow from the
property that the total number of $\tl f$-iterates involved  in our
construction is bounded in terms of $(N,l,\kappa)$ and $t$,
while the transit maps $I_\la$, $\la \in \bar\La$, form a compact
family. Hence the size of $U_\la$ is definite in terms of $(N,l,
\kappa)$ and $t$. 

On the other hand,  as $t\to \infty$ with $(N,l, \kappa)$ fixed,
(\ref{V_-t}) implies that $\diam U_\la \to 0$. This yields
the lower bound in item (iii).
\end{proof}

For notational convenience,
let us shift the $m$-parameter:
$$
   \m= m-\kappa\in \bar\N =\{1,2,\dots, \infty\} ,
$$
and let $\FF_\m= R_\Sieg^\m \FF$,
where $\FF$ is the quadratic family $(\Bf_\gamma)$.
By Theorem \ref{hyp fixed point}, these are holomorphic curves converging to the
unstable manifold $\FF_\infty=\WW^u(f_\infty)$ for the Siegel renormalization. 

\sss {Quadratic-like families for parabolic perturbations}

Perturbing our parabolic maps $\tl f_\m$ within the families $\FF_\m$,
we can construct genuinely renormalizable maps: 

\begin{lem}\label{ql family}
Under the circumstances of Lemma \ref{ql family on Cyl},
for any $\m\in \bar\N_0 $ and $j> \underline {j} ( N, l, \kappa, t)$,
there exists a holomorphic subfamily $\FF_{\m , j} = (f_{\m, j; \la} )$ of $\FF_\m$
parametrized by some  domain $\La_{\m,,j}$  with the following
properties:
 

\ssk\nin {\rm {(i)} } Each family $\FF_{\m, j}$
gives rise to  a primitive proper unfolded q-l family
$$   
    F_{\m, j; \la}   = f_{\m,j;\la}^\per:  U_{\m,j;\la}\ra   V_\m, \quad
    \la\in \La_{\m , j}, \ 
$$     
with period  $\per= n+j+s+t$;

\ssk\nin {\rm {(i)} } 
As $\m\to \infty$, the families $\FF_{\m, j } $ converge, uniformly in $\m$, 
to the families $\FF_{\infty,j}$ in $\FF_\infty= \WW_\Sieg^u(f_\infty)$;

\ssk \nin \rm { (iii) }
$$
  \bar \mu( N,  l, \kappa, t) \geq  \mod (V_\m \sm U_{\m, j; \la})   
  \geq \underline{\mu} ( N,  l, \kappa, t)\to \infty\
   \mathrm{as} \ t  \to \infty  \ \mbox{ with $N,l,\kappa $ fixed } .
$$ 

\ssk \nin \rm { (iv) }
In case of connected Julia set $J(F_\la)$ 
(i.e., when $\la$ belongs to the corresponding little Mandelbrot set 
$\MM'_{N,l, \kappa,t,m,j}$), 
the disks $U_\la^j $ are
$L( N, l, \kappa, t )$-quasidisks with   
$$
   \area U_{\m, j; \la }  \geq c ( N,  l, \kappa, t)>0 .
$$ 

All geometric  constants and bounds are independent of $\m$ and $j$.
\end{lem}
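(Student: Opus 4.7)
The plan is to realize the cylinder-level quadratic-like family of Lemma \ref{ql family on Cyl} as a genuine quadratic-like family inside $\FF_\m$, by using the Parabolic Bifurcation Theorem \ref{parabolic bifurcation} to replace the abstract transit maps $I_\la$ with the $j$-fold iterate of a suitable perturbation of $\tl f_\m=\tl f_{m-\kappa}$ in the one-parameter family $\FF_\m$.

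First, I would apply Theorem~\ref{parabolic bifurcation} to the parabolic map $\tl f_\m\in \FF_\m$, noting that $\FF_\m$ is a holomorphic curve transverse to the Siegel slice at $\tl f_\m$ (by the hyperbolicity of the Siegel renormalization fixed point, cf.\ Theorem~\ref{hyp fixed point} and Corollary~\ref{Siegel renorm of famiies}). For each $j$ big enough the theorem supplies an embedding $\Phi_j(\tl f_\m, \cdot):\bar\D_{1/8}\ra \FF_\m$ characterised by the fact that for $\la\in\bar\D_{1/8}$ the map $f_{\m,j;\la}:=\Phi_j(\tl f_\m,\la)$ sends the base point $c^a\in \CC^a(\tl f_\m)$ through the petal and deposits $f_{\m,j;\la}^{\,j}(c^a)$ at the prescribed height in $\CC^r(\tl f_\m)$ corresponding to transit parameter $\la$. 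Moreover $f_{\m,j;\la}^{\,j}:\CC^a_\eps \to \PP$ converges, uniformly in $\la$ and in $\tl f_\m$ inside any compact family, to the parabolic transit map $I_{\tl f_\m,\la}: \CC^a(\tl f_\m)\ra \PP(\tl f_\m)$.

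Next I would define $F_{\m,j;\la}:=f_{\m,j;\la}^{\,\per}$ with $\per=n+j+s+t$ and take $U_{\m,j;\la}$ to be the connected component of $F_{\m,j;\la}^{-1}(V_\m)$ containing $c_0$, where $V_\m$ is the disk supplied by Lemma~\ref{ql family on Cyl}(0). Factoring
\[
F_{\m,j;\la}=f_{\m,j;\la}^{\,s+t}\circ f_{\m,j;\la}^{\,j}\circ f_{\m,j;\la}^{\,n}
\]
and invoking the uniform convergence $f_{\m,j;\la}^{\,j}\to I_{\tl f_\m,\la}$ on compact subsets of $\CC^a(\tl f_\m)$, the $F_{\m,j;\la}$ are $C^0$-close to the cylinder-level maps $\tl F_{\m;\la}=\tl f_\m^{\,s+t}\circ I_\la\circ \tl f_\m^{\,n}$ of Lemma~\ref{ql family on Cyl}. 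Since the latter form a proper unfolded quadratic-like family with the disjointness property (ii) and the moduli control (iii), a standard robustness argument (take $j$ large enough, which makes the $C^0$-error as small as desired) promotes $F_{\m,j;\la}$ to a proper unfolded quadratic-like family on a parameter domain $\La_{\m,j}\supset$ the corresponding Mandelbrot copy, with intermediate disks pairwise disjoint (primitivity) and with $\mod(V_\m\sm U_{\m,j;\la})$ within $O(1)$ of the cylinder-level modulus, hence satisfying the bounds in (iii). Items (0), (i), (iii) and (iv) are then direct consequences of the corresponding statements in Lemma~\ref{ql family on Cyl} via this approximation.

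For (ii), the convergence $\FF_{\m,j}\to \FF_{\infty,j}$ uniformly in $j$ is built in: the Fatou coordinates, the horn maps, and hence the embeddings $\Phi_j$ all depend holomorphically on the base map (Lemma~\ref{continuous dependence of I}), and Lemma~\ref{shadowing} yields exponential convergence $\tl f_\m\to \tl f_\infty$ in $\WW^u_\Sieg(f_\infty)$; composing with the uniform convergence in the third bullet of Theorem~\ref{parabolic bifurcation} gives continuity of the whole construction in the parameter $\m\in\bar\N_\kappa$. The main obstacle, and the step requiring most care, is ensuring that all constants in (iii) and (iv) are uniform in $\m$ (including $\m=\infty$): this requires combining compactness of the family $\{\tl f_\m\}_{\m\in\bar\N_\kappa}$ with the fact that the base points $c^{a/r}_{\tl f_\m}$ and the crescents $\CC^{a/r}(\tl f_\m)$ used to normalize Theorem~\ref{parabolic bifurcation} can be chosen to depend continuously (in fact holomorphically) on $\m$, so that the embeddings $\Phi_j(\tl f_\m,\cdot)$ form a precompact family and the error between $f_{\m,j;\la}^{\,j}$ and $I_{\tl f_\m,\la}$ on compact subsets of $\CC^a$ is uniformly small in $\m$ once $j\geq \underline j(N,l,\kappa,t)$.
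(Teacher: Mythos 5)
Your proposal is correct and follows essentially the same route as the paper: perturb $\tl f_\m$ within $\FF_\m$ via Theorem~\ref{parabolic bifurcation} to realize the transit map as $f_{\m,j;\la}^{\,j}$, compose to obtain the quadratic-like family, transport properness/unfoldedness, primitivity and the moduli bounds from Lemma~\ref{ql family on Cyl} by perturbation, and use Lemma~\ref{shadowing} plus the uniform convergence in Theorem~\ref{parabolic bifurcation} to make everything uniform in $\m\in\bar\N_\kappa$. The one step you gloss over as a ``standard robustness argument'' is spelled out in the paper via the Argument Principle applied to the phase--parameter equation $h_\gamma(z)=I^j_\gamma(0)$ (with $h_\gamma$ the holomorphic motion of $\tl V_{-s-t}$), which produces the Jordan curve $\Gamma^j\Subset\La$ bounding $\La_{\m,j}$ and gives properness and unfoldedness directly rather than by appeal to closeness to the cylinder-level family.
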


\begin{proof}
Throughout this argument,  $(N,l,\kappa,t)$ will be fixed,
and dependences on them  will not be mentioned. 
Parameters $m$ and $j$ will be free.   


By Corollary~\ref{Siegel renorm of famiies}, 
the families $ \FF_\m $
stay within a compact collection of families crossing the Siegel class
$\{ f\in \IS :  f'(0) =e^{2\pi i \theta} \}$ transversally at points
$
       f_\m =  R_\Sieg^\m \Bf_{\theta} . 
$
In fact, they converge, as $\m\to \infty$, 
to the unstable manifold $\WW^u(f_\infty) \equiv \FF_\infty$ of
the Siegel fixed point.
Moreover, the  parabolic maps
$$
       \tl f_\m = R_\Sieg^\m (\Bf_{p_m/q_m} )
      = f_{m; \,  p_\kappa/q_\kappa} \in \FF_\m,
$$
converge  to  $\tl f_\infty\in \FF_\infty$.
This allows us to apply  the Parabolic Bifurcation Theory 
 in a uniform way  to the families $\FF_\m$ 
near the maps $\tl f_\m $.

Let us start with  the limiting parabolic  map  $\tl f= \tl f_\infty$.
Let  $  V   \ni c_0$ be the  disk  
selected for this map in  Lemma \ref{ql family on Cyl},
and let  $\tl V_{-s-t} \ni c_{-s-t} $ be its pullback constructed in that 
lemma.
 It is compactly contained in the repelling crescent $\tl \CC^r$,
and hence it is compactly contained in some smooth disk $\La\Subset
\tl \CC^r$.

There is a neighborhood $\Upsilon\subset \FF_\infty$ of $\tl f$
such that for any map $f_\gamma\equiv f_{\infty, \gamma} \in \Ups$,
the pullback $V^\gamma_{-s-t}\ni c_{-s-t} $ of $ V$ under $f_\gamma^{s+t}$
is compactly contained in $\La$ as well (uniformly over $f_\gamma\in \Upsilon$).  
Moreover, since  the disks  $\tl V^\gamma_{-s-t}$  are univalent pullbacks of a
fixed disk $ V$ by a holomorphic family of  maps $f^{s+t}_\gamma$, 
they   move holomorphically in $\gamma$; let 
$$ 
        h_\gamma : \tl V_{-s-t} \ra V^\gamma_{-s-t}
$$  
be this holomorphic  motion (based at $\tl f$).  

\ssk 
By Theorem \ref{parabolic bifurcation}, for any sufficiently big $j$,
there exists a holomorphic function  $\gamma= \gamma_j (\la)$ on $\La$ such that 
 the transit  maps $I_\gamma^j : \C/\Z \ra \C/\Z $
induced by $f_{ \gamma }^j$, have the following  properties:

\ssk\nin $\bullet$
  $I_\gamma^{j} (0) = \la$ (recall that the uniformizations of the
  Douady cylinders $\Cyl^{a/r}$
by $\C/\Z$ are selected so that
 $c_n\in \Cyl^a$ and $c_{-s-t}\in
\Cyl^r $ correspond to  $0\in \C/\Z$);

\ssk\nin $\bullet$
 As $j\to \infty$, the transit maps $I_{\gamma(\la)}^j$ converge uniformly on
 compact sets of $\C/\Z $
and uniformly in $\la\in \La $ 
to the transit map $I_\la: z\mapsto z+\la$  between the
Ecall\'e-Voronin cylinders for the parabolic map $\tl f$.

\ssk  
By the Argument Principle%
\footnote{This is an occasion of the standard Phase-Parameter
  relation.}
for any $z\in \di \tl V_{-s-t}$
there exists a unique $\la\in \La$  such that
$$
   h_\gamma  (z) = I^j_\gamma (0) ,  \quad  \mathrm{with}\
   \gamma= \gamma_j (\la),
$$
and these $\la$'s go around a Jordan curve $\Gamma^j \Subset \La$.
This implies that each quadratic-like family  
\begin{equation}\label{perturbed q-l family} 
     F_{j;  \la}  = f^{s+t}_\gamma \circ I_\gamma^j \circ
     f^n_\gamma : U_{j; \gamma} \ra  \tl V
\end{equation}
is proper and unfolded over the disk bounded by $\Gamma^j \Subset \La$,
where $\gamma=\gamma_j (\la)$ and $U_{j; \gamma}  \ni c_0$ is  the pullback of
 $V^\gamma_{ -t-s} $ by $I_\gamma^j \circ f_\gamma^n$.  
We obtain assertions (i) and (ii) for $\m=\infty$.

 Assertions (iii) and (iv) for $\m=\infty$ follow from the corresponding assertions of
 Lemma \ref{ql family on Cyl} since the quadratic-like families (\ref{perturbed q-l family})
are small perturbations (for big $j$) of the family
$\tl F_\la$  (\ref{F_la}).

\ssk For each finite $\m$, we can apply the same argument to the family $\FF_\m$,
which provides us with  quadratic-like families $F_{\m, j ; \la }$ with desired properties,
except that the geometric constants and bounds may depend on $\m$.
To make them uniform, we can apply a perturbative argument near $\FF_\infty$.
Namely, let us start with the same disk $ V \ni c_0$ as for $\tl
f\equiv \tl f_{\infty}$,
and pull it back by $f_{\m; \gamma}^{s+t}$.   
We obtain  a holomorphically moving family of disks 
$V^{\m;\gamma}_{-s-t} \Subset \CC^r (f_{\m; \gamma}) $ 
which is a  small perturbations of the above  family
$(V^\gamma_{-s-t} )$ for the $f_\gamma$. In particular,  for $\m$ big enough,
all these disks are uniformly compactly contained in the domain $\La$ used for $\m=\infty$.

Moreover, by  Theorem \ref{parabolic bifurcation}, as  $m, j \to \infty$, the transit maps
$I_\gamma^{\m, j }$, with $\gamma= \gamma_{\m,j} (\la)$, associated with  $f_{\m;  \gamma}$, converge to $I_\la$.
It follows that for $\m$ and $j$ sufficiently big,
the quadratic-like families  $(F_{\m, j; \la} )$ are small perturbations
of the family $( \tl F_\la )$ from Lemma \ref{ql family on Cyl}.
The uniformity of the  geometric bounds follows. 
\end{proof}

\sss{Renormalizations in the quadratic family}

Lifting the above renormalization to the quadratic family by means of
change of variable $\Pi_{m-\kappa}$ (\ref{restricted Pi}) 
 we obtain:

\begin{cor}\label{ql renormalization}
Let  $\underline {N}, \underline {l}, \underline{\kappa}$,
and $\underline{t}$ be as above.
Then for any natural 
$( N,l, \kappa, t ) > (\underline{N}, \underline{l},  \underline{\kappa},
\underline{t}) $,  there exist $\underline{m}$ and $\underline{j}$ 
with the following properties.
For  each  natural  $(m,j)> (\underline{m}, \underline{j})$,  
consider the holomorphic family ${\boldsymbol{\FF}}_{m,j}  = (\Bf_{m,j; \la} ) $  of quadratic polynomials
such that 
$$
   f_{m-\kappa , j; \la} = R_\Sieg^\m ( \Bf_{m-\kappa ,j; \la} ) , 
$$
where $( f_{m-\kappa , j; \la} )$ is the family from Lemma \ref{ql family},  
Then:

\ssk\nin {\rm {(i)} } Each family ${\boldsymbol{\FF}}_{m, j}$
admits  a primitive proper unfolded q-l renormalization 
$$   
   \BF_{m, j; \la}   = \Bf_{m, j;\la}^\p:  \BU_{m,j;\la}\ra   \BV_m, \quad
    \la\in \La_{m , j}; \ 
$$     

\ssk \nin \rm { (ii) }
$$
  \bar \mu( N,  l, \kappa, t) \geq  \mod ( \BV_\m \sm \BU_{\m, j; \la})   
  \geq \underline{\mu} ( N,  l, \kappa, t)\to \infty\
   \mathrm{as} \ t  \to \infty  \ \mbox{ with $N,l,\kappa $ fixed } .
$$ 

\ssk \nin \rm { (iii) }
In case of connected Julia set $\BJ\equiv J(\BF_\la)$ 
(i.e., when $\la$ belongs to the corresponding little Mandelbrot set 
$\MM'_{N,l, \kappa,t,m,j}$), 
the disks $\BU_{m, j; \la} $ are
$L( N, l, \kappa, t )$-quasidisks with   
$$
   \area \BU_{\m, j; \la }  \geq   c\, \area \BV_m,
   \quad \mathrm{where}\ c= c( N,  l, \kappa, t)  >0  .
$$ 

All geometric  constants and bounds are independent of $\m$ and $j$.

\end{cor}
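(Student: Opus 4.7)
The plan is to lift the quadratic-like families of Lemma \ref{ql family} from the Siegel renormalized level back to the quadratic family via the change of variable $\Pi_{m-\kappa}$ from \eqref{Pi_m def}. By Lemma \ref{Delta-m} together with the discussion around \eqref{restricted Pi}, the restricted map $\Pi_{m-\kappa} \colon \De^{m-\kappa}(c_0) \to \C$ is univalent with bounded distortion, and its image contains a definite neighborhood of the critical point of $R_\Sieg^{m-\kappa}\Bf_{m,j;\la}$ (uniformly in $m$, by compactness of $\overline{\IS}$ and continuous dependence of $\De^{m-\kappa}$ on $f$). For $m$ sufficiently large, this neighborhood will contain $V_{m-\kappa}$, which is a quasidisk of definite size by Lemma \ref{ql family on Cyl}(0) (and hence by Lemma \ref{ql family}).

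Having arranged this, I would define
$$
   \BV_m := \Pi_{m-\kappa}^{-1}(V_{m-\kappa}), \qquad
   \BU_{m,j;\la} := \Pi_{m-\kappa}^{-1}(U_{m-\kappa,j;\la}).
$$
By equivariance of the renormalization change of variable (part (iii) of Lemma \ref{Davoud} iterated $m-\kappa$ times), the first-return map of $\Bf_{m,j;\la}$ to $\BV_m$, restricted to $\BU_{m,j;\la}$, conjugates under $\Pi_{m-\kappa}$ to the quadratic-like map $F_{m-\kappa,j;\la}\colon U_{m-\kappa,j;\la}\to V_{m-\kappa}$ of Lemma \ref{ql family}. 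This yields a holomorphic quadratic-like family $\BF_{m,j;\la} = \Bf_{m,j;\la}^{\p}\colon \BU_{m,j;\la}\to\BV_m$ of some period $\p$ (determined by $n+j+s+t$ together with the combinatorial returns of the successive Siegel renormalizations). Properness and unfoldedness are preserved because $\Pi_{m-\kappa}$ is univalent and equivariant, so a winding of $\Bf_{m,j;\la}(0)$ around $\di \BV_m$ corresponds to a winding of $F_{m-\kappa,j;\la}(0)$ around $\di V_{m-\kappa}$ via Theorem \ref{M-sets of ql families}; primitivity is inherited from the primitive combinatorics already established in Lemma \ref{ql family}(i). The bounded distortion of $\Pi_{m-\kappa}$ then transfers the estimates: the modulus bounds of Lemma \ref{ql family}(iii) carry over to assertion (ii) up to a multiplicative constant depending only on the Koebe distortion on $\De^{m-\kappa}(c_0)$, and similarly the area bound $\area U_{m-\kappa,j;\la}\geq c(N,l,\kappa,t)$ together with the definite size of $V_{m-\kappa}$ translates into the relative area bound $\area \BU_{m,j;\la}\geq c\cdot\area\BV_m$ required in (iii).

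The main delicate point I anticipate is ensuring that the composition $\Pi_{m-\kappa}$ of renormalization changes of variable retains \emph{uniform} (in $m$) bounded distortion on the specific disk where we want to lift, despite the fact that its degree (as a pseudo-covering in $\C^*$) grows like $q_{m-\kappa}$. This is handled precisely by restricting to $\De^{m-\kappa}(c_0)$, where $\Pi_{m-\kappa}$ is univalent and the distortion bounds hold uniformly by compactness of $\overline{\IS}$ and Corollary~\ref{Delta-m}; the lower bound $\underline{m}(N,l,\kappa,t)$ in the statement is exactly the threshold at which $V_{m-\kappa}$ fits inside $\Pi_{m-\kappa}(\De^{m-\kappa}(c_0))$ with the required uniform bounds, and the required $\underline{j}$ is inherited from Lemma \ref{ql family}.
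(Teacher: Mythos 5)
Your proposal is correct and follows the same route the paper takes, which is stated in a single line: lift the quadratic-like family of Lemma~\ref{ql family} to the quadratic plane via the univalent renormalization change of variable $\Pi_{m-\kappa}$ of \eqref{restricted Pi}, using its bounded distortion on $\De^{m-\kappa}(c_0)$ (uniform in $m$ by compactness of $\overline{\IS}$) together with the equivariance from Lemma~\ref{Davoud}. One small point: since $\Pi_{m-\kappa}$ is univalent and holomorphic, the moduli in (ii) are transferred \emph{exactly} as conformal invariants rather than ``up to a multiplicative Koebe constant''; the bounded distortion is needed only for the quasidisk and relative-area assertions in (iii).
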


The little Mandelbrot copies 
$\MM'=\MM'_{ N,  l, \kappa,t,m,j }\subset\MM$ generated by these quadratic-like families 
 determine the desired renormalization combinatorics.  Below, a map $\Bf_\la$ will be called {\it
  renormalizable} if it is DH renormalizable with these combinatorics
(and similarly, for the Siegel map $f_\la$).

\subsection{A priori bounds}

Along with lower thresholds 
 $(\underline{N}, \underline{l}, \underline{\kappa}) $
let us  select some   upper bounds 
$
  (\bar N, \bar l, \bar \kappa)  > 
( \underline{N}, \underline{l},   \underline{\kappa} ) 
$
satisfying the following requirements:
$$
    \bar N> \uN,\quad    \bar l > \ul= \ul (\bar N),\quad 
\bar\kappa >\ukappa =    \ukappa (\bar N, \bar l). 
$$

Let $\Bf_*: \BU  \ra \BV $ be an infinitely
renormalizable quadratic polynomial with
bounded  combinatorics $(\MM^i)_{i=0}^\infty$, where 
$\MM^i=M'_{N_i, l_i, \kappa_i , t_i  , m_i ,  j_i } $ are the little Mandelbrot
copies constructed above with 
\begin{equation}\label{N,l,kappa}
    ( \underline{N}, \underline{l},   \underline{\kappa} ) <   (N_i, l_i, \kappa_i ) \leq 
  (\bar N, \bar l, \bar  \kappa )
\end{equation}
(while the bounds on $t_i$, $m_i$ and $j_i$ are not yet specified%
\footnote{In fact, in this section  one can consider maps $\Bf_*$ with
  unbounded $t_i, m_i, j_i$}).

\begin{prop}\label{a priori bounds}
For any sequence $ (N_i, l_i, \kappa_i ) $ satisfying (\ref{N,l,kappa})
  there exists $\underline{t}$ such that if 
$$
t_i>\underline{t}, \quad i=0,1,\dots,
$$
 then  the quadratic polynomial  $\Bf_* $ has 
 {\it  a priori} bounds $\nu(\bar N, \bar l, \bar \kappa )>0$
independent of   $(t_i, m_i, j_i ) $.
\end{prop}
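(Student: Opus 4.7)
The argument is an induction on the renormalization level. Write $f_i := R^i \Bf_*$ for the $i$-th DH renormalization, a quadratic-like map with connected Julia set (since $\Bf_*$ is infinitely renormalizable). The goal is to produce a uniform lower bound $\mod f_i \geq \nu$ for all $i \geq 1$, with $\nu$ depending only on $(\bar N, \bar l, \bar\kappa)$, provided $\underline t$ is chosen large enough. The strategy combines Corollary~\ref{ql renormalization} (which gives a large initial modulus at each level, driven by the parameter $t_i$) with the Straightening Theorem (which lets us transfer the modulus bound from the quadratic polynomial family to the actual q-l renormalization of $f_i$).

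\emph{Base case.} The polynomial $\Bf_*$ belongs to the family $\BFF_{m_0, j_0}$ with $\chi(\Bf_*) \in \MM^0 = \MM'_{N_0, l_0, \kappa_0, t_0, m_0, j_0}$, so Corollary~\ref{ql renormalization}(ii) applied directly to $\Bf_*$ yields $\mod f_1 \geq \underline\mu(N_0, l_0, \kappa_0, t_0) \geq M$, where $M := \underline\mu(\bar N, \bar l, \bar\kappa, \underline t)$ can be made arbitrarily large by increasing $\underline t$.

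\emph{Inductive step.} Suppose $\mod f_i \geq \mu$. By the Straightening Theorem, $f_i$ is hybrid conjugate to $P_{c_i}$ with $c_i = \chi(f_i) \in \MM^i = \MM'_{N_i,l_i,\kappa_i,t_i,m_i,j_i}$, via a qc map $h_i$ whose dilatation obeys a universal estimate $\mathrm{Dil}(h_i) \leq K_0(\mu)$ with $K_0(\mu)\to 1$ as $\mu\to\infty$ (the qc extension of $f_i$ across its fundamental annulus has dilatation controlled by the modulus of that annulus). Since $c_i\in\MM^i$ is a parameter in the family $\BFF_{m_i,j_i}$, Corollary~\ref{ql renormalization}(ii) produces a q-l representative $\BF_{m_i,j_i;c_i}$ of the DH renormalization $RP_{c_i}$ with $\mod(\BV_{m_i}\setminus \BU_{m_i,j_i;c_i}) \geq \underline\mu(N_i, l_i, \kappa_i, t_i) \geq M$. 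Pulling this representative back through $h_i^{-1}$ gives a q-l representative of $f_{i+1} = R f_i$ of modulus at least $M/K_0(\mu)$, so $\mod f_{i+1} \geq M/K_0(\mu)$.

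\emph{Closing the induction and main obstacle.} Set $\nu := M/2$. Since $K_0(\mu) \to 1$ as $\mu \to \infty$ and $M$ grows without bound with $\underline t$, one can select $\underline t$ large enough that $K_0(M/2) \leq 2$. Then $\mod f_i \geq \nu$ forces $\mod f_{i+1} \geq M/K_0(\nu) \geq M/2 = \nu$, and together with the base inequality $\mod f_1 \geq M \geq \nu$, induction yields $\mod f_i \geq \nu(\bar N, \bar l, \bar\kappa)$ for all $i \geq 1$. The principal technical point is the quantitative dilatation estimate $\mathrm{Dil}(h_i) \leq K_0(\mod f_i)$ with $K_0$ tending to $1$ at infinity; this is a standard consequence of the proof of the Straightening Theorem (the hybrid conjugacy is built by a qc extension over the fundamental annulus, whose dilatation is governed by the modulus of that annulus), and it is exactly what allows the loss of modulus at each inductive step to be absorbed by choosing $\underline t$ once and for all.
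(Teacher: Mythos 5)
Your proof is correct and follows essentially the same route as the paper's: combine the quantitative version of the Straightening Theorem ($K(\mu)\searrow 1$ as $\mu\to\infty$) with the large-modulus estimate of Corollary~\ref{ql renormalization}(ii), and iterate, choosing $\underline t$ so large that the $K$-qc loss at each step is dominated by the gain in modulus. The only cosmetic difference is the bookkeeping of constants (you set $\nu = M/2$ and then tune $\underline t$ so that $K_0(M/2)\leq 2$, while the paper first fixes $\nu$ with $K(\nu)<2$ and then takes $\underline t$ so that $\underline\mu>2\nu$), together with your explicit base-case check that $\mod f_1\geq M$, which the paper leaves implicit.
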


\begin{proof}
   If $g$ is a quadratic-like map with   $\mod g > \mu $ then it is $K$-qc conjugate to a quadratic  polynomial
   $\Bf_\theta $, where $K=K(\mu)\searrow 1$ as $\mu \to \infty$.  
Hence, if $g$ is DH renormalizable with any combinatorics
$\MM'=\MM_{N,l,\kappa,t,m,j }'$ under consideration, 
then  its renormalization $Rg $ has modulus at least 
$K^{-1}  \underline{\mu} $, 
where $\underline{\mu}=\underline{\mu}(N,l, \kappa,t)$ is from Corollary \ref{ql renormalization},
and $K=K(\underline{\mu} )$.

Let us select  $\nu$ so that $K( \nu )<2$ and then $\underline{t}$ so that
$\underline{\mu} (N,l,\kappa,t)>2\nu $ for any $t> \underline{t}$ 
 (which is possible by Corollary \ref{ql renormalization}).
Then for any  quadratic-like map $g$ with $\mod g > \nu $   which is renormalizable with
combinatorics $\MM'$,  we have $\mod Rg > \nu $ as well.

It follows that $\nu$ gives  {\it a priori bound} for  any quadratic-like map
$g $ with  $\mod g >\nu$
which is infinitely renormalizable with  combinatorics
$(\MM^i)$.
\end{proof}

\subsection{Landing probability}

Let $f_*  = R_\Sieg^{m-\kappa} \Bf_* $, 
and let $R f_*: U_*\ra V_*$ be its DH pre-renormalization
(with the combinatorics constructed in \S \ref{super par}). 

The next  lemma shows that there is a definite probability of landing
in the renormalization domain $U_*$ of the  map $f_*$.


\begin{lem}
Let $\underline{l}$ and $\iota$ be as in Lemma  \ref{definite disc}.
Let $l> \underline{l} + \iota$ and let $D_*= D_*^{l-\iota}$ be the trapping
disk for $f_*$ constructed in that lemma.   
Then $D_*$  contains  domains $U'\subset V'$ of comparable
(with $D_*$)  size 
(with constants depending on $N, l, \kappa$, and $t$)
which are mapped respectively  to
$U_*$ and $V_*$ under some iterate of $f_*$.  
Moreover, $D_*$ is contained well inside $\Dom f_* \sm \OO_*$
(with a lower bound depending on $N$ only),
where $\OO_*=\OO_{f_*}$ is the postcritical set for $f_*$. 
\end{lem}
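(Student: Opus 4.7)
The lemma has two assertions: (i) $D_* = D_*^{l-\iota}$ is well inside $\Dom f_* \setminus \OO_*$ with modulus depending only on $N$, and (ii) $D_*$ contains a univalent pullback $V' \supset U'$ of $V_* \supset U_*$ of diameter comparable to $\diam D_*$. I would prove (i) first, as it follows rather quickly from what is already in place, and devote the bulk of the argument to (ii).

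For (i), the starting point is Property (iii) of Lemma~\ref{definite disc}: $D_*$ has hyperbolic diameter of order one in $\Dom f_* \setminus \bar S$, where $S = S_{f_\infty}$ is the Siegel disk of the renormalization fixed point $f_\infty \in \IS_N$. This already gives a lower bound on $\mod\bigl((\Dom f_* \setminus \bar S)\setminus D_*\bigr)$ coming from the IS class geometry, hence depending only on $N$. The postcritical set $\OO_*$ is confined to an arbitrarily thin tubular neighborhood of $\di S$ by the combination of Proposition~\ref{postcrit set for IS}, Corollary~\ref{confinement of PP}, and the \emph{a priori} bounds (Proposition~\ref{a priori bounds}), provided the parameters $l,\kappa,t,m,j$ at every renormalization level of $\Bf_*$ are sufficiently large. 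Replacing $\bar S$ by $\OO_*$ in the modulus estimate then costs only a negligible amount, yielding the asserted lower bound depending on $N$ alone.

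For (ii), the plan is to produce a bounded-length $f_*$-orbit from the middle of $D_*$ into $V_*$, and then pull $V_*$ back univalently along it. Property (iv) of Lemma~\ref{definite disc} furnishes a point $\beta^* \in D_*$, in the middle of $D_*$, with $f_*(\beta^*)$ on (or arbitrarily close to) $\di S$. Since $f_*$ is uniformly close to $f_\infty$ by Lemma~\ref{shadowing} and the rotation $f_\infty|\di S$ is minimal with quantitative recurrence (bounded type $\theta$), the forward orbit $\{f_*^i(\beta^*)\}$ enters $V_*$ after some bounded number $k = k(N,l,\kappa,t)$ of iterates; set $z^* := f_*^k(\beta^*) \in V_*$. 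Then $V' := \phi(V_*)$ and $U' := \phi(U_*)$, where $\phi$ is the inverse branch of $f_*^k$ at $z^*$ that tracks the orbit back to $\beta^*$. The \emph{a priori} bounds of Proposition~\ref{a priori bounds}, combined with the definite size of $V_*$ supplied by Corollary~\ref{ql renormalization}, furnish a slight enlargement $V_*'' \Supset V_*$ of definite modulus on which $\phi$ is univalent; the Koebe Distortion Theorem then delivers $V' \subset D_*$ of diameter comparable to $\diam D_*$, with constants depending on $N, l, \kappa, t$.

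The delicate step is the univalence and distortion control for $\phi$ along an orbit shadowing $\OO_*$. Univalence holds because the critical values $c_j = f_*^j(c_0)$ that could obstruct it are concentrated along the superattracting cycle of $f_\circ$, near $\alpha^l$ at the fine scale $l$, whereas the orbit of $\beta^*$ executes a rotation-like motion on $\di S$ whose positions can be separated from those of the cycle at the scale of $V_*''$; a small perturbation of $\beta^*$ within $D_*$ takes care of any accidental collision of the orbit with $c_0$ itself. Bounded distortion then follows from Koebe applied along an orbit of bounded length, with the derivatives of $f_*$ uniformly controlled by compactness of the IS class and the shadowing of Lemma~\ref{shadowing}, uniformly in $m$ and $j$.
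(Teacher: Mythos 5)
Your overall plan agrees with the paper's: use the point from Lemma~\ref{definite disc}(iv) to get $D_*$ connected to $\di S$ under one application of $f_*$, exploit the fact that $V_*$ sits at a much deeper dynamical scale than $D_*$, and then realize $U',V'$ as a bounded-distortion pullback of $U_*,V_*$. Your part (i) is fine (the paper disposes of it even more quickly, by noting that $\OO_*$ lies inside the collar $A^{l-1}_*$ while $D_*$ lies outside it; but the hyperbolic-diameter route you take also works, since enlarging $\Dom f_*\setminus\bar S$ to $\Dom f_*\setminus\OO_*$ only decreases the hyperbolic metric).

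The gap is in your univalence argument for part (ii). You claim that the critical values $c_j=f_*^j(c_0)$ ``are concentrated along the superattracting cycle of $f_\circ$, near $\alpha^l$ at the fine scale $l$.'' This mischaracterizes $\OO_*$. The map $f_*$ is the renormalization of the \emph{infinitely renormalizable} polynomial $\Bf_*$, not of the superattracting $\Bf_\circ$; its postcritical set is a Cantor set that accumulates along (a perturbation of) the whole circle $\di S$, not a finite cycle near $\alpha^l$. Moreover, the orbit of $\beta^*$ itself winds around $\di S$, so there is no clean separation between that orbit and $\OO_*$ at any bounded scale; your ``separated from those of the cycle at the scale of $V_*''$'' does not hold. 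This leaves the central analytic claim --- that $V_*$ can be pulled back univalently along a long orbit segment that repeatedly passes close to $c_0$ --- without justification. What actually delivers the univalent bounded-distortion pullback is the Siegel dynamical tiling: by Theorem~\ref{bounds for Siegel pairs} and Proposition~\ref{disk D for Siegel}, each tile $I^{l'}_k$ at a deep scale $l'$ sits well inside a domain $Y^{l'}_k$ on which the landing map to the central tile at $c_0$ is univalent with bounded distortion; since $f_*(D_*)$ contains (in its middle) a point of $\di S$, it contains such a tile, and its landing map pulls $V_*$ back univalently with a definite relative size. The paper does exactly this for the unperturbed Siegel map $f$ first, then transfers to $f_*$ by closeness. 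You should replace the ``separation from critical values'' heuristic by an appeal to this landing-map structure; without it the argument does not close.
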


\begin{proof}
Recall that $f_*$ is a small perturbation of the Siegel map $f$
whose Siegel disk is called  $S=S_f$.
Let $S'$ be the component of  $f^{-1} (S)$
which is different from $S$.   
The trapping disk $\Disc^{l-\iota}$ for $f$  contains in the middle 
 some point  of $\di S' $. If $f_*$ is sufficiently close to  $f$
the  $D_*= D^{l-\iota}_*$  contains in the middle some point of 
$f_*^{-1}(\di  S)$. Hence $f_*(D_*)$ contains in the middle some point
of $\di S$.

The renormalization range $V_* $ can be selected at a much
 deeper 
(but still depending only on $N, l, \kappa$, and $t$)
dynamical scale    than $f_*(\Disc_*)$.  Then $f_*(\Disc_*)$ contains many
(in fact, we need only one) 
univalent and bounded distortion  pullbacks of $V_*$ under the
 Siegel map $f$.  Moreover, these pullbacks have size comparable with 
$\diam f_*(\Disc_*)$.
Selecting  $f_*$  sufficiently close to $f$,
we ensure the same property for  $f_*$.
Then  $\Disc_*$ also contains a comparable  pullback of
$V_*$. The corresponding pullback of $U_*$
 has a comparable size as well (all in terms of $N,l, \kappa$, 
and $t)$ .

The last assertion follows from the property that the postcritical set
$\OO_*$ lies well inside $A^{l-1}_*$ while $D_*$ lies outside $A_*^{l-1}$.
\end{proof}

We call the disk $D =D^{l-\iota}_*$ 
(and similar disks that appear below)
a  {\it safe trapping disk}
since it can be ``safely'' pulled back, with a bounded distortion
(depending on $N$ only),
along any orbit landing in it.
As before, we say that $D$ is {\it centered} at  $\alpha^{l-\iota}$, or that
$\depth D = l-\iota$.

Lifting  this disk by the renormalization change of variable
 $\Pi_{m-\kappa}$ (\ref{restricted Pi}),  
we obtain: 

\begin{cor}\label{safe trapping disk}
The quadratic polynomial  $\Bf_*$ has
a   safe trapping disk  $\BD :=\BD_*^{m-\kappa+ l -\iota }$ 
that contains domains $\BU'\subset  \BV'$ of comparable (with $\BD$)  size
which are mapped  respectively to
$\BU_* $ and $\BV_*$  under some iterate of $\Bf_*$.  
The constant depends on $N, l, \kappa$, and $t$
but is  independent of $m$.
\end{cor}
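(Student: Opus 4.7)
The plan is to lift the safe trapping disk $D := D^{l-\iota}_*$ for $f_* = R_\Sieg^{m-\kappa}\Bf_*$ produced in the previous lemma to a disk $\BD$ in the dynamical plane of $\Bf_*$ by pulling it back through the Siegel renormalization change of variable $\pi_{m-\kappa}$. The equivariance of $\pi_{m-\kappa}$ will automatically lift the distinguished point $\alpha^{l-\iota}_{f_*}$ to a periodic point $\Bal^{m-\kappa+l-\iota}$ of $\Bf_*$ at the advertised depth, and it will convert the iterate of $f_*$ that maps $U',V'$ onto $U_*,V_*$ into a (longer) iterate of $\Bf_*$ that maps the pullbacks $\BU',\BV'$ onto $\BU_*,\BV_*$.

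First I would check that $D$ lies in the image of a well-defined univalent branch of $\pi_{m-\kappa}$. By the previous lemma $D$ is well inside $\Dom f_*\setminus \OO_{f_*}$ with a margin depending only on $N$, and its hyperbolic diameter there is bounded in terms of $(N,l,\kappa,t)$; by the telescope of Lemma \ref{Delta-m} together with Corollary \ref{Delta-m}, the change of variable $\pi_{m-\kappa}$ admits inverse branches of bounded distortion on any such hyperbolic neighborhood, and each branch lands in a component of the necklace $\BDe^{m-\kappa}_{\Bf_*}$. Selecting the branch associated with the same critical orbit piece that defines the first renormalization of $f_*$ yields a quasidisk $\BD \subset \Dom\Bf_*\setminus \OO_{\Bf_*}$ with bounded shape and at the claimed depth.

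Next I would pull back the pair $U'\subset V'\subset D$ by the same inverse branch to obtain $\BU'\subset \BV'\subset \BD$. Bounded distortion transfers the comparability $\diam U'\asymp \diam V'\asymp \diam D$ (with constants depending on $N,l,\kappa,t$) to $\diam\BU'\asymp \diam\BV'\asymp \diam \BD$. The equivariance of $\pi_{m-\kappa}$ converts the relation $f_*^a(U',V')=(U_*,V_*)$ into a relation of the form $\Bf_*^{\mathbf{a}}(\BU',\BV')=(\BU_*,\BV_*)$ for an appropriate iterate $\mathbf{a}$ (namely $a$ lifted through the Siegel cylinder return), which is exactly the content of the corollary. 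Safety of $\BD$, in the sense defined after Corollary \ref{safe trapping disk}, follows because any pullback of $\BD$ along a landing orbit projects under $\pi_{m-\kappa}$ to a pullback of $D$, and the latter is controlled by the Koebe distortion estimate available for the safe trapping disk $D$ of $f_*$.

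The main obstacle, and the reason the conclusion is nontrivial, is the uniformity in $m$: one must verify that the distortion of the chosen branch of $\pi_{m-\kappa}$ on a hyperbolic neighborhood of $D$ does not degenerate as $m\to\infty$. This is exactly what the precompactness of the Siegel cylinder renormalization orbit provides. By Lemma \ref{shadowing} the maps $f_i = R_\Sieg^i \Bf_*$ shadow $R_\Sieg^i \Bf_\theta$ and converge to $f_\infty$; consequently the geometry of the necklaces $\BDe^{m-\kappa}_{\Bf_*}$ and the butterflies underlying them (Theorem \ref{bounds for Siegel pairs}, Corollary \ref{expansion for Siegel maps-2}) is uniformly controlled in $m$. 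This yields an $m$-independent distortion bound for the inverse branch of $\pi_{m-\kappa}$, and hence $m$-independent constants for the conclusion, with the dependence on $N,l,\kappa,t$ entering only through the input from the previous lemma.
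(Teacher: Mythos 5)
Your proof is correct and matches the paper's own (one-line) argument: lift the trapping disk, the pair $U'\subset V'$, and the periodic point from the $f_*$-plane to the $\Bf_*$-plane via the inverse of the renormalization change of variable near the critical point, and obtain uniformity in $m$ from precompactness of the renormalization orbit (Lemma~\ref{shadowing}). Two notational slips worth fixing: the paper invokes the critical-orbit-centered restriction $\Pi_{m-\kappa}$ from (\ref{restricted Pi}) rather than $\pi_{m-\kappa}$ itself, and there is no ``Lemma~\ref{Delta-m}''---you mean Lemma~\ref{Delta} and Corollary~\ref{Delta-m}.
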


We will refer to  the above disk  $\BD$ as the {\it base}  safe trapping disk.

Spreading the disks $\BU'\subset \BV'$ around by the landing map, 
we obtain: 

\begin{cor}\label{gaps U(z)}
For any point $z$ whose orbit  passes through the safe trapping disk  $\BD$ 
under the iterates of  $\Bf_*$, 
 there  exist  quasidisks $\BU(z) \subset \BV(z)$
with bounded dilatation  
whose size is comparable with   $\dist(z, \BV(z))$, and such that 
$$ \Bf_*^n (\BU(z) )= \BU_*, \quad 
 \Bf_*^n (\BV(z) )= \BV_*  \quad  \mathrm {for \ some}\  n=n(z).
$$
All constants and bounds depend on $N$, $l$, $\kappa$ and $t$,
but not on $m$.
\end{cor}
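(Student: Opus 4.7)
The plan is to obtain $\BU(z)\subset \BV(z)$ by pulling the reference pair $\BU'\subset \BV'$ of Corollary~\ref{safe trapping disk} back along the orbit of $z$ to its first entry into $\BU'$. Concretely, let $k$ be the first moment at which $\Bf_*^k(z)\in\BU'$ (which exists by hypothesis on $z$), and let $n_0$ denote the uniformly bounded number of iterates sending $\BU'\to\BU_*$ and $\BV'\to\BV_*$; set $n=n(z):=k+n_0$. I would then define $\BV(z)$ and $\BU(z)$ as the connected components containing $z$ of $\Bf_*^{-k}(\BV')$ and $\Bf_*^{-k}(\BU')$ respectively, along the univalent branch of $\Bf_*^{-k}$ determined by the orbit segment $\{\Bf_*^i(z)\}_{i=0}^{k-1}$.

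First I would verify that this backward iteration is univalent and has bounded distortion. For this I would thicken $\BV'$ within $\BD$ to an auxiliary quasidisk $\hat\BV'\Subset \BD$ with $\mod(\hat\BV'\setminus\BV')$ definite; this is possible because $\BU'$, $\BV'$ and $\BD$ have comparable sizes by Corollary~\ref{safe trapping disk}. By the last assertion of that Corollary, $\BD$, and hence $\hat\BV'$, lies well inside $\Dom \Bf_*\setminus\OO_{\Bf_*}$, with a gap depending only on $N$. Since $\OO_{\Bf_*}$ is forward invariant, every univalent branch of $\Bf_*^{-1}$ is a hyperbolic contraction on $\C\setminus\OO_{\Bf_*}$ by the Schwarz lemma, and the successive pullbacks of $\hat\BV'$ along the orbit of $z$ stay inside $\C\setminus\OO_{\Bf_*}$, avoiding in particular the critical value of $\Bf_*$. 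Hence the relevant inverse branches of $\Bf_*^k$ are univalent on $\hat\BV'$, and Koebe's Distortion Theorem applied to the pair $\hat\BV'\supset\BV'$ gives a dilatation bound $L=L(N)$ for the pullback restricted to $\BV'$.

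With this in hand, $\BV(z)$ and $\BU(z)$ are $L$-quasidisks. The shape claim follows from the reference configuration: by Corollary~\ref{safe trapping disk} the diameters of $\BU'$, $\BV'$ and $\BD$ are comparable with constants depending only on $(N,l,\kappa,t)$, and $\Bf_*^k(z)\in\BU'$ sits at a controlled relative position inside $\BV'$. Bounded distortion of the pullback transfers this to $z\in\BV(z)$, yielding $\diam\BV(z)\asymp\dist(z,\partial\BV(z))$ with constants depending only on $(N,l,\kappa,t)$. Finally, composing with the $n_0$-iterate that sends $\BU'\to\BU_*$ and $\BV'\to\BV_*$ gives $\Bf_*^n(\BU(z))=\BU_*$ and $\Bf_*^n(\BV(z))=\BV_*$ as required.

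The main obstacle is the uniformity in $m$ of all bounds; everything else is standard quasiconformal bookkeeping. This reduces to the uniform (in $m$) distance from $\BD$ to the postcritical set $\OO_{\Bf_*}$, which is precisely the content of Corollary~\ref{safe trapping disk}: $\BD$ was obtained by lifting the trapping disk $D_*^{l-\iota}$ for $f_*$ through the renormalization change of variable $\Pi_{m-\kappa}$, and this underlying disk is separated from the Siegel postcritical set by a margin controlled only by $N$. Once this uniform safety is in place, the hyperbolic contraction, Koebe estimate, and the resulting geometric comparability are automatically $m$-independent.
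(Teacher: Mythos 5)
Your overall strategy---pull back the reference pair $\BU'\subset\BV'$ of Corollary~\ref{safe trapping disk} along the orbit of $z$ and compose with the bounded iterate that sends $\BU'\to\BU_*$, $\BV'\to\BV_*$, relying on the safety of $\BD$ (i.e., $\BD$ lies well inside $\C\sm\OO_{\Bf_*}$ with a gap depending only on $N$) to control the inverse branches---is exactly the right idea and matches the paper's one-line ``spreading by the landing map.'' But there is a genuine error in the setup: you define $k$ as ``the first moment at which $\Bf_*^k(z)\in\BU'$'' and assert this ``exists by hypothesis on $z$.'' The hypothesis is only that the orbit of $z$ passes through the trapping disk $\BD$, not through the much smaller domain $\BU'\subset\BD$. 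The orbit may enter $\BD$ and never touch $\BU'$; indeed in the main application (Proposition~\ref{def eta}) $z$ is a Fatou point escaping to infinity, whose orbit in general misses $\BU'$ entirely.

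The fix is to take $k$ to be a moment with $\Bf_*^k(z)\in\BD$ (which does exist), pull back a definite thickening of $\BD$ itself along the orbit segment---this is univalent with bounded Koebe distortion by the safety of $\BD$, as you argue---and \emph{then} take $\BU(z)$ and $\BV(z)$ to be the pullbacks of $\BU'$ and $\BV'$ under the same inverse branch. With this correction, $z$ need not belong to $\BV(z)$: $z$ lies in the pullback of $\BD$, while $\BV(z)$ is a comparably sized quasidisk nearby. This is in fact forced by the statement, which asserts the size of $\BV(z)$ is comparable with $\dist(z,\BV(z))$---a quantity that would be zero or degenerate under your reading that $z\in\BV(z)$. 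Once repaired, the comparability follows from Corollary~\ref{safe trapping disk} (sizes of $\BU'$, $\BV'$, $\BD$ are comparable in terms of $N,l,\kappa,t$), bounded distortion of the landing branch (constants in $N$ only), and the bounded iterate $\BU'\to\BU_*$; the $m$-uniformity is exactly as you say, coming from the $m$-independent separation of $\BD$ from $\OO_{\Bf_*}$.
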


\comm{***
\begin{proof}
Since the postcritical set $\BO_\la$ of the maps $\Bf_\la$ does not go
beyond  the collar $\BA^{m-\kappa+ l-1}$, the trapping disk $\BD=\BD^{m-\kappa+\l/2}$ 
is disjoint of $\BO_\la$ (with a definite collar around it).
Hence it can be pulled back along the orbit of $z$.
 The pullback of the domain $BW_\la$ from Corollary 
\ref{safe trapping disk} gives us a desired disk $\BW_\la(z)$.
 
\end{proof}
******}

We are now ready to show the map $\Bf_*$ has 
 a definite landing  probability $\eta$.

\begin{prop}\label{def eta}
For the  polynomial  $\Bf_*$, 
 the landing probability $\eta$ is bounded from below in terms
of $N$, $l$, $\kappa$, and $t$,  uniformly in  $m$.  
\end{prop}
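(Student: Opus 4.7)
The plan is to reduce the problem, via Corollaries~\ref{safe trapping disk} and~\ref{gaps U(z)}, to a purely measure-theoretic statement about visits to the base safe trapping disk $\BD$, and then to establish that measure bound by a distortion/covering argument. Corollary~\ref{safe trapping disk} gives a quasidisk $\BU' \subset \BD$ of area at least $c(N,l,\kappa,t)\,\area(\BD)$ with $\Bf_*^n(\BU') = \BU_*$ for some $n$, so $\BU' \subset \XX$. Corollary~\ref{gaps U(z)} propagates this locally: for every $z \in \BU$ whose forward $\Bf_*$-orbit meets $\BD$, there is a quasidisk $\BU(z) \subset \XX$ of bounded shape containing $z$ in its middle. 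In particular,
$$
\BU^{\mathrm{vis}} := \{\, z \in \BU : \orb_{\Bf_*}(z) \cap \BD \neq \emptyset \,\} \subset \XX,
$$
so it suffices to prove $\area(\BU^{\mathrm{vis}}) \geq c_0 \cdot \area(\BU)$ with a constant independent of $m$.

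For the density of $\XX$ inside each orbit-hit region, I would write $\BU^{\mathrm{vis}} = \bigcup_{n \geq 0} \Bf_*^{-n}(\BD) \cap \BU$ as a union of univalent pullbacks. Since, by construction, $\BD$ lies well outside the collar $\BA_*^{m-\kappa+l-1}$ that contains the postcritical set $\OO_*$, each pullback component has Koebe-bounded distortion, with bounds depending only on $N$. This gives uniform local control: each component of $\Bf_*^{-n}(\BD) \cap \BU$ is a univalent, bounded-distortion image of $\BD$, and inside it the pullback of $\BU'$ still occupies a definite proportion of the area and lies in $\XX$.

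The main obstacle is thus the covering estimate $\area(\BU^{\mathrm{vis}}) \gtrsim \area(\BU)$. My plan is to pass to the Siegel renormalized plane via the change of variable $\Pi_{m-\kappa}$ of Lemma~\ref{Davoud}, under which $\BD = \Pi_{m-\kappa}^{-1}(D_{f_*}^{l-\iota})$. In that plane, the collar-trap mechanism of Proposition~\ref{disk D for Siegel}~(A2),(D3), transferred to the perturbation $f_*$ through Lemma~\ref{definite disc}~(v), ensures that every orbit of $f_*$ entering the collar $A_{f_*}^l$ deterministically lands in the middle of $D_{f_*}^{l-\iota}$ within at most $q_{l+1}$ iterates. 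A direct estimate using the definite modulus $\mod(V_{f_*} \setminus S_{f_*})$ then shows that an $(N,l,\kappa,t)$-definite fraction of a fixed neighborhood $U_{f_*}$ of the virtual Siegel disk eventually enters $A_{f_*}^l$, and hence visits $D_{f_*}^{l-\iota}$. Finally, lifting through $\Pi_{m-\kappa}$ --- a branched covering whose univalent branches have locally bounded distortion, uniformly in $m$ thanks to Theorem~\ref{hyp fixed point} and the bounded geometry of the necklaces --- transfers this density estimate back to $\BU$ while only losing a factor depending on $N,l,\kappa,t$. Combining the three ingredients yields $\eta \geq c_0(N,l,\kappa,t) > 0$ uniformly in $m$.
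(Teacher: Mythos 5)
There is a genuine gap, and it occurs at the very first step. You assert that Corollary~\ref{gaps U(z)} provides a quasidisk $\BU(z)\subset\XX$ \emph{containing $z$ in its middle}, and you deduce $\BU^{\mathrm{vis}}\subset\XX$. This is a misreading: the corollary only says that $\BU(z)\subset\BV(z)$ are pullbacks of $\BU_*\subset\BV_*$ whose size is \emph{comparable to} $\dist(z,\BV(z))$. In other words, $\BU(z)$ sits near $z$ at a distance of order its own diameter, but generically $z\notin\BU(z)$. The correct conclusion is a density statement: every such $z$ has a ball $B(z)$ of comparable scale in which $\XX$ occupies a definite proportion. Turning that into an area lower bound requires a covering argument (Besikovich, as the paper invokes explicitly in the companion Proposition~\ref{small xi}), not a pointwise inclusion. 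As written, your reduction to $\area(\BU^{\mathrm{vis}})\ge c_0\area(\BU)$ does not follow.

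The second and more structural gap concerns how to get a definite fraction of the domain to visit $\BD$ in the first place. You aim to show that a definite fraction of a neighborhood of the virtual Siegel disk eventually enters $A_{f_*}^l$, and you appeal to a ``direct estimate using the definite modulus $\mod(V_{f_*}\sm S_{f_*})$.'' But a modulus bound on a fundamental annulus is a static quantity and does not by itself control how much mass escapes. Worse, the whole point of the construction is that $J(\Bf_*)$ may have positive (even nearly full) area near the virtual Siegel disk; points of $J(\Bf_*)$ have bounded orbits and need not ever enter $A^l$ or visit $\BD$. So the inequality $\area(\BU^{\mathrm{vis}})\gtrsim\area(\BU)$ is simply not available by a trapping-disk argument alone. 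What the paper does instead is exploit a clean dichotomy inside the Siegel disk $\BS=\BS_\Bf$ of the unperturbed Siegel polynomial (which has definite area depending only on $N$): points of $\BS\cap\BJ_*$ land in $\BU_*$ almost surely by \cite{DAN}, while points of $\BS\sm\BJ_*$ escape to infinity and must therefore pass through the base trapping disk $\BD$, after which Corollary~\ref{gaps U(z)} plus Besikovich gives a definite density of $\XX$. Either alternative contributes a definite proportion of $\BS$ to $\XX$, yielding $\eta\ge c(N,l,\kappa,t)>0$ uniformly in $m$. Your argument omits the role of $\cite{DAN}$ entirely, and without it there is no way to account for the Julia-set portion of the measure; the passage through the cylinder change of variable $\Pi_{m-\kappa}$ is also an unnecessary detour that introduces uniformity issues the paper avoids.
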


\begin{proof}
 It is known that  almost all point of the Julia set $\BJ_*= J(\Bf_*)$ land in $\BU_*$
\cite{DAN},  so it is sufficient to deal with the Fatou set. 
Since the Siegel disk $\BS= \BS_\Bf$ occupies certain area, it is
sufficient to check that a definite portion of points $z\in \BS \sm \BJ_*$ land
in $\BU_*$.  But any point $z\in \BS\sm \BJ_*$ on its way  from $\BS$ to $\infty$  must pass
through  the base safe trapping disk $\BD$.  Then Lemma \ref{gaps U(z)}
provides us with a domain $\BU (z)$ of points landing in $\BU_*$ that 
occupies  a definite portion  of some neighborhood of  $z$. 
The conclusion follows.  
\end{proof}


\subsection{Escaping  probability $\xi$}\label{escaping par sec}

\sss{Porosity}
Let us  start with a general measure-theoretic
lemma asserting that if a set $X$ has density less than $1-\eps$ in many
scales then it has small area.

By a {\it gap} in $X$ of radius $r$ we mean a round disk of radius
$r$ disjoint from $X$. 

\begin{lem}\label{gaps in many scales}
  For any $\rho\in (0,1)$, $C>0$ and $\eps>0$ there exist  $\si\in
  (0,1)$ and $C_1>0$  
with the following property.  
Assume that a measurable set $X\subset \D_r$
 has the property that for any $z\in X$ there are $n$ disks 
$\D(z,r_k)$ with radii  
$$
      C^{-1} \rho^{l_k} \leq r_k/r \leq C\rho^{l_k}, 
   \quad  l_k\in \N, \  l_1< l_2< \dots < l_n,
$$ 
containing gaps in $X$ of radii  $\eps\, r_k$. Then 
$
    \area X \leq C_1 \si^n\, r^2. 
$
\end{lem}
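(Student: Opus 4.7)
The plan is to prove this as a quantitative porosity statement, where the presence of $n$ gap scales per point forces exponential-in-$n$ decay of area. The key mechanism is that a gap of radius $\eps r_k$ in $\D(z,r_k)$ forces the density of $X$ in that disk to be at most $1-\eps^2$, and we want to iterate this gain over the $n$ scales.

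First I would perform a preliminary reduction: since consecutive scales satisfy $r_{k+1}/r_k \leq C^2\rho$, by passing to a subsequence of scales (losing only a bounded factor in $n$, which can be absorbed into $\sigma$) we may assume the scales are strictly nested with a fixed large ratio, say $r_{k+1} < \rho' r_k$ where $\rho'$ is as small as needed. Then I would set up a $\rho'$-adic dyadic grid and work with box-counting: fix a large depth $L$, let $\mathcal{Q}_k$ denote the grid cells at level $k$ (side $\asymp r(\rho')^k$), and let $N_k$ be the number of cells in $\mathcal{Q}_k$ meeting $X$. Since $\area X \leq N_L \cdot (r(\rho')^L)^2$, the goal reduces to showing $N_L \leq C_1 \sigma^n (\rho')^{-2L}$.

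The main step is a recursive counting: at each level $k$, for a cell $Q \in \mathcal{Q}_k$ intersecting $X$ pick a representative $z_Q \in X \cap Q$. If $z_Q$ has a gap scale $r_j$ with $l_j$ at most $O(1)$ from the current level $k$, the associated gap of radius $\asymp \eps r(\rho')^k$ is contained in a bounded neighborhood of $Q$ and forbids a definite fraction $\asymp \eps^2$ of the $(\rho')^{-2}$ potential children of $Q$ from being in $\mathcal{Q}_{k+1}\cap X$. Call $Q$ \emph{active} at step $k$ in this case: its contribution to $N_{k+1}$ is at most $(1 - c\eps^2)(\rho')^{-2}$ children, otherwise at most $(\rho')^{-2}$. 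By a Fubini/double-counting argument, each point $z \in X$ contributes exactly $n$ active levels along its dyadic path (one per gap scale $l_j$), so summing the multiplicative weights across $L$ levels and aggregating over all cells yields the bound $N_L \leq C_1 (\rho')^{-2L}(1 - c\eps^2)^n$, which is the claim with $\sigma = 1 - c\eps^2$.

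The main technical obstacle will be the bookkeeping at Step 3: different cells at the same depth $k$ may be active for different reasons (their representative $z_Q$ having gap scales at different $l_j$), so the contractions are not synchronized across the grid. A naive ``one gap at level $k$ makes \emph{all} cells of $\mathcal{Q}_k$ contract'' argument fails, and one must instead assign each of the $n$ gaps of each point $z$ to a unique active level along $z$'s grid path, then show via a Markov-/stopping-time argument that the weighted product telescopes to the claimed bound. The Besicovitch-type overhead in identifying representatives $z_Q$ and in handling the constant $C$ (which spreads a gap at level $l_k$ across $O(1)$ grid levels) is absorbed into $C_1$ by making $c$ small enough so that $(1-c\eps^2)$ still exceeds the overhead per step; this is why the statement allows $C_1$ to be large but only requires $\sigma < 1$.
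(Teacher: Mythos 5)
Your proposal exploits the same mechanism as the paper's proof: a gap of radius $\eps r_k$ in $\D(z,r_k)$ depresses the density of $X$ at scale $r_k$ by $\asymp\eps^2$, and iterating this over the $n$ scales gives exponential decay. However, you set this up on a fixed $\rho'$-adic grid with a per-cell activity count, while the paper subdivides adaptively (a stopping-time decomposition tied to each point's first gap scale and working with areas of nested neighborhoods rather than cell counts), and this difference matters.

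Two steps in your counting do not go through as written. First, the gap for the representative $z_Q$ lives in $\D(z_Q,r_j)$, which you correctly note sits only in a \emph{bounded neighborhood} of $Q$; it need not intersect $Q$ at all, so it does not directly forbid a definite fraction of the children of $Q$ (it may forbid children of a neighboring cell, which could be double-claimed by another active cell). Second --- the obstacle you flag --- activity is defined through the chosen representative $z_Q$, so the claim that ``each $z\in X$ contributes exactly $n$ active levels along its dyadic path'' is not established: the representatives of $z$'s ancestors need not lie on $z$'s path, and the $n$ gap scales of $z$ need not coincide with the active levels of those ancestors. Your proposed ``Markov-/stopping-time argument'' is the right instinct, but once carried out it is exactly the paper's adaptive subdivision: for each $z\in X$ take a square at $z$'s first gap scale $l_1(z)$, arranged (via a Vitali/Besicovitch-type refinement and a slight enlargement of the square) so that the gap lies inside it; remove the gap to get a neighborhood $\Q_1\supset X$ with $\area\Q_1\leq(1-c\eps^2)\area\Q$; then recurse inside each piece at a boundedly deeper scale $l_j$, for $\asymp n/j$ steps. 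This handles both the alignment and the synchronization at the subdivision step and dispenses with the cell count $N_k$ entirely. So your outline points at the right mechanism, but the fixed-grid counting would need to be replaced by the adaptive subdivision to become a proof.
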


\begin{proof}
Since the assertion is scaling invariant, 
we can assume without loss of generality that $r=1$.
 We can also assume that $X$ is compact,
   and we can work with  squares instead of disks.
Using the first scale $l_1$ for  points of $X$, 
we can subdivide  the unit square $\Q$ into dyadic squares $Q^1_i$
(of varying scales) such that each $Q^1_i$ contains a comparable
dyadic square $B^1_i$ (of relative scale depending on $\eps$) disjoint from
$X$.   Let $\Q^1\supset X$ be the union of $Q^1_i\sm B^1_i$. Then 
$$
  \area \Q_1  \leq \si_0 \area \Q,
$$
where $\si_0 \in (0,1)$ is roughly
equal to  $1-\eps^2$.

Then we can subdivide each $Q^1_i$ into squares of size $B^1_i$ and repeat
the construction with all non-empty squares of this subdivision
(using a deeper scale $l_j$ with a sufficiently big but bounded $j$). 
It will produce a set $\Q_2\supset X$ such that
$$
  \area \Q_2 \leq \si_0 \, \area \Q_1.
$$

We can repeat this procedure roughly $ n/j$ times, which implies the desired.
\end{proof}

\sss{Landing branches} 



Let us consider   a  safe trapping disk $\BD=\BD^\Bl $  for $\Bf_*$.
centered at the periodic point $\Bal_\Bl $. 
By definition, it has a bounded  hyperbolic%
\footnote{Below, ``hyperbolic''  will always refer to the hyperbolic metric in
$\C\sm \BO_*$.}
 diameter of order 1 in  $\C\sm \BO_*$:
\begin{equation}\label{d}
   d^{-1} \leq      \diam_\hyp \BD \leq d \quad \mathrm{with}\ d=d(N).
\end{equation}
For instance, $\BD$ can be  the base trapping disk of depth
$\Bl=m-\kappa+l-\iota$  from  Corollary~\ref{safe trapping disk},
but we will also consider much more shallow disks.

For any point $z$, let 
$$
  0\leq r_1(z) < \dots < r_n(z)< \dots 
$$
be all  {\it landing times} of $\orb z$ at $\BD$, i.e. the moments for which 
$\Bf_*^{r_n} (z) \in \BD$ listed consecutively  (this list can be
infinite, finite, or empty).  
Let  $T^n :  \Dom T^n \ra \BD$ be the corresponding {\it landing maps},
i.e.,   for a point $z\in \Dom T^n $, the landing moment $r_n(z)$ is
well defined and  $T^n (z)= \Bf_*^{r_n} (z) $.
Let $P^n (z)\ni z$ be the pullback of $\BD$ along the orbit
$\{\Bf_*^i  (z) \}_{i=0}^{r_n}$. 
Since $ \BD\Subset \C\sm \BO_*$, the  maps 
\begin{equation}\label{return branches}
      \Bf_*^{r_n } : P^n  (z) \ra \BD
\end{equation}
are univalent 
 We will refer to these maps as the {\it landing branches}.

For a domain  $P =P^n(z) $, 
we will also use notation $r_P$ for the landing time $r_n(z)$
(which is independent of $z\in P$), and will will use  notation
$T_P= \Bf_*^{r_P}$ for the corresponding landing branch $P\ra \BD$.

Let $\PP(\BD) $ be the family of all  domains $P=P^n (z)$.

\comm{***
 Let $T :   \Dom T \ra \BD$ be the first return map to $\BD$,
and let $T^n :  \Dom T^n \ra \BD$ be its iterates. 
For a point $z\in \Dom T^n $,
we have $T^n (z)= \Bf_*^{r_n} (z) $ for some return time $r_n =r_n(z)$.
Let $P^n (z)\ni z$ be the pullback of $\BD$ along the orbit
$\{\Bf_*^i  (z) \}_{i=0}^{r_n}$. 
Since $ \BD\Subset \C\sm \BO_*$, the  maps 
\begin{equation}\label{return branches}
      \Bf_*^{r_n } : P^n  (z) \ra \BD
\end{equation}
are univalent 
 We will refer to these maps as {\it return branches}
(slightly inconsistently as the domains $P^n(z)$ are  not
necessarily contained in $\BD$).

Let $\PP $ be the family of all the domains $P^n (z)$. 
For $P=P^n(z) \in \PP$, 
we will also use notation $r_P$ for the return time $r_n(z)$
(which is independent of $z\in P$), and will will use  notation
$T_P= \Bf_*^{r_P}$ for the corresponding return branch $P\ra \BD$.
************}

\begin{lem}\label{distortion trivia}
\ssk \nin $\bullet$
   The landing branches $T_P : P\ra  \BD$, $P\in \PP (\BD) $,  
have uniformly bounded distortion; the domains $P\in \PP(\BD)$ have  a bounded shape
and are well inside $\C\sm \BO_*$
(with  bounds and constants  depending only on
$\bar N$); 

\ssk\nin $\bullet$
Each domain $P\in \PP(\BD)$ contains a pullback of $\BV_*$ of
 comparable size (with the constant depending  only on the parameters
$\bar N, \underline{l}, \underline{\kappa}, \underline {t}$).
%
\end{lem}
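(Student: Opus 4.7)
The plan is to exploit forward invariance of the postcritical set $\BO_*$ to extend each landing branch univalently across a definite Koebe collar around $\BD$. Since $\BD$ has bounded hyperbolic diameter $d=d(\bar N)$ in $\C\setminus\BO_*$ by (\ref{d}), first enlarge $\BD$ to a quasidisk $\BD'\Supset\BD$ still compactly contained in $\C\setminus\BO_*$, with $\mod(\BD'\setminus\BD)\geq m_0=m_0(\bar N)$; concretely $\BD'$ can be taken as a smoothed hyperbolic $1$-neighborhood of $\BD$ in $\C\setminus\BO_*$. Forward invariance $\Bf_*(\BO_*)\subset\BO_*$ yields $\Bf_*^{-1}(\C\setminus\BO_*)\subset\C\setminus\BO_*$, so successive preimages of $\BD'$ stay in $\C\setminus\BO_*$ and in particular never contain the critical value $\Bc_1=\Bf_*(\Bc_0)\in\BO_*$.

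For each $P=P^n(z)\in\PP(\BD)$, I would pull $\BD'$ back along the orbit $\Bf_*^{r_P}(z),\Bf_*^{r_P-1}(z),\dots,z$; by the previous remark every inverse branch involved is univalent, so the resulting preimage $P'\supset P$ satisfies $\mod(P'\setminus P)=\mod(\BD'\setminus\BD)\geq m_0(\bar N)$. The Koebe Distortion Theorem then yields a distortion bound for $T_P\colon P\to\BD$ depending only on $m_0$, hence only on $\bar N$. Bounded shape of $P$ is obtained by transporting the bounded shape of the quasidisk $\BD$ through $T_P^{-1}$, and the inclusion $P\Subset P'\subset\C\setminus\BO_*$ with definite annular modulus places $P$ well inside $\C\setminus\BO_*$.

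For the second bullet, the case when $\BD$ is the base safe trapping disk $\BD_*^{m-\kappa+l-\iota}$ follows at once from Corollary~\ref{safe trapping disk}, which supplies a disk $\BV'\subset\BD$ of size comparable to $\BD$ with $\Bf_*^s(\BV')=\BV_*$ for a bounded $s$; then $T_P^{-1}(\BV')\subset P$ is a univalent pullback of $\BV_*$, whose Euclidean size is comparable to $\diam P$ by the bounded distortion established above. For a shallower safe trapping disk $\BD=\BD^\Bl$, one has to produce an analogous pullback of $\BV_*$ of comparable size inside $\BD^\Bl$ itself; this can be done by combining property (D2) of Proposition~\ref{disk D for Siegel} (which places a preimage of the Siegel-disk boundary in the middle of the corresponding Siegel-plane trapping disk) with the Markov-like return structure of $\Bf_*$ inherited through the change of variable $\Pi_{m-\kappa}$, yielding a pullback of $\BV_*$ of size comparable to $\BD^\Bl$ that is then transferred to $P$ by $T_P^{-1}$.

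The main technical obstacle is handling shallow trapping disks uniformly in their depth $\Bl$, where the center $\alpha^\Bl$ can sit very close to the boundary of the Siegel disk relative to the Euclidean size of $\BD^\Bl$. The whole argument is organized around the hyperbolic-diameter normalization (\ref{d}) precisely because, in the hyperbolic metric of $\C\setminus\BO_*$, one has uniform room to thicken to $\BD'$ and uniform Koebe control, whereas any Euclidean counterpart would degenerate as $\alpha^\Bl$ approaches $\di S$; once this collar is in place, the rest is a standard invocation of Koebe distortion combined with the renormalization bounds already recorded in Corollary~\ref{safe trapping disk} and Corollary~\ref{ql renormalization}.
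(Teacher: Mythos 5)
Your first bullet is correct and is exactly the intended unpacking of the paper's one-line appeal to $\BD\Subset\C\setminus\BO_*$ and Koebe: forward invariance of $\BO_*$ means $\Bf_*^{-1}(\C\setminus\BO_*)\subset\C\setminus\BO_*$, so every intermediate pullback of a definite collar $\BD'\Supset\BD$ stays off $\BO_*\ni\Bc_1$, the inverse branches of $\Bf_*^{r_P}$ are therefore univalent on $\BD'$, and Koebe then gives uniform distortion, bounded shape, and $\mod(P'\setminus P)\geq m_0(\bar N)$ with $P'\subset\C\setminus\BO_*$. For the second bullet your split into base versus shallower trapping disks is reasonable, though the citation of Corollary~\ref{safe trapping disk} alone is in fact what the paper needs, because bullet two is only ever invoked (in Lemma~\ref{never landing in V'}) for the base disk; the shallower disks appear only through bullet one (in Lemma~\ref{exp decay}) and through the $W_i\subset\Bf_*^{-1}(\BS)$ gaps (in Lemma~\ref{no return}). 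If one does want the stated generality, your sketch points at the right mechanism — a preimage of $\di\BS$ sits in the middle of the trapping disk, and the bounded geometry of the dynamical tilings along $\di\BS$ then produces a pullback of $\BV_*$ at a comparable scale, transported to $P$ by the Koebe control from bullet one — but the reference to $\Pi_{m-\kappa}$ is a little misplaced: the shallower safe trapping disks of Lemma~\ref{many trapping disks} are constructed directly in the $\Bf_*$-plane (via Lemma~\ref{definite disc}, using that $\Bf_*$ is $\eps_m$-close to $\Bf_\theta$), not lifted from the $f_*$-plane, so the relevant middle-point property is Lemma~\ref{definite disc}(iv) applied to $\Bf_*$ rather than (D2) of Proposition~\ref{disk D for Siegel} followed by a change of variable. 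With that substitution your argument closes the general case as well.
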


\begin{proof}
  The first assertion follows from the property that $\BD$ is well inside
  $\C\sm \BO_*$ and  the Koebe Distortion Theorem.
 Together with 
Corollary \ref{safe trapping disk},
it implies the second assertion.
\end{proof}

Along with $\BD$, let us consider another trapping disk $\BD'$
(which is allowed to coincide with $\BD$).   
Let $\PP_{\BD'} (\BD) $ be the family of all the domains $P=P^n (z)\in \PP(\BD)$ 
intersecting $\BD'$.

\begin{lem}\label{distortion trivia-2}
 For any domain $P\in \PP_{\BD'} (\BD)$, 
$$
  \diam P \leq C_0\diam \BD' \quad \mathrm{ with}\  C_0= C_0 (\bar N), 
$$
where $\diam\equiv \diam_\Euc$ stands for the Euclidean diameter;
\end{lem}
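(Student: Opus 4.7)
\medskip
\noindent\textbf{Proof proposal.}
The plan is to convert the Euclidean bound to a hyperbolic one by measuring everything in the hyperbolic metric of $\C\sm\BO_*$. Indeed, $\BD$ has bounded hyperbolic diameter in $\C\sm\BO_*$ by \eqref{d}, and so does $\BD'$. If we can show that every $P\in\PP_{\BD'}(\BD)$ also has bounded hyperbolic diameter in $\C\sm\BO_*$ (with a bound depending only on $\bar N$), then the proof concludes by a standard density argument: picking any $z\in P\cap\BD'$ and letting $\rho$ denote the hyperbolic density of $\C\sm\BO_*$ at $z$, both Euclidean diameters are comparable to $1/\rho(z)$ (to within factors depending only on $\bar N$), hence $\diam_\Euc P$ and $\diam_\Euc \BD'$ are comparable.

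The key hyperbolic estimate is immediate from the Schwarz lemma. Since the postcritical set $\BO_*$ is forward invariant under $\Bf_*$, one has $\BO_*\subset \Bf_*^{-1}(\BO_*)$, so $\Bf_*:\C\sm \Bf_*^{-1}(\BO_*)\to \C\sm\BO_*$ is a covering and the tautological embedding $\C\sm \Bf_*^{-1}(\BO_*)\hookrightarrow \C\sm\BO_*$ is a hyperbolic contraction. Therefore every univalent inverse branch of $\Bf_*^n$ taking values in $\C\sm\BO_*$ contracts the hyperbolic metric of $\C\sm\BO_*$. Applied to the landing branch $T_P=\Bf_*^{r_P}:P\to\BD$, whose inverse is precisely such a branch, this gives
$$
\diam_\hyp P \;\leq\; \diam_\hyp \BD \;\leq\; d,
$$
where $d=d(\bar N)$ is the constant from \eqref{d}.

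Now pick $z\in P\cap\BD'$. Both $P$ and $\BD'$ have hyperbolic diameter $\leq d$ in $\C\sm\BO_*$ and contain $z$, so both are contained in the hyperbolic ball of radius $d$ around $z$. Applying the standard comparison between the hyperbolic metric of a hyperbolic planar domain and the Euclidean metric (the hyperbolic density at $z$ is $\asymp 1/\dist_\Euc(z,\BO_*)$ up to factors depending only on the distortion of the uniformization, which here is universal), one obtains
$$
\diam_\Euc P \;\asymp\; \diam_\hyp(P)/\rho(z) \;\asymp\; \diam_\hyp(\BD')/\rho(z) \;\asymp\; \diam_\Euc \BD',
$$
with constants depending only on $\bar N$. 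In particular $\diam P\leq C_0\diam \BD'$ for some $C_0=C_0(\bar N)$, which is the desired estimate.

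The argument is essentially routine once the hyperbolic viewpoint is adopted; the only point deserving care is verifying that the relevant inverse iterates really live in $\C\sm\BO_*$ so that the Schwarz contraction applies — this is exactly the ``safety'' built into the trapping disks by Lemma~\ref{definite disc}, which ensures that the pullback orbits defining $P$ stay uniformly away from $\BO_*$ (cf.\ Lemma~\ref{distortion trivia}).
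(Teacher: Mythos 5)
Your proof follows essentially the same route as the paper's: bound $\diam_\hyp P$ by $d$ via the hyperbolic contraction of the inverse landing branch $T_P^{-1}$ (the paper cites Lemma~\ref{expansion} for this, you re-derive it via Schwarz), then use $P\cap\BD'\neq\emptyset$ together with bounded hyperbolic diameter of $\BD'$ to control the oscillation of the hyperbolic/Euclidean conformal factor on $\BD'\cup P$ and transfer the bound to Euclidean diameters. One small imprecision: the middle $\asymp$ in your final display, $\diam_\hyp(P)/\rho(z)\asymp\diam_\hyp(\BD')/\rho(z)$, should be a one-sided $\lesssim$, since you only know $\diam_\hyp P\leq d$ (not $\gtrsim d^{-1}$); this does not affect the conclusion, which is a one-sided inequality.
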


\begin{proof}
By Lemma \ref{expansion},
the inverse branch $T_P^{-1}: \BD\ra P$
is a hyperbolic contraction.
Hence  $\diam_\hyp P \leq \diam_\hyp \BD \leq d $.
 Since $P\cap \BD' \not=\emptyset$ and $\diam_\hyp \BD \leq d$ as well, we have: 
\begin{equation}\label{diam}
    \diam_\hyp (\BD\cup P)\leq 2d.
\end{equation}
 It follows that the conformal factor $\rho(z)$ between the
 hyperbolic and Euclidean  metrics has a  bounded oscillation on
 $\BD' \cup P$:
$$
  \sup_{z\in \BD' \cup P}  \rho(z) \leq 
  C \inf_{z\in \BD' \cup P}   \rho (z), \quad C=C( N). 
$$
Hence
\begin{equation}\label{hyp-Euc}
    \frac  {\diam_\Euc P } { \diam_\Euc \BD' } 
   \leq C \,  \frac{\diam_\hyp  P} {\diam_\hyp \BD' } \leq Cd^2.
\end{equation}

\end{proof}

\bignote{Expanding Lemma is hidden here}

\comm{*****
\begin{lem}\label{distortion trivia-3}
For any $\la>1$, there exist $ \nu = \nu (N, \la) \in \N$  
 with the following property.
Let us consider $\nu+1$ trapping disks%
\footnote{not necessarily distinct}
 $\BD_i$, $i=0,1,\dots, k$, 
such that 
\begin{equation}\label{depth}
\depth \BD_0 \geq \depth \BD_\nu .
\end{equation}
Let $P\in \PP(\BD_\nu )$ with  $\Bf_*^{r_\nu } (P) = \BD$. 
If   $\Bf_*^{r_i} (P)\cap \BD_i \not=\emptyset  $ for some moments
$0=r_0< r_1<\dots < r_\nu$,
then 
$$\mbox{
$|D\Bf_*^{r_\nu } (z)| \geq \la$ \rm{for all}  $z\in P$. 
}$$
\end{lem}

\begin{proof}
Let $P_i :=  \Bf_*^{r_i} (P) $, so $P_i\cap \BD_i\not=\emptyset$,
$i=0,1,\dots, \nu$. 
By (\ref{diam} ), 
\begin{equation}\label{diam-2}
   \diam_\hyp \BD_i  \cup P_i \leq 2d ,
\end{equation} 
which implies (\ref{z-position}) for all $z\in P_i$.    
This allows us to apply   Lemma \ref{perturbative expansion} and to
conclude that all the maps $\Bf_*^{r_{i+1}-r_i}: P_i\ra P_{i+1}$ are
hyperbolic expansions  by  some factor $\mu =\mu (N) >1$. 
Hence the map 
\begin{equation}\label{expanding map}
\Bf_*^{r_\nu } : P\ra P_\nu
\end{equation}
 is a hyperbolic expansion by  $\mu^\nu $. 

By (\ref{diam-2}) 
the oscillation of the conformal factor $\rho(z)$ on each
$\BD_i \cup P_i$ is bounded by some $C=C(N)$. 
Hence  map (\ref{expanding map}) is a Euclidean expansion by factor
$$
        \geq     C^{-2} \mu^\nu \rho ( \alpha_{l_0} )  / \rho ( \alpha_{\Bal_\nu} ) , 
$$ 
where  the $\BD_i$ are centered at  the periodic points $\Bal_{\Bl_i}
$ respectively.
Finally,  (\ref{depth}) implies that   
$$
 \rho ( \Bal_{\Bl_0} ) \geq c\,  \rho ( \Bal_{\Bl_\nu } ) \quad
 \mathrm{with \ some}\  c=c (N)>0.
$$   \note{need a comment?}
The conclusion follows.
\end{proof}

*************}

The following lemma shows that pullbacks of trapping disks to some
point $z$ lie in different scales: 

\begin{lem}\label{exp decay}
For any $\si \in (0, 1)$,  there exists $\nu = \nu(N, \si)\in \N$ 
 with the following property. 
Let $\BD_i$, $i= 1,\dots, \nu$, be safe trapping disks,
not necessarily distinct. Consider a point $z$  landing at the $\BD_i$ at 
moments $r_i$, where $0   \leq  r_1  < \dots < r_\nu $, and
let $P^i\ni z$ be the corresponding pullback of the $\BD_i$. 
Then 
$$
   \diam P^{\nu}  < \si \diam P^1 .  
$$

\end{lem}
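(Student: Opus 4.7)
The plan is to work in the hyperbolic metric on $\Om := \C\sm \BO_*$ and show that the composition of inverse landing branches $(\Bf_*^{r_\nu})^{-1}$ contracts by a factor shrinking geometrically with $\nu$, then convert this hyperbolic estimate to a Euclidean one using the common base point $z\in P^1\cap\dots\cap P^\nu$.

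First I would record the soft bounds. By Lemma~\ref{distortion trivia}, each landing branch $T^i = \Bf_*^{r_i}: P^i \to \BD_i$ is univalent with bounded distortion, $P^i$ sits well inside $\Om$, and $P^i$ has bounded shape. Since $\diam_\hyp \BD_i \leq d(\bar N)$ and $T^i$ is univalent, the Schwarz lemma applied to the inverse branch gives $\diam_\hyp P^i \leq d$. Combined with bounded shape around $z$, this shows $\diam_\Euc P^i \asymp \rho(z)^{-1}\,\diam_\hyp P^i$, where $\rho$ is the hyperbolic-Euclidean conformal factor in $\Om$ at the base point $z$. It therefore suffices to prove $\diam_\hyp P^\nu \leq C^{-1}\si\,\diam_\hyp P^1$ for some $C = C(\bar N)$.

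The key ingredient would be a per-landing definite hyperbolic contraction, extracted from Corollary~\ref{expansion for perturbed maps-2}. Between two consecutive landings $r_i < r_{i+1}$, the forward iterate $\Bf_*^{r_{i+1}-r_i}$ carries $\Bf_*^{r_i}(P^\nu)$, which by Lemma~\ref{distortion trivia-2} intersects $\BD_i$ in an essential way, onto a domain meeting $\BD_{i+1}$. Both $\BD_i$ and $\BD_{i+1}$ lie well inside $\Om$, so each such orbit segment traverses a definite hyperbolic distance in $\Om$ and, by the corollary, yields hyperbolic expansion by a factor $\mu = \mu(\bar N) > 1$. Composing over $\nu$ landings, $\Bf_*^{r_\nu}:P^\nu \to \BD_\nu$ has $\| D\Bf_*^{r_\nu}\|_\hyp \geq \mu^\nu$, so its inverse contracts hyperbolically by $\mu^{-\nu}$. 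Converting back to Euclidean diameters via the bounded-shape comparison above gives $\diam_\Euc P^\nu \leq C\mu^{-\nu}\,\diam_\Euc P^1$, and choosing $\nu = \nu(\bar N,\si)$ so that $C\mu^{-\nu} < \si$ finishes the proof.

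The main obstacle is securing the uniform per-landing expansion factor $\mu$ independently of the depths and positions of the disks $\BD_i$. This would fail if an orbit segment between two consecutive landings loitered inside a fixed deep collar without crossing a region of definite hyperbolic expansion. The resolution should combine the ``no jumping over the collar'' property (item (A2) of Proposition~\ref{disk D for Siegel}) with the bounded-depth structure of safe trapping disks established in \S\ref{level l}: the depth of any safe trapping disk lies in a bounded range determined by $\bar N$, forcing the orbit to traverse a definite number of dynamical scales between distinct landings, and each scale crossing contributes uniform expansion via Corollary~\ref{expansion for perturbed maps-2}. Once this combinatorial control is in hand, the remainder is routine Schwarz-lemma iteration and Koebe distortion bookkeeping already packaged in Lemma~\ref{distortion trivia}.
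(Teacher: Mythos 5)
Your plan matches the skeleton of the paper's argument: compose the per-landing hyperbolic expansions, deduce geometric contraction of the nested pullbacks, and convert to a Euclidean estimate using bounded shape and Koebe. But the expansion mechanism you invoke is the wrong one, and that is a genuine gap rather than a packaging issue.

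The tool that produces the uniform per-landing factor is Lemma~\ref{perturbative expansion}, not Corollary~\ref{expansion for perturbed maps-2}. Set $P := P^\nu$ and $P_i := \Bf_*^{r_i}(P)$. Since each $\BD_i$ has hyperbolic diameter between $d^{-1}$ and $d$ by~(\ref{d}), and $P_i$ meets $\BD_i$, one gets $\diam_\hyp(\BD_i\cup P_i)\leq 2d$. The crucial observation is then that a safe trapping disk is built with bounded shape around a periodic point $\alpha^{l_i}$ whose distance to $\Bc_0$ is comparable to its distance to $\bar\BS$; bounded shape and the hyperbolic diameter bound propagate the proximity condition~(\ref{z-position}), namely $|w-\Bc_0|\leq C(N)\,\dist(w,\bar\BS)$, to every $w\in P_i$. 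That is precisely the hypothesis of Lemma~\ref{perturbative expansion}, which yields $\|D\Bf_*\|_\hyp\geq\rho(N)>1$ at each landing moment; combined with the soft fact that $\Bf_*$ never contracts the hyperbolic metric of $\C\setminus\BO_*$, each segment $P_i\to P_{i+1}$ expands by a definite $\lambda(N)>1$. The depth $l_i$ plays no role. Your resolution paragraph is therefore misdirected on two counts: the depths of safe trapping disks are \emph{not} confined to a bounded range (Lemma~\ref{many trapping disks} manufactures disjoint ones at arbitrarily large depth), and Corollary~\ref{expansion for perturbed maps-2} requires $\dist(f^jz,\bar S)\geq\eps$ along the whole orbit segment, which fails precisely because the trapping disks live at all scales near $\bar S$. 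What triggers the expansion is proximity to the critical point at the landing moments, automatic by the construction of the trapping disks, not scale traversal between them.

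A smaller gap: $\|D\Bf_*^{r_\nu}\|_\hyp\geq\mu^\nu$ gives $\diam_\hyp P^\nu\leq d\mu^{-\nu}$, but this does not by itself control the ratio $\diam_\hyp P^\nu/\diam_\hyp P^1$, since $\diam_\hyp P^1$ has no uniform lower bound (it decays as $r_1$ grows). The paper instead pushes both pullbacks forward by $\Bf_*^{r_1}$ to $P_1$ and $\BD_1$, uses the lower bound $\diam_\hyp\BD_1\geq d^{-1}$ from~(\ref{d}) to get $\diam_\hyp P_1/\diam_\hyp\BD_1\leq d^2\lambda^{-\nu+1}$, and then recovers the Euclidean ratio of $P^\nu$ to $P^1$ by applying Koebe to $\Bf_*^{r_1}$ on $P\cup P^1$. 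If you prefer to compare directly at the base point $z$, you still need to say that the derivative of $\Bf_*^{r_1}$ has bounded oscillation across $P^\nu\cup P^1$ (true, again by Koebe), which your write-up skips.
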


\begin{proof}
Let $P\equiv P^\nu $, and 
let $P_i :=  \Bf_*^{r_i} (P) $, $i=1,\dots, \nu$.
Then  $P_i\cap \BD_i\not=\emptyset$.
By property (\ref{diam}), 
\begin{equation}\label{diam-2}
   \diam_\hyp \BD_i  \cup P_i \leq 2d ,
\end{equation} 
which implies (\ref{z-position}) for all $z\in P_i$.    
It allows us to apply   Lemma \ref{perturbative expansion} 
and to conclude that all the maps $\Bf_*^{r_{i+1}-r_i}: P_i\ra P_{i+1}$ are
hyperbolic expansions  by  some factor $\la  =\la  (N) >1$. 
Hence the map 
$ \Bf_*^{r_\nu -r_1} : P_1 \ra P_\nu$
 (which is the same as $ \Bf_*^{r_1} (P)  \ra  \BD_\nu $)
 is a hyperbolic expansion by  $\la^{\nu-1} $. 
Hence 
$$
  \diam_\hyp (\Bf_*(P))    \leq \la^{-\nu+1} \diam_\hyp \BD_\nu \leq d\, \la^{-\nu+1}. 
$$
On the other hand, 
$
   \diam_\hyp  (\Bf_* (P^1))  \equiv \diam_\hyp \BD_1 \geq d^{-1},
$
so $$   \diam_\hyp (\Bf_*(P))   \leq d^2 \la^{-\nu+1} \,  \diam_\hyp  (\Bf_* (P^1)).$$  
Property  (\ref{diam-2})  with $i=1$ allows us to switch in the last estimate from the
hyperbolic diameters  to the Euclidean ones (like in (\ref{hyp-Euc}))
and then to apply the Koebe
Distortion Theorem to the map 
$ \Bf_*^{r_1} $ on $ P\cup P^1 $. The conclusion follows. 
\end{proof}

\sss{Truncated Poincar\'e series}
Let us now fix a safe trapping disk $\BD$
(in applications, it will be the base trapping disk), and let
$\PP :=\PP_\BD(\BD)$
Of course, a domain $P \in \PP$ can admit several representations
as $P^n (z)$. 
Let 
$$
  \chi(P) = \max  \{n : \exists\, z\in P \ \mathrm{such \ that} \ 
P= P^n  (z) \}. 
$$
Let $\PP^n$ be the family of domains $P\in \PP$ with $\chi(P)\leq  n$. 
We also let
$$
   \P = \bigcup_{\PP} P ,\quad    \P^n = \bigcup_{\PP^n } P 
$$

\begin{lem}\label{bounded multiplicity}
  There exists $C= C( N)$ such that 
$$
   \sum_{\PP^n}  \area P  \leq C n \area \BD . 
$$
\end{lem}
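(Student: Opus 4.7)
My plan is to prove this via a pointwise multiplicity bound combined with a crude area bound on the union $\P^n$.

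The key observation will be that every point $w \in \P^n$ lies in at most $n$ of the domains of $\PP^n$. Indeed, if $w \in P$ for some $P \in \PP$, then $P$ must be the connected component of $(\Bf_*^{r_P})^{-1}(\BD)$ containing $w$, hence $P = P^m(w)$ where $m$ is the count of landing moments of the orbit of $w$ at $\BD$ during $[0, r_P]$. Two distinct $P_1, P_2 \in \PP^n$ containing $w$ have distinct landing times $r_{P_1} \neq r_{P_2}$, so they correspond to distinct counts $m_1 \neq m_2$. By the very definition of $\chi(P)$ (applied to $z=w$), each such $m$ satisfies $m \leq \chi(P) \leq n$. Hence the values of $m$ realized by $P \in \PP^n$ with $w \in P$ lie in $\{1, \dots, n\}$, giving at most $n$ such domains.

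Next I would show that $\P^n$ is contained in a Euclidean neighborhood of $\BD$ whose area is comparable to $\area \BD$. By Lemma \ref{distortion trivia-2}, every $P \in \PP_\BD(\BD)$ intersects $\BD$ and satisfies $\diam P \leq C_0(\bar N)\diam \BD$, so $P$ lies inside the $C_0\diam \BD$-neighborhood of $\BD$. Since $\BD$ is a quasidisk of bounded shape, $\area \BD \asymp (\diam \BD)^2$, whence this enlarged neighborhood has area at most $C(\bar N)\area \BD$, giving $\area \P^n \leq C(\bar N)\area \BD$.

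Combining the two estimates by Fubini yields
$$
  \sum_{P \in \PP^n} \area P \ =\ \int_{\P^n} \#\{P \in \PP^n : w \in P\}\, d\area(w)\ \leq\ n\cdot \area \P^n\ \leq\ Cn\,\area \BD,
$$
which is the claim. I do not anticipate a serious obstacle: both ingredients are essentially packaged in the preceding lemmas, and the only new content is the elementary combinatorial observation that distinct pullback domains passing through a common point correspond to distinct counts of intermediate landings at $\BD$.
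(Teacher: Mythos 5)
Your argument is correct and is essentially the paper's own proof: you establish the pointwise intersection multiplicity bound $\leq n$ for the family $\PP^n$ by observing that distinct domains through a point correspond to distinct landing counts $m\leq\chi(P)\leq n$, then bound $\area \P^n$ by $C(N)\area\BD$ using Lemma \ref{distortion trivia-2} and the bounded shape of $\BD$, and combine the two by integrating the multiplicity function. This is exactly the route the paper takes.
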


\begin{proof}
Note that  the family  $ \PP^n  $ has 
the intersection multiplicity at most $n$. Indeed, if some point $z$
is contained in  $k$ sets $P_i$  of this family then 
$P_i = P^{n_i} (z)$ with $n_i= n_i(z) \leq n$. But since the $n_i$
are pairwise distinct,  $\max n_i \geq k$.  
  
Hence 
\begin{equation}\label{sum vs union}
  \sum_{\PP^n }  \area P \leq  n\,   \area \P^n \leq n \area  \P.  
\end{equation}

By Lemma \ref{distortion trivia-2} (i), 
 $\P$  is contained in  a Euclidean  neighborhood of $\BD$ of size
$\leq C_0 \diam \BD$.  Since $\BD$ has a bounded shape, 
$
  \area \P  \leq C\area \BD ,
$ 
with $C=C(N)$. 
Together with (\ref{sum vs union}), this implies the desired.  
\end{proof}

Let us consider the following {\it truncated Poincar\'e series}:
for $\zeta\in \BD$, let 
$$
    \phi_n (\zeta) =  \sum_{P\in \PP^n}  
     \frac 1{| D T_P (\zeta_P )|^2 },  \quad
\mbox{where $\zeta_P\in P$ and $T_P  (\zeta_P) = \zeta$.}
$$

\begin{lem}\label{bound of phi}
We have
$   \phi_n(\zeta) \leq C n $, where $C=C(\bar N)$. 
\end{lem}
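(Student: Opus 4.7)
The plan is to combine the bounded distortion of the landing branches with the area-multiplicity estimate already at hand. By Lemma~\ref{distortion trivia}, each landing branch $T_P : P \to \BD$ has distortion bounded by a constant $C_1 = C_1(\bar N)$, so for any $\zeta \in \BD$ and the corresponding preimage $\zeta_P = T_P^{-1}(\zeta) \in P$ we have
$$
  |DT_P(\zeta_P)|^{-2} \;\asymp\; \frac{\area P}{\area \BD},
$$
with comparability constant depending only on $\bar N$ (this is the usual passage from pointwise derivative to area ratio for a univalent map of bounded distortion between domains of bounded shape).

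First I would justify this comparability carefully: since $T_P$ is univalent with bounded distortion and $\BD$, $P$ both have a bounded shape around their ``centers'' (Lemma~\ref{distortion trivia} plus the bounded shape of the trapping disk $\BD$ from Proposition~\ref{disk D for Siegel}(D1)), the Jacobian $|DT_P|^2$ is pointwise comparable on $P$ to its average $\area(T_P(P))/\area(P) = \area\BD/\area P$.

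Plugging this into the definition of $\phi_n$,
$$
  \phi_n(\zeta) \;=\; \sum_{P\in\PP^n} |DT_P(\zeta_P)|^{-2}
  \;\leq\; \frac{C_2}{\area\BD}\sum_{P\in\PP^n}\area P,
$$
with $C_2 = C_2(\bar N)$. Now I would invoke Lemma~\ref{bounded multiplicity} to bound the remaining sum by $Cn\,\area\BD$, which cancels the denominator and yields $\phi_n(\zeta) \leq C n$ with $C = C(\bar N)$, as required.

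The argument is essentially routine once the distortion control and the multiplicity estimate are in place, so there is no serious obstacle; the only thing to be careful about is that the bounded-distortion comparability $|DT_P|^{-2}\asymp \area P/\area \BD$ really does hold with a constant depending only on $\bar N$, independent of the depths involved and of the auxiliary parameters $l,\kappa,t,m,j$. This is automatic from Lemma~\ref{distortion trivia} since the Koebe constant for $T_P$ depends only on the modulus $\mod(\C\sm\BO_* , \BD)$, which in turn depends only on $\bar N$.
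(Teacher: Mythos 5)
Your proof is correct and uses the same two ingredients as the paper — bounded distortion of the landing branches (Lemma~\ref{distortion trivia}) and the multiplicity/area bound (Lemma~\ref{bounded multiplicity}) — and is essentially a term-by-term rephrasing of the paper's argument, which first integrates $\phi_n$ over $\BD$ via the change of variables $\int_\BD |DT_P(\zeta_P)|^{-2}\,d\area(\zeta)=\area P$ and then uses bounded distortion once at the end to pass from the average to the pointwise bound.
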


\begin{proof}
  We have:
$$
       \int_\BD \phi_n (\zeta)\,  d\area(\zeta) =  \sum_{\PP^n} \area P
       \leq C n \area \BD,
$$
where the last estimate is the content of Lemma \ref{bounded multiplicity}. 
But 
since the branches  $ T_P : P\ra \BD$ have a bounded distortion,
$\phi_n(\zeta)\asymp \phi_n (\zeta')$ for any $\zeta, \zeta' \in \BD$
(with constants depending only on $ N$). 
The conclusion follows. 
\end{proof}

\sss{Probability of few returns to the base}

Let us start with an observation that for $m$ big enough,
our quadratic polynomial  $\Bf_*$ has  plenty of safe trapping disks:

\begin{lem}\label{many trapping disks}
  For any natural $\tau\in \N$,
there exists $\underline{m}=\underline{m}(N,l.\kappa, t, \tau)$ such that
for any $m>\underline{m}$, the polynomial $\Bf_*$ 
has at least $\tau$ safe trapping disks $\BD_i $ satisfying properties of Lemma \ref{definite disc}.
Moreover, these trapping disks are  pairwise disjoint
and disjoint from the base safe trapping disk $\BD=\BD^{m-\kappa+l-\iota}$.
\end{lem}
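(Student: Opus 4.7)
My plan is to construct the $\tau$ required safe trapping disks by climbing the Siegel renormalization tower of $\Bf_*$ below the base depth $m-\kappa+l-\iota$. For each level $k$ in a suitable range, the Siegel renormalization $R_\Sieg^k \Bf_*$ is a small perturbation of an Inou--Shishikura Siegel map, so Lemma~\ref{definite disc} supplies a trapping disk at a fixed ``shallow'' depth $l-\iota$ in the plane of $R_\Sieg^k\Bf_*$; pulling this disk back by the renormalization change of variable $\Pi_k$ gives a trapping disk $\BD_{k}$ in the $\Bf_*$-plane sitting at Siegel depth $k+l-\iota$.

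First I would verify that for every integer $k\in[\underline{k},\,m-\kappa-\iota]$, the map $R_\Sieg^k\Bf_*$ lies within the perturbation threshold $\delta_0(N,\mu,K,l)$ of some Siegel map of class $\Siegel(N,\mu,K)$. This is immediate from Lemma~\ref{shadowing}: once $m$ is large enough that $f_*=R_\Sieg^{m-\kappa}\Bf_*$ is $\delta$-close to the IS-class Siegel map $f_{m-\kappa}$ with $\delta\ll\delta_0$, the exponential shadowing gives $\dist(R_\Sieg^k\Bf_*,f_k)\leq C\delta\,\rho^{-(m-\kappa-k)}$, uniformly small across the entire range. Note that the full Siegel renormalization tower of $\Bf_*$ is available by the a priori bounds (Proposition~\ref{a priori bounds}) combined with the quadratic-like family construction of Corollary~\ref{ql renormalization}.

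Next I would pick $\tau$ levels $k_1<k_2<\cdots<k_\tau$ with gaps $k_{i+1}-k_i\geq \iota$ and $k_\tau\leq m-\kappa-\iota$, apply Lemma~\ref{definite disc} to each $R_\Sieg^{k_i}\Bf_*$ at level $l-\iota$ to obtain $D_{k_i}$, and then lift via $\Pi_{k_i}$. Since $D_{k_i}$ sits well inside the postcritical complement of $R_\Sieg^{k_i}\Bf_*$ and $\Pi_{k_i}$ is univalent with bounded distortion on a neighborhood, the pullback $\BD_{k_i}$ is a quasidisk of bounded shape around a lifted periodic point $\Bal_i$ and automatically inherits the safe-trapping properties (i)--(v) of Lemma~\ref{definite disc} relative to the $\Bf_*$-dynamics (the corresponding collar $\BA_{k_i}$ being also a lift via $\Pi_{k_i}$).

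Pairwise disjointness is then automatic: $\BD_{k_i}$ lies in a Siegel collar of depth $k_i+l-\iota$ about $S_{\Bf_*}$, and since $k_{j}-k_i\geq\iota$ for $j>i$, the Siegel version of the nested-collar disjointness ($\BD^\Bl\cap\BD^{\Bl+\iota}=\emptyset$ from Lemma~\ref{definite disc}(iii)) applies. Disjointness from the base disk $\BD=\BD^{m-\kappa+l-\iota}$ follows from $k_\tau+l-\iota\leq m-\kappa+l-2\iota$. The constraint $k_\tau\leq m-\kappa-\iota$ with $\tau$ levels spaced by $\iota$ forces $m\geq\underline{m}:=\underline k+\kappa+\iota(\tau+1)$, which defines the threshold. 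The only mildly delicate point is the uniform perturbative control at all Siegel renormalization levels simultaneously, but this is already handled by the exponential shadowing of Lemma~\ref{shadowing}.
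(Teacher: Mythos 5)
Your proposal climbs the Siegel renormalization tower and lifts shallow trapping disks of $R_\Sieg^k\Bf_*$ to the $\Bf_*$-plane via $\Pi_k$. The paper does something simpler: since Lemma~\ref{shadowing} gives $\Bf_*\to\Bf$ as $m\to\infty$ (with $N,l,\kappa,t$ frozen), one applies Lemma~\ref{definite disc} \emph{directly} to $\Bf_*$ as an $\eps_m$-perturbation of the Siegel polynomial $\Bf$, at shallow depths $l',\,l'+\iota,\dots,l'+(\tau-1)\iota$. Each threshold $\de_0(\bar N,\mu,K,l'+i\iota)$ is eventually beaten because there are only finitely many of them; the resulting disks are pairwise disjoint by property (D4)/(iii) of that lemma, and disjoint from the deep base disk $\BD^{m-\kappa+l-\iota}$ simply because the depth gap is huge for $m$ large. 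No lifting, no cross-level compatibility to check.

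The part of your argument I would flag is the claim that the pullback $\BD_{k_i}=\Pi_{k_i}^{-1}(D_{k_i})$ ``automatically inherits the safe-trapping properties (i)--(v) of Lemma~\ref{definite disc} relative to the $\Bf_*$-dynamics (the corresponding collar $\BA_{k_i}$ being also a lift via $\Pi_{k_i}$).'' The change of variable $\Pi_{k_i}$ of (\ref{restricted Pi}) is a univalent map from a \emph{sector} near the critical point of $\Bf_*$ onto a definite neighborhood of the critical point of $R_\Sieg^{k_i}\Bf_*$. Its inverse cannot lift the full annular collar $A^{l-\iota}$ (which wraps all the way around the Siegel disk of $R_\Sieg^{k_i}\Bf_*$) to a full annulus around $S_\Bf$ at depth $k_i+l-\iota$; you would only get the piece of the collar visible through that sector, and then you would have to spread it around by the $\Bf_*$-dynamics (as in the construction of the $\Om^m_f$-domains in \S\ref{telescope}) to recover the ``no jump-over'' property (A2)/(ii). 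That is a genuine extra step, not an automatic consequence, and it is precisely what makes the direct application of Lemma~\ref{definite disc} preferable: there the collar comes for free because $\Bf_*$ is treated globally as a perturbation of $\Bf$.

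What does go through in your approach without much trouble are the disk-local properties: $\BD_{k_i}$ has bounded shape around the lifted periodic point, is well inside $\C\setminus\BO_*$, and contains a definite part landing in $\Bf_*^{-1}(\BS)$ — these are what Lemma~\ref{no return} actually uses — and the pairwise disjointness from the $\geq\iota$ spacing is sound, provided you note that distinct depths correspond to nested collars for the \emph{reference} Siegel polynomial $\Bf$ and the perturbation is small at each level you use. So your argument is salvageable, but it trades a one-line perturbation statement for a tower-lifting construction plus a spreading argument to reconstruct the collars, which is a harder road to the same place.

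One small imprecision: you write ``the perturbation threshold $\delta_0(N,\mu,K,l)$'' where you presumably mean $\delta_0(N,\mu,K,l-\iota)$, since you are invoking Lemma~\ref{definite disc} at depth $l-\iota$ in each renormalized plane. Also, the Siegel renormalizability of $\Bf_*$ up to level $m-\kappa$ comes from the Inou--Shishikura structure built into the construction (Lemma~\ref{ql family}, Corollary~\ref{ql renormalization}), not from the DH a priori bounds of Proposition~\ref{a priori bounds}, which concern a different renormalization operator.
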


\begin{proof}
  By Lemma \ref{shadowing}, our polynomial $\Bf_*$  is $\eps_m$-close to the Siegel polynomial
 $\Bf$, where $\eps_m\to 0$ as $m\to \infty$   (keeping the other parameters, $N, l, \kappa$ and $t$,
frozen).  Hence for $m$ big enough,  Lemma \ref{definite disc} (applied directly to $\Bf_*$)
supplies us with arbitrary many safe  trapping disks $\BD_i $.   
\end{proof}

\bignote{Lemma on ``going high'' is hidden here} 
\comm{****
\begin{lem}\label{going high}
  Let $\Bf_* $ be a quadratic polynomial constructed above,
and let   $\BD\equiv \BD^{m-\kappa}$ be a trapping disk for $\Bf_*$. 
   For any natural  $\tau < m-\kappa-l$,
   let  $ Z(\tau) $ bet he set  of points $z\in \BD$  that 
   enter the trapping disk $\BD^{m-\kappa - \tau}$ before (possible)
   returning back   to $\BD$. Then 
$$
  \area Z(\tau) \leq C  \si^\tau \area \BD ,
$$
   with $\si\in (0,1)$ and $C>0$ depending only on $\bar N$.
\end{lem}

\begin{proof} 
 If a point $z\in \BD$ enters $\BD^{m-\kappa- \tau}$ before
 returning back to $\BD $, 
then  on its way to $\BD$ it cannot come back to the Siegel disk $\BS=S_\Bf$.
On the other hand,  
it must visit all intermediate trapping disks $\BD_{m-\kappa- j}$, $j \leq \tau$.
By Lemma \ref{}, \note{comment on the lift?}
these trapping disks were selected so that 
their definite parts are contained in $\Bf_*^{-1}(\BS)$.
Our orbit cannot go there, so this produces  $\tau$  gaps in $Z(\tau)$ near $z$. 
These gaps lie in exponentially decaying scales since 
by Lemma \ref{exp decay}  the landings to
the disks $\BD^{m-\kappa- j}$ are exponentially expanding in $j$.
\end{proof} 
****}

{\it From now on,  $\BD$ will stand for the base trapping disk. }  
Recall that $\BJ_*$ is the Julia set of $\Bf_*$.
 Let  $Z $ be the set of points $z\in \BD \sm \BJ_* $
   that under the iterates of $\Bf_*$
 never return back to 
   $\BD$.
The following lemma shows that for $m$ sufficiently big, 
 it is difficult to escape from 
 $\BD$:

\begin{lem}\label{no return}
 For any natural $\tau\in \N$,
there exists $\underline{m}=\underline{m}(N,l.\kappa, t, \tau)$ such that
for any $m>\underline{m}$,
$$
    \area Z\leq C  \si^\tau \area \BD,
$$
   with $\si\in (0,1)$ and $C>0$ depending only on $N$.
\end{lem}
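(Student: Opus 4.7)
The plan is to establish the claim via the porosity criterion of Lemma \ref{gaps in many scales} applied to $Z \subset \BD$: for each $z \in Z$ I will exhibit $\tau$ definite-size gaps in $Z$ at exponentially decaying scales. By Lemma \ref{many trapping disks} I first pick $m = m(N,l,\kappa,t,\tau)$ large enough that $\Bf_*$ possesses, in addition to the base $\BD_0 = \BD$, safe trapping disks $\BD_j$ ($j=1,\dots,\tau$) at strictly shallower depths, together with their pairwise disjoint collars $\BA_j$ around the virtual Siegel structure. Since $\Bf_*$ is an infinitely renormalizable quadratic polynomial with connected Julia set and no non-repelling periodic cycles, its Fatou set coincides with the basin of $\infty$; so every $z\in Z \subset \BD \sm \BJ_*$ has an orbit escaping to infinity. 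By the no-jumping property (A2) applied to each collar, the orbit cannot skip any $\BA_j$ on its way out; by property (D3) it is then forced, within $O(q_{l+1})$ iterates of entering $\BA_j$, to visit the middle of $\BD_j$. Let $0 < r_1 < \dots < r_\tau$ be the first such landing times, so that $\Bf_*^{r_j}(z) \in \BD_j$.

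Next I would pull back. Since each trapping disk is well inside $\Dom \Bf_* \sm \BO_*$, the orbit segments $\{\Bf_*^i(z)\}_{i=0}^{r_j}$ avoid $\BO_*$, and the Koebe distortion argument from Lemma \ref{distortion trivia} produces univalent pullbacks $P_j \ni z$ of $\BD_j$ with bounded distortion and bounded shape, satisfying $\Bf_*^{r_j}(P_j) = \BD_j$. Lemma \ref{exp decay} guarantees that $\diam P_j$ decays geometrically in $j$ along a subsequence spaced by a constant depending only on $N$. The crucial step is to exhibit, inside each $\BD_j$, a subdisk $G_j$ of comparable diameter consisting entirely of points whose orbits lie outside $Z$. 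Property (D2) of the Siegel model, transferred to $\Bf_*$ by robustness under perturbation and combined with Corollary \ref{safe trapping disk} and Corollary \ref{gaps U(z)}, places in $\BD_j$ a comparable pullback of the renormalization domain $\BU_*$. Points of this subdisk either belong to $\BJ_*$ (and are thereby excluded from $Z$) or enter the renormalization cycle and, upon exit via $\Bf_*^\per$ into $\BV_* \sm \BU_*$, re-enter the broader dynamics and revisit $\BD$ through the deep self-similar structure --- and so are again excluded from $Z$. The univalent pullback of $G_j$ inside $P_j$ is then the required gap in $Z$ at scale $\diam P_j$ around $z$.

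Combining the exponentially decaying scales $\diam P_j$ with the definite relative gap sizes, Lemma \ref{gaps in many scales} applied with $n=\tau$ and $r = \diam \BD$ yields $\area Z \leq C \sigma^\tau \area \BD$, with $C$ and $\sigma \in (0,1)$ depending only on $N$. The principal obstacle is the identification of $G_j$: one must verify that the points in the sub-pullback of $\BU_*$ inside $\BD_j$ genuinely avoid $Z$, which requires tracking the return dynamics under the little renormalization --- in particular ruling out trajectories that enter $\BU_*$, escape through $\BV_* \sm \BU_*$ via $\Bf_*^\per$, and then drift out to infinity without ever crossing $\BD$. This rests on the fact that $\BD$ itself lies at a dynamical scale at which pullbacks of $\BU_*$ are abundant, so any orbit recurrent to the renormalization cycle must sooner or later return to $\BD$.
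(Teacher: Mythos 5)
Your skeleton is correct and matches the paper: you invoke the $\tau$ safe trapping disks from Lemma~\ref{many trapping disks}, the forced visits via the collar and trapping properties, the bounded-distortion pullbacks from Lemma~\ref{distortion trivia}, the scale separation from Lemma~\ref{exp decay}, and finally the porosity estimate of Lemma~\ref{gaps in many scales}. But the gap mechanism you choose --- a pullback of the renormalization domain $\BU_*$ inside each $\BD_j$ --- is circular, and you flag the issue yourself at the end. To make that set a genuine gap in $Z$ you would need to know that orbits entering $\BU_*$ cannot escape to infinity without passing through $\BD$. After such an orbit leaves the little renormalization cycle through the fundamental annulus $\BV_*\sm\BU_*$, some of its points do escape without ever returning to $\BV_*$: that is precisely the set $\YY$ whose measure the escaping parameter $\xi$ is designed to estimate. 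Proving $\YY$ has small measure is the whole point of \S\ref{escaping par sec}, so it cannot be used as an input here. Note also that property (iv)/(D2) of Lemma~\ref{definite disc}, which you cite, does not produce a pullback of $\BU_*$; that ingredient comes from Corollaries~\ref{safe trapping disk} and~\ref{gaps U(z)} and plays its role in the landing-probability estimate, not here.

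The correct, non-circular gap is exactly what property (iv) of Lemma~\ref{definite disc} gives for free: a definite sub-disk $W_i$ of each trapping disk $\BD_i$ is contained in $\Bf_*^{-1}(\BS)$, where $\BS=S_\Bf$ is the Siegel disk of the unperturbed polynomial. If the orbit of $z\in Z$ landed in $W_i$, the next iterate would lie in $\BS$. Since $\BS$ sits inside $\Comp_0(\C\sm A_*^{m-\kappa+l-\iota})$, the escaping orbit would then have to cross the base collar (no-jump property (A2)) and hence land in the base trapping disk $\BD$ (forced-landing property (v)), contradicting $z\in Z$. So the orbit avoids each $W_i$, and the bounded-distortion pullback of $W_i$ along the orbit of $z$ is the definite gap at the scale of $P^i(z)$. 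This obstruction is purely topological/dynamical, with no probabilistic content, which is why it closes the argument cleanly where the $\BU_*$-mechanism stalls; once you substitute $W_i$ for your $G_j$, the rest of your write-up goes through essentially verbatim.
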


\begin{proof}
Let $z\in Z$.
If $m$ is sufficiently big then 
 on its way from $\BD$ to $\infty$,
the orbit of $z$ must visit $\tau$ safe trapping disks $\BD_i $
from Lemma \ref{many trapping disks} at some moments
$r_1< r_2< \dots < r_\tau$. 
By Lemma \ref{definite disc}, 
definite parts $W_i $ of these trapping disks are contained in $\Bf_*^{-1}(\BS)$.
 Since $\orb z$  never 
 returns back to $\BD $, 
 it cannot visit the Siegel disk $\BS=S_\Bf$,
and hence it cannot land in  the domains $W_i $ either.

Since each disk $\BD_j$ is safe, it can be univalently and with bounded distortion 
pulled back to $z$.  The corresponding pullback of $W_i $ creates a gap
of definite size  in $Z$ near $z$. 
By Lemma \ref{exp decay},  these gaps lie in  $\asymp \tau$ different scales.
Lemma~\ref{gaps in many scales} completes the proof. 
\end{proof}

\comm{****

\begin{lem}\label{n returns}
  Let $ Z_n$ be the set of points $z\in \BD^{m-\kappa} \sm J( \Bf_* )$
  that  return back to the trapping disk 
   $\BD = \BD^{m-\kappa} $  at most  $n$ times. Then  
$$
   \area Z_n  \leq Cn \si^{m-\kappa} \area \BD,
$$
with $\si\in (0,1)$ and $C>0$ depending only on the bound $ N$
  of the rotation number $\theta$. 
\end{lem}

\begin{proof}
  Let $T :   Y\ra \BD$ be the first return map to $\BD$,
and let $T^k :  Y_k \ra \BD$ be its iterates. 
For a point $z\in Y_k $,
we have $T^k(z)= \Bf_*^{r_k} (z) $ for some return time $r_k=r_n(z)$.
Let $U^k(z)\ni z$ be the pullback of $\BD$ along the orbit
$\{\Bf_*^i  (z) \}_{i=0}^{r_n}$. 
Since $\bar \BD$ is well inside  $\C\sm \BO_*$, the  maps 
$$
      \Bf_*^{r_k} : U^k (z) \ra \BD
$$
have uniformly bounded distortion (with an absolute bound).

Let $\UU$ be the family of all domains $U^k(z)$. 
Any point $z\in Z_n$ is contained in no more than $n$ domains
 $U\in \UU$.  Hence
$$ 
    \sum_{U\in \UU} \area( Z_n \cap U) \leq n \area D.   
$$
Let us consider the following {\it Poincar\'e sum}:
for $\zeta\in D$, let 
$$
    \PP (\zeta) = \sum_{k+1}^n \sum_{z\in Z_n:\,  T^k z= \zeta}  
     \frac 1{|D\Bf_*^{r_k} (z)|}.  
$$
Then
$$
       \int_D \PP(\zeta) d\area(\zeta) =  \sum_{U\in \UU} \area( Z_n \cap U). 
$$
\end{proof}
*****}

Let 
$$
    Z_n = \bigcup_{P\in \PP^n} T_P^{-1} (Z),
$$
where $Z$ is from Lemma \ref{no return}.
Notice that points of $Z_n$ escape $\BD$ forever
after at most $n$ returns.  

\begin{lem}\label{Z_n}
For any natural $\tau\in \N$,
there exists $\underline{m}=\underline{m}(N,l.\kappa, t, \tau)$ such that
for any $m>\underline{m}$, 
$$
   \area Z_n \leq C\,n\, \si^\tau \area \BD,
$$
where $\si\in (0,1)$ and $C>0$ depend only on $ N$. 
\end{lem}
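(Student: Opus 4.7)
The plan is to decompose $Z_n$ according to the landing branches and use bounded distortion together with the multiplicity bound from Lemma \ref{bounded multiplicity} (and Lemma \ref{no return} to control the size of $Z$ itself).

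By definition, $Z_n = \bigcup_{P\in \PP^n} T_P^{-1}(Z)$, so first I would reduce the estimate to a sum:
\[
   \area Z_n \leq \sum_{P\in \PP^n} \area \bigl( T_P^{-1}(Z) \bigr).
\]
For each $P\in \PP^n$, the landing branch $T_P: P\to \BD$ has uniformly bounded distortion by Lemma \ref{distortion trivia} (with constants depending only on $N$). Hence for some $C_1=C_1(N)$,
\[
 \frac{\area T_P^{-1}(Z)}{\area P} \leq C_1 \, \frac{\area Z}{\area \BD},
\]
and therefore
\[
     \area Z_n \;\leq\; C_1 \, \frac{\area Z}{\area \BD} \sum_{P\in \PP^n} \area P.
\]

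Next I would invoke Lemma \ref{bounded multiplicity} to bound the sum $\sum_{P\in \PP^n} \area P \leq C_2\, n\, \area \BD$, with $C_2=C_2(N)$; this is the place where the multiplicity bound on the family $\PP^n$ (at most $n$ at every point, since $\chi(P)\leq n$) is used. Combining the two inequalities yields
\[
    \area Z_n \leq C_1 C_2 \, n \, \area Z.
\]

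The final step is to pick $\underline{m}=\underline{m}(N,l,\kappa,t,\tau)$ large enough for Lemma \ref{no return} to be applicable, which then gives $\area Z \leq C_3 \si^\tau \area \BD$ with $\si\in(0,1)$ and $C_3$ depending only on $N$. Substituting this into the previous display gives $\area Z_n \leq C n\, \si^\tau \area \BD$ with $C=C(N)$, as desired. No real obstacle arises: all ingredients (bounded distortion of the landing branches, the multiplicity bound, and the escape estimate for $Z$) have already been established, and the argument is a routine Koebe-plus-Poincaré-sum type manipulation.
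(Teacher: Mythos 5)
Your proof is correct and follows essentially the same route as the paper's. The paper writes $\area Z_n = \int_Z \phi_n(\zeta)\,d\area(\zeta)$ and then cites Lemma \ref{bound of phi} (for $\phi_n\leq Cn$) and Lemma \ref{no return}; you simply unpack the truncated Poincar\'e series step into the equivalent explicit bounded-distortion estimate $\area T_P^{-1}(Z)/\area P \leq C_1\,\area Z/\area\BD$ followed by Lemma \ref{bounded multiplicity}, which is exactly what Lemma \ref{bound of phi} rests on.
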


\begin{proof}
 Since
$$
    \area Z_n = \int_Z \phi_n (\zeta) \, d\area(\zeta),
$$
the conclusion follows from Lemma \ref{bound of phi} and Corollary \ref{no return}. 
\end{proof}

\sss{Many returns to the base} 

\bignote{Old scaling lemma is hidden here} 

\comm{*****
\begin{lem}\label{exp decay-old}
There exists $\si \in (0, 1)$  and $\iota \in \N$ (depending only on 
$ N$) such that 
for any $z\in\BD$ that returns to $\BD$ at least $n$ times,  we have:
$$
   \diam P^{k+\iota} (z) < \si \diam P^k(z), \quad k=1,\dots, n-\iota. 
$$

\end{lem}

\begin{proof}
For any $k \leq  n$, we have  $\Bf_*^{r_k } (P^k (z)) = \BD$. Hence 
$$
  \Bf_*^{r_k } (P ^j  (z)) \cap  \BD\not= \emptyset, 
\quad  j = k+1,\dots,  n .
$$
By  Lemma \ref{expansion},  the map
$   \Bf_*^{r_{k+1}-r_k } $ is expanding on the $ \Bf_*^{r_k }  (P^j  (z))  $ 
in the hyperbolic metric of $\C\sm \BO_*$
with some factor $\rho=\rho( N) >1$. 
Hence for $k \leq  n-\iota$, 
 the maps $   \Bf_*^{r_{k+\iota }-r_k } $ are hyperbolically expanding 
on  $ \Bf_*^{r_k }  (P^{k+\iota}  (z))  $  with  factor $\rho^\iota$. 
Since $\BD\cup  \Bf_*^{r_k }  (P^{k+\iota}  (z)) $ has a bounded hyperbolic
diameter (depending only on $ N$),  the map
$$
   \Bf_*^{r_{k+\iota} - r_k }  : \Bf_*^{r_k} ( P^{k+\iota}  (z)) \ra\BD 
$$
  is expanding with respect to the  Euclidean metric  by factor
$\geq C^{-1} \rho^\iota$, where $C=C( N)$.
Hence
$$
         \diam \Bf_*^{r_k}  (P^{k+\iota}  (z)  )
    \leq        C\rho^{-\iota}    \diam \BD. 
$$
Since  the map 
$$
     \Bf_*^{r_k} : P^k(z) \cup P^{k+\iota} (z) \ra \BD \cup \Bf_*^{r_k}  (P^{k+\iota}  (z) )
$$
has a bounded distortion,
$$
         \diam  P^{k+\iota}  (z)  \leq  C' \rho^{-\iota} \diam \BD \quad
         {\mathrm with} \ C' = C' ( N). 
$$
 The conclusion follows as long as $\iota$ is sufficiently big
 (depending on $ N$ only). 
\end{proof}
***********}

Let 
$$
   {\Bbb S}^n = \bigcup_{ \chi(P)  > n }  P = \bigcup_{\PP\sm \PP^n} P. 
$$

\begin{lem}\label{never landing in V'}  
There exist $C>0$ and $\si\in (0,1)$
depending  on $ N, \underline{l}, \underline{\kappa}, $ and $\underline{t}$
such that  for any $n\in \N$ 
 the area of the set of points of ${\Bbb S}^n $ that never land in $\BV$ is at
 most $C\si^n \area \BD $.
\end{lem}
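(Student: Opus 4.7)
The plan is to show exponential decay, in $n$, of the measure of orbits that visit $\BD$ many times without ever landing in $\BV'$ (and hence never in $\BV_*$); the set appearing in the statement is contained in such a set, since points in ${\Bbb S}^n$ return to $\BD$ at least $n{+}1$ times.

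The key uniform input is a definite ``escape fraction'' at every return to $\BD$: by Corollary~\ref{safe trapping disk}, $\BV'\subset \BD$ satisfies $\area \BV'/\area \BD\geq c_0$, and by the uniformly bounded distortion of each landing branch $T_P:P\to \BD$ (Lemma~\ref{distortion trivia}), for every $P\in \PP$ the subset $T_P^{-1}(\BV')\subset P$ has relative area at least some $c_1=c_1(N,\underline l,\underline \kappa,\underline t)>0$.

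I would then set up an inductive area estimate along the iterates of the first-return map $T:\Dom T\to\BD$. Let $E_n\subset \BD$ be the set of points whose first $n$ returns to $\BD$ are all defined and lie outside $\BV'$; any orbit in ${\Bbb S}^n$ which never lands in $\BV$ satisfies this with $n$ replaced by $n{+}1$ (since every $P\in \PP$ with $\chi(P)>n$ arises as an $(n{+}1)$-fold iterate pullback of $\BD$ under $T$), so the set to be bounded is contained in $E_{n+1}$. I propose to prove $\area E_n\leq C(1-c_1)^n\area \BD$ inductively. To carry this out cleanly, I would work with the disjoint family $\PP'_n$ of $n$-fold iterate pullbacks of $T$: by Koebe in the hyperbolic metric of $\C\sm\bar \BO_*$ (in which $\BD$ has uniformly bounded diameter and each $T^n|_P$ is a hyperbolic expansion, being a composition of expansions each of whose images contains $\BD$), the map $T^n|_P:P\to \BD$ has Euclidean distortion bounded \emph{independently of $n$}. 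Consequently, passing from generation $n$ to generation $n{+}1$, each $P\in \PP'_n$ is partitioned (mod escape) by the sub-pullbacks $P'\in \PP'_{n+1}$, and on each $P'$ the escape set $(T^{n+1}|_{P'})^{-1}(\BV')$ still occupies relative area at least $c_1$ by the same distortion-plus-definite-area argument. Summing $\sum_{P'\subset P} \area P' \leq \area P$ and iterating yields $\area E_{n+1}\leq (1-c_1)\area E_n$ up to a harmless bounded factor.

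The delicate point is the uniformity of the escape fraction $c_1$ across all generations: this is exactly the statement that the Koebe bound on $T^n|_P$ holds with a constant independent of $n$ and of the branch, which in turn rests on the fact that $\BD$ sits well inside $\C\sm\bar\BO_*$ (so the hyperbolic-to-Euclidean distortion on any pullback chain is controlled). Once this is in place, the induction above gives $\area E_{n+1}\leq C(1-c_1)^{n+1}\area \BD$, and choosing any $\si\in (1-c_1,1)$ absorbs the polynomial multiplicity factor from Lemma~\ref{bounded multiplicity}; the inclusion $\{z\in {\Bbb S}^n:z\text{ never lands in }\BV\}\subset E_{n+1}$ then concludes the proof.
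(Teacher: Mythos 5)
Your proposal identifies the correct raw ingredients (uniformly bounded distortion of the landing branches, a definite escape sub-disk $\BV'\subset\BD$ of relative area $\geq c_1$), but the inductive area estimate does not close, and the gap is not merely technical.

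The step that fails is the assertion that ``each $P\in\PP'_n$ is partitioned (mod escape) by the sub-pullbacks $P'\in\PP'_{n+1}$,'' from which you deduce $\area E_{n+1}\leq (1-c_1)\area E_n$. This partition claim is false on both counts: a point of $P$ whose $n$-th landing lies in $\BV'$ (i.e.\ a point of $G_P:=(T^n|_P)^{-1}(\BV')$) may perfectly well have an $(n+1)$-th return to $\BD$ (it belongs to some $P'$), while a point of $P\setminus G_P$ may simply escape to infinity without ever returning and belong to no $P'$. More fundamentally, the escape fraction $\geq c_1$ is a fraction of the \emph{whole} domain $P'$, not of the remaining bad set $E_n\cap P'$; there is no \emph{a priori} control on $\area\bigl(G_{P'}\cap (E_n\cap P')\bigr)/\area(E_n\cap P')$, because the events ``$T^{n}(z)\notin\BV'$'' (which shapes $E_n\cap P'$) and ``$T^{n+1}(z)\in\BV'$'' are not independent and are measured against different reference domains. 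Carrying your bound through one generation only yields $\area E_{n+1}\leq (1-c_1)\sum_{P'}\area P'\leq (1-c_1)\area\BD$, i.e.\ a single factor, and the scheme does not iterate.

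The paper's argument supplies exactly the missing ingredient and works locally rather than by global area bookkeeping. Around any bad point $\zeta\in\Bbb S^n$ one takes the nest $P^1(z)\supset\cdots\supset P^n(z)\ni\zeta$; Lemma~\ref{distortion trivia} gives a gap (a pullback of $\BV$) of definite relative size inside each $P^k(z)$, disjoint from the bad set; and, crucially, Lemma~\ref{exp decay} shows these nested domains lie at $\asymp n$ \emph{different scales} (which your proof never establishes and which does not follow from bounded distortion alone). One then invokes the general porosity lemma \ref{gaps in many scales}, which is precisely the correct replacement for the naive ``multiply the escape fractions'' heuristic: it converts ``definite gaps in $\asymp n$ different scales around every point'' into $\area\leq C\si^n\area\BD$. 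If you want to keep the spirit of your argument, you should replace the inductive area step by a citation of Lemma~\ref{exp decay} for the scale separation and of Lemma~\ref{gaps in many scales} for the exponential decay; at that point the proof becomes the one in the paper.
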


\begin{proof}
Take a point $\zeta \in \Bbb S^n$.   It belongs to some domain
$P\in \PP$ with $\chi(P) >  n$. Then $P$ contains a point $z$ that
lands in  $\BD$ at least $n$ times, and $P^n(z)=P$.  
By Lemma \ref{exp decay}, the nest
$$
    P^1(z)\supset P^2(z) \supset\dots \supset  P^n(z) = P
$$
represents $\asymp n$ different scales.
By Lemma \ref{distortion trivia}, each of these domains contains a pullback of
$\BV$ of comparable size. 
Now  the desired follows from  Lemma \ref{gaps in many scales}. 
\end{proof}



\comm{***
\sss{Bernoulli return map}

A map 
\begin{equation}\label{Bernoulli map} 
     g:    \Dom g\equiv \bigsqcup U_i \ra D
\end{equation}
is called  {\it Bernoulli} (with range $D$) if  the closures $\bar U_i$ are
disjoint
and the branches $g: U_i \ra D$ are univalent uniformly  expanding
(with some factor $\geq \rho>1$ maps with uniformly bounded distortion
(by some $K$). The iterates 
$$
   g^n: \Dom g^n \equiv \bigsqcup U^n_j\ra D
$$ 
of  a Bernoulli map are
Bernoulli with the same range, expanding factor $\geq \rho^n$,
and distortion uniformly   bounded in  terms of $K$ and $\rho$. A Bernoulli map is 
a {\it return map } for $f$ if all the branches of $g$ are some iterates of
$f$. 

\begin{lem}\label{Bernoulli lemma}
  For any $\de>0$, there exists $\underline{m}$ such that for any $m>
  \underline{m}$, the quadratic map $\Bf_*$ admits a return map  $g$ 
  (\ref{Bernoulli map}) whose range is the trapping disk
   $ \BD = \BD^{m-\kappa}$ and such that 
$$
   \area (\BD\sm \Dom g ) < \de. 
$$ 
\end{lem}

\begin{proof}
    For a small $\eps>0$,  let $\NN_\eps$ be the tubular
    $\eps$-neighborhood of the boundary $\di D$, 
and let $\BD_\eps= D\cup \NN_\eps$.
For $\eps$ sufficiently small, it is a smooth quasidisk. 
\note{prepare  a bit?}

Let $T : \BD\sm Z \ra Z$ be the first return map to $\BD$,
and let $T^n : D\sm Z^n \ra D$ be its iterates. 
For a point $z\in \BD\sm Z^n$,
we have $T^n(z)= \Bf_*^{r_n} (z) $ for some return time $r_n=r_n(z)$.
Let $U^n(z)\ni z$ be the pullback of $\BD$ along the orbit
$\{\Bf_*^k  (z) \}_{k=0}^{r_n}$, and let $U^n_\eps(z)\Supset U^n(z)$ 
be the corresponding pullback of $\BD_\eps$.

 Since $\bar \BD$  does  not intersect
the postcritical set $\BO_*$, the  maps 
$$
      \Bf_*^{r_n} : U^n_\eps (z) \ra \BD_\eps
$$
have uniformly bounded distortion. Moreover, 
by Theorem \ref{}, they are 
$>\rho^n$-expanding with some $\rho>1$. 
Hence there exists $n$ such that  
$$
    \diam U^n_\eps (z) < \eps\quad \forall \ z\in \BD\sm Z^n.
$$
 It follows that if $z\in \BD\sm \NN_\eps $ then 
$U^n_\eps (z) \subset \BD\sm \NN_\eps$.  

Let us say that a domain $U^n(z)$ is {\it good} if
for any $k\in [0, r_n]$, either
$ \Bf_*^k (U^n_i) \cap \BD= \emptyset$ or 
$\Bf_*^k(z (U^n(z)) \subset \BD$.  

Any two good domains  $U^n(z)$ and $U^n(\zeta)$  are
either equal or disjoint.
Indeed, assume $U^n(z)\cap U^n(\zeta)\not= \emptyset$
for two different domains,
and assume  for definiteness that $r_n(z)\leq r_n(\zeta)$. 
Then  the domain $f^{r_n(z) } (U^n(\zeta))$ intersects 
$\BD= f^{r_n(z) } (U^n(z))$. But since the map
$     f^{r_n(z) } |\,  U^n(z) \cup U^n(\zeta) $ 
is  univalent. \note{justify}
$f^{r_n(z) } (U^n(\zeta))$ is not contained in 
$\BD$, contradicting the goodness of $U^n(\zeta)$. 

Let $\UU^n$ be the family of good domains $U^n(z)$.

Hence we obtain a Bernoulli map
\end{proof}
**********}

\sss{Escaping probability}

We are finally ready to show that the escaping probability $\xi$ 
for $\Bf_*$  can
be made arbitrary small by selecting $m$ sufficiently big 
(while  keeping the previously selected  parameters, 
$ N, l, \kappa$, and $t $, unchanged).


\begin{prop}\label{small xi}  
For any $\eps>0$ there exists $\underline {m}$ such that
$\xi< \eps$  for any $m> \underline{m}$. 
\end{prop}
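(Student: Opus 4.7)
The plan is to reduce the bound on $\xi$ to an area estimate for the escape set inside the base safe trapping disk $\BD = \BD_*^{m-\kappa+l-\iota}$, namely
$$
E := \{z \in \BD : \Bf_*^n(z) \notin \BV_* \text{ for all } n \geq 1\},
$$
and then transfer it to $\BA' = \BV_* \setminus \bar\BU_*$ via a first-entry decomposition with Koebe distortion control.

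To estimate $\area E$, note that every $z \in E \setminus \BJ_*$ either returns to $\BD$ only finitely often (and hence lies in the set $Z_n$ of Lemma~\ref{Z_n} for some $n$, since it must ultimately leave $\BD$ forever) or returns to $\BD$ more than $n$ times without ever entering $\BV_*$ (and hence lies in the set of Lemma~\ref{never landing in V'}). Given $\eps' > 0$, first choose $n$ so that $C\sigma^n < \eps'/2$, then $\tau$ with $Cn\sigma^\tau < \eps'/2$, and finally $m > \underline{m}(N, l, \kappa, t, \tau)$ so that Lemma~\ref{Z_n} applies. Combining, $\area E < \eps' \area \BD$.

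To transfer the estimate to $\BA'$, I first verify that every $z \in \YY$ has its orbit enter $\BD$ at some time: since $\BV_*$ sits at a deeper scale than the collar $\BA^{m-\kappa+l-\iota}$ surrounding $\BD$, any orbit that never returns to $\BV_*$ must traverse this collar, and by property (v) of Lemma~\ref{definite disc} (applied to $\Bf_*$ as a controlled perturbation of the Siegel polynomial $\Bf$) such an orbit lands in $\BD$ within $q_{m-\kappa+l-\iota+1}$ iterates. Decompose $\YY = \bigsqcup_k \YY_k$ according to the first time $k$ at which the orbit enters $\BD$. For each $k$, the connected component $P_k$ of $\Bf_*^{-k}(\BD)$ containing $\YY_k$ is a univalent pullback of $\BD$ (since $\BD \Subset \C \setminus \BO_*$), the $P_k$'s are pairwise disjoint, and $\Bf_*^k(\YY_k) \subset E$. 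By Koebe distortion,
$$
\area \YY_k \leq K \, \frac{\area E}{\area \BD} \, \area P_k.
$$

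Summing and invoking a bounded-multiplicity estimate $\sum_k \area P_k \leq C_1 \area \BA'$ (in the spirit of Lemma~\ref{bounded multiplicity} and the truncated Poincar\'e series of Lemma~\ref{bound of phi}, with $C_1 = C_1(N,l,\kappa,t)$ independent of $m$) yields $\xi \leq K C_1 \eps'$, and taking $\eps' = \eps/(KC_1)$ completes the proof. The principal technical obstacle is precisely this bounded-multiplicity estimate: the pullback components $P_k$ extend outside $\BA'$ and can accumulate, so controlling their total area by $\area \BA'$ requires careful counting exploiting the geometric control afforded by the safety of $\BD$ and by the definite shape of the pullbacks. A secondary difficulty is the dynamical claim that every escape orbit visits $\BD$, which rests on the collar/trapping-disk machinery of Lemma~\ref{definite disc} applied to $\Bf_*$.
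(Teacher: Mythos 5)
Your estimate for $\area E$ (the paper's $\area Y$) matches the paper's decomposition in substance: the Julia set contributes measure zero, Lemma~\ref{never landing in V'} handles the points lying in $\Bbb S^n$, and Lemma~\ref{Z_n} handles the rest via $Z_n$. One small caveat: your dichotomy is phrased as ``returns $\le n$ times vs.\ $>n$ times,'' but to place a point in $Z_n$ you need $\chi(P^k(z))\le n$, not merely that $z$ itself returns $k\le n$ times (another point of $P^k(z)$ could return more often). The clean split is ``$z\in\Bbb S^n$ vs.\ $z\notin\Bbb S^n$'': for $z\notin\Bbb S^n$ every landing domain through $z$ lies in $\PP^n$, and that is what puts $z$ in $Z_n$. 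This is exactly how the paper organizes it.

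The transfer step to $\BA'=\BV_*\setminus\BU_*$ is where you diverge from the paper, and where the real gap sits. The paper's argument is local: for each $z\in\YY\setminus\BJ_*$ it takes the univalent pullback $Q(z)$ of $\BD$ along the escaping orbit (a quasidisk of bounded shape containing $z$), observes that the pullback of $Y$ gives $\area(\YY\cap Q(z))\le C\eps\,\area Q(z)$ by Koebe, and then invokes the Besikovich Covering Lemma to pass from this pointwise density bound to a global one. The $Q(z)$'s are allowed to overlap; Besikovich controls the overlap automatically. You instead decompose $\YY=\bigsqcup_k\YY_k$ by first-entry time to $\BD$, and this is where two problems arise.

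First, the pullback components $P_k$ of $\Bf_*^{-k}(\BD)$ containing the $\YY_k$ are \emph{not} pairwise disjoint. If $\zeta\in P_{k_2}$ has $\Bf_*^{k_1}(\zeta)\in\BD$ for some $k_1<k_2$, then $P_{k_2}$ meets some $P_{k_1}$; this is precisely the non-first-landing phenomenon that the paper handles through the multiplicity function $\chi$ and the truncated Poincar\'e series in Lemmas~\ref{bounded multiplicity} and \ref{bound of phi}. If you instead define $P_k$ to be the first-entry component itself (which \emph{is} disjoint from the others), then $\Bf_*^k\colon P_k\to\BD$ may fail to be onto $\BD$, and then the Koebe argument does not give density of $\YY_k$ in $P_k$ equal (up to constants) to the density of $E$ in all of $\BD$; you would only control it in a subdomain of $\BD$ whose position is not under control.

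Second, even granting the Koebe estimate, the claim $\sum_k\area P_k\le C_1\,\area\BA'$ is not a bounded-multiplicity statement in the spirit of Lemma~\ref{bounded multiplicity}: that lemma concerns pullbacks of $\BD$ intersecting $\BD$, not pullbacks landing on $\BA'$, and here the domains can sprawl outside $\BA'$. What you actually need is a combination of (a) a Poincar\'e-series bound for landings of $\BD$ passing through $\BA'$, and (b) a geometric estimate showing that these pullbacks stay in a region of area comparable to $\area\BA'$. Neither is supplied. The Besikovich route sidesteps (a) entirely and reduces (b) to the quantitative statement that the pullbacks $Q(z)$ have diameters small relative to $\diam\BA'$ — which follows from the hyperbolic contraction of the pullback (Corollary~\ref{expansion for Siegel maps-2} / Lemma~\ref{perturbative expansion}). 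So the paper's route is genuinely lighter on machinery. Your approach can likely be made to work, but the two obstacles you flag are real and would need the Poincar\'e-series bookkeeping of Lemmas~\ref{bounded multiplicity}--\ref{bound of phi} adapted to first landings from $\BA'$ rather than returns to $\BD$.
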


\begin{proof}
Let $Y$ be the set of points in $\BD$
that never land in $\BV_* $.  
We will show first that for $m$ sufficiently big, 
\begin{equation}\label{Y}
   \area Y < \eps \area \BD.
\end{equation} 
For any  $n\in \N$, 
let us cover  $Y$ by  three sets:
$$\mbox{
$Y_0 = Y\cap J(\Bf_*)$, \quad 
$Y_1^n = Y\cap  {\Bbb S}^n$, \quad
   $ Y^n_2  = Y\sm (Y_0\cup  Y_1^n) .$}
$$

It is known that  almost all point of $J(\Bf_*)$ land in $\BV$
\cite{DAN},  so $\area Y_0 = 0$,

By Lemma \ref{never landing in V'}, 
$$
   \area Y_1^n  \leq C \si^n \area \BD <  (\eps/2)\area \BD.
$$
as long as $n$ is sufficiently big. 

Let  us take now any point $z\in Y_2^n$.  Then 
$$
    \chi(z)  : = \max \{ \chi(P): \ P\in \PP, \ P\ni z \}  \leq n,
$$
and $\orb z$ returns back to $\BD$ at most  $n$ times. 
Let $k\leq n$ be the number of returns, and let  $P:= P^k(z)$.
Since $P\ni z$, we have   $P\in \PP^{\chi(z)} \subset \PP^n $.
Moreover, under the return map  
$T_P : P \ra \BD$, the point $z$ must land in $Z$ since it will never
come back to $\BD$ again. Hence $z\in Z_n$. Thus $Y_2^n \subset Z_n$. 
Applying  Lemma \ref{Z_n},  we see that  
$\area Y^n_2 < (\eps/2)\area \BD $ for  $m$ sufficiently big, 
and estimate   (\ref{Y})  follows. 

\msk
To pass from (\ref{Y}) to an estimate of $\xi$, we need to transfer
the density estimate for $Y$  to the fundamental annulus 
$\BV_* \sm \BU_* $. 
Let $\YY$ be the set of points in $\BV_*\sm \BU_*$ that never land in $\BV_*$. 
Again, since almost all points of $J(\Bf_*)$ land in $\BV_*$, it is
sufficient to deal with the Fatou set $\YY \sm \BJ_*$. 
Any  point $z\in \YY \sm \BJ_*$  eventually lands in the ``middle'' of the
base trapping disk  $\BD$. Pulling $\BD$ back to $z$,
we obtain a domain $Q(z)$ of bounded
shape in which the set $\YY\cap Q(z) $ (the pullback of $Y$) 
has density  $\leq C\eps$.
 Applying the
Besikovich Covering Lemma (see \cite{Ma}), we conclude that $\YY$
has density $\leq C' \eps$ in $\BV_* \sm \BU_* $. 
\end{proof}

%

\comm{**** First attempt
Fix any $n\in \N$.  For the Siegel polynomial $P$,
we can construct (By Lemma \ref{disk D for Siegel})
 $n$ disjoint trapping disks $D_i$ on some bounded levels
$l_1< l_2< \dots < l_n$.  Since $\Bf_*$ is an arbitrary small
perturbation of $P$, these disks will serve as safe trapping 
disks for $\Bf_*$ as well  (compare Lemma \ref{definite disc}) . 

Let us define a partial transit map $T : D_{i+1} \ra D_i$ as follows:
for $z\in D_{i+1}$, $\zeta \in D_i$,  we let $\zeta= T (z)$ if 
$\zeta= \Bf_*^k z$ and $\Bf_*^j z\not \in D_i\cup D_{i+1}$ for $0< J< k$.
By Lemma \ref{perturbative expansion}, the transit maps $T : D_{i+1} \ra D_i$ under the 
 are uniformly expanding in the hyperbolic metric of
$\C\sm \bar \OO_*$ (where $\OO_*$ is the postcritical set of $\Bf_*$).
Making the levels $l_1$ sufficiently sparse, we can make the expanding
factor big.  

By Lemma \ref{landing prob}, each of $D_i$  contains a disk $\Ups_i$ of comparable size
that eventually lands in $V'$.   

Let $X$ be the set of points in
$(V'\sm U')\sm J(\Bf_*)$ that never return back to $V'$.   
The orbit of any point $z\in X $  must pass through all the trapping
disks $D_i$. 
Pulling the $\Ups_i$ back to $z$ along this orbit creates definite gaps
in the set $X$. Moreover, due to the expansion of the transit maps
$D_{i+1}\ra D_i$, these gaps lie in different scales. By Lemma \ref{gaps in many scales}, 
the set $X$ has an exponentially small (in terms of $n$)  area. 

On the other hand, almost all points of $J(\Bf_*)$ eventually land in
$V'$.%
\footnote{ We don't really need it as we can assume  without loss of
  generality that $\area J(\Bf_*)=0$.}
 It follows that the escaping probability $\xi$ is exponentially small
 in $n$. 
************}

\comm{****  An attempt to construct a nice return map
To make this work, let us create a convenient return map to a
slightly adjusted trapping disk $D$: 

\begin{lem}\label{returns to D} 
Take any $\de>0$.  If $P_*$ is sufficiently close to $P$
then  there exists a trapping disk $D_*\supset D$, a family of disjoint subdisks
$W_i\subset D_*$ and return times $n_i$ such that:

\ssk \nin  {\rm{(i) }}
   there exists a measurable subset $X\subset D_*$ such that 
$\area X\geq \xi_*  \, \area D_*$ for some $\xi_*>0$ independent of
$\de$, 
and the orbits of all points  $z\in X$ eventually  land in  $U'$;

\ssk \nin {\rm{(ii) }}
    $\area (\bigcup W_i) \geq (1-\de)\,  \area D_*$; 
and for any $i$,  the iterate $P_*^{n_i}$ maps $W_i$ onto $D_*$
 with uniform expansion and bounded distortion
(with absolute constants).
\end {lem}

\begin{proof}
  Let us slightly enlarge the trapping disk $D$ to obtain a disk 
$ D^* \Supset D$ with the Hausdorff distance between $\di D$ and 
$\di  D^*$ of order $\de$ (but small compared
with $\de$).%
\footnote{Meaning that the distance  is squeezed in between $\de/C$ and
  $\de/C'$,
where $C$ and $C'$ are sufficiently big constants, to be selected later.}
   On the other hand,  let $S_*\Subset S$ be a slightly
shrunk (in the same sense) Siegel disk. 

Let $Y\subset D$ be the set of points $z$ such that
there exist unique moments $m(z)= m<n=n(z) $ with the properties that
$f^m z\in S_* $,
$f^n z \in D$, while $f^k z\not\in  D^* $ for $k < n$
and $f^k z\not\in  S $ for $k\leq n$ and  $k\not m$. 
Pulling the disk $D_*$ back along the orbit $\{ f^k z\}_{k=0}^n$,
we obtain a disk $W= W(z)\ni z $ and a univalent map $f^n:  W\ra D^*$
with bounded distortion (with an absolute bound).
We claim that this family of disks and return times satisfy the
desired properties. 

As we pull $D^*$ back to $z$,  it enjoys definite contraction
in the complement of the postcritical set  every time it passes by
$c_0$. If $P_*$ is sufficiently close to $P$, the orbit  $\{f^k z\}_{k=m}^n$ it
makes a lot of revolutions near the boundary of $S$ passing close to
$c_0$. Hence the map $f^n : W \ra D^*$ is expanding by a big factor,
so $W\Subset D^*$.
 
Assume that  another domain, $W_1= W(z_1 )$ for  some $z_1\in Y$  
overlaps with $W$. Let $n_1= n(\zeta)$ be the corresponding return time;
assume for definiteness that $n_1\geq n$.  Then $f^n W_1$ overlaps
with $D^*$, while $f^m W_1$ overlaps with $S_*$. 
By the same expansion property,  the domain $f^m (W)_1)$ is exponentially small it terms of
$n_1-m$, and hence $f^m z_1\in S$.  By definition, we conclude that $m_1=m$.

\end{proof}
********************}

\comm{****** Old return machinery 
How does the Return machinery work?
Recall that $P$ is the Siegel polynomial.
Given an $\eps>0$,
find  a distance  $d>0$ such that $K(P)$ has density $1-\eps$ in the
outer $(d/2, d) $-collar $B$ around the Siegel disk $S=S_P$ (the ``toll belt''). Select $\kappa$
and $m$ big enough so that the postcritical set of $P_*$ and the disk
$V'$ lie much closer to $\di S$. 

Let $K^*=\bigcup P_*^{-n}(S)$. 
If $P_*$ is a sufficiently small perturbation of $P$ then $K^*$ has density at least  $1-2\eps$  in
the toll belt $B$.  

Let $\V$ be  the enlargement of  $V'$ by a small factor $1+\de$. 
Given  a set $Q\subset S$ consisting of pullbacks of $V'$,
 we let  $\Q$ be the set consisting of the corresponding pullbacks
of $\V$.  Then $\area \Q\leq (1+O(\de)) \area Q$.  

Let $ Q_1\subset S $ be the set of points  that land in $V'$ before they
enter the toll belt $B$.
We know that it has density $\geq \eta$.
 
Let $z\in S\sm \Q_1$.  If $z$ eventually lands in $B\cap K^*$ under some
iterate $P_*^n$,  let    $\Delta$ be the component of  $K^*$ containing
$ P^N_*(z)$.  When we pull $\Delta$ back to $z$ then it enjoys definite contraction
in the complement of the postcritical set  every time it passes by
$c_0$.  Hence it will arrive to a neighborhood of $V'$ being
much smaller than $\de \diam V'$.   
Since the orbit of $z$ misses $\V$,
the pullback of $\Delta$  misses  $ V'$.     

So, these pullbacks form a set $D_2 \subset S$ disjoint from $Q_1$ and such that 
in any component of $D_2$ we have the $\asymp \eta$  portion of $\XX$ (points
eventually returning  to  $V'$).  Let $Q_2= D_2\cap \XX$. 
The set $\Q_2$  occupies at  least  $\asymp \eta$ part of $D_2$, which
covers $(1-O(\de))$-part of the complement of $Q_1$.  
So, $Q_2 $ eats up a definite portion of the complement of $Q_1$.

Take now  a point $z\in S\sm (\Q_1\cup \Q_2)$, and proceed in the same
way, and so on. 
%
End of  of Old Return Machinery************}

\comm{
*******************************

\begin {lem}\label{P-plane}
  If $\kappa$ is sufficiently big then for any $m\geq l+\kappa$ the
  parabolic map $P_m$ possesses the following objects:

\ssk\nin  $\bullet$  a repelling periodic point $\zeta'\equiv \zeta^m_{m-k}$
  with period $q_{m-\kappa}$;

\ssk \nin $\bullet$ 
a quasidisk $\Disc' \equiv \Disc^m_{m-k}$ centered at
$\zeta'$ and separated from the
flower $ \FF' $ whose Euclidean  diameter is comparable to 
 $\dist (\zeta',  c_0)$ (in terms of $\kappa$);

\ssk\nin $\bullet$ 
a point $b$ in the repelling petal $\CC^r$ of $0$ such that $f^s(b) =\zeta'$
 at some moment $s$;

\ssk \nin $\bullet$  
   a precritical point $c_{-\tau}$  well inside $\Disc'$;

\ssk \nin $\bullet$ 
  a postcritical point $c_n$ in the attracting crescent $\CC^a$.
  \note{which depends on $\la$? } 

\ssk
Moreover, the moments $s$ and $\tau$ are bounded in terms of the
renormalization $\tl f$ (\ref{f_m}), 
and
\begin{equation}\label{derivative bounds}
    C^{-1} \leq  | DP_m^s (b)|  \leq C, \quad
   C^{-1} \leq  | DP_m^\tau (c_{-\tau}) |  \leq C,
\end{equation}
where all constants and bounds depend on $\kappa$ uniformly in $m$. 
\end{lem}

\begin{proof}
    Consider the renormalization $f$  \ref{m-2 kappa}) of $P$. 
By Corollary \ref{Siegel renorm of famiies},  the   parabolic map $\tl
f$ 
(with rotation number $p_{l+\kappa} / q_{l+\kappa}$)  
is $\de_0$-close to $f$  if $\kappa$ is big (uniformly in $m$).
So  Lemma \ref{definite disk} and Corollaries  \ref{point in petal} \& 
\ref{perturb u-mand of zeta} are applicable, as well as Proposition~\ref{postcrit set for IS}.
Lifting the objects constructed there to the 
$P_m$-plane using  Lemma~\ref{perturbed changes of variables},
we obtain the desired objects, except that we need to 
to adjust  the $a$-point slightly (to obtain the $b$-point).  
Namely, since $ \Disc'$ is well
inside $\C\sm \FF'$, it can be uniformly pulled back towards $0$
along the orbit of $a$.  The corresponding backward orbit of $\zeta'$
must pass, at some moment $s$,   through the repelling crescent
$\CC^r$. This is our $b$-point. 

To deduce bounds (\ref{derivative bounds})  we  use that $s$ and $\tau$ are
bounded in terms of $\tl f$ and the changes of variable $\pi_{m-\kappa-l}$  have  bounded
distortion at the endpoints of the corresponding orbits. 
\end{proof}

\begin{rem}\label{periodic pt: puzzle description}
The periodic point $\zeta' $  can be characterized as a landing point of a ray
  with rotation number $p'/q'\equiv p_{m-\kappa}/q_{m-\kappa}$
that was born in the parabolic explosion from
$P_{p'/q'}$, 
compare Remark \ref{parabolic explosion} .
It can also be described in terms of the top level of the puzzle:
$$
           U= \BY_0 \cup\bigcup_{i=1}^{q-1} ( \BY_i  \cup \BZ_i), \quad \Bq=\Bq_m,\
           \BY_0\ni c_0, 
$$  
where $\BY_i$ are the sectors attached to the $\alpha$-fixed point
and $\BZ_i= - \BY_i$ are the symmetric ``lateral'' sectors. 
 Let us label the sectors $\BY_k$, $k\in \Z/q\Z$,  according
to the dynamics:
$$
   0\in \BY_0\ra \BY_1\ra\dots \ra \BY_{q-1} .
$$ 
 Then $ \zeta' $  belongs to the  lateral sector $\BZ_{-q' }$, 
under the first iterate it jumps to the
sector $\BY_{-q' +1}$  attached to the $\alpha$-fixed point, and
then goes  through the  sectors $\BY_i$ until landing in  $\BY_{-1}$ and
then  closing up. 
\end{rem}

\comm{******
\begin{lem}\label {hyp contraction}

   Let $W'=W^m_{m-\kappa}$ be the pullback of the disk $V'=V^m_{m-\kappa}$ constructed in Lemma \ref{P-plane}
to the repelling petal. Then $W'$ is a
$L(\kappa)$-quasidisk  of a small hyperbolic diameter bounded in terms of
$\kappa$:  
$$
         0< \si_1 (\kappa) \leq      \diam_\hyp W' \leq \si_2 (\kappa)\to 0\  \mathrm{as} \
                  \kappa\to 0
$$ 
 (uniformly in  $m$), where the hyperbolic distance is measured in
 $\C\sm \FF'$. 
\end{lem}

\begin{proof}
  Let us go to the dynamical plane of the renormalization 
$\tl f =  \tl  f_m$  (\ref{f_m}), which is a small perturbation of the
  Siegel map $f$  (\ref{m-2 kappa}). 
The disk $\tl V\ni \tl f^s \tl a$ 
from Lemmas~\ref{definite disk} and \ref{point in petal} can be
univalently pulled back
along the orbit of $\tl a$, 
$$
 (\tl V, \tl f^s \tl a ) \ra (\tl V_{-1}, \tl f^{ s-1} \tl a)  \ra
 \dots  \ra (\tl V_{-T}, \tl f^{s-T} \tl a),   
$$
shadowing  closely (for some time $T=T(\kappa)\to \infty$ as $\kappa\to
\infty$) a backward  orbit of the Siegel map $f$. 
Hence we can find  $n$ moments of time $s-i$
(with $i\leq T$ and  $n=n  (\kappa) \to \infty$ as $\kappa\to \infty$)
when  $\tl f^{s-i} \tl a$  is closer (up to a constant)  in the
Euclidean metric to the
symmetric petal $\tl \FF^\sym_0  $ than to $\tl \FF_0$. 
At these moments, the hyperbolic  distance from  $\tl V_{-i}$ to 
$\tl \FF^\sym_0$ is bounded, 
and hence it enjoys a definite  hyperbolic contraction under the
pullback  (we apply here McMullen's argument \cite{McM1}).
Further pullbacks can only contract it more.

However, the result depends only on $\kappa$ (uniformly in $m$) by
(\ref{derivative bounds}).
\end{proof}
 
*******}

%
\comm{*****
Consider the  pullback $W'$ of $V'$ from Lemma  \ref{hyp contraction} . 
For a big $\kappa$, it has a small diameter in the complement of the
flower $(\FF^m)'$.   Hence it can be inscribed in a disk $W''$ of
small hyperbolic diameter with a big  $\mod (W''\sm W')$.  

Let us select  a repelling crescent $\CC^r$ so that $\CC^r \sm  \FF'$
is a rectangle with bounded geometry and whose  boundary stays a
definite hyperbolic distance away from some point of $W'$. Then
$W''\subset \CC^r$.  
In particular, the disks $W''\supset W'$ 
univalently projected onto  a disk $\BW''\supset \BW' $ in the repelling cylinder $\Cyl^r$. 

By Property (P3) of the  IS class, there is a postcritical point
${\mathbf c}_n$ in the attracting cylinder $\Cyl^a$.  Let $\La$ be the set of
$\la\in \Cyl^a$ such that $\BI_\la(\mathbf {c}_n)\in \BW'$. Obviously, it is an
$L$-quasidisk in $\Cyl^a$ that is mapped univalently onto $\BW'$ by the
map $\la\mapsto \BI_\la({\mathbf c}_n)$. (In the affine cylinder coordinates, $\La$
is just the  translation of $\BW'$  by $-{\mathbf c}_n$.)   

For $\la\in \La$, let $\mathbf{\Upsilon}_\la' := \BI_\la^{-1} (\BW')$  and $\mathbf{\Ups}_\la'' := \BI_\la^{-1} (\BW'')$. 
Let $\Upsilon_\la''\supset \Ups_\la'' \supset \Ups_\la' \ni c_n$  be the
lifts of these disks  to the  attracting
crescent $\CC^a_\la$.  
Pulling them back along the critical orbit to $c_0$ 
and then further to the precritical  point $c_{-\tau}$ from 
Lemma \ref{P-plane}, 
we obtain disks $U_\la''\supset U_\la'$ around $c_{-\tau}$. 

Let us show that the maps $F_\la: U_\la'\ra V'$  are
quadratic-like:

\ssk\nin $\bullet$ The map $P_m^s: W_\la'\ra V_\la'$ is univalent
since  $V'$ does not intersect the flower $\FF'$ which contains
the postcritical set $\OO'$;

\ssk\nin $\bullet$  The map $ T_\la: (\Ups_\la'', \Ups_\la') \ra (W'',
W')$ is univalent 
as a lift of the univalent map $\BT_\la:  (\mathbf{ \Ups}_\la'' ,
\mathbf{ \Ups}_\la')  \ra (\BW_\la'', \BW')$
between topological disks;
  
\ssk\nin $\bullet$  The pullback  $\Om_\la''\supset \Om_\la'$
of the disks $\Ups_\la''\supset \Ups_\la'$ to $c_0$ has degree
two since it corresponds to the first landing of the critical point to
$\Ups_\la'$. Moreover,  the Euclidean diameter of $\Om_\la'$ is small since it has a big
collar $\Om_\la''\sm \Om_\la'$ around itself such that the inner
radius of $\Om_\la''$ around $c_0\in \Om_\la'$ is bounded
(for instance,  $c_1\not\in \Om_\la''$).

\ssk\nin $\bullet$  The further pullback of $\Om_\la'$ to $U_\la'\ni
c_{-\tau}$ is univalent  if $\diam \Om_\la'$ is small enough
(since $\tau$ is independent of $\kappa$).

\ssk Composing the above maps, we obtain for any $\la\in \La$ 
a  branched covering $F_\la: U_\la'\ra V'$ of degree two.

\ssk 
Since $c_{-\tau}$ is well inside $V'$ while
the Euclidean diameter of $U_\la'$ is small (if $\kappa$ is big enough)
the disk $U_\la'$ is well inside $V'$ with 
$$
 \mod(V'\sm U_\la') > \mu(\kappa)\to \infty\quad  {\mathrm{as}} \
 \kappa\to \infty\quad \mbox{(uniformly in $m$)}.
$$

Thus, we have constructed a quadratic-like family with big modulus. 
This family is full and unfolded. \note{to be defined   in S 2}
Indeed, by construction, as  $\la$ goes once around  $\di \La$ then
$\BT_\la (\mathbf{ c}_n) = \la+ \mathbf{ c}_n$ (in affine
cylinder coordinates)   goes once around $\di \BW'$.
Since the lift  $\pi^{-1}: \BW'\ra   W'$ and the map  $P_m^s: W'\ra V'$ are  univalent, the critical value 
$F_\la(c_{-\tau}) =  P_m^s (\pi^{-1}  (\BT_\la( \mathbf{ c}_n))) $
goes once around $\di V'$.

\ssk 
By the Koebe Distortion,  $U_\la'$ is an $L'(\kappa)$-quasidisk provided
$c_n$ is not too close to $\di \Ups_\la$, which is OK in case of
connected Julia set.

The size of this quasidisk is bounded from below in terms of $\kappa$
(uniformly in $m$) by  (\ref{derivative bounds}).

*****************}

Perturbing the parabolic map $P_m$ (with the transit time  
bounded (albeit big) in terms of $\tl f$ (\ref{f_m}), 
we obtain a genuine quadratic-like renormalization: 

\begin{cor}
Let $\theta=\theta_N$ be a stationary rotation number.  
There exists $\underline{\kappa}= \underline{\kappa}(\theta) $ such that 
 for any $\kappa> \underline{\kappa}$  and $m>\kappa+ l$,  
there is a domain $\La\subset \C$ in the wake of $e^{2\pi i \Bp_m/\Bq_m}$ 
with the following property. There is a family of disks 
$U_\la'\subset V' $,  $\la\in \La$, 
and a renormalization period $k >1$ such that
the maps $F_\la=P_\la^k  : U_\la' \ra V' $ form  a full unfolded quadratic-like family over $\La$. 
Moreover, $\mod F_\theta \geq \mu(\kappa)\to \infty$ as
$\kappa\to \infty$. 
In case of connected Julia set $J(F_\la)$, the disk $U_\la'$ is an
$L(\kappa)$-quasidisk with   $\area U'_\la \geq c(\kappa)>0 $ (all uniformly in $m$).
\end{cor}

This quadratic-like family produces a little Mandelbrot copy
$\MM'=\MM'_{\kappa,m}$ that determines the desired renormalization
combinatorics.

\begin{rem}
  This combinatorics is primitive. In terms of the top level of the
  puzzle it can be  described as follows.   \note{can it ?} 
The critical orbit makes several revolutions 
  around the $\alpha$-fixed point, then escapes through the lateral
  sector (attached to $-\alpha$) containing the periodic point
  $\zeta^m_{m-\kappa}$, 
and then makes several revolutions following  combinatorially  this
cycle (see Remark \ref{periodic pt: puzzle description}), until it returns back $0$.   
\end{rem}

\begin{rem}
  In fact, $\kappa$ and $m$ do not determine the combinatorics $\MM'$ uniquely as there are various choices
made in Lemmas \ref{point in petal},  \ref{perturb u-mand of zeta}
and Corollary \ref{ql family}. Let us make some choice $\MM'_{\kappa, m}$:
any of them would do. 
\end{rem}
**************************************}



\subsection{Positive area}

\begin{thm}\label{main the}
   For any stationary rotation number $\theta=\theta_N$ of high type (i.e., $N> \underline{N}$),
there exist $\underline{l}$,$\underline{\kappa}$,
$\underline{t}$  and $\underline{m}$, $\underline{j}$ with the following property.
If  $\kappa, l, t, m, j$ are larger than the corresponding underlined parameters,
then the Feigenbaum polynomial $\Bf_*$ with stationary combinatorics
$\MM'_{N, l, \kappa,t,m, j }$ 
has  the Julia set of positive area.
\end{thm}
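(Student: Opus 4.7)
The plan is to assemble the Black Hole Criterion (Theorem \ref{black hole}) from the three structural ingredients already produced in the excerpt: the uniform a priori bound of Proposition \ref{a priori bounds}, the definite landing probability of Proposition \ref{def eta}, and the vanishing escape probability of Proposition \ref{small xi}. The architecture of the construction in \S\ref{construction sec} is arranged precisely so that these three outputs fit together in the order they must be applied.

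I will begin by fixing a high-type stationary rotation number $\theta=\theta_N$ with $N>\uN$, and selecting consecutively $l>\ul$ and $\kappa>\underline{\kappa}$ so that the perturbed periodic points $\tl\alpha^l$, the collars $A^l$, the trapping disks $D^l$ and the parabolic transit lemmas of \S\ref{level l} are all in force, and so that Corollary \ref{ql renormalization} produces a primitive Mandelbrot copy $\MM'_{N,l,\kappa,t,m,j}$ for every admissible $(t,m,j)$. The next step is to apply Proposition \ref{a priori bounds} to choose $\ut=\ut(N,l,\kappa)$ so that, for every $t>\ut$, any infinitely renormalizable quadratic polynomial with stationary combinatorics $\MM'_{N,l,\kappa,t,m,j}$ enjoys an a priori bound $\mu=\mu(N,l,\kappa)>0$ independent of $m$ and $j$. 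This freezes the threshold constant $C=C(\mu)$ appearing in the Black Hole Criterion. With $(N,l,\kappa,t)$ now chosen, Proposition \ref{def eta} supplies a lower bound $\eta\geq \eta_0=\eta_0(N,l,\kappa,t)>0$ for the landing probability of $\Bf_*$ into the first renormalization domain $\BU_*$, uniformly in $m$ and $j$. Finally, Proposition \ref{small xi} applied with $\varepsilon=\eta_0/(2C)$ produces $\underline{m}=\underline{m}(N,l,\kappa,t)$, while Corollary \ref{ql renormalization} produces the corresponding $\underline{j}$, so that for $m>\underline{m}$ and $j>\underline{j}$ the escape probability satisfies $\xi<\eta_0/(2C)<\eta/C$. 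Invoking Theorem \ref{black hole} then delivers $\area J(\Bf_*)>0$.

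The critical point, and the reason the construction carries six integer parameters rather than one, is the quantifier ordering encoded in the phrase ``consecutively selected'': each underlined threshold must depend only on the parameters preceding it, so that the a priori bound constant $C$ is fixed before $\eta_0$ is extracted, and $\eta_0$ is fixed before one is permitted to push $\xi$ below $\eta_0/C$ by sending $m$ to infinity. The Black Hole Criterion is stated for a quadratic-like representative in terms of its first pre-renormalization, while Propositions \ref{def eta} and \ref{small xi} are phrased directly for the quadratic polynomial $\Bf_*$ with landing domain $\BU_*$ and fundamental annulus $\BV_*\sm \BU_*$ of its pre-renormalization; since $\Bf_*$ restricts to a quadratic-like representative realizing precisely this pre-renormalization, the criterion applies without further translation. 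I do not foresee any substantive obstacle beyond this bookkeeping: the one geometric danger, namely that $\eta_0$ might collapse if the geometry of the first renormalization degenerates as $t\to\infty$, has been forestalled by the lower modulus bound $\underline{\mu}(N,l,\kappa,t)$ of Corollary \ref{ql renormalization} and the $t$-uniformity built into Proposition \ref{def eta}, which together were the whole purpose of introducing the truncation parameter $t$ in the first place.
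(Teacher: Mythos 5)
Your proposal is correct and follows essentially the same route as the paper: invoke Proposition \ref{a priori bounds} to fix the a priori bound $\mu(N,l,\kappa)$ and hence the threshold constant $C(\mu)$ of the Black Hole Criterion, extract from Proposition \ref{def eta} a definite landing probability $\eta_0(N,l,\kappa,t)>0$, drive the escape probability below $\eta_0/C$ by taking $m$ large via Proposition \ref{small xi}, and conclude via Theorem \ref{black hole}. Your explicit care about quantifier ordering and the remark that $\Bf_*$ already furnishes the quadratic-like representative to which the criterion applies are both accurate; they make explicit what the paper leaves terse, but do not change the argument.
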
 

 \begin{proof}
By Proposition \ref{def eta}, the map $\Bf_*$ has {\it a priori bounds} depending only
on $N, l, \kappa$, and $t$.

By Proposition \ref{def eta}, it has a definite {\it landing parameter} $\eta$ depending on the same four parameters only.

By Proposition \ref{small xi}, it has an arbitrary small {\it escaping parameter} $\xi$ as long as 
$m, j$ are sufficiently big (with frozen $N$, $\l$, $\kappa$, and $t$).

Now the Black Hole Criterion (Theorem \ref{black hole}) implies the desired.
\comm{****
By Theorem \ref{renorm fixed point},
 there is the DH Renormalization Fixed Point $g : U \ra V $
in the hybrid class of $\Bf_*$. Moreover, $R_{DH}^n  (\Bf_*) \to g$ implying that the {\it a priori bound}
$\nu(N, \kappa,l t)$ for $\Bf_*$ also works for $g$. 
 
We will  apply Theorem \ref{black hole}  to $g$, 
which will imply the desired result for $\Bf_*$
(as they are  qc equivalent).
To this end we need to estimate 
{\it  the landing parameter $\eta$}  (\ref{landing par})
and  {\it the escaping parameter $\xi$ }  (\ref{escaping par})
of this map.
In fact,  both of them have been already taken care of
in Propositions \ref{} and \ref{}. 
%
%

\ssk The Theorem is proven.
****}
\end{proof}



\section{Appendix: Further comments and open problems}

\subsection{Probabilistically balanced maps}
There is an interesting approach  to 
creating balanced (in some stronger sense) maps by variation of a continuous parameter (we thank
Jean-Christophe Yoccoz for this suggestion).  Consider a renormalization
horseshoe associated to a pair of renormalization combinatorics, such that
one of the fixed points is lean and the other is a black hole.  For each $0
\leq p \leq 1$, let $\mu_p$ be the Bernoulli measure on the horseshoe giving
probability $p$ to the ``Lean'' combinatorics and $1-p$ to the ``Black hole''
one.  Then for each $p$, the limit
$$
   c_p = \lim \frac {1} {n} \log \frac {\eta_n} {\xi_n}
$$
should exist  $\mu_p$-a.e. and be independent of a particular
$\mu_p$-typical  combinatorics.
Moreover, the dependence $p \mapsto c_p$ is  conceivably  continuous, 
and since $c_0<0<c_1$, we must have $c_{p_*}=0$ for some $0<p_*<1$ 
(justification of all those facts would depend on a suitable extension of the
analysis of \cite {AL}).  Let us call a $\mu_{p_*}$-typical Feigenbaum map
{\it probabilistically balanced}. (They are ``better balanced'' than generic 
{\it topologically balanced}  examples constructed  in \cite {AL}.)
The geometry of the probabilistically balanced  Julia sets would be
a good approximation to the geometry of  (perhaps, non-existing) balanced  Julia sets with periodic combinatorics.%
\footnote {Note however that $\mu_p$-a.e. Feigenbaum Julia set
has full hyperbolic dimension for every $0<p<1$ (see Lemma 7.2 and Theorem
8.1 of \cite {AL}), and while $c_p>0$ should imply positive area,
$c_p<0$ would not imply Hausdorff dimension less than $2$.}

\subsection{Computer experiments}

After identifying theoretically the main dynamical phenomena which should
lead to Black hole behavior, we have attempted an informal
numeric investigation of
a particularly simple sequence of renormalization combinatorics displaying
them.  Consider the quadratic map $p_c$ with a
golden mean Siegel disk, with rotation number
$[1,1,1,...]$, and let $p_m/q_m$ be the sequence of rational approximands
($p_m=q_{m-1}$ being the Fibonacci sequence).  Visual inspection of the
$(p_m/q_m)$-limb reveals a pair of largest primitive
Mandelbrot copies with period $q_m+q_{m-2}$.  Choosing one of them,
we explore in detail the parameter $z_m$ in this copy for which the first
renormalization has a golden mean Siegel disk.  This parameter is very
close to the actual Feigenbaum parameter with this stationary
combinatorics, and considerably easier to determine numerically.

In parameter space, one sees that $\frac {z_{2 m-1}-c} {z_{2 m+1}-c}
\to \beta=\frac {7+3 \sqrt {5}} {2}$.  Moreover, centering the Mandelbrot
copies at the superattracting parameter and rescaling by
$\beta^m$ shows manifest convergence of the copies in the Hausdorff topology.

In the dynamical plane, one sees that $p_{z_{2 m+1}}^{q_{2 m+1}+q_{2 m-1}}$
restricts to a quadratic-like map $g_{2 m+1}:U_{2 m+1} \to V_{2 m+1}$, where
$V_{2 m+1}$ is a disk of radius
$\sqrt {38} |w_{2 m+1}|$ and $w_{2 m+1}$ is the center of the Siegel disk for
$g_{2 m+1}$.  Moreover, $\frac {w_{2 m-1}} {w_{2 m+1}}$ converges to some
real constant greater than $1$, and up to rescaling by $|w_{2 m+1}|^{-1}$,
$g_{2 m+1}$ is seen to converge.  The proportion of $p_{z_{2 m+1}}$-orbits
starting in the original Siegel disk of $p_c$ that eventually land in
$V_{2 m+1}$ is clearly seen to approach $1$ (so that $\eta(2 m+1)$ is
bounded from below), while $\xi(2 m+1)$ appears to decay exponentially. 
Julia sets of positive area might already emerge then for period $2207$
($\xi \approx 0.0622$), and more likely for period $15127$ ($\xi \approx
0.0215$).\footnote {Those estimates are valid for the quadratic map and not
for the renormalization fixed point, so there is still some extra distortion
to consider.  Heuristically (ignoring distortion), $\xi$ should be small
compared to the relative area of the filled Julia set with a Siegel disk,
which near the fixed point is around $0.06$.}

All those observations would be justified by the existence of an
hyperbolic Siegel renormalization fixed point  \note{is it really enough?} 
 associated to the
golden mean, with one-dimensional unstable manifold containing
(up to straightening) the Mandelbrot copies we
explore.  While the existence of a hyperbolic Siegel renormalization fixed
point was  established by McMullen and  Yampolsky
\cite{McM3,Ya-posmeas} 
(and plays a central role in our argument), the current techniques do not go so far as to
to prove its hyperbolicity in the particular case of the golden mean.
But even this would not be enough: then  one needs to show that the unstable manifold is
large enough to contain those particular Mandelbrot copies that we
want, which  looks like a 
hard problem.

\subsection{More Julia sets of positive area?}

It remains an open problem whether Julia sets of positive area may exist for
real quadratic maps.  Any such example would have to be
infinitely renormalizable, and would imply their existence already in the
class of real Feigenbaum quadratic maps with periodic combinatorics.  A
natural candidate would be the ``original'' Feigenbaum map corresponding to
the period doubling bifurcation, since fixed points with
high (essential) period are known to be Lean.  However, in the doubling case
numerical experiments  still suggest that  the $\eta_m$ decay, 
 so this case appears to be Lean as well.  
It is thus plausible that all real quadratic Feigenbaum Julia sets are
Lean. However,  resolving  this problem one way or another  may depend
on  computer assistance.

In the higher degree case,  
the situation is even less conclusive. 
In this case, there is even a chance of existence of a
non-renormalizable   unicritical polynomial with positive area Julia
set (and even real):  see an attempt to prove it  by  Nowicki and van Strien 
for the  Fibonacci map of high degree,
see \cite {Buff}).

\end{document}